\numberwithin{equation}{subsection}
\newtheorem{theorem}[equation]{Theorem}
\newtheorem{proposition}[equation]{Proposition}
\newtheorem{lemma}[equation]{Lemma}
\newtheorem{corollary}[equation]{Corollary}
\newtheorem{conjecture}[equation]{Conjecture}
\theoremstyle{definition}
\newtheorem{rmk}[equation]{Remark}
\newenvironment{remark}[1][]{\begin{rmk}[#1] \pushQED{\qed}}{\popQED \end{rmk}}
\newtheorem{eg}[equation]{Example}
\newenvironment{example}[1][]{\begin{eg}[#1] \pushQED{\qed}}{\popQED \end{eg}}
\newtheorem{defn}[equation]{Definition}
\newenvironment{subeqns}[1][]{\addtocounter{equation}{-1}
\begin{subequations}

}{\end{subequations}}
\newcommand{\rA}{\mathrm{A}}
\newcommand{\rB}{\mathrm{B}}
\newcommand{\rC}{\mathrm{C}}
\newcommand{\rD}{\mathrm{D}}
\newcommand{\cE}{\mathcal{E}}
\newcommand{\rE}{\mathrm{E}}
\newcommand{\bF}{\mathbf{F}}
\newcommand{\bG}{\mathbf{G}}
\newcommand{\rH}{\mathrm{H}}
\newcommand{\bL}{\mathbf{L}}
\newcommand{\cL}{\mathcal{L}}
\newcommand{\cM}{\mathcal{M}}
\newcommand{\sM}{\mathscr{M}}
\newcommand{\cN}{\mathcal{N}}
\newcommand{\bO}{\mathbf{O}}
\newcommand{\cO}{\mathcal{O}}
\newcommand{\bP}{\mathbf{P}}
\newcommand{\cQ}{\mathcal{Q}}
\newcommand{\bR}{\mathbf{R}}
\newcommand{\cR}{\mathcal{R}}
\newcommand{\rR}{\mathrm{R}}
\newcommand{\bS}{\mathbf{S}}
\newcommand{\rT}{\mathrm{T}}
\newcommand{\rU}{\mathrm{U}}
\newcommand{\sU}{\mathscr{U}}
\newcommand{\bV}{\mathbf{V}}
\newcommand{\cV}{\mathcal{V}}
\newcommand{\cW}{\mathcal{W}}
\newcommand{\bZ}{\mathbf{Z}}
\newcommand{\rd}{\mathrm{d}}
\newcommand{\fg}{\mathfrak{g}}
\newcommand{\fh}{\mathfrak{h}}
\newcommand{\bk}{\mathbf{k}}
\newcommand{\fn}{\mathfrak{n}}
\newcommand{\fp}{\mathfrak{p}}
\newcommand{\fu}{\mathfrak{u}}
\renewcommand{\phi}{\varphi}
\renewcommand{\emptyset}{\varnothing}
\newcommand{\eps}{\varepsilon}
\renewcommand{\tilde}[1]{\widetilde{#1}}
\newcommand{\ol}[1]{\overline{#1}}
\newcommand{\arxiv}[1]{\href{http://arxiv.org/abs/#1}{{\tt arXiv:#1}}}
\def\Ddots{\mathinner{\mkern1mu\raise\p@
\vbox{\kern7\p@\hbox{.}}\mkern2mu
\raise4\p@\hbox{.}\mkern2mu\raise7\p@\hbox{.}\mkern1mu}}
\DeclareMathOperator{\coker}{coker}
\renewcommand{\hom}{\operatorname{Hom}}
\DeclareMathOperator{\rank}{rank}
\DeclareMathOperator{\ext}{Ext}
\DeclareMathOperator{\Sym}{Sym}
\DeclareMathOperator{\Tor}{Tor}
\DeclareMathOperator{\Spec}{Spec}
\DeclareMathOperator{\grade}{grade}
\newcommand{\GL}{\mathbf{GL}}
\newcommand{\SL}{\mathbf{SL}}
\newcommand{\Sp}{\mathbf{Sp}}
\newcommand{\SO}{\mathbf{SO}}
\newcommand{\Gr}{\mathbf{Gr}}
\newcommand{\IGr}{\mathbf{IGr}}
\newcommand{\OGr}{\mathbf{OGr}}
\newcommand{\Spin}{\mathbf{Spin}}
\newcommand{\fgl}{\mathfrak{gl}}
\newcommand{\fso}{\mathfrak{so}}
\newcommand{\fsp}{\mathfrak{sp}}
\newcommand{\fosp}{\mathfrak{osp}}
\newcommand{\osp}{\mathfrak{osp}}
\newcommand{\spo}{\mathfrak{spo}}
\newcommand{\Pin}{\mathbf{Pin}}
\newcommand{\Fl}{\mathbf{Flag}}
\newcommand{\ch}{\mathrm{char}}
\newcommand{\lw}{{\textstyle \bigwedge}}
\title[Orthosymplectic Lie superalgebras and complete intersections]{Orthosymplectic Lie superalgebras, Koszul duality,\\ and a complete intersection analogue of the Eagon--Northcott complex}
\date{September 7, 2016}
\author{Steven V Sam}
\address{Department of Mathematics, University of California, Berkeley}
\curraddr{Department of Mathematics, University of Wisconsin, Madison}
\email{svs@math.wisc.edu}
\urladdr{\url{http://math.wisc.edu/~svs/}}
\thanks{The author was supported by a Miller research fellowship.}
\subjclass[2010]{13D02, 
13C40, 
17B10, 
17B55, 
18G10
}
\begin{document}

\maketitle

\begin{abstract}
We study the ideal of maximal minors in Littlewood varieties, a class of quadratic complete intersections in spaces of matrices. We give a geometric construction for a large class of modules, including all powers of this ideal, and show that they have a linear free resolution over the complete intersection and that their Koszul dual is an infinite-dimensional irreducible representation of the orthosymplectic Lie superalgebra. We calculate the algebra of cohomology operators acting on this free resolution. We prove analogous results for powers of the ideals of maximal minors in the variety of length $2$ complexes when it is a complete intersection, and show that their Koszul dual is an infinite-dimensional irreducible representation of the general linear Lie superalgebra.

This generalizes work of Akin, J\'ozefiak, Pragacz, Weyman, and the author on resolutions of determinantal ideals in polynomial rings to the setting of complete intersections and provides a new connection between representations of classical Lie superalgebras and commutative algebra. As a curious application, we prove that the cohomology of a class of reducible homogeneous bundles on symplectic and orthogonal Grassmannians and $2$-step flag varieties can be calculated by an analogue of the Borel--Weil--Bott theorem.
\end{abstract}

\tableofcontents

\section{Introduction}

In this paper, we are interested on the one hand with representations of orthosymplectic Lie superalgebras (a family of simple Lie superalgebras), and on the other hand, with free resolutions over a class of quadratic complete intersections which we call Littlewood varieties (see \cite{littlewoodcomplexes}). To avoid clumsy technicalities, we will not distinguish between affine varieties and their coordinate rings. The two sides of the story are connected by Koszul duality.

First, it is well-known that quadratic complete intersections, that is, polynomial rings modulo an ideal generated by a regular sequence of quadratic polynomials, are Koszul algebras (this notion and the following notions are reviewed in \S\ref{sec:koszul}). So it has a Koszul dual, which is the universal enveloping algebra of a nilpotent Lie superalgebra. By generalities on Koszul duality, if $R$ is a Koszul $\bk$-algebra and $R^!$ is its Koszul dual, then for any finitely generated graded $R$-module $M$, the space $M^! = \bigoplus_{d \ge 0} \Tor_d^R(M, \bk)^*$ is naturally a finitely generated graded $R^!$-module, which is a direct sum of linear strands.

Generally, Lie superalgebras only have a $\bZ/2$-grading, but many simple Lie superalgebras, including the orthosymplectic Lie superalgebras, have a $\bZ$-grading (not necessarily unique) which lifts its $\bZ/2$-grading. The crucial observation is that for the Littlewood varieties, the nilpotent Lie superalgebra arising from Koszul duality, which has a natural $\bZ$-grading, is a subalgebra of an orthosymplectic Lie superalgebra with a distinguished $\bZ$-grading. 

The main result of this paper is that for a large class of modules $M$ over the coordinate ring of the Littlewood variety, the module $M^!$ consists of one linear strand, and the action of the nilpotent subalgebra on $M^!$ extends to an action of the entire orthosymplectic Lie superalgebra and becomes an irreducible representation. The Littlewood variety sits in a space of matrices, and our class of modules includes all powers of the ideal of maximal minors in this space of matrices. 
Furthermore, the modules $M$ have natural geometric constructions. As a bonus, our results allow us to deduce a Borel--Weil--Bott type theorem for a class of reducible homogeneous bundles over orthogonal and symplectic Grassmannians.

The use of Koszul duality allows one to transform problems about representations of Lie superalgebras into problems about modules over complete intersections, where one can use tools from commutative algebra. We hope that this work will inspire yet further connections. For some context, we mention that the motivations for this paper come in several forms from commutative algebra, invariant theory, and representation theory:
\begin{compactenum}[(1)]
\item The Littlewood varieties were studied in several places under different names (or no name at all) \cite{howe-notes, kac, lovett, lwood}. In \cite{littlewoodcomplexes}, we put the Littlewood varieties into a Lie-theoretic context and show that related free resolutions can be used to recover Littlewood's formulas for branching rules for classical Lie groups (hence the name) and to deduce new formulas for the exceptional Lie groups. In \cite{saturation} they played a central role in proving the saturation property for tensor product coefficients for classical groups. So it is reasonable to look for deeper connections between Littlewood varieties and representation theory.

\item If we consider the polynomial ring which is the coordinate ring of the space of matrices, then all powers of the ideal of maximal minors have a linear free resolution (\cite[Theorem 5.4]{ABW}; we give another proof in \S\ref{sec:EN-powers}). Our results are a perfect analogue in the setting of complete intersections. See \cite{chardin} for a survey of the behavior of homological invariants of powers of an ideal (in the general context of a graded Noetherian ring).

\item The resolution of the ideal of maximal minors is an infinite length generalization of the classical Eagon--Northcott complex \cite{eagonnorthcott} which has been a rich testing ground and source of examples in the theory of finite free resolutions \cite{syzygies, northcott} and its connections with the theory of degeneracy loci \cite{GLP, kempf, schubertcomplexes}.

\item Techniques exist for constructing free resolutions over complete intersections, but little is known about {\it minimal} free resolutions. The complexes in this paper give a large family of concrete examples which should form the beginning of a useful case study.

\item Our results extend work of Akin, J\'ozefiak, Pragacz, Weyman, and the author \cite{aw1, aw2, aw3, JPW, PW, derivedsym} on realizations of finite-dimensional representations of simple Lie superalgebras as syzygies of determinantal ideals. Past results do not cover the important class of orthosymplectic Lie superalgebras; furthermore, our results deal with infinite-dimensional irreducible representations. Aside from this, explicit constructions of representations of Lie superalgebras are generally lacking.

\item Lovett \cite{lovett} investigates the problem of calculating the minimal free resolution of the determinantal ideals in Littlewood varieties, but over the polynomial ring rather than over the coordinate ring of the Littlewood variety. As our results show, even for the maximal minors, the answer over the polynomial ring is complicated while the answer over the complete intersection is dramatically simplified.
\end{compactenum}

\subsection{Outline of paper}

We now describe the results mentioned above in more detail and give an outline of the main results of this paper. The background material for this paper is in \S\ref{sec:prelim}. There is a good analogy between the results we prove for modules over the coordinate rings of Littlewood varieties and modules over the coordinate ring of the whole space of matrices (which is just a polynomial ring). The polynomial case is substantially simpler and as a warmup, we deduce the analogous results in \S\ref{sec:EN-powers} (also they will be used later).

We begin in \S\ref{sec:symp-case} with the symplectic version of the Littlewood variety. Let $V$ be a symplectic vector space and let $E$ be another vector space with $2\dim(E) \le \dim(V)$. Set $A = \Sym(E \otimes V)$ and $B = A / (\lw^2(E))$, where $\lw^2(E)$ is the space of quadratic invariants under the symplectic group $\Sp(V)$. See Example~\ref{eg:typeC-coord} for an explicit example with coordinates. Then $\Spec(A)$ is the space of linear maps $E \to V^*$ and $\Spec(B)$ is the subvariety where the image of $E$ in $V^*$ is an isotropic subspace. Then $B$ is a complete intersection and there is an ideal of maximal minors. 

The main result (Theorem~\ref{thm:Mnu-linear-res}, Remark~\ref{rmk:typeC-minors-explain}) is that all powers of this ideal have a linear free resolution over $B$ and more generally for a class of modules $M$. \S\ref{sec:typeC-prelim} contains basic properties of $B$, and the connection to orthosymplectic Lie superalgebras. In \S\ref{sec:howe}, we describe Howe duality, which is our source of the representations of the orthosymplectic Lie superalgebra, and prove new results about these representations. We prove the main result in \S\ref{sec:main-result-typeC} and give applications. First, we calculate the support varieties of the modules $M$ (Proposition~\ref{prop:support-typeC}). Second, we use Eisenbud's construction for free resolutions over a complete intersection from a free resolution over the polynomial ring \cite{eisenbud-ci} to prove an analogue of the Borel--Weil--Bott theorem for a class of reducible homogeneous bundles on the symplectic Grassmannian (Theorem~\ref{thm:bott-gen}). We give examples in \S\ref{sec:typeC-examples}.

Conjecture~\ref{conj:Ilambda} gives a hint about further directions into which this story should develop. In particular, while the representations we get come from a Howe dual pair, the same will probably not be true for modules associated with lower-order minors. We expect that further understanding in this direction will lead to deeper insight into the structure of the (derived) category of representations of orthosymplectic Lie superalgebras. More importantly, this should lead to an explicit tensor construction of representations, in the same spirit as Schur functors. We will pursue this in future work.

There are analogues of the above results when $V$ is instead a vector space equipped with an orthogonal form, which is discussed in \S\ref{sec:orth-case}. There is also a third case where $V$ is vector space with no extra structure, but one considers pairs of matrices $E \to V$ and $V \to F$ with the condition $\dim(E) + \dim(F) \le \dim(V)$. The locus where the composition $E \to V \to F$ is $0$ is a complete intersection, and analogues of the above results also hold in this case. This is discussed in \S\ref{sec:gl-case}. The main difference is that the orthosymplectic Lie superalgebra is replaced by the general linear Lie superalgebra.

\section{Preliminaries} \label{sec:prelim}

Throughout the paper, we work over a field $\bk$, which we assume is algebraically closed. To avoid this assumption, it is enough to assume that all semisimple Lie algebras are split forms. At the beginning of each section we will state our assumptions on $\ch(\bk)$ if necessary. 

\subsection{Notation}

Given a vector space $V$, its dual is denoted $V^* = \hom_\bk(V,\bk)$. The symmetric, exterior, and divided power algebras are $\Sym(V)$, $\lw^\bullet(V)$, and $\rD(V)$, respectively.

If $V$ is a vector space with $\dim(V) = n$, we set $\det V = \lw^n(V)$. Similarly, if $\cV$ is a vector bundle of rank $n$, we set $\det \cV = \lw^n (\cV)$.

Given representations $V$, $W$ of a group $G$, we write $V \approx W$ if $V$ has a $G$-equivariant filtration whose associated graded is isomorphic to $W$. We only consider the case when $G$ is a reductive algebraic group and $V$ and $W$ are graded representations with finite-dimensional pieces. So when $\ch(\bk)=0$, we can replace $\approx$ by $\cong$ (isomorphism) by complete reducibility.

If $S$ is a graded algebra, $S(-n)$ is the free $S$-module with shifted grading: $S(-n)_i = S_{i-n}$.

\subsection{Koszul duality} \label{sec:koszul}

Let $R$ be a $\bZ_{\ge 0}$-graded $\bk$-algebra such that $R_0 = \bk$ and all $R_i$ are finite-dimensional over $\bk$. If $\bk = R / R_{>0}$ has a linear free resolution over $R$, then $R$ is a {\bf Koszul algebra} \cite[Definition 1.2.1]{BGS}. Let $W$ be the kernel of the multiplication map $R_1 \otimes R_1 \to R_2$. Let $W^\perp$ be the annihilator of $W$ in $R_1^* \otimes R_1^*$ and define $R^! = \rT(R_1^*) / (W^\perp)$ where $\rT(R_1^*)$ is the tensor algebra on $R_1^*$. Then $R^!$ is the {\bf Koszul dual} of $R$ and is also a Koszul algebra \cite[Proposition 2.9.1]{BGS} (and $(R^!)^! = R$). Furthermore, we have an isomorphism of algebras
\begin{align} \label{eqn:koszul-ext}
R^! \cong \ext^\bullet_R(\bk,\bk)^{\rm op}
\end{align}
where the right hand side is equipped with the usual Yoneda product \cite[Theorem 2.10.1]{BGS} and the superscript ${\rm op}$ means we take the opposite ring. Write $E= \ext^\bullet_R(\bk,\bk)$.

Let $M$ be a graded right $R$-module such that $\dim_\bk M_i < \infty$ for all $i$. A linear cochain complex $\beta(M)$ is constructed in \cite[p.105]{BEH} consisting of free right $E$-modules with $\beta(M)^j = M_j \otimes_\bk E(j)$. Via \eqref{eqn:koszul-ext}, this is also a cochain complex of free left $R^!$-modules. If $M$ is a graded left $R$-module, then $M^\vee$ is a graded right $R$-module with $M^\vee_i = M^*_{-i}$, and we define $\bL(M) = \beta(M^\vee)$, which we think of as a chain complex of graded free left $R^!$-modules:
\[
\bL(M): \cdots \to M_i^* \otimes R^!(-i) \to \cdots \to M_1^* \otimes R^!(-1) \xrightarrow{d_1} M_0^* \otimes R^! \xrightarrow{d_0} M_{-1}^* \otimes R^!(1) \to \cdots.
\]
Chasing through the construction, we see that for any $i$, the map $M_i^* \to M_{i-1}^* \otimes R^!_1$ is dual to the multiplication map $R_1 \otimes M_{i-1} \to M_i$. In particular, we can recover $M$ (functorially) from $\bL(M)$ under this finiteness assumption, and so $\bL$ is an equivalence. Let $\bR$ be its inverse. 

We record the above discussion in the next theorem, which is a standard result.

\begin{theorem} \label{thm:dict}
Let $M$ be a finitely generated graded $R$-module. Pick $n$ minimal so that $M_n \ne 0$ and set $N = \coker(M_{n+1}^* \otimes R^!(-n-1) \to M_n^* \otimes R^!(-n))$. 

\begin{compactenum}[\rm (a)]
\item The functor $\bL$ is an equivalence from the category of graded left $R$-modules with finite-dimensional graded pieces to linear complexes of graded finite rank free left $R^!$-modules.
\item For integers $i \ge 0$ and $k$, we have
\[
\rH_k(\bL(M))_{i+k} \cong (\Tor_i^R(\bk, M)_{i+k})^*.
\]
\item $\bL(M)$ is a subcomplex of the minimal free resolution of $N$ if and only if $M$ is generated in degree $n$.
\item $\bL(M)$ is the first linear strand of the minimal free resolution of $N$ if and only if $M$ is generated in degree $n$ and is defined by linear relations over $R$.
\item $\bL(M)$ is the minimal free resolution of $N$ if and only if $M$ is generated in degree $n$ and has a linear free resolution over $R$.
\end{compactenum}
\end{theorem}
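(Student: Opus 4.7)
My plan is to prove (a) by explicit construction of an inverse functor $\bR$, prove (b) as the technical core via identification of $\bL(M)$ with the $\bk$-linear dual of the Koszul resolution of $\bk$ tensored with $M$, and then deduce (c), (d), (e) by translating $\Tor^R$ vanishing via (b).

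For (a), a linear complex of graded finite-rank free left $R^!$-modules has the form $L_\bullet$ with $L_k = V_k \otimes_\bk R^!(-k)$, and its differential is determined by $\bk$-linear maps $\delta_k \colon V_k \to V_{k-1} \otimes R^!_1$. Dualizing with $(R^!_1)^* = R_1$ yields maps $R_1 \otimes V_{k-1}^* \to V_k^*$, producing an $R_1$-action on $M := \bigoplus_k V_k^*$ (placing $V_k^*$ in degree $k$). The identity $d^2 = 0$ is equivalent, via the pairing between $W = \ker(R_1 \otimes R_1 \to R_2)$ and its annihilator $W^\perp \subset R^!_1 \otimes R^!_1$, to the induced quadratic action factoring through $R_2 = (R_1 \otimes R_1)/W$; since $R = \rT(R_1)/(W)$, this promotes $M$ to an $R$-module. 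This defines $\bR$, and the verifications $\bR \circ \bL = \mathrm{id}$ and $\bL \circ \bR = \mathrm{id}$ are immediate from the construction.

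For (b), the Koszulness of $R$ provides a linear minimal free resolution $F^\bk_\bullet \to \bk$ with $F^\bk_i = R(-i) \otimes_\bk (R^!_i)^*$, so $\Tor^R_i(\bk, M) = \rH_i(F^\bk_\bullet \otimes_R M)$. In a fixed internal degree $d$, the chain complex $F^\bk_\bullet \otimes_R M$ has $i$-th term $M_{d-i} \otimes (R^!_i)^*$, while the chain complex $\bL(M)$ in internal degree $d$ has $k$-th term $M_k^* \otimes R^!_{d-k}$; under the substitution $k = d-i$ the two become $\bk$-linear duals of each other. Tracing the definition of $\beta$ from \cite{BEH}, the differentials on both sides are adjoint (each is induced by the multiplication $R_1 \otimes M_\bullet \to M_\bullet$ paired with multiplication on $R^!$). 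Finite-dimensionality of graded pieces lets us commute dualization with homology, yielding $\rH_k(\bL(M))_{i+k} \cong \Tor^R_i(\bk, M)_{i+k}^*$.

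For (c), (d), (e), observe that $\bL(M)_{n-1}=0$ forces $\rH_n(\bL(M)) = \coker(\bL(M)_{n+1} \to \bL(M)_n) = N$. By (b), vanishing of $\rH_k(\bL(M))$ in internal degree $i+k$ is equivalent to vanishing of $\Tor^R_i(\bk, M)_{i+k}$. Hence for (e), $\bL(M)$ is the minimal free resolution of $N$ iff $\rH_k(\bL(M)) = 0$ for all $k > n$, iff $\Tor^R_i(\bk, M)$ is concentrated in degree $i+n$ for every $i$---precisely the condition that $M$ is generated in degree $n$ and has a linear $R$-free resolution. For (d), applying $\bR$ to the first linear strand of the minimal free resolution of $N$ produces an $R$-module $M^{\mathrm{lin}}$ with $(M^{\mathrm{lin}})_{n+i} = \Tor^{R^!}_i(\bk, N)_{n+i}^*$; matching $\bL(M)$ with $\bL(M^{\mathrm{lin}})$ in positions $n$ and $n+1$ (via (a)) shows equality iff $M$ is generated in degree $n$ and linearly presented. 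Part (c) is the further-weakened condition requiring only that $\bL(M)_{n+1} \to \bL(M)_n$ injects on generator rows, i.e.\ that $M_{n+1}^* \to M_n^* \otimes R^!_1$ is injective, equivalently that $R_1 \otimes M_n \to M_{n+1}$ is surjective, i.e.\ $M$ is generated in degree $n$.

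The main obstacle is (b): correctly reconciling homological direction (chain in $\bL(M)$ versus chain in $F^\bk_\bullet \otimes_R M$, with index offset $k = d-i$), sign conventions, and the precise adjointness of the differentials. Once this identification is pinned down, (a) is a formal construction and (c)--(e) are routine translations.
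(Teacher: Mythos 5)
Your treatment of (a), (b), and (e) is fine: (a) is the same unwinding the paper relies on (the equivalence of $d^2=0$ with the action factoring through $R_2$ is exactly the point), and in (b) you replace the paper's citation to BEH by a direct identification, in each internal degree, of $\bL(M)$ with the $\bk$-dual of $F^{\bk}_\bullet\otimes_R M$; this is legitimate once one invokes the standard fact that $R\otimes (R^!_\bullet)^*$ with its canonical differential is the minimal free resolution of $\bk$ for Koszul $R$. Part (e) then follows as you say, after the routine remarks that $\Tor_i^R(\bk,M)$ automatically vanishes in internal degrees below $n+i$ and that a linear complex is automatically minimal. The paper instead disposes of (c)--(e) by citing the proof of Theorem 7.7 in Eisenbud's syzygies book, so for (e) your direct argument is a reasonable substitute.

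The genuine gap is in (c), and it propagates into (d). You assert that $\bL(M)$ is a subcomplex of the minimal free resolution of $N$ iff the single map $M_{n+1}^*\to M_n^*\otimes R^!_1$ is injective, and then that surjectivity of $R_1\otimes M_n\to M_{n+1}$ is ``i.e.''\ generation in degree $n$. Each of these equivalences fails in one direction. Surjectivity of the one map $R_1\otimes M_n\to M_{n+1}$ does not imply $M$ is generated in degree $n$ (take $M$ with $R_1M_n=M_{n+1}$ but $R_1M_{n+1}\subsetneq M_{n+2}$); and for such an $M$ your bottom condition holds while $\bL(M)$ is \emph{not} a subcomplex: a nonzero element of $\ker(M_{n+2}^*\to M_{n+1}^*\otimes R^!_1)$ would be a cycle of internal degree $n+2$ in homological position $2$ of the resolution, hence a boundary by exactness, yet the third free module of the minimal resolution of $N$ is generated in internal degrees $\ge n+3$ and so vanishes in degree $n+2$ --- a contradiction. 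So being a subcomplex forces injectivity of \emph{every} map $M_{k+1}^*\to M_k^*\otimes R^!_1$, $k\ge n$ (equivalently surjectivity of every $R_1\otimes M_k\to M_{k+1}$), and that is the correct route to ``generated in degree $n$''; conversely, when $M$ is generated in degree $n$ you still owe the construction of the embedding, e.g.\ by applying $\bL$ to the natural surjection $M^{\mathrm{lin}}\twoheadrightarrow M$, where $M^{\mathrm{lin}}=\bR(\text{first linear strand})$ and the containment of relations $\ker(R_1\otimes M_n\to M_{n+1})\subseteq\ker(R\otimes M_n\to M)$ is what makes this map exist, or by the inductive comparison argument the paper cites. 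Likewise in (d), ``matching $\bL(M)$ with $\bL(M^{\mathrm{lin}})$ in positions $n$ and $n+1$ shows equality'' is not justified as stated: you need that $M^{\mathrm{lin}}$ is itself generated in degree $n$ and linearly presented (another exactness-plus-minimality degree chase, this time on the strand), and that a module generated in degree $n$ with only linear relations is determined by its degree $n$ and $n+1$ pieces together with the multiplication map; only with these inputs does agreement in two degrees yield $M\cong M^{\mathrm{lin}}$, and only then does the ``only if'' direction give linear presentation of $M$. With these points supplied, your outline becomes a complete proof; without them, (c) and (d) are not proved.
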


\begin{proof}
(a) follows from the discussion above. For (b), we have
\[
\rH_k(\bL(M))_{i+k} = \rH^{-k}(\beta(M^\vee))_{i+k} = \ext_R^{i}(\bk, M^\vee)_{-i-k} = (\Tor^R_i(\bk, M)_{i+k})^*,
\]
where the first equality is by definition of $\bL$, the second is on \cite[p.106]{BEH}, and the third follows from the fact that $\hom_R(\bk, (-)^\vee) \cong (\bk \otimes -)^*$ are isomorphic functors when restricted to finitely generated $R$-modules. One can prove (c), (d), (e) using (b) by following the arguments in \cite[Proof of Theorem 7.7]{syzygies}.
\end{proof}

We end this section with the families of Koszul algebras studied in this paper.

Let $V$ be a vector space. Let $R = \Sym(V) / (f_1, \dots, f_r)$ be a complete intersection where each $f_i$ is a homogeneous polynomial of degree $2$. Define $\fg_1 = V^*$ and let $\fg_2$ be the dual space of $\langle f_1, \dots, f_r \rangle$ with dual basis $f_1^*, \dots, f_r^*$. Each $f_i$ is a quadratic form on $V^*$; let $\beta_i$ be its symmetric bilinear form. Define a Lie superbracket on $\fg = \fg_1 \oplus \fg_2$ by  $[v+w,v'+w'] = \sum_{i=1}^r \beta_i(v,v') f_i^* \in \fg_2$ for $v,v' \in \fg_1$ and $w,w' \in \fg_2$. Define a squaring operation $v^{[2]} = \sum_{i=1}^r f_i(v) f_i^*$ for $v \in \fg_1$ (this is only needed when ${\rm char}(\bk) =2$). The universal enveloping algebra $\rU(\fg)$ is the tensor algebra on $\fg$ modulo the relations 

\begin{compactenum}[\quad (a)]
\item $x \otimes y-(-1)^{\deg(x) \deg(y)} y \otimes x = [x,y]$ for homogeneous $x,y \in \fg$, and 
\item $x \otimes x = x^{[2]}$ for $x \in \fg_1$.  
\end{compactenum}

The bracket and squaring operations preserve the grading on $\fg$, so $\rU(\fg)$ is a graded algebra. The PBW theorem \cite[Theorem 1.32]{chengwang} implies that $\rU(\fg)$ admits a flat degeneration to $\lw^\bullet(\fg_1) \otimes_\bk \Sym(\fg_2)$. In particular, $\rU(\fg)$ is a Noetherian $\bk$-algebra.

\begin{proposition} \label{prop:CI-koszul}
Keep the notation above.

\begin{compactitem}[\rm (a)]
\item $R$ is a Koszul algebra, and $R^! = \rU(\fg)$. 
\item Let $M$ be a finitely generated $R$-module and let $\bG$ be a linear strand of the minimal free resolution of $M$ over $R$. Then $\bR(\bG)$ is a finitely generated $\rU(\fg)$-module.
\end{compactitem}
\end{proposition}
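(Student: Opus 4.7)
Part (a) reduces to two computations: showing $R^! \cong \rU(\fg)$ as quadratic algebras and showing $R$ is Koszul. For the first, I would apply the definition of the quadratic dual directly. Assuming $\ch(\bk) \ne 2$ for exposition, decompose $V \otimes V = \Sym^2 V \oplus \lw^2 V$; the kernel $W$ of $V \otimes V \twoheadrightarrow R_2$ is then $\lw^2 V \oplus \langle f_1, \dots, f_r\rangle$, so $W^\perp$ lives in $\Sym^2 V^*$ and annihilates $\langle f_i \rangle$. Introducing the $f_i^* \in R^!_2$ as redundant degree-$2$ generators, the relations in $R^!$ can be rewritten as $vv' + v'v = \sum_i \beta_i(v,v') f_i^*$, which matches the presentation of $\rU(\fg)$ exactly; the squaring operation handles the characteristic-$2$ adjustment where polarization fails. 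For Koszulity, I would produce a linear $R$-free resolution of $\bk$ by extending the Koszul complex $\Sym V \otimes \lw^\bullet V^* \to \bk$ over $R$ via Tate's construction, adjoining divided-power generators $\Gamma(\fg_2^*)$ of internal degree $2$ dual to the $f_i$. The regularity of $(f_1, \dots, f_r)$ makes the resulting complex acyclic, and it is manifestly linear, so $R$ is Koszul.

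For (b), my plan is to reduce finite generation over $\rU(\fg)$ to finite generation over the central commutative subalgebra $\Sym(\fg_2) \subset \rU(\fg)$. Since $\fg_2$ is concentrated in degree $2$ with $[\fg_2, \fg_2] \subset \fg_4 = 0$, its elements are central in $\rU(\fg)$ and generate a polynomial subring $\Sym(\fg_2)$; by PBW, $\rU(\fg)$ is a free module over this subring of rank $2^{\dim \fg_1}$, so the two notions of finite generation coincide. Under Koszul duality, the $R^!$-action on $\bR(\bG)$ is identified with the Yoneda action of $\ext^\bullet_R(\bk, \bk)^{\rm op}$ on the relevant linear-strand summand of $\bigoplus_i \Tor^R_i(\bk, M)$. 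By Eisenbud's theory of cohomology operators \cite{eisenbud-ci}, the elements $f_1^*, \dots, f_r^* \in \fg_2 \subset R^!$ are identified with a polynomial family $\chi_1, \dots, \chi_r$ of cohomology operators of bidegree $(2, 2)$ on $\Tor^R_\bullet(\bk, M)$, and Gulliksen's theorem then asserts that $\Tor^R_\bullet(\bk, M)$ is finitely generated over $\bk[\chi_1, \dots, \chi_r] = \Sym(\fg_2)$. Since the $\chi_i$ preserve each linear strand and $\Sym(\fg_2)$ is Noetherian, each strand is itself finitely generated, and hence so is $\bR(\bG)$ over $R^!$.

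The step I expect to be the main obstacle is the identification of the subalgebra $\Sym(\fg_2) \subset R^! = \ext^\bullet_R(\bk, \bk)^{\rm op}$ with Eisenbud's cohomology operators $\chi_1, \dots, \chi_r$. Both are built by lifting to $\Sym V$---the Tate-style resolution in (a) and Eisenbud's lifting-to-$\Sym V$ construction for each $\chi_i$---so the comparison is a cochain-level chase, but it is where the technical bookkeeping of the proof resides. Once this identification is in place, PBW, Gulliksen's theorem, and Noetherianity of $\Sym(\fg_2)$ combine to give (b) cleanly.
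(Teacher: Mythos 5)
Your proposal is correct and follows essentially the same route as the paper, which simply outsources both facts: part (a) is cited to Avramov (Example 10.2.3), whose proof is exactly your Tate-resolution/quadratic-dual computation, and part (b) is the identification of the Yoneda action on $\bigoplus \bR(\bG)$ (cited to \cite[p.106]{BEH}) followed by Gulliksen-type finite generation over the cohomology operators (cited to \cite[Theorem 9.1.4]{avramov}, using that $M$ has finite projective dimension over $\Sym(V)$ --- the one hypothesis you should state explicitly, though it is automatic here). Your extra observations (PBW-freeness of $\rU(\fg)$ over $\Sym(\fg_2)$, and that the operators preserve each linear strand) are correct and just unpack what the paper gets from Noetherianity of $\rU(\fg)$ together with the fact that a direct summand of a finitely generated module is finitely generated.
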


\begin{proof}
(a) follows from \cite[Example 10.2.3]{avramov}.

(b) The algebra $\rU(\fg) \cong \ext^\bullet_R(\bk,\bk)$ acts on $\ext^\bullet_R(M,\bk) = \Tor_\bullet^R(M,\bk)^*$ via the Yoneda product, and this can be identified with the action of $\rU(\fg)$ on $\bigoplus \bR(\bG)$ where the sum is over all linear strands of the minimal free resolution of $M$ (see for example, \cite[p.106]{BEH}). By \cite[Theorem 9.1.4]{avramov} (and the fact that $\Sym(V)$ has finite projective dimension \cite[Theorem 1.1]{syzygies}), the $\ext^\bullet_R(\bk,\bk)$-module $\ext^\bullet_R(M,\bk)$ is finitely generated.
\end{proof}

\subsection{Partitions and Schur functors} \label{sec:schur-functors}

A sequence of integers $\lambda = (\lambda_1, \dots, \lambda_n)$ with $\lambda_1 \ge \cdots \ge \lambda_n \ge 0$ is a {\bf partition}. We write $\ell(\lambda) = \max\{i \mid \lambda_i \ne 0\}$ and $|\lambda| = \sum_i \lambda_i$. If $i > \ell(\lambda)$, we use the convention that $\lambda_i = 0$. 

The sum of two partitions is defined by $(\lambda + \mu)_i = \lambda_i + \mu_i$. The exponential notation $(a^b)$ denotes the number $a$ repeated $b$ times. Its Young diagram is a rectangle, so we also denote this by $b \times a$. We say that $\lambda \subseteq \mu$ if $\lambda_i \le \mu_i$ for all $i$. If $\lambda \subseteq b \times a$, then $(b \times a) \setminus \lambda$ refers to the partition $(a - \lambda_b, \dots, a-\lambda_1)$, i.e., we have rotated the Young diagram of $\lambda$ by $180$ degrees and removed it from the bottom-right corner of the $b \times a$ rectangle. 

The {\bf transpose partition} $\lambda^\dagger$ is defined by $\lambda^\dagger_i = \#\{ j \mid \lambda_j \ge i\}$. This is best explained in terms of Young diagrams, which we define via an example.

\begin{example}
If $\lambda = (5,3,2)$, then $\lambda^\dagger = (3,3,2,1,1)$:
\[
\lambda = \ydiagram{5,3,2}, \qquad \lambda^\dagger = \ydiagram{3,3,2,1,1}.
\]
So $\ell(5,3,2) = 3$ and $|(5,3,2)| = 10$. We have $\lambda = (3 \times 5) \setminus (3,2)$.
\end{example}

Let $\lambda$ be a partition. Then we can define the {\bf Schur functor} $\bS_\lambda$ (this is $L_{\lambda'}$ in \cite[\S 2.1]{weyman}). For any vector space $E$, $\bS_\lambda(E)$ is a representation of the general linear group $\GL(E)$ and $\bS_\lambda(E) \ne 0$ if and only if $\ell(\lambda) \le n$. When $\ch(\bk)=0$, each $\bS_\lambda(E)$ is an irreducible representation of $\GL(E)$. If $\cE$ is a vector bundle, then $\bS_\lambda(\cE)$ is a vector bundle.

If $\dim(E) = n$ and $\lambda = (1^n)$, then $\bS_{(1^n)}(E) = \det E = \lw^n(E)$. Furthermore, $\bS_{\lambda + (1^n)}(E) = \bS_\lambda(E) \otimes (\det E)$. Using this, we can define $\bS_\lambda(E)$ for any weakly decreasing sequence $\lambda$ of integers of length $n$: find $N$ such that $\lambda + (1^N)$ is nonnegative, and define $\bS_\lambda(E) = \bS_{\lambda + (1^N)}(E) \otimes (\det E^*)^N$. This does not depend on the choice of $N$.

\begin{theorem} \label{thm:L-R}
Given partitions $\lambda, \mu, \nu$, let $c^\nu_{\lambda, \mu}$ be the Littlewood--Richardson coefficient (see \cite[\S 9]{macdonald} for details, but we will not need the precise formulation). Then
\[
\bS_\lambda(V) \otimes \bS_\mu(V) \approx \bigoplus_\nu \bS_\nu(V)^{\oplus c^\nu_{\lambda, \mu}}.
\]
If $c^\nu_{\lambda, \mu} \ne 0$, then $|\nu| = |\lambda| + |\mu|$, $\lambda \subseteq \nu$, $\mu \subseteq \nu$, $\nu_1 \le \lambda_1 + \mu_1$, and $\ell(\nu) \le \ell(\lambda) + \ell(\mu)$.
\end{theorem}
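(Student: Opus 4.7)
The plan is to reduce Theorem~\ref{thm:L-R} to two standard facts: (i) the character of the $\GL(V)$-representation $\bS_\lambda(V)$ is the Schur polynomial $s_\lambda$, and (ii) Schur polynomials multiply via the Littlewood--Richardson rule $s_\lambda s_\mu = \sum_\nu c^\nu_{\lambda,\mu}\, s_\nu$. The latter identity is one of the standard ways to define $c^\nu_{\lambda,\mu}$ (see \cite[\S 9]{macdonald}), and in characteristic zero it gives the decomposition with $\approx$ replaced by $\cong$ via Weyl's complete reducibility and equality of characters.

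To promote the characteristic-zero isomorphism to the $\approx$-statement in arbitrary characteristic, I would appeal to the universal (characteristic-free) construction of $\bS_\lambda$ in \cite[\S 2.1]{weyman}: the functors $\bS_\lambda$, $\bS_\mu$, and $\bS_\lambda \otimes \bS_\mu$ are all defined over $\bZ$ from $\Sym$ and $\lw$, and tensor products of such functors carry a natural $\GL$-equivariant Schur filtration whose graded pieces are determined by the characteristic-zero decomposition. Base-changing this universal filtration to $\bk$ yields the $\approx$-statement. Alternatively, one can cite the foundational fact from modular representation theory of $\GL$ that tensor products of Weyl/Schur modules admit good filtrations with multiplicities given by LR coefficients.

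The four numerical constraints follow from the interpretation of $c^\nu_{\lambda,\mu}$ as the number of Littlewood--Richardson skew tableaux of shape $\nu/\lambda$ and content $\mu$. The degree identity $|\nu| = |\lambda| + |\mu|$ and the containment $\lambda \subseteq \nu$ are immediate from the skew shape being well-defined; $\mu \subseteq \nu$ follows from the symmetry $c^\nu_{\lambda,\mu} = c^\nu_{\mu,\lambda}$, itself reflecting commutativity of the tensor product. For the remaining bounds, entries of an LR skew tableau lie in $\{1,\dots,\ell(\mu)\}$ with strictly increasing columns, so the first column of $\nu/\lambda$ (which has $\ell(\nu) - \ell(\lambda)$ cells) forces $\ell(\nu) - \ell(\lambda) \le \ell(\mu)$, and the transpose statement via $c^{\nu^\dagger}_{\lambda^\dagger,\mu^\dagger} = c^\nu_{\lambda,\mu}$ yields $\nu_1 \le \lambda_1 + \mu_1$.

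The only real obstacle is the characteristic-free filtration claim, which requires invoking the universal construction of $\bS_\lambda$ rather than character theory alone; everything else is standard character theory and elementary tableau combinatorics.
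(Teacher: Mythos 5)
Your proposal is correct and follows essentially the same route as the paper: the characteristic-free Schur filtration of $\bS_\lambda(V) \otimes \bS_\mu(V)$ that you propose to invoke (the ``universal'' Littlewood--Richardson rule for Schur functors) is precisely the result the paper cites from \cite{boffi}, and your tableau argument for the numerical constraints is the ``easy consequence'' of \cite[\S 9]{macdonald} that the paper leaves to the reader. Note only that your characteristic-zero character-theoretic preamble is subsumed by the filtration statement, so it adds no logical content once that citation is in place.
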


\begin{proof}
For the $\approx$, see \cite{boffi}. The last statement is an easy consequence of \cite[\S 9]{macdonald}.
\end{proof}

\begin{theorem}[Cauchy identities] \label{thm:cauchy-id}
Given vector spaces $V$ and $W$, we have
\begin{align*}
\Sym^n(V \otimes W) \approx \bigoplus_{|\lambda| = n} \bS_{\lambda}(V) \otimes \bS_{\lambda}(W), \qquad 
\lw^n(V \otimes W) \approx \bigoplus_{|\lambda| = n} \bS_{\lambda}(V) \otimes \bS_{\lambda^{\dagger}}(W).
\end{align*}
\end{theorem}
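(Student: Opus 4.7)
My approach is to prove the identities first as isomorphisms in characteristic zero via Schur--Weyl duality, and then upgrade them to the filtration statement claimed by the symbol $\approx$ in arbitrary characteristic.

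For the characteristic zero step, write $(V \otimes W)^{\otimes n} \cong V^{\otimes n} \otimes W^{\otimes n}$ and apply Schur--Weyl duality to each tensor factor: $V^{\otimes n} \cong \bigoplus_{|\lambda|=n} \bS_\lambda(V) \otimes M_\lambda$, where $M_\lambda$ is the Specht $\mathfrak{S}_n$-module. The symmetric power $\Sym^n(V \otimes W)$ and the exterior power $\lw^n(V \otimes W)$ are, respectively, the trivial and sign isotypic components of $(V \otimes W)^{\otimes n}$ under the diagonal $\mathfrak{S}_n$-action. Using the elementary representation-theoretic facts $\hom_{\mathfrak{S}_n}(\mathbf{1}, M_\lambda \otimes M_\mu) = \bk \cdot \delta_{\lambda, \mu}$ and $\hom_{\mathfrak{S}_n}(\mathrm{sgn}, M_\lambda \otimes M_\mu) = \bk \cdot \delta_{\lambda^\dagger, \mu}$, the stated decompositions fall out. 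At the level of formal $\GL(V) \times \GL(W)$ characters this recovers the classical Cauchy identities for Schur polynomials, and the character computation is valid over any base field.

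In arbitrary characteristic one needs an honest $\GL(V) \times \GL(W)$-equivariant filtration, which is what $\approx$ demands. The plan is to invoke the explicit construction of Akin--Buchsbaum--Weyman, carried out in the requisite bi-$\GL$ form by Boffi: one builds a filtration of $\Sym^n(V \otimes W)$ (respectively $\lw^n(V \otimes W)$) whose strata are indexed by partitions $\lambda$ with $|\lambda| = n$, using bitableaux and the standard straightening algorithm to identify the $\lambda$-th associated graded piece with $\bS_\lambda(V) \otimes \bS_\lambda(W)$ (respectively $\bS_\lambda(V) \otimes \bS_{\lambda^\dagger}(W)$). The main obstacle is the positive-characteristic combinatorics: Schur functors need not be semisimple, so one cannot merely match characters and must produce the filtration equivariantly for both factors simultaneously. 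This is precisely what \cite{boffi} accomplishes, so in practice the proof reduces to citing that construction.
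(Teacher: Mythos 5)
Your proposal is correct and, on the point that actually matters here (the filtration statement $\approx$ in arbitrary characteristic), it takes essentially the same route as the paper: the paper simply cites \cite[Theorem 2.3.2]{weyman}, which is precisely the Akin--Buchsbaum--Weyman straightening construction you invoke, so your characteristic-zero Schur--Weyl computation is a correct but redundant warm-up. One small correction: the bi-$\GL$-equivariant Cauchy filtration should be attributed to \cite{ABW} (or \cite[Theorem 2.3.2]{weyman}) rather than to \cite{boffi}, whose paper concerns the Littlewood--Richardson filtration used in Theorem~\ref{thm:L-R}, not the Cauchy identities.
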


\begin{proof}
See \cite[Theorem 2.3.2]{weyman}.
\end{proof}

Finally, let $V$ be a symplectic or orthogonal space, i.e., a vector space equipped with a symplectic or orthogonal form. Let $\lambda$ be a partition with $2\ell(\lambda) \le \dim(V)$. We can define $\bS_{[\lambda]}(V)$, which is a representation of either the symplectic group $\Sp(V)$ or orthogonal group $\bO(V)$, respectively. If $V$ is orthogonal and $2\ell(\lambda) = \dim(V)$, then $\bS_{[\lambda]}(V)$ decomposes, as a representation of $\SO(V)$, into the sum of two non-isomorphic representations which we call $\bS_{[\lambda]^+}(V)$ and $\bS_{[\lambda]^-}(V)$. When $\ch(\bk)=0$, $\bS_\lambda(V)$ is an irreducible representation of $\bO(V)$, and $\bS_{[\lambda]^\pm}(V)$ are irreducible representations of $\SO(V)$. In all other cases, the representation $\bS_{[\lambda]}(V)$ is irreducible when $\ch(\bk)=0$. We refer the reader to \cite[\S\S 17.3, 19.5]{fultonharris} for details when $\ch(\bk)=0$ and to \cite[\S 2]{littlewoodcomplexes} for definitions and basic properties in the general case.

\subsection{Geometric technique for free resolutions} \label{sec:geom}

Let $X$ be a projective variety.  Let
\begin{displaymath}
0 \to \xi \to \eps \to \eta \to 0
\end{displaymath}
be an exact sequence of vector bundles on $X$, with $\eps$ trivial, and let $\cV$ be another vector bundle on $X$.  Put
\begin{displaymath}
A=\rH^0(X; \Sym(\eps)) = \Sym(\rH^0(X; \eps)), \qquad M^{(i)}(\cV) =\rH^i(X; \Sym(\eta) \otimes \cV).
\end{displaymath}
Then each $M^{(i)}(\cV)$ is an $A$-module. For the following, see \cite[\S 5.1]{weyman}.

\begin{theorem} \label{thm:geom-tech}
There is a minimal graded $A$-free complex $\bF_\bullet$ with terms
\[
\bF_i = \bigoplus_{j \ge 0} \rH^j(X; \lw^{i+j}(\xi) \otimes \cV) \otimes A(-i-j)
\]
with the property that for all $i \ge 0$, we have $\rH_{-i}(\bF_\bullet) = M^{(i)}(\cV)$ and $\rH_j(\bF_\bullet) = 0$ for $j>0$. 

In particular, $\bF_i = 0$ for all $i < 0$ if and only if $M^{(j)}(\cV)=0$ for all $j>0$. In this case, $\bF_\bullet$ is a minimal free resolution of $M^{(0)}(\cV)$.
\end{theorem}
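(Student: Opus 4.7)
The plan is to follow the geometric technique for free resolutions developed in \cite[\S 5.1]{weyman}. The starting point is the Koszul complex
\[
\bK_\bullet : \quad \cdots \to \Sym(\eps) \otimes \lw^2(\xi) \to \Sym(\eps) \otimes \xi \to \Sym(\eps) \to 0,
\]
with differential induced by the inclusion $\xi \hookrightarrow \eps$. Because $\xi \hookrightarrow \eps$ is locally split with cokernel $\eta$, this is locally (via $\eps = \xi \oplus \eta$) the Koszul complex of the regular sequence generating $\xi \subset \Sym(\eps) = \Sym(\xi) \otimes \Sym(\eta)$, so it is a locally free resolution of $\Sym(\eta)$. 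Since $\eps$ is trivial we have $\Sym(\eps) = A \otimes_\bk \cO_X$, and with the standard shift $\bK_p = A(-p) \otimes \lw^p(\xi) \otimes \cO_X$ the differentials become internal-degree-preserving and each $\bK_p \otimes \cV$ is a sheaf of graded free $A$-modules.

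Next I would compute $\rR\Gamma(X; \bK_\bullet \otimes \cV)$, which equals $\rR\Gamma(X; \Sym(\eta) \otimes \cV) = \bigoplus_i M^{(i)}(\cV)[-i]$, by choosing a finite affine open cover of $X$ and forming the Cech double complex. Its terms $\cC^j(U; \bK_p \otimes \cV)$ are graded free $A$-modules, differentials are $A$-linear, and the ``Cech-first'' spectral sequence has
\[
E_1^{p,j} = A(-p) \otimes \rH^j(X; \lw^p(\xi) \otimes \cV).
\]
The total complex $\bT_\bullet$ is then a complex of graded free $A$-modules whose homology computes the $M^{(\bullet)}(\cV)$.

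The crux is to replace $\bT_\bullet$ by a minimal quasi-isomorphic complex of finitely generated free $A$-modules. Over the $\bZ_{\ge 0}$-graded ring $A$ with $A_0 = \bk$, every bounded-below complex of graded free modules admits an essentially unique minimal model, obtained by iteratively splitting off contractible summands whose differentials are internal-degree-zero isomorphisms. The Koszul differential raises internal $A$-degree by $+1$ and the Cech differential preserves it, so each internal-degree strand of $\bT_\bullet$ involves only finitely many summands and the minimization terminates. After reindexing by the total homological degree $i = p - j$ (so $p = i+j$, with fixed $j$), the surviving ranks assemble into
\[
\bF_i = \bigoplus_{j \ge 0} \rH^j(X; \lw^{i+j}(\xi) \otimes \cV) \otimes A(-i-j), \qquad \rH_{-i}(\bF_\bullet) = M^{(i)}(\cV).
\]
The final assertion is then immediate: if $M^{(j)}(\cV) = 0$ for all $j > 0$, then $\bF_\bullet$ has no terms in negative homological degrees, so it is a minimal free resolution of $M^{(0)}(\cV)$.

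The main obstacle is the minimization step: converting a hypercohomology spectral sequence into a genuine minimal complex of finitely generated free $A$-modules and identifying the surviving summands with the exterior-power cohomologies $\rH^j(X; \lw^{i+j}(\xi) \otimes \cV)$. This requires careful bookkeeping of the two gradings (homological and internal), using the strict shift of internal $A$-degree under the Koszul differential together with boundedness of the Koszul direction (cut off by $\rank \xi$). This is exactly the content of \cite[Theorem 5.1.2]{weyman}, which combines the standard double-complex spectral sequence with minimality theory for graded complexes over $A$.
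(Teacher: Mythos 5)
Your proof is correct and is essentially the argument the paper relies on: the paper gives no independent proof of Theorem~\ref{thm:geom-tech} but simply cites \cite[\S 5.1]{weyman}, and your Koszul--\v{C}ech double complex together with graded minimization over $A$ is exactly the proof of \cite[Theorem 5.1.2]{weyman}. The one step to phrase more carefully is the identification of the terms (the \v{C}ech pieces are infinite-rank free modules, so one should not speak of finitely many summands per internal degree): the clean statement is that the minimal representative satisfies $\bF_i \otimes_A \bk \cong \rH_i(\bT_\bullet \otimes_A \bk)$, and since the Koszul differential has entries of internal degree exactly $1$ it dies modulo the irrelevant maximal ideal, leaving only the \v{C}ech differential and hence giving $\bF_i \otimes_A \bk \cong \bigoplus_{j\ge 0} \rH^j(X; \lw^{i+j}(\xi) \otimes \cV)$ in internal degree $i+j$, as required.
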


\subsection{Borel--Weil theorem} \label{sec:BWB}

Let $\Gr(n,V)$ be the Grassmannian of $n$-dimensional subspaces of a vector space $V$. There is a tautological sequence of vector bundles on $\Gr(n,V)$
\begin{align*}
0 \to \cR \to V \otimes \cO_{\Gr(n,V)} \to \cQ \to 0,
\end{align*}
where $\cR = \{(v, W) \in V \times \Gr(n,V) \mid v \in W\}$ has rank $n$.

\begin{theorem}[Borel--Weil, Kempf] \label{thm:bott}
Let $\lambda,\mu$ be weakly decreasing sequences of integers of lengths $n$ and $\dim(V) - n$, and let $\cV$ be the vector bundle $\bS_{\lambda}(\cR^*) \otimes \bS_{\mu}(\cQ^*)$ on $\Gr(n,V)$. Set $\beta = (\lambda_1, \dots, \lambda_n, \mu_1, \mu_2, \dots)$. 
\begin{compactitem}
\item If $\beta$ is weakly decreasing, then $\rH^0(\Gr(n,V); \cV) = \bS_\beta(V^*)$ as $\GL(V)$-representations, and all higher cohomology of $\cV$ vanishes.
\item Suppose $\lambda_1 = \cdots = \lambda_n = k$ for some $k$ and that $k+1 \le \mu_1 \le k+n$. Then all cohomology of $\cV$ vanishes.
\end{compactitem}
\end{theorem}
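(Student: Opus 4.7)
The plan is to reduce to the cohomology of a line bundle on the complete flag variety $\Fl(V)$ and invoke Kempf's vanishing theorem. Let $\pi\colon \Fl(V) \to \Gr(n,V)$ be the projection sending a flag to its $n$-dimensional member; each fiber is the product $\Fl(W)\times \Fl(V/W)$ of complete flag varieties of the tautological sub $W$ and quotient $V/W$. On $\Fl(V)$ let $\cL^\beta$ denote the line bundle associated to the integer sequence $\beta = (\lambda_1,\dots,\lambda_n,\mu_1,\mu_2,\dots)$. Since $\lambda$ and $\mu$ are each weakly decreasing, the restriction of $\cL^\beta$ to any fiber is a dominant line bundle on each factor; Kempf's vanishing applied factorwise, together with the K\"unneth formula, yields $R^0\pi_*\cL^\beta = \bS_\lambda(\cR^*) \otimes \bS_\mu(\cQ^*) = \cV$ and $R^j\pi_*\cL^\beta = 0$ for $j > 0$, so the Leray spectral sequence gives $H^i(\Gr(n,V), \cV) \cong H^i(\Fl(V), \cL^\beta)$ for every $i$.

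For part~(a), $\beta$ is itself weakly decreasing, hence a dominant weight of $\GL(V)$; Kempf's vanishing on $\Fl(V)$ identifies $H^0(\Fl(V),\cL^\beta)$ with the Weyl module $\bS_\beta(V^*)$ and forces the higher cohomology to vanish.

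For part~(b), write $i = \mu_1 - k \in \{1,\dots,n\}$ and let $m=\dim V$, $\rho = (m-1,m-2,\dots,0)$. A direct computation gives $(\beta+\rho)_{n-i+1} = k+(m-n+i-1) = (k+i)+(m-n-1) = (\beta+\rho)_{n+1}$, so $\beta+\rho$ lies on the Weyl wall of the (generally non-simple) root $e_{n-i+1}-e_{n+1}$. In characteristic zero this is the singular case of the Borel--Weil--Bott theorem and gives $H^*(\Fl(V),\cL^\beta)=0$ at once. In arbitrary characteristic the same vanishing follows from the translation-to-the-wall principle (see e.g.\ Jantzen, \emph{Representations of Algebraic Groups}, Part~II.5), or alternatively from an iterated $\bP^1$-pushforward argument through a chain of partial flag varieties whose line-bundle restriction is $\cO(-1)$ on the fiber at each step.

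The principal obstacle I expect is part~(b) in positive characteristic, where Bott's theorem does not hold verbatim and the singular-wall vanishing must be extracted from either the translation principle or an explicit iterated pushforward. I would spell out the $\bP^1$-tower argument carefully, checking at each step that the weight on the next base is again singular with respect to a simple root so Kempf vanishing can be applied; the combinatorial verification that $\beta+\rho$ has a repeated entry is immediate.
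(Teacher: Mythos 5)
Your reduction to $\Fl(V)$ and your proof of the first bullet follow the same route as the paper (which handles $\rR\pi_*$ via \cite[Theorem 4.1.12]{weyman} and then quotes Kempf vanishing from \cite[\S II.4]{jantzen}); apart from the terminological slip that $\rH^0$ of a dominant line bundle is the \emph{dual} Weyl module (which is what $\bS_\beta(V^*)$ denotes here), that part is fine. The genuine gap is the second bullet in positive characteristic, and it matters: the theorem carries no hypothesis on $\ch(\bk)$, and both bullets are used in \S\ref{sec:EN-powers}, where none is imposed. Your singularity computation is correct, but the repeated entries of $\beta+\rho$ sit in positions $n-i+1$ and $n+1$ with $i=\mu_1-k$, so the root $e_{n-i+1}-e_{n+1}$ is simple only when $\mu_1=k+1$. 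In characteristic $p$ the only wall-vanishing statement available is the simple-wall one, coming from a $\bP^1$-fibration on which the line bundle restricts to $\cO(-1)$; the implication ``$\beta+\rho$ singular for an arbitrary positive root $\Rightarrow$ all cohomology vanishes'' is exactly the portion of Bott's theorem one cannot invoke, and the translation principle does not supply it. Your fallback tower also cannot be run: on the fibration of $\Fl(V)$ forgetting the $j$-dimensional subspace, $\cL^\beta$ restricts to $\cO(\beta_j-\beta_{j+1})$ on the $\bP^1$-fibers, and for $\beta=(k^n,\mu_1,\mu_2,\dots)$ these integers are $0$ for $j<n$, $k-\mu_1$ for $j=n$, and $\ge 0$ for $j>n$. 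So for $\mu_1\ge k+2$ no step has fiber bundle $\cO(-1)$, the weight is not singular for any simple root, and the only non-dominant direction has fiber degree $k-\mu_1\le -2$, whose first direct image is a bundle of rank $\mu_1-k-1$ induced from a Weyl module of a Levi (not even a line bundle once $\mu_1\ge k+3$); analyzing it head-on reintroduces exactly the characteristic-$p$ subtleties you set out to avoid, since the Demazure-style exchanges that would walk $\beta$ to a simple wall are precisely what fail in characteristic $p$. So the induction you sketch never gets started.

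The paper's proof of the second bullet is characteristic-free and exploits the block structure of $\beta$ instead of staying on $G/B$: because $\lambda=(k^n)$, the line bundle $\cL$ is trivial on the fibers of $f\colon\Fl(V)\to Y_n$, where $Y_n$ is the variety of flags of dimensions $n,n+1,\dots,\dim(V)-1$, so $\cL'=f_*\cL$ is a line bundle on $Y_n$; on the fibers of $h\colon Y_n\to Y_{n+1}$, which are $\bP^n$'s, $\cL'$ restricts to $\cO(k-\mu_1)$ with $-n\le k-\mu_1\le -1$, and $\cO(-1),\dots,\cO(-n)$ have no cohomology on $\bP^n$ in any characteristic. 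Hence $\rR^\bullet h_*\cL'=0$, so $\rH^\bullet(Y_n;\cL')=0$, and since $g_*\cL'=\cV$ with $\rR^{>0}g_*\cL'=0$ ($\mu$ is dominant on the fibers of $g\colon Y_n\to\Gr(n,V)$), all cohomology of $\cV$ vanishes. The idea you are missing is to descend to a partial flag variety and use a $\bP^n$-fibration, which absorbs the whole range $k+1\le\mu_1\le k+n$ at one stroke; with only $\bP^1$'s over the full flag variety the hypotheses of the second bullet cannot be exploited in characteristic $p$. If you add $\ch(\bk)=0$, your argument for the second bullet is complete via Bott, but then the theorem as stated, and as used later, is not fully proved.
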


\begin{proof}
Let $\pi \colon \Fl(V) \to \Gr(n,V)$ be the projection of the full flag variety on $V$ to $\Gr(n,V)$. Then $\cV = \pi_* \cL$ for a line bundle $\cL$ such that $\rR^i \pi_* \cL = 0$ for $i>0$ (see \cite[Theorem 4.1.12]{weyman}). So we can reduce the first statement to a statement about cohomology of $\cL$ on $\Fl(V)$, and this is contained in \cite[\S II.4]{jantzen}.

For the second statement, let $Y_i$ be the partial flag variety of flags of subspaces of dimensions $i, i+1, \dots, \dim(V)-1$. Name the projections $f \colon \Fl(V) \to Y_n$, $g \colon Y_n \to \Gr(n,V)$, and $h \colon Y_n \to Y_{n+1}$. Set $\cL' = f_* \cL$. Since $\lambda_1 = \cdots = \lambda_n$, $\cL$ is fiberwise trivial along $f$. The fibers of $f$ are connected, so $\cL'$ is a line bundle. The fibers of $h$ are isomorphic to $\bP^n$ and the restriction of $\cL'$ to each $\bP^n$ is isomorphic to $\cO(k-\mu_1)$. By assumption, $0 > k - \mu_1 \ge -n$, so the cohomology is fiberwise trivial and hence globally trivial. So $\rR^i h_* \cL' = 0$ for all $i$. In particular, $\rH^i(Y_n; \cL') = 0$ for all $i$. Since $\pi = gf$, we see that $g_* \cL' = \cV$, and that $\rR^i g_* \cL' = 0$ for $i>0$. We conclude that $\rH^i(\Gr(n,V); \cV) = 0$ for all $i$.
\end{proof}

\begin{remark} \label{rmk:grass-dual}
The Grassmannian of $n$-dimensional subspaces of $V$ and the Grassmannian of  $n$-dimensional quotient spaces of $V^*$ are naturally isomorphic. So in Theorem~\ref{thm:bott}, we could replace $\cR^*$ and $\cQ^*$ by $\cQ$ and $\cR$, respectively, and use $\bS_\beta(V)$ instead of $\bS_\beta(V^*)$.
\end{remark}

Let $\Fl(n,n+d,V)$ be the subvariety of $\Gr(n,V) \times \Gr(n+d,V)$ consisting of pairs of subspaces $(W,W')$ such that $W \subset W'$. Let $\cR_n \subset \cR_{n+d}$ be the corresponding tautological subbundles restricted to $\Fl(n,n+d,V)$.

\begin{theorem}[Borel--Weil, Kempf] \label{thm:bott-flag}
Let $\lambda$, $\mu$, $\nu$ be weakly decreasing sequences of integers of lengths $n$, $d$, $\dim(V)-n-d$, respectively. Let $\cV$ be the vector bundle $\bS_{\lambda}(\cR^*_n) \otimes \bS_{\mu}((\cR_{n+d}/\cR_n)^*) \otimes \bS_{\nu}((V/\cR_{n+d})^*)$ on $\Fl(n,n+d,V)$. 

If $\beta = (\lambda, \mu, \nu)$ is a weakly decreasing sequence, then $\rH^0(\Fl(n,n+d,V); \cV) = \bS_\beta(V^*)$ as $\GL(V)$-representations, and all higher cohomology of $\cV$ vanishes.
\end{theorem}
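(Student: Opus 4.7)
The plan is to mimic the first half of the proof of Theorem~\ref{thm:bott}, replacing the two-step projection $\Fl(V) \to \Gr(n,V)$ by the projection onto a two-step flag variety with three blocks rather than two. Let $\pi \colon \Fl(V) \to \Fl(n,n+d,V)$ be the natural forgetful map from the full flag variety, and let $\cL$ be the line bundle on $\Fl(V)$ whose weight (with respect to a fixed maximal torus of $\GL(V)$) is given by the concatenation $\beta = (\lambda_1,\dots,\lambda_n,\mu_1,\dots,\mu_d,\nu_1,\dots,\nu_{\dim V - n - d})$ (using the same sign conventions as in \cite[\S II.4]{jantzen} so that dominant weights correspond to global sections of $\cL$, and $\bS_\beta(V^*) = \rH^0(\Fl(V);\cL)$).

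First I would verify that $\pi_*\cL = \cV$ and that $\rR^i\pi_*\cL = 0$ for $i>0$. The fiber of $\pi$ over a point $(W,W') \in \Fl(n,n+d,V)$ is naturally $\Fl(W)\times \Fl(W'/W)\times \Fl(V/W')$, and the restriction of $\cL$ to this fiber is the external tensor product of the line bundles associated to the weakly decreasing weights $\lambda$, $\mu$, $\nu$ on the three factors respectively. Each of these weights is dominant on its factor, so by the ordinary Borel--Weil theorem (applied on each of the three full flag varieties, i.e., exactly as cited from \cite[\S II.4]{jantzen} in the proof of Theorem~\ref{thm:bott}), each factor has only $\rH^0$, and this $\rH^0$ computes $\bS_\lambda$, $\bS_\mu$, $\bS_\nu$ of the appropriate subquotients. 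Globalizing with the base change theorem (or equivalently citing \cite[Theorem 4.1.12]{weyman}, which packages precisely this relative Borel--Weil statement), one gets $\rR^i\pi_*\cL = 0$ for $i>0$ and $\pi_*\cL = \bS_\lambda(\cR_n^*)\otimes \bS_\mu((\cR_{n+d}/\cR_n)^*)\otimes \bS_\nu((V/\cR_{n+d})^*) = \cV$.

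Next I would apply the Leray spectral sequence to $\pi$, which in view of the previous step degenerates and yields $\rH^i(\Fl(n,n+d,V);\cV) = \rH^i(\Fl(V);\cL)$ for all $i$. Since $\beta$ is assumed to be weakly decreasing, it is a dominant weight of $\GL(V)$, so the classical Borel--Weil theorem on $\Fl(V)$ gives $\rH^0(\Fl(V);\cL) = \bS_\beta(V^*)$ as a $\GL(V)$-representation and $\rH^i(\Fl(V);\cL) = 0$ for $i>0$. Combining with the Leray identification yields the theorem.

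I expect the only subtle step to be the identification $\pi_*\cL = \cV$, which is a bookkeeping exercise in dualities and conventions (subbundle versus quotient, $\bS_\lambda$ versus $\bS_{\lambda^\dagger}$) rather than a genuine obstacle. The higher vanishing and the final computation of $\rH^0(\Fl(V);\cL)$ are standard invocations of Borel--Weil that are already cited in the proof of Theorem~\ref{thm:bott}, so no new ingredients beyond those already used there are required.
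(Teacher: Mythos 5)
Your proposal is correct and is essentially the paper's own argument: the paper proves Theorem~\ref{thm:bott-flag} by referring to the proof of Theorem~\ref{thm:bott}, i.e., pushing forward a line bundle from the full flag variety via \cite[Theorem 4.1.12]{weyman} and then invoking the cohomology computation of \cite[\S II.4]{jantzen}, exactly as you do.
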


\begin{proof}
See proof of Theorem~\ref{thm:bott}.
\end{proof}

Now let $V$ be a vector space of dimension $2n$ with symplectic form $\omega_V$. For $d \le n$, $\IGr(d,V)$ is the subvariety of $\Gr(d,V)$ consisting of isotropic subspaces $W$ (i.e., $\omega_V|_W \equiv 0$). Let $\cR$ be the tautological subbundle on $\Gr(d,V)$ restricted to $\IGr(d,V)$. 

\begin{theorem}[Borel--Weil, Kempf] \label{thm:bott-C}
Let $\lambda$ be a partition with $\ell(\lambda) \le d$. As representations of $\Sp(V)$, $\rH^0(\IGr(d,V); \bS_\lambda(\cR^*)) = \bS_{[\lambda]}(V)$ and all higher cohomology of $\bS_{\lambda}(\cR^*)$ vanishes.
\end{theorem}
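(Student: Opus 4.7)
The plan is to follow the strategy of the proof of Theorem~\ref{thm:bott}: realize $\bS_\lambda(\cR^*)$ as the pushforward of a line bundle along a projection from a larger flag variety on which Kempf's vanishing theorem applies directly.

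First, I would let $\Fl^{\Sp}(V)$ denote the full symplectic flag variety, parameterizing chains of isotropic subspaces $W_1 \subset W_2 \subset \cdots \subset W_n$ with $\dim W_i = i$ (so $W_n$ is Lagrangian), and let $\pi \colon \Fl^{\Sp}(V) \to \IGr(d,V)$ be the map $W_\bullet \mapsto W_d$. Denote by $\cR_1 \subset \cdots \subset \cR_n$ the tautological sub-bundles on $\Fl^{\Sp}(V)$. Since $\lambda$ is a partition with $\ell(\lambda) \le d \le n$, the weight $\mu = (\lambda_1,\ldots,\lambda_d,0,\ldots,0)$ is dominant for $\Sp(V)$, and it is realized by the line bundle $\cL_\mu := \bigotimes_{i=1}^d (\cR_i/\cR_{i-1})^{\otimes(-\lambda_i)}$ on $\Fl^{\Sp}(V)$.

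Second, I would verify that $\pi_* \cL_\mu = \bS_\lambda(\cR^*)$ and $\rR^i \pi_* \cL_\mu = 0$ for $i > 0$. The fiber of $\pi$ over $W_d \in \IGr(d,V)$ is naturally isomorphic to $\Fl(W_d) \times \Fl^{\Sp}(W_d^\perp/W_d)$, where the second factor is the full symplectic flag variety of the symplectic space $W_d^\perp/W_d$ of dimension $2(n-d)$. Under this identification, $\cL_\mu$ restricts to the type A line bundle for the weight $(\lambda_1,\ldots,\lambda_d)$ on the first factor and to the trivial line bundle on the second. By Theorem~\ref{thm:bott} (or the full flag result cited in its proof via \cite[\S II.4]{jantzen}), the cohomology is concentrated in degree $0$ and equals $\bS_\lambda(W_d^*)$. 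Upgrading the fiber-wise computation to the global statement about $\pi_*$ is then a matter of applying flat base change (or cohomology and base change).

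Finally, the Leray spectral sequence degenerates to give $\rH^i(\IGr(d,V); \bS_\lambda(\cR^*)) \cong \rH^i(\Fl^{\Sp}(V); \cL_\mu)$ for all $i$, and Kempf's vanishing theorem in arbitrary characteristic (see \cite[\S II.4]{jantzen}, valid for $\Sp(V)$ in the same way as for $\GL(V)$) applied to the dominant weight $\mu$ yields $\rH^0(\Fl^{\Sp}(V); \cL_\mu) \cong \bS_{[\lambda]}(V)$ with all higher cohomology zero. The main obstacle will be the bookkeeping in the second step: one has to identify the fiber of $\pi$ as the correct product of flag varieties, check that $\cL_\mu$ decomposes as claimed under this product structure, and verify that the sign/duality conventions in the definition of $\bS_{[\lambda]}(V)$ from \cite{littlewoodcomplexes} are consistent with the global sections of $\cL_\mu$ on $\Fl^{\Sp}(V)$. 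Once these identifications are in place, both vanishing statements follow from two invocations of classical results.
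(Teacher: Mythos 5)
Your proposal is correct in outline, but it is a genuinely different route from the paper: the paper does not prove this statement at all, it simply cites \cite[(2.1)]{littlewoodcomplexes}, whereas you give a self-contained argument that extends the paper's own proof of Theorem~\ref{thm:bott} to the symplectic setting. Your steps check out: the fiber of $\pi \colon \Fl^{\Sp}(V) \to \IGr(d,V)$ is indeed $\Fl(W_d) \times \Fl^{\Sp}(W_d^\perp/W_d)$, and the pushforward identity $\pi_*\cL_\mu = \bS_\lambda(\cR^*)$ with vanishing higher direct images holds; a slightly cleaner way to organize it is to factor $\pi$ through the partial isotropic flag variety ${\bf IFl}(1,\dots,d;V)$, so that the first map contributes only $\rR^\bullet p_* \cO = \cO$ (Kempf for the zero weight on the fibers $\Fl^{\Sp}(W_d^\perp/W_d)$) and the second map is literally the relative form of \cite[Theorem 4.1.12]{weyman}, avoiding any cohomology-and-base-change bookkeeping. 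Leray and Kempf vanishing (\cite[\S II.4]{jantzen}, valid for $\Sp(V)$ in all characteristics) then give the vanishing of higher cohomology and identify $\rH^0$ with the induced module $H^0(\mu)$ for the dominant weight $\mu=(\lambda_1,\dots,\lambda_d,0,\dots,0)$. The one point you understate is the final identification $H^0(\mu) \cong \bS_{[\lambda]}(V)$: in characteristic $0$ this is immediate (both are the irreducible of highest weight $\lambda$), but in positive characteristic it is not a matter of sign or duality conventions — it depends on how $\bS_{[\lambda]}(V)$ is defined in \cite[\S 2]{littlewoodcomplexes}. If it is defined there as sections over the isotropic Grassmannian or as the symplectic induced (dual Weyl) module, your argument closes immediately; if it is defined by the Littlewood ``traceless quotient'' presentation, then matching that construction with $H^0(\mu)$ characteristic-freely is a genuine piece of content, and it is precisely part of what the cited statement \cite[(2.1)]{littlewoodcomplexes} packages. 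So what your approach buys is an explicit, short proof of the vanishing statement from classical Kempf theory; what the paper's citation buys is the characteristic-free identification of the section module with $\bS_{[\lambda]}(V)$ in the sense used throughout, which your write-up should either quote or prove rather than treat as bookkeeping.
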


\begin{proof}
See \cite[(2.1)]{littlewoodcomplexes}.
\end{proof}

Now let $V$ be a vector space of dimension $2n$ or $2n+1$ with orthogonal form $\omega_V$ (if $\ch(\bk)=2$, we need a quadratic form, but the details remain unchanged). For $d \le n$, $\OGr(d,V)$ is the subvariety of $\Gr(d,V)$ of isotropic subspaces $W$ (i.e., $\omega_V|_W \equiv 0$). Let $\cR$ be the tautological subbundle on $\Gr(d,V)$ restricted to $\OGr(d,V)$. When $\dim(V) = 2n$ and $d=n$, $\OGr(d,V)$ has two connected components, which are isomorphic to one another.

\begin{theorem}[Borel--Weil, Kempf] \label{thm:bott-BD}
Let $\lambda$ be a partition with $\ell(\lambda) \le d$. As representations of $\bO(V)$, $\rH^0(\OGr(d,V); \bS_\lambda(\cR^*)) = \bS_{[\lambda]}(V)$ and all higher cohomology of $\bS_\lambda(\cR^*)$ vanishes.
\end{theorem}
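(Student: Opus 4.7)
The strategy is parallel to the symplectic case already stated as Theorem~\ref{thm:bott-C} (which references \cite[(2.1)]{littlewoodcomplexes}), but with $\bO(V)$ in place of $\Sp(V)$; the hint ``See proof of Theorem~\ref{thm:bott}'' suggests the same flag-variety pushforward argument. First I would separate the two structurally different cases: (i) the generic case where $\OGr(d,V)$ is connected, that is, either $\dim(V)=2n+1$, or $\dim(V)=2n$ with $d<n$; and (ii) the boundary case $\dim(V)=2n$, $d=n$, where $\OGr(d,V)$ has two isomorphic components.

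In case (i), $\OGr(d,V)$ is a partial flag variety $G/P$ with $G=\SO(V)$, and $P$ the stabilizer of an isotropic $d$-plane. I would introduce the relative full flag variety $\pi\colon Y\to\OGr(d,V)$ that further splits $\cR$ into a complete flag; then $\bS_\lambda(\cR^*)=\pi_*\cL_\mu$ for a line bundle $\cL_\mu$ associated to a dominant weight $\mu$ determined by $\lambda$, and $\rR^i\pi_*\cL_\mu=0$ for $i>0$ by the same iterated-$\bP^1$-fibration argument used in Theorem~\ref{thm:bott}. Composing with the projection to the full flag variety of $G$ and applying Kempf vanishing (\cite[\S II.4]{jantzen}), the higher cohomology of $\cL_\mu$ vanishes and $\rH^0$ is the Weyl module of highest weight corresponding to $\lambda$. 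By definition of $\bS_{[\lambda]}(V)$ in \cite[\S 2]{littlewoodcomplexes}, this recovers $\bS_{[\lambda]}(V)$ as an $\bO(V)$-representation (the $\bO(V)$-action extends the $\SO(V)$-action since $\bS_{[\lambda]}(V)$ is defined so).

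In case (ii), $\OGr(n,V)=\OGr^+ \sqcup \OGr^-$, with the two components interchanged by any element of $\bO(V)\setminus\SO(V)$, and each component is isomorphic to $\SO(V)/P$ for a maximal parabolic. Running the same pushforward-plus-Kempf argument on a single component gives $\rH^0(\OGr^\pm;\bS_\lambda(\cR^*))=\bS_{[\lambda]^\pm}(V)$ with all higher cohomology vanishing, where $\bS_{[\lambda]^\pm}(V)$ are the two $\SO(V)$-summands identified in \S\ref{sec:schur-functors}. Summing over the two components and using that $\bO(V)\setminus\SO(V)$ permutes them yields $\rH^0(\OGr(n,V);\bS_\lambda(\cR^*))=\bS_{[\lambda]^+}(V)\oplus\bS_{[\lambda]^-}(V)=\bS_{[\lambda]}(V)$ as an $\bO(V)$-representation, with vanishing higher cohomology.

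The main obstacle, as for Theorem~\ref{thm:bott}, is that we want a characteristic-free statement, which rules out naive Borel--Weil--Bott. This is precisely why Kempf vanishing is the right tool and why the definitions of $\bS_{[\lambda]}(V)$ and $\bS_{[\lambda]^\pm}(V)$ from \cite[\S 2]{littlewoodcomplexes} are set up to equal these global sections in arbitrary characteristic; once those ingredients are in place, the argument reduces to bookkeeping of weights and is essentially the same as in the proof of Theorem~\ref{thm:bott}, which is why the author defers to that proof.
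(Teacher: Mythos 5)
Your overall strategy in case (i) -- realize $\bS_\lambda(\cR^*)$ as the pushforward of a line bundle from a (relative, then full) flag variety with vanishing higher direct images, and then invoke Kempf vanishing to get the characteristic-free statement -- is exactly the argument underlying the result; the paper's own proof is just the citation \cite[(2.6)]{littlewoodcomplexes} for $\dim(V)$ odd with the even case declared similar, and your sketch is the standard filling-in of that citation, in the same spirit as the proof of Theorem~\ref{thm:bott}.

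The genuine problem is your case (ii). The representations $\bS_{[\lambda]^\pm}(V)$ are only defined (\S\ref{sec:schur-functors}) when $2\ell(\lambda)=\dim(V)$, i.e.\ $\ell(\lambda)=n$, whereas the hypothesis only requires $\ell(\lambda)\le d=n$. When $\ell(\lambda)=n$ your bookkeeping is fine: the two components contribute the two $\SO(V)$-constituents, the outer involution swaps them, and the sum is the irreducible $\bO(V)$-module $\bS_{[\lambda]}(V)$. But when $\ell(\lambda)<n$ the two components each produce the \emph{same} $\SO(V)$-irreducible, so the space of sections is the induced module $\bS_{[\lambda]}(V)\oplus\bigl(\bS_{[\lambda]}(V)\otimes\det\bigr)$, of twice the dimension of $\bS_{[\lambda]}(V)$; the simplest instance is $\lambda=\emptyset$, where $\rH^0(\OGr(n,V);\cO)=\bk^2$ because there are two components, while $\bS_{[\emptyset]}(V)=\bk$. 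So the step ``summing over the two components yields $\bS_{[\lambda]}(V)$'' fails in that subcase; your write-up needs to either treat $d=n$, $\ell(\lambda)<n$ separately and record the extra $\det$-twisted summand, or note explicitly that this is a delicacy of the maximal even case (it is really a defect of the literal statement there, not of the method). A smaller omission in case (i) when $\dim(V)$ is even: since the restriction of $\bS_{[\lambda]}(V)$ to $\SO(V)$ is irreducible and has two extensions to $\bO(V)$ differing by $\det$, identifying which extension is realized on $\rH^0$ requires an actual argument (for instance tracking the action of a suitable reflection on highest weight sections), not merely the remark that $\bS_{[\lambda]}(V)$ comes equipped with an $\bO(V)$-action.
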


\begin{proof}
See \cite[(2.6)]{littlewoodcomplexes} for $\dim(V)$ odd; the even-dimensional case is similar.
\end{proof}

\begin{remark} \label{rmk:bw-functorial}
The results in this section are functorial in $V$. So we can replace $V$ by a vector bundle $\pi \colon \cV \to Z$ on a scheme $Z$, and then $\Gr(n,V)$, $\IGr(n,V)$, etc. are replaced by a relative construction, and cohomology is replaced with higher direct images.
\end{remark}

\subsection{Determinantal modules} \label{sec:EN-powers}

In this section, we construct a family of equivariant modules with a linear free resolution over the coordinate ring of a space of matrices, and derive their basic properties. The modules considered later are generalizations of them. 

Let $E$ and $F$ be vector spaces of dimensions $n$ and $n+d$, respectively ($d \ge 0$). Let $X = \Gr(d,F)$ be the Grassmannian with tautological sub- and quotient bundles $\cR$ and $\cQ$. In the notation of \S\ref{sec:geom}, set $\eta = E \otimes \cQ$ and $\eps = E \otimes F \otimes \cO_X$. Given a partition $\nu$ with $\ell(\nu)\le d$, and given $k \ge \nu_1$, define
\[
\cV^k_\nu = (\det E)^k \otimes (\det \cQ)^k \otimes \bS_\nu \cR.
\]

\begin{proposition} \label{prop:cVnu-vanish}
The higher cohomology of $\Sym(\eta) \otimes \cV^k_\nu$ vanishes.
\end{proposition}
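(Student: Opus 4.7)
The plan is to apply the Cauchy identity (Theorem~\ref{thm:cauchy-id}) to decompose $\Sym(\eta)$ and then invoke the Borel--Weil--Kempf theorem (Theorem~\ref{thm:bott} together with Remark~\ref{rmk:grass-dual}) summand by summand. On $X = \Gr(d,F)$ we have $\rank(\cR) = d$ and $\rank(\cQ) = n$, so Cauchy gives a $\GL(E)$-equivariant filtration
\[
\Sym(E \otimes \cQ) \;\approx\; \bigoplus_\lambda \bS_\lambda(E) \otimes \bS_\lambda(\cQ),
\]
indexed by partitions $\lambda$ with $\ell(\lambda) \le n$. Tensoring with $(\det E)^k \otimes (\det \cQ)^k \otimes \bS_\nu(\cR)$ and using $(\det W)^k \otimes \bS_\lambda(W) = \bS_{\lambda+(k^{\rank W})}(W)$ for $W \in \{E, \cQ\}$, the typical associated graded piece becomes
\[
\bS_{\lambda+(k^n)}(E) \otimes \bS_{\lambda+(k^n)}(\cQ) \otimes \bS_\nu(\cR).
\]
The first tensor factor is constant on $X$, so I would pull it out of the cohomology.

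Higher cohomology vanishing is preserved under short exact sequences, so via the long exact sequences for the filtration it suffices to prove that each summand $\bS_{\lambda+(k^n)}(\cQ) \otimes \bS_\nu(\cR)$ has vanishing higher cohomology. By Remark~\ref{rmk:grass-dual}, I can identify $\Gr(d,F)$ with the Grassmannian of $n$-dimensional subspaces of $F^*$, under which $(\cQ, \cR)$ becomes $((\cR')^*, (\cQ')^*)$. Then Theorem~\ref{thm:bott} applies directly with the partition of length $\le n$ equal to $\mu := \lambda+(k^n)$ and the partition of length $\le d$ equal to $\nu$.

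The hypothesis one has to check is that the concatenated sequence
\[
(\mu_1, \ldots, \mu_n, \nu_1, \ldots, \nu_d) = (\lambda_1+k, \ldots, \lambda_n+k, \nu_1, \ldots, \nu_d)
\]
is weakly decreasing. Inside each block this is automatic, and at the junction we need $\lambda_n + k \ge \nu_1$, which follows from $\lambda_n \ge 0$ together with the standing hypothesis $k \ge \nu_1$ in the definition of $\cV^k_\nu$. Hence the first case of Theorem~\ref{thm:bott} applies and all higher cohomology of each summand vanishes.

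The only real obstacle is the bookkeeping at the junction — the hypothesis $k \ge \nu_1$ is manifestly designed precisely so that the sequence stays weakly decreasing regardless of which $\lambda$ appears in the Cauchy decomposition, so no summand ever lands in the ``non-acyclic'' range of Borel--Weil--Kempf. Everything else is routine.
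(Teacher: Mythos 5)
Your proposal is correct and follows essentially the same route as the paper: apply the Cauchy identity (Theorem~\ref{thm:cauchy-id}) to filter $\Sym(\eta)$ by pieces $\bS_\lambda(E)\otimes\bS_\lambda(\cQ)$, and then observe that the resulting weight $(k+\lambda_1,\dots,k+\lambda_n,\nu_1,\dots,\nu_d)$ is weakly decreasing (using $k\ge\nu_1$), so Theorem~\ref{thm:bott} together with Remark~\ref{rmk:grass-dual} kills all higher cohomology. Your extra bookkeeping with the dual Grassmannian and the junction inequality is exactly what the paper leaves implicit.
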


\begin{proof}
By Theorem~\ref{thm:cauchy-id}, $\Sym(\eta)$ has a filtration by terms of the form $\bS_\lambda(E) \otimes \bS_\lambda(\cQ)$. Then the weight of $\bS_\lambda(\cQ) \otimes \cV^k_\nu$ is $(k+\lambda_1, \dots, k+\lambda_n, \nu_1, \dots, \nu_d)$, which is weakly decreasing. Hence all higher cohomology vanishes by Theorem~\ref{thm:bott} and Remark~\ref{rmk:grass-dual}.
\end{proof}

Define the $A = \Sym(E \otimes F)$-module
\begin{equation} \label{eqn:det-module}
\begin{split}
\sM^k_{\nu}(E,F) &= \rH^0(X; \Sym(\eta) \otimes \cV^k_{(d \times k) \setminus \nu})\\
&\approx \bigoplus_{\ell(\lambda) \le n} \bS_{(k+\lambda_1, \dots, k+\lambda_n)}(E) \otimes \bS_{(k + \lambda_1, \dots, k + \lambda_n, k - \nu_d, \dots, k- \nu_1)}(F).
\end{split}
\end{equation}
To prove the $\approx$, use Theorem~\ref{thm:bott} and Remark~\ref{rmk:grass-dual}. The annihilator of this module is $0$. Note that we have used $\cV^k_{(d \times k) \setminus \nu}$ rather than $\cV^k_\nu$.

Still in the notation of \S\ref{sec:geom}, we have $\xi = E \otimes \cR$, so by Theorem~\ref{thm:geom-tech} and Proposition~\ref{prop:cVnu-vanish}, the terms of the minimal free resolution of $\sM^k_{\nu}(E,F)$ are
\begin{align} \label{eqn:sM-res}
\bF^\nu_i = \bigoplus_{j \ge 0} \rH^j(X; \lw^{i+j}(E \otimes \cR) \otimes (\det E)^k \otimes  \bS_{(d \times k) \setminus \nu}(\cR) \otimes (\det \cQ)^k) \otimes A(-i-j).
\end{align}

\begin{proposition} \label{prop:sM-linear}
The resolution of $\sM^k_\nu(E,F)$ is linear. More precisely,
\[
\bF^\nu_i \approx \bigoplus_{\substack{|\lambda|=i\\ \lambda \subseteq n \times d\\ \alpha \subseteq d \times k}} 
(\bS_{(k^n) + \lambda}(E) \otimes \bS_{(k^n, \alpha)}(F))^{\oplus c^\alpha_{\lambda^\dagger, (d \times k) \setminus \nu}} \otimes A(-i).
\]
\end{proposition}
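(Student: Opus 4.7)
The plan is to unpack the cohomology groups defining $\bF^\nu_i$ in \eqref{eqn:sM-res} via the Cauchy identity, the Littlewood--Richardson rule, and Theorem~\ref{thm:bott} on $X = \Gr(d, F)$, and to show that only the $j=0$ piece survives, yielding simultaneously linearity and the stated decomposition.

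I would first apply Theorem~\ref{thm:cauchy-id} to get $\lw^{i+j}(E \otimes \cR) \approx \bigoplus_\lambda \bS_\lambda(E) \otimes \bS_{\lambda^\dagger}(\cR)$, with $\lambda$ ranging over partitions of size $i+j$ satisfying $\lambda \subseteq n \times d$ (so that both Schur functors are nonzero, using $\dim E = n$ and $\rank \cR = d$). Absorbing $(\det E)^k$ into the $E$ factor gives $\bS_{(k^n)+\lambda}(E)$, and on the $\cR$ side I would use Theorem~\ref{thm:L-R} to write $\bS_{\lambda^\dagger}(\cR) \otimes \bS_{(d\times k)\setminus \nu}(\cR) \approx \bigoplus_\alpha \bS_\alpha(\cR)^{\oplus c^\alpha_{\lambda^\dagger,(d\times k)\setminus\nu}}$, keeping only $\alpha$ with $\ell(\alpha) \le d$. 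The task then reduces to computing $\rH^\bullet(X; \bS_\alpha(\cR) \otimes (\det \cQ)^k)$ for each such $\alpha$; via Remark~\ref{rmk:grass-dual}, this is the cohomology of $\bS_{(k^n)}(\cR'^*) \otimes \bS_\alpha(\cQ'^*)$ on $X$ regarded as $\Gr(n, F^*)$, where $\cR'$ has rank $n$ and $\cQ'$ has rank $d$.

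The main obstacle is ruling out higher cohomology when $\alpha_1 > k$, since then the weight $(k^n,\alpha)$ is no longer weakly decreasing and the first bullet of Theorem~\ref{thm:bott} does not apply. This is exactly the situation addressed by the second bullet of Theorem~\ref{thm:bott}, which gives total vanishing whenever $k+1 \le \alpha_1 \le k+n$; hence for such $\alpha$ no contribution is made at any cohomological degree. To invoke this bullet I still need the upper bound $\alpha_1 \le k+n$, and this is forced by Theorem~\ref{thm:L-R}: a nonvanishing Littlewood--Richardson coefficient $c^\alpha_{\lambda^\dagger,(d\times k)\setminus\nu}$ implies $\alpha_1 \le (\lambda^\dagger)_1 + ((d\times k)\setminus\nu)_1 = \ell(\lambda) + (k-\nu_d) \le n+k$. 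In the remaining case $\alpha \subseteq d\times k$, the sequence $(k^n,\alpha)$ is weakly decreasing and the first bullet of Theorem~\ref{thm:bott} gives $\rH^0 = \bS_{(k^n,\alpha)}(F)$ with vanishing higher cohomology. Combining everything, only the $j=0$ terms with $\alpha \subseteq d\times k$ and $|\lambda|=i$ survive, assembling exactly into the claimed formula.
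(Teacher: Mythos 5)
Your proposal is correct and follows essentially the same route as the paper's proof: Cauchy identity on $\lw^{i+j}(E\otimes\cR)$, Littlewood--Richardson decomposition of $\bS_{\lambda^\dagger}(\cR)\otimes\bS_{(d\times k)\setminus\nu}(\cR)$, the bound $\alpha_1\le k+n$ coming from $\lambda^\dagger_1\le n$, and then the two cases of Theorem~\ref{thm:bott} (via Remark~\ref{rmk:grass-dual}) to get either $\rH^0=\bS_{(k^n,\alpha)}(F)$ or total vanishing. Your explicit identification of $X$ with $\Gr(n,F^*)$ just spells out what the paper leaves to Remark~\ref{rmk:grass-dual}; no gaps.
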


\begin{proof}
Consider \eqref{eqn:sM-res}. Using Theorem~\ref{thm:cauchy-id}, we have
\[
\lw^{i+j}(E \otimes \cR) \approx \bigoplus_{\substack{|\lambda|=i+j\\ \lambda \subseteq n \times d}} \bS_{\lambda}(E) \otimes \bS_{\lambda^\dagger}(\cR),
\]
so we have to show that $\bS_{\lambda^\dagger}(\cR) \otimes \bS_{(d \times k) \setminus \nu}(\cR) \otimes (\det \cQ)^k$ has no higher cohomology when $\lambda \subseteq n \times d$. Let $\bS_\alpha(\cR)$ be a term appearing in the filtration of $\bS_{\lambda^\dagger}(\cR) \otimes \bS_{(d \times k) \setminus \nu}(\cR)$ in Theorem~\ref{thm:L-R} (so with multiplicity $c^\alpha_{\lambda^\dagger, (d \times k) \setminus \nu}$). By assumption, $\lambda^\dagger_1 \le n$, so Theorem~\ref{thm:L-R} implies that $\alpha_1 \le k+n$. To calculate the cohomology of $\bS_\alpha(\cR) \otimes (\det \cQ)^k$, we consider the sequence $(k, \dots, k, \alpha_1, \dots, \alpha_d)$ and use Theorem~\ref{thm:bott} and Remark~\ref{rmk:grass-dual}: if $k \ge \alpha_1$, then $\rH^0(X; \bS_\alpha(\cR) \otimes (\det \cQ)^k) = \bS_{(k^n, \alpha)}(F)$ and there is no higher cohomology; otherwise, $k+1 \le \alpha_1 \le k+n$, and all cohomology vanishes.
\end{proof}

\begin{remark}
If $\nu = (k^d)$, then $\sM^k_{(k^d)}(E,F)$ is the $k$th power of the maximal minors of the generic matrix $E \otimes A(-1) \to F^* \otimes A$. When $k<n$ and $\ch(\bk)=0$, $\bF^{(k^d)}_\bullet$ is also the $k$th linear strand of the ideal of minors of order $n-k+1$ in $A$ \cite[Proposition 6.1.3]{weyman}. 
\end{remark}

\subsection{Modification rules}

In this section, fix a nonnegative integer $k$.

\subsubsection{Type C} \label{sec:typeC-weyl}

We associate to a partition $\lambda$ two quantities, $\iota^\rC_{2k}(\lambda)$ and $\tau^\rC_{2k}(\lambda)$. If $\ell(\lambda) \le k$ we put $\iota^\rC_{2k}(\lambda)=0$ and $\tau^\rC_{2k}(\lambda)=\lambda$.  Suppose $\ell(\lambda)>k$. A {\bf border strip} is a connected skew Young diagram containing no $2 \times 2$ square.  Let $R_{\lambda}$ be the connected border strip of length $2\ell(\lambda)-2k-2$ which starts at the first box in the final row of $\lambda$, if it exists.  If $R_{\lambda}$ exists, is non-empty and $\lambda \setminus R_{\lambda}$ is a partition, then we put $\iota^\rC_{2k}(\lambda)=c(R_{\lambda})+\iota^\rC_{2k}(\lambda \setminus R_{\lambda})$ and $\tau^\rC_{2k}(\lambda)=\tau^\rC_{2k}(\lambda \setminus R_{\lambda})$, where $c(R_{\lambda})$ denotes the number of columns that $R_{\lambda}$ occupies; otherwise we put $\iota^\rC_{2k}(\lambda)=\infty$ and leave $\tau^\rC_{2k}(\lambda)$ undefined.

\begin{example}
Set $k=1$ and $\lambda = (6,5,5,3,2,1,1)$. Then $2\ell(\lambda)-2k-2 = 10$. We shade in the border strip $R_\lambda$ of length $10$ in the Young diagram of $\lambda$:
\[
\ytableaushort
{\none, \none \none \none \none \none, \none, \none, \none, \none, \none}
*[*(white)]{6,4,2,1}
*[*(lightgray)]{6,5,5,3,2,1,1} 
\]
In this case $c(R_\lambda) = 5$. If $k=2$, then $\lambda \setminus R_\lambda$ is not a partition, so $\tau_4^\rC(\lambda)$ is undefined.
\end{example}

More details and references can be found in \cite[\S 3.5]{lwood}.

\subsubsection{Type D} \label{sec:typeD-weyl}

We associate to a partition $\lambda$ two quantities $\iota^\rD_{2k}(\lambda)$ and $\tau^\rD_{2k}(\lambda)$. Details and references can be found in \cite[\S 4.4]{lwood}. The definition is the same as the one given in \S \ref{sec:typeC-weyl}, except for two differences (here, as opposed to \cite[\S 4.4]{lwood}, we require $\ell(\tau_{2k}^\rD(\lambda)) \le k$):
\begin{itemize}
\item the border strip $R_{\lambda}$ has length $2\ell(\lambda)-2k$,
\item in the definition of $\iota_{2k}(\lambda)$, we use $c(R_{\lambda})-1$ instead of $c(R_{\lambda})$.
\end{itemize}

\subsubsection{Spin rule} \label{sec:spin-weyl}

We associate to a partition $\lambda$ two quantities $\iota^\Delta_{2k}(\lambda)$ and $\tau^\Delta_{2k}(\lambda)$. Details and references can be found in \cite{spin-cat}. The definition is the same as the one given in \S \ref{sec:typeC-weyl}, except that the border strip $R_{\lambda}$ has length $2\ell(\lambda)-2k-1$.

\subsubsection{Type A} \label{sec:typeA-weyl}

We associate to a pair of partitions $(\lambda, \lambda')$ two quantities, $\iota^\rA_k(\lambda, \lambda')$ and $\tau^\rA_k(\lambda, \lambda')$. Details and references can be found in \cite[\S 5.4]{lwood}.

If $\ell(\lambda)+ \ell(\lambda') \le k$, then $\iota^\rA_k(\lambda, \lambda')=0$ and $\tau^\rA_k(\lambda, \lambda')=(\lambda, \lambda')$. Assume now that $\ell(\lambda) +  \ell(\lambda') > k$. Let $R_{\lambda}$ and $R_{\lambda'}$ be the border strips of length $\ell(\lambda) + \ell(\lambda') - k - 1$ starting in the first box of the final row of $\lambda$ and $\lambda'$, respectively, if they exist. If both $R_{\lambda}$ and $R_{\lambda'}$ exist and are non-empty and both $\lambda \setminus R_{\lambda}$ and $\lambda' \setminus R_{\lambda'}$ are partitions, define
\begin{displaymath}
\iota^\rA_k(\lambda, \lambda')= c(R_{\lambda}) + c(R_{\lambda'})-1 + \iota_k(\lambda \setminus R_{\lambda}, \lambda' \setminus R_{\lambda'})
\end{displaymath}
and $\tau^\rA_k(\lambda, \lambda') = \tau^\rA_k(\lambda \setminus R_{\lambda}, \lambda' \setminus R_{\lambda'})$.  Otherwise, set $\iota_k(\lambda, \lambda') = \infty$ and leave $\tau^\rA_k(\lambda, \lambda')$ undefined.

\subsection{Lie superalgebra homology} \label{sec:liealg-hom}

For background on Lie superalgebras, see \cite[\S 1.1]{chengwang}.
Let $\fg$ be a Lie superalgebra with universal enveloping algebra $\rU(\fg)$, and let $M$ be a $\fg$-module. Define $\rH_i(\fg; M) = \Tor_i^{\rU(\fg)}(M, \bk)$. Given an ideal $\fh \subset \fg$, each $\rH_q(\fh; M)$ is annihilated by $\fh$, and we have the Hochschild--Serre spectral sequence
\begin{align} \label{eqn:HS-SS}
\rE^2_{p,q} = \rH_p(\fg/\fh; \rH_q(\fh; M)) \Rightarrow \rH_{p+q}(\fg; M).
\end{align}
When $\fg$ is a Lie algebra, this is stated in \cite[\S 7.5]{weibel}. It is a special case of the Grothendieck spectral sequence for the derived functors of a composition of two right-exact functors \cite[\S 5.8]{weibel}, so it easily extends to Lie superalgebras.

We now state some calculations of Lie superalgebra homology that will be used.

Let $E|F$ denote a superspace with even part $E$ and odd part $F$. Set $n = \dim(E)$ and $m=\dim(F)$. Let $\fgl(n|m) \cong \fgl(E|F) = (E|F) \otimes (E|F)^*$ be the general linear Lie superalgebra. It has an Abelian subalgebra $\fu = E \otimes F^*$, which is the space of strictly upper-triangular block matrices. So $\rU(\fu) = \bigwedge^\bullet(E \otimes F^*)$.

Given a partition $\lambda$, we can define $\bS_\lambda(E|F)$ (see \cite[\S 2.4]{weyman} where $L_{\lambda'}$ is used in place of $\bS_\lambda$), which is a representation of $\fgl(E|F)$. This is isomorphic to $\bS_{\lambda^\dagger}(F|E)$ and it is nonzero if and only if $\lambda_{n+1} \le m$. We are interested in the homology groups $\rH_i(\fu; \bS_\lambda(E|F))$, which are naturally representations of $\fgl(E) \times \fgl(F)$. Let $\rho = (-1, -2, -3, \dots)$. Given a permutation $w$, set $w \bullet \lambda^\dagger = w(\lambda^\dagger + \rho) - \rho$. Let $W^P$ be the set of finite permutations such that $w \bullet \lambda^\dagger = (\beta_1, \dots, \beta_m, \gamma_1, \gamma_2, \dots)$ where $\beta$ is weakly decreasing and $\gamma$ is a partition. Set $\ell(w) = \#\{i<j \mid w(i)>w(j)\}$.

\begin{proposition} \label{prop:schur-hom}
Assume $\ch(\bk)=0$. With the notation above, we have
\[
\rH_i(E \otimes F^*; \bS_\lambda(E|F)) = \bigoplus_{\substack{w \in W^P\\ \ell(w) = i\\ w\bullet \lambda^\dagger = (\beta, \gamma)}} \bS_\beta(F) \otimes \bS_{\gamma^\dagger}(E).
\]
In particular, setting $\mu = (\lambda^\dagger_{m+1}, \lambda^\dagger_{m+2}, \dots)$, we get
\begin{align*}
\rH_0(E \otimes F^*; \bS_{\lambda}(E|F)) &= \bS_{(\lambda^\dagger_1, \dots, \lambda^\dagger_m)}(F) \otimes \bS_{\mu^\dagger}(E),\\
\rH_1(E \otimes F^*; \bS_{\lambda}(E|F)) &= \bS_{(\lambda^\dagger_1, \dots, \lambda^\dagger_{m-1}, \mu_1 - 1)}(F) \otimes \bS_{(\lambda^\dagger_m+1, \mu_2, \mu_3, \dots)^\dagger}(E),
\end{align*}
so $\bS_\lambda(E|F)$ has a linear presentation as a $\lw^\bullet(E \otimes F^*)$-module if and only if $\lambda_m^\dagger=\lambda_{m+1}^\dagger$.
\end{proposition}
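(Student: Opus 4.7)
The plan is to derive the general formula from Kostant's homology theorem for Lie superalgebras, applied to the standard parabolic of $\fgl(E|F)$. Concretely, equip $\fgl(E|F)$ with the $\bZ$-grading placing $\fgl(E) \oplus \fgl(F)$ in degree $0$, $E \otimes F^*$ in degree $+1$, and $F \otimes E^*$ in degree $-1$. Then $\fu := E \otimes F^*$ is the purely odd abelian nilpotent radical of the parabolic $\fp = (\fgl(E) \oplus \fgl(F)) \ltimes \fu$, and $\rU(\fu) = \lw^\bullet(E \otimes F^*)$. In characteristic $0$, $\bS_\lambda(E|F)$ is the finite-dimensional irreducible $\fgl(E|F)$-module whose highest weight, with respect to the distinguished Borel containing $\fu$, encodes $\lambda^\dagger$. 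Applying super Kostant (see, e.g., \cite[Chapter 6]{chengwang}) to this parabolic gives exactly the claimed general formula: the relevant Weyl group is the group $S_\infty$ of finite permutations, the Levi stabilizer is $S_m \times S_\infty$, minimal coset representatives are the shuffle permutations that make up $W^P$, and the dot action uses $\rho = (-1, -2, -3, \ldots)$. Each $w \in W^P$ of length $i$ contributes the Levi irreducible $\bS_\beta(F) \otimes \bS_{\gamma^\dagger}(E)$ with $(\beta, \gamma) = w \bullet \lambda^\dagger$; the transpose on the $E$-side reflects the supersymmetry $\bS_\lambda(E|F) \cong \bS_{\lambda^\dagger}(F|E)$ swapping the roles of even and odd.

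The explicit formulas for $\rH_0$ and $\rH_1$ follow by enumerating small-length elements of $W^P$. For $\ell(w) = 0$, only $w = e$ contributes, immediately yielding $(\beta, \gamma) = ((\lambda^\dagger_1, \ldots, \lambda^\dagger_m), \mu)$. For $\ell(w) = 1$, the only simple reflection outside the Levi is $s_m = (m, m+1)$, and a direct computation of $s_m \bullet \lambda^\dagger$ produces $(\lambda^\dagger_1, \ldots, \lambda^\dagger_{m-1}, \mu_1 - 1, \lambda^\dagger_m + 1, \mu_2, \mu_3, \ldots)$, matching the stated $\rH_1$. (One should also check that any competing length-one shuffle would have to lie in $W_P$, so genuinely only $s_m$ contributes.)

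For the linear presentation criterion, grade $\bS_\lambda(E|F)$ as a $\lw^\bullet(E \otimes F^*)$-module by placing $E$ in internal degree $1$ and $F$ in internal degree $0$, so that $E \otimes F^*$ acts with internal degree $1$. The $E$-part of $\rH_0$ is $\bS_{\mu^\dagger}(E)$, sitting in internal degree $|\mu|$, while the $E$-part of $\rH_1$ is $\bS_{(\lambda^\dagger_m + 1, \mu_2, \mu_3, \ldots)^\dagger}(E)$, sitting in internal degree $|\mu| + 1 + (\lambda^\dagger_m - \lambda^\dagger_{m+1})$. A linear presentation requires the relations to lie exactly one internal degree above the generators, so the criterion $\lambda^\dagger_m = \lambda^\dagger_{m+1}$ drops out. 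The main obstacle is aligning the many conventions in super Kostant (choice of Borel, super $\rho$, parity signs in the dot action, and the passage from $\lambda$ to $\lambda^\dagger$ as a highest weight) so that the formula specializes cleanly to the statement; once the conventions are fixed, the remaining steps are purely combinatorial bookkeeping.
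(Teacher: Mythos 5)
Your low-degree bookkeeping is fine: the enumeration of $W^P$ in lengths $0$ and $1$, the computation of $s_m \bullet \lambda^\dagger$, and the internal-degree comparison giving the criterion $\lambda^\dagger_m = \lambda^\dagger_{m+1}$ all agree with what the stated formulas require. The problem is the main step. The paper proves the general formula by citing \cite[Corollary 5.1]{CKL}, and that citation is not a dispensable convenience: there is no general ``super Kostant theorem'' that you can apply off the shelf to the parabolic $\fp = (\fgl(E) \oplus \fgl(F)) \ltimes \fu$ of $\fgl(E|F)$. Finite-dimensional representations of $\fgl(E|F)$ are not semisimple, the tensor modules $\bS_\lambda(E|F)$ are atypical, and the central-character/Casimir argument behind Kostant's theorem breaks down; for general finite-dimensional irreducibles the naive formula is false. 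The shape of the answer already signals this: the Weyl group of $\fgl(E|F)$ is $S_n \times S_m$, which would produce only finitely many homology groups, whereas here $\rH_i$ is nonzero for arbitrarily large $i$ and $W^P$ consists of finite permutations of the \emph{infinite} sequence $\lambda^\dagger$ with Levi stabilizer $S_m \times S_\infty$. That $S_\infty$ dot-action on $\lambda^\dagger$ is the Kostant combinatorics of a classical (infinite-rank) general linear Lie algebra sitting on the other side of super duality, not of $\fgl(E|F)$ itself.

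So the step you describe as ``aligning the many conventions \dots purely combinatorial bookkeeping'' is in fact the substantive content: one must either invoke \cite[Corollary 5.1]{CKL} directly (as the paper does), or run the super-duality argument of \cite[Chapter 6]{chengwang} and \cite{CKW}, which matches the $\rU(\fu) = \lw^\bullet(E \otimes F^*)$-homology of $\bS_\lambda(E|F)$ with the nilradical homology of the corresponding module over a classical general linear Lie algebra, where genuine Kostant applies and the $\lambda \leftrightarrow \lambda^\dagger$ transposition and the group $S_\infty$ enter. With that replacement (cite the known formula, or prove the transfer), the rest of your argument — the $\rH_0$, $\rH_1$ specializations and the linear-presentation criterion — goes through as written and matches the paper.
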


\begin{proof}
See \cite[Corollary 5.1]{CKL}.
\end{proof}

Now let $V$ be a symplectic or orthogonal space. Let $F \subset V$ be a maximal isotropic subspace. The choice of $F$ gives a nilpotent subalgebra $\fn$ as follows:
\begin{compactitem}
\item If $V$ is symplectic, $\fn = \Sym^2(F) \subset \Sym^2(F \oplus F^*) \cong \fsp(V)$. 
\item If $V$ is orthogonal and $\dim(V)$ is even, $\fn = \lw^2(F) \subset \lw^2(F \oplus F^*) \cong \fso(V)$.
\item If $V$ is orthogonal and $\dim(V)$ is odd, $\fn = F \oplus \lw^2(F) \subset \lw^2(F \oplus F^* \oplus \bk) \cong \fso(V)$.
\end{compactitem}
Recall the representations $\bS_{[\lambda]}(V)$ discussed in \S\ref{sec:schur-functors}.

\begin{proposition} \label{prop:par-hom}
Assume $\ch(\bk)=0$. Pick a partition with $\ell(\lambda) \le n = \lfloor \dim(V)/2 \rfloor$. 
\begin{itemize}
\item If $V$ is orthogonal, $\dim(V)$ is even, and $\ell(\lambda) = n$, then we have $\fgl(F)$-equivariant isomorphisms
\[
\rH_0(\fn; \bS_{[\lambda]^+}(V)) = \bS_\lambda(F^*), \qquad \rH_0(\fn; \bS_{[\lambda]^-}(V)) = \bS_{(\lambda_1, \dots, \lambda_{n-1}, -\lambda_n)}(F^*).
\]

\item Otherwise, we have a $\fgl(F)$-equivariant isomorphism
\[
\rH_0(\fn; \bS_{[\lambda]}(V)) = \bS_\lambda(F^*).
\]
\end{itemize}
So a finite-dimensional $\fsp(V)$- or $\fso(V)$-module $M$ is determined by the $\fgl(F)$-module $\rH_0(\fn; M)$.
\end{proposition}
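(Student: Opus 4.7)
The plan is to exploit the $\bZ$-grading on $\fg \in \{\fsp(V),\fso(V)\}$ induced by the parabolic $\fp = \fgl(F) \oplus \fn$ stabilizing $F$. Set $\fg_0 = \fgl(F)$ and place the opposite nilradical $\fn^-$---respectively $\Sym^2 F^*$, $\lw^2 F^*$, or $F^* \oplus \lw^2 F^*$---in strictly negative degree, so that $\fg = \fn^- \oplus \fg_0 \oplus \fn$. Let $H \in \fg_0$ be the grading element; any finite-dimensional $\fg$-module $M$ then carries an $H$-eigenspace decomposition $M = \bigoplus_j M_j$ with $\fg_i \cdot M_j \subseteq M_{i+j}$, and we write $M_{\min}$ for the nonzero summand of smallest eigenvalue $j_{\min}$.

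The core calculation is that for $\fg$-irreducible $M$, $\rH_0(\fn;M) \cong M_{\min}$. A $\fg$-lowest weight vector $v_0$ lies in $M_{\min}$ and is annihilated both by $\fn^-$ and by the negative roots of $\fgl(F)$, so $v_0$ is also an $\fgl(F)$-lowest weight vector. By PBW, $M = \rU(\fg) v_0 = \rU(\fn)\,\rU(\fgl(F))\,\rU(\fn^-) v_0 = \rU(\fn)\,\rU(\fgl(F))\, v_0$; reading off $H$-degrees gives $M_{\min} = \rU(\fgl(F)) v_0$, and because $v_0$ is a lowest-weight vector in the finite-dimensional semisimple $\fgl(F)$-module $M$, its cyclic submodule $\rU(\fgl(F)) v_0$ is necessarily a single simple $\fgl(F)$-module (with lowest weight equal to the weight of $v_0$). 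It follows that $M = \rU(\fn) M_{\min} = M_{\min} + \fn M$, and since $\fn M \subseteq \bigoplus_{j > j_{\min}} M_j$ meets $M_{\min}$ trivially, $\rH_0(\fn;M) = M/\fn M \cong M_{\min}$.

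It remains to identify $M_{\min}$ explicitly. Whenever $-1$ lies in the Weyl group of $\fg$---which covers $\fsp(V)$, $\fso(V)$ of odd dimension, and $\fso(V)$ of even dimension $2n$ with either $n$ even or $\ell(\lambda) < n$---the lowest weight of $\bS_{[\lambda]}(V)$ is $-\lambda$, and the irreducible $\fgl(F)$-module with lowest weight $-\lambda$ is $\bS_\lambda(F^*)$ (its highest weight, via the longest element of $S_n$, being $(-\lambda_n,\dots,-\lambda_1)$). For $\fg = \fso_{2n}$ with $\ell(\lambda) = n$ and $n$ odd, the longest Weyl element acts by $(\lambda_1,\dots,\lambda_n) \mapsto (-\lambda_1,\dots,-\lambda_{n-1},\lambda_n)$, producing distinct lowest weights on $\bS_{[\lambda]^\pm}(V)$; fixing the $\pm$-labelling by the choice of connected component of $\OGr(n,V)$ containing $F$ yields the stated values $\bS_\lambda(F^*)$ and $\bS_{(\lambda_1,\dots,\lambda_{n-1},-\lambda_n)}(F^*)$.

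For the final assertion, $\fg$ is reductive in characteristic zero so every finite-dimensional $M$ decomposes into irreducibles, and $\rH_0(\fn;-) = (-) \otimes_{\rU(\fn)}\bk$ commutes with direct sums; since the irreducible $\fgl(F)$-modules computed above are pairwise non-isomorphic for distinct irreducible $\fg$-summands, the multiplicities of these summands in $M$ can be read off from $\rH_0(\fn; M)$. The main obstacle is the $\fso_{2n}$ split case with $n$ odd: the $\pm$-labelling on $\bS_{[\lambda]^\pm}(V)$ is convention-dependent (corresponding to the two components of $\OGr(n,V)$), and some care is required to align the two stated formulas with the correct labels now that $-1 \notin W(D_n)$.
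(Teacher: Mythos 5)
Your argument is correct and is essentially the paper's proof written out in detail: the paper simply invokes the fact that a Cartan subalgebra of $\fgl(F)$ is also a Cartan subalgebra of $\fsp(V)$ or $\fso(V)$ (which is exactly the extreme-weight computation you carry out via the parabolic $\bZ$-grading and the lowest-weight vector), together with semisimplicity for the final statement. One small repair to your justification: for $\fso(2n)$ with $n$ odd and $\ell(\lambda)<n$, the element $-1$ does \emph{not} lie in the Weyl group of type $\rD_n$; the lowest weight is nevertheless $-\lambda$ simply because $\lambda_n=0$, so $w_0\lambda=(-\lambda_1,\dots,-\lambda_{n-1},\lambda_n)=-\lambda$. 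Your caveat about the $\pm$ labels in the split case is apt --- the paper fixes that convention only by reference, and the stated formulas amount to choosing the labels compatibly with $F$ (equivalently, with a component of $\OGr(n,V)$), exactly as you do.
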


\begin{proof}
The $\rH_0$ calculations are a consequence of the fact that a Cartan subalgebra of $\fgl(F)$ is also a Cartan subalgebra for $\fsp(V)$ or $\fso(V)$, respectively. The last statement follows from the fact that finite-dimensional representations of $\fsp(V)$ or $\fso(V)$ are semisimple.
\end{proof}

Let $V$ be a vector space with a decomposition $V = A \oplus B$ with $\dim(A) = a$ and $\dim(B) = b$. This defines a nilpotent subalgebra $\fn = B^* \otimes A \subset \fgl(V)$. 

\begin{proposition} \label{prop:par-hom-typeA}
Assume $\ch(\bk)=0$. Let $\lambda = (\lambda_1, \dots, \lambda_{a+b})$ be a weakly decreasing sequence of integers. Then
\begin{align*}
\rH_0(B^* \otimes A; \bS_{\lambda}(V)) = \bS_{(\lambda_1, \dots, \lambda_b)}(B) \otimes \bS_{(\lambda_{b+1}, \dots, \lambda_{a+b})}(A).
\end{align*}
So a finite-dimensional $\fgl(V)$-module $M$ is determined by the $\fgl(A) \times \fgl(B)$-module $\rH_0(\fn; M)$.
\end{proposition}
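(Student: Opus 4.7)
The plan is to realize $\fn = B^* \otimes A$ as the nilradical of the parabolic subalgebra opposite to $\fp = \fl \oplus \fu$, where $\fl = \fgl(B) \oplus \fgl(A)$ and $\fu = A^* \otimes B$. Fixing an ordered basis of $V$ with vectors of $B$ preceding those of $A$, the subalgebra $\fp$ consists of block upper-triangular matrices and $\fn$ is strictly block lower-triangular. Let $v_\lambda \in \bS_\lambda(V)$ be a highest weight vector for the Borel of upper-triangular matrices; then $v_\lambda$ is annihilated by $\fu$, and $E_0 := \rU(\fl) v_\lambda$ is the irreducible $\fl$-representation of highest weight $\lambda$, which in characteristic zero is $\bS_{(\lambda_1, \dots, \lambda_b)}(B) \otimes \bS_{(\lambda_{b+1}, \dots, \lambda_{a+b})}(A)$.

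First I would invoke the PBW decomposition $\rU(\fgl(V)) = \rU(\fn)\rU(\fl)\rU(\fu)$ together with $\fu v_\lambda = 0$ to obtain $\bS_\lambda(V) = \rU(\fn) \cdot E_0$. Consequently the natural map
\[
E_0 \hookrightarrow \bS_\lambda(V) \twoheadrightarrow \bS_\lambda(V)/\fn \bS_\lambda(V) = \rH_0(\fn; \bS_\lambda(V))
\]
is surjective. To prove injectivity, I would introduce a ``$B$-weight'' grading on $\bS_\lambda(V)$, where a Cartan weight $\mu = (\mu_1, \dots, \mu_{a+b})$ is assigned $B$-weight $\mu_1 + \cdots + \mu_b$. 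The basis elements of $\fn = B^* \otimes A$ all lower the $B$-weight by exactly $1$, whereas $E_0$ sits entirely in the single $B$-weight $\lambda_1 + \cdots + \lambda_b$, which one verifies (via the elementary identity $\sum_{i=1}^b \lambda_i = \sum_j \min(b,\lambda^\dagger_j)$, obtained by counting boxes in the first $b$ rows of $\lambda$ two ways) is the maximum $B$-weight attained in $\bS_\lambda(V)$. Therefore $\fn \bS_\lambda(V)$ lies in strictly smaller $B$-weight and cannot meet $E_0$, giving injectivity. If $\lambda$ has negative entries, I would reduce to the partition case by tensoring with $(\det V)^N$ for large $N$: since $\fn$ is traceless it acts trivially on $\det V$, so $\rH_0(\fn; -)$ commutes with this twist while the target is adjusted by $(\det B)^N \otimes (\det A)^N$ on the $\fl$-side.

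For the final assertion, complete reducibility of finite-dimensional $\fgl(V)$-modules in characteristic zero allows one to decompose $M$ as a direct sum of various $\bS_\lambda(V)$; since $\rH_0(\fn; -)$ commutes with direct sums and, by the main computation, sends distinct irreducibles to distinct nonzero irreducible $\fl$-modules whose highest weights recover $\lambda$, the isomorphism class of $M$ is determined by the $\fl$-module $\rH_0(\fn; M)$.

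I expect the main step to be the clean verification that $E_0$ fills out the entire top $B$-weight of $\bS_\lambda(V)$, rather than a proper subspace thereof; once that identity is in place, everything reduces to a routine PBW/highest-weight bookkeeping exercise, and the reduction for non-partition $\lambda$ via $\det V$-twisting is cosmetic.
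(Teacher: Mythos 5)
Your argument is correct, and it is exactly the standard parabolic highest-weight argument that the paper has in mind: Proposition~\ref{prop:par-hom-typeA} is stated without proof, and the analogous Proposition~\ref{prop:par-hom} is justified there by a one-line appeal to highest-weight theory plus semisimplicity. Your write-up (PBW surjectivity onto the coinvariants, injectivity via the grading by the central element of $\fgl(B)$ inside the Levi, the $\det V$-twist for non-partition weights, and complete reducibility for the last claim) is a correct fleshed-out version of that same route, so there is nothing to add.
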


\section{Symplectic Littlewood varieties} \label{sec:symp-case}

In this section, we use $\tau_{2k}$ and $\iota_{2k}$ to denote $\tau^\rC_{2k}$ and $\iota^\rC_{2k}$ (see \S\ref{sec:typeC-weyl}).

\subsection{Preliminaries} \label{sec:typeC-prelim}

Let $E$ be a vector space of dimension $n$ and let $V$ be a symplectic space of dimension $2n+2d$ (we assume $d \ge 0$; some of the results below hold even if $d<0$, but we will not need them). Let $A = \Sym(E \otimes V)$. Using the symplectic form, we have a $\GL(E) \times \Sp(V)$-equivariant inclusion
\[
\lw^2(E) \subset \lw^2(E) \otimes \lw^2(V) \subset \Sym^2(E \otimes V).
\]
Let $B = A / (\lw^2(E))$ be the quotient of $A$ by the ideal generated by $\lw^2(E)$.

\begin{example} \label{eg:typeC-coord}
Let $n=3$ and $d=0$, and pick bases for $E$ and $V$. We think of the coordinates in $A$ as the elements in a $3 \times 6$ matrix:
\[
\begin{pmatrix}
x_{1,1} & x_{1,2} & x_{1,3} & x_{1,4} & x_{1,5} & x_{1,6}\\
x_{2,1} & x_{2,2} & x_{2,3} & x_{2,4} & x_{2,5} & x_{2,6}\\
x_{3,1} & x_{3,2} & x_{3,3} & x_{3,4} & x_{3,5} & x_{3,6}
\end{pmatrix}
\]
If the symplectic form on $V$ is given by $\omega_V(v, w) = v_1w_2 - v_2w_1 + v_3w_4 - v_4w_3 + v_5w_6 - v_6w_5$, then $\lw^2(E)$ is spanned by the $3$ equations
\[
\begin{array}{l}
x_{1,1}x_{2,2} - x_{1,2}x_{2,1} + x_{1,3}x_{2,4} - x_{1,4}x_{2,3} + x_{1,5}x_{2,6} - x_{1,6}x_{2,5},\\
x_{1,1}x_{3,2} - x_{1,2}x_{3,1} + x_{1,3}x_{3,4} - x_{1,4}x_{3,3} + x_{1,5}x_{3,6} - x_{1,6}x_{3,5},\\
x_{2,1}x_{3,2} - x_{2,2}x_{3,1} + x_{2,3}x_{3,4} - x_{2,4}x_{3,3} + x_{2,5}x_{3,6} - x_{2,6}x_{3,5}. 
\end{array} \qedhere
\]
\end{example}

We have $\Spec(A) = \hom(E,V^*)$, the space of linear maps $E \to V^*$. We can identify $\Spec(B) \subset \hom(E,V^*)$ with the set of maps $\phi$ such that the composition $E \xrightarrow{\phi} V^* \cong V \xrightarrow{\phi^*} E^*$ is $0$ (here $V \cong V^*$ is the isomorphism induced by the symplectic form on $V$). Equivalently, $\Spec(B)$ is the subvariety of maps $E \to V^*$ such that the image of $E$ is isotropic, i.e., the symplectic form restricts to the $0$ form on it.

\begin{proposition} \label{prop:typeC-basicfacts}
\begin{compactenum}[\rm (a)]
\item $B$ is an integral domain, i.e., the ideal generated by $\lw^2(E)$ is prime.

\item As a representation of $\GL(E) \times \Sp(V)$, we have
\[
B \approx \bigoplus_{\ell(\lambda) \le n} \bS_\lambda(E) \otimes \bS_{[\lambda]}(V).
\]

\item $\bS_\lambda(E) \otimes \bS_{[\lambda]}(V)$ is in the ideal generated by $\bS_\mu(E) \otimes \bS_{[\mu]}(V)$ if and only if $\lambda \supseteq \mu$.

\item Two closed points in $\Spec(B)$ are in the same $\GL(E) \times \Sp(V)$ orbit if and only if they have the same rank as a matrix. Each orbit closure is a normal variety with rational singularities. In particular, they are Cohen--Macaulay varieties.

\item The ideal $(\lw^2(E))$ is generated by a regular sequence. In particular, $B$ is a complete intersection and is a Koszul algebra.
\end{compactenum}
\end{proposition}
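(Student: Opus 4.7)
The plan is to desingularize $\Spec(B)$ with an incidence variety over the symplectic isotropic Grassmannian and feed everything into the geometric technique of Theorem~\ref{thm:geom-tech}. First I would identify $V^* \cong V$ via the symplectic form and set $X = \IGr(n, V)$ with tautological sequence $0 \to \cR \to V \otimes \cO_X \to \cQ \to 0$. Form the incidence variety
\[
Y = \{(\phi, W) \in \hom(E, V) \times X \mid \phi(E) \subseteq W\},
\]
which is a rank $n^2$ vector bundle over $X$. The projection $p \colon Y \to \hom(E, V) = \Spec(A)$ is proper (since $X$ is) with image inside $\Spec(B)$ (its points are the maps with isotropic image). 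A dimension count
\[
\dim Y = \dim X + n^2 = n(n+2d) - \tbinom{n}{2} + n^2 = \dim A - \tbinom{n}{2}
\]
shows that $p$ is birational onto its irreducible image.

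Next, I would apply Theorem~\ref{thm:geom-tech} with $\eps = E \otimes V \otimes \cO_X$, $\eta = E \otimes \cR^*$, and $\xi = E \otimes \cQ^*$. By Theorem~\ref{thm:cauchy-id} and Theorem~\ref{thm:bott-C}, the higher cohomology of $\Sym(\eta)$ vanishes and
\[
\rH^0(Y, \cO_Y) \approx \bigoplus_{\ell(\lambda) \le n} \bS_\lambda(E) \otimes \bS_{[\lambda]}(V).
\]
The produced resolution $\bF_\bullet$ of $p_*\cO_Y$ has $\bF_0 = A$, so the surjection $A \twoheadrightarrow p_*\cO_Y$ factors through $B$. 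Since $p_*\cO_Y$ is a domain of dimension $\dim A - \binom{n}{2}$, the $\binom{n}{2}$ generators of $\lw^2(E)$ cut out an ideal of maximal possible codimension in the Cohen--Macaulay ring $A$, forcing them to form a regular sequence---this proves (e), and $B$ becomes a quadratic complete intersection, hence Koszul by Proposition~\ref{prop:CI-koszul}. A Jacobian check at a generic rank-$n$ isotropic $\phi$ shows that $B$ is regular there; combining Cohen--Macaulayness with generic reducedness gives that $B$ is reduced, and therefore $B \cong p_*\cO_Y$. This simultaneously establishes (a) and (b).

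For (c), the decomposition in (b) is $\GL(E) \times \Sp(V)$-multigraded, so the ideal generated by $\bS_\mu(E) \otimes \bS_{[\mu]}(V)$ consists of exactly those $\bS_\nu(E) \otimes \bS_{[\nu]}(V)$ that are hit by multiplication from $B$. The Littlewood--Richardson rule (Theorem~\ref{thm:L-R}) applied on the $E$-factor forces any such $\nu$ to satisfy $\nu \supseteq \mu$, while the converse follows by iteratively multiplying by the summands $\bS_{(1^k)}(E) \otimes \bS_{[(1^k)]}(V)$ of $B$ and invoking Pieri to reach every $\nu \supseteq \mu$. For (d), Witt's theorem gives transitivity of $\Sp(V)$ on $r$-dimensional isotropic subspaces of $V$, and $\GL(E)$ is transitive on rank-$r$ surjections $E \twoheadrightarrow W$; together they classify $\GL(E) \times \Sp(V)$-orbits by matrix rank. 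Each orbit closure $\overline{O_r}$ is desingularized by the analogous incidence variety $Y_r \subseteq \hom(E, V) \times \IGr(r, V)$, and rerunning the Cauchy--Borel--Weil argument kills $\rR^{>0}p_{r,*}\cO_{Y_r}$, yielding rational singularities and thereby normality and Cohen--Macaulayness.

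The hard part will be promoting the set-theoretic equality $V(\lw^2(E)) = \overline{p(Y)}$ to the scheme-theoretic identity $B \cong p_*\cO_Y$---that is, showing $B$ is reduced. The codimension count yields a regular sequence and hence Cohen--Macaulayness of $B$, but nilpotents must still be ruled out; generic reducedness via a Jacobian criterion at a point of maximal rank, combined with Cohen--Macaulayness, is the bridge I would rely on.
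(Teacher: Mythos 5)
Your overall strategy (desingularize by the total space $Y$ of $E^*\otimes\cR$ over $\IGr(n,V)$, push forward via Cauchy plus Theorem~\ref{thm:bott-C}, get the regular sequence from the codimension count in the Cohen--Macaulay ring $A$) is essentially the argument behind the source the paper actually cites for (a), (b), (d), (e), namely \cite[Theorem 2.2]{littlewoodcomplexes}; the paper itself only gives the citation, and (c) is outsourced to \cite[Corollary 4.2]{dEP}. The dimension count, the regular sequence deduction, and the Jacobian/Serre argument for reducedness are fine. The genuine gap is the step that ties $B$ to the pushforward: you assert that the complex $\bF_\bullet$ produced by Theorem~\ref{thm:geom-tech} has $\bF_0=A$, i.e.\ that $A\to \rH^0(X;\Sym(\eta))=p_*\cO_Y$ is surjective. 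With your choices, $\xi=E\otimes\cQ^*\cong E\otimes\cR^\perp$, so $\bF_0=\bigoplus_{j\ge 0}\rH^j(X;\lw^j(E\otimes\cR^\perp))\otimes A(-j)$, and the required vanishing $\rH^j(X;\lw^j(E\otimes\cR^\perp))=0$ for $j>0$ is \emph{not} a consequence of Theorem~\ref{thm:bott-C}: that theorem handles $\bS_\lambda(\cR^*)$, whereas $\lw^j(E\otimes\cR^\perp)$ involves exactly the reducible homogeneous bundles in $\cR^\perp$ whose cohomology is the subject of Theorem~\ref{thm:bott-gen} --- a result the paper proves \emph{after}, and using, the present proposition, so invoking anything of that kind here would be circular. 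Moreover, even granting your reducedness argument, the inference ``$B$ reduced, therefore $B\cong p_*\cO_Y$'' is a non sequitur on its own: a reduced (even Cohen--Macaulay) image of a proper birational map can be strictly smaller than $p_*\cO_Y$ when it fails to be normal, so you need either the surjectivity above or normality of the image before you can identify $B$ with $\rH^0(Y,\cO_Y)$ and read off (a) and (b). The same issue recurs in your treatment of (d): vanishing of $\rR^{>0}p_{r,*}\cO_{Y_r}$ is not enough for rational singularities without first knowing $p_{r,*}\cO_{Y_r}=\cO_{\overline{O_r}}$, i.e.\ normality of the orbit closure.

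The gap is repairable inside your framework without any new cohomology computations: after (e) you know $B$ is a complete intersection, hence satisfies $S_2$, and your Jacobian computation actually shows the non-smooth locus of $\Spec(B)$ is the locus of maps of rank $\le n-2$, which has codimension $\ge 2$ in $\Spec(B)$ (the rank $n-1$ locus already has codimension $2d+2$); Serre's criterion then gives that $B$ is normal, hence (being connected) a normal domain, and properness plus birationality of $Y\to\Spec(B)$ gives $p_*\cO_Y=B$ by Zariski's main theorem, which yields (a), (b) and, combined with the higher vanishing, rational singularities in (d). (Alternatively one can compare Hilbert series of $B$, known from the regular sequence, with that of $\bigoplus_\lambda\bS_\lambda(E)\otimes\bS_{[\lambda]}(V)$ via Littlewood's identity, which is the route of \cite{littlewoodcomplexes}.) Two smaller points: your ``if'' direction in (c) is not proved by Pieri --- Pieri decomposes the abstract tensor product, but you must show the multiplication map of the ring $B$ is nonzero on the relevant component, which is exactly the content of the straightening results of \cite{dEP} (or needs an explicit highest-weight/generic-point argument); and your argument for rational singularities, as well as semisimplicity conveniences, implicitly assume $\ch(\bk)=0$, whereas the proposition is stated with $\approx$ so as to hold in general, which is another reason the paper defers to \cite{littlewoodcomplexes}.
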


\begin{proof}
For (a), (b), (d), and (e), see \cite[Theorem 2.2]{littlewoodcomplexes}. (c) follows from the analogous statement for $\Sym(E \otimes V)$ \cite[Corollary 4.2]{dEP}. 
\end{proof}

\begin{remark}
Part (e) of Proposition~\ref{prop:typeC-basicfacts} also holds if $\dim(V) = 2\dim(E) - 2$. Essentially the same proof as for \cite[Theorem 2.2(3)]{littlewoodcomplexes} works.
\end{remark}

Let $\IGr(n+d,V)$ be the isotropic Grassmannian of rank $n+d$ isotropic subspaces of $V$ with tautological subbundle $\cR$. Let $\nu \subseteq (k^d)$ be a partition. Using \eqref{eqn:det-module} in a relative situation, we get a $\Sym(E \otimes \cR^*)$-module $\sM^k_\nu(E, \cR^*)$.

\begin{proposition} \label{prop:cM-vanish}
The higher sheaf cohomology groups of $\sM^k_\nu(E, \cR^*)$ vanish and
\[
\rH^0(\IGr(n+d,V); \sM^k_\nu(E, \cR^*)) \approx \bigoplus_{\ell(\lambda) \le n} \bS_{(k^n) + \lambda}(E) \otimes \bS_{[(k^{n})+\lambda, k-\nu_d, \dots, k-\nu_1]}(V).
\]
\end{proposition}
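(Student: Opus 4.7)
The plan is to realize $\sM^k_\nu(E, \cR^*)$ as a pushforward from a relative Grassmannian over $Y := \IGr(n+d, V)$ and then apply Borel--Weil in two stages. Let $\pi \colon X' := \Gr(d, \cR^*) \to Y$ denote the relative Grassmannian, with tautological sub $\cR'$ (rank $d$) and quotient $\cQ'$ (rank $n$) of $\cR^*$. The relative version of \eqref{eqn:det-module} reads
\[
\sM^k_\nu(E, \cR^*) = \pi_* \cN, \qquad \cN := \Sym(E \otimes \cQ') \otimes (\det E)^k \otimes (\det \cQ')^k \otimes \bS_{(d \times k) \setminus \nu}(\cR').
\]
Applying Proposition~\ref{prop:cVnu-vanish} fiberwise (Remark~\ref{rmk:bw-functorial}) yields $R^j \pi_* \cN = 0$ for $j > 0$, so the Leray spectral sequence collapses to $\rH^i(Y; \sM^k_\nu(E, \cR^*)) \cong \rH^i(X'; \cN)$.

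To compute $\rH^i(X'; \cN)$, I would filter $\Sym(E \otimes \cQ')$ via the Cauchy identity (Theorem~\ref{thm:cauchy-id}). After absorbing the determinantal twists, the associated graded pieces of $\cN$ are
\[
\bS_{(k^n) + \lambda}(E) \otimes \bS_{(k^n) + \lambda}(\cQ') \otimes \bS_{(d \times k) \setminus \nu}(\cR'), \qquad \ell(\lambda) \le n.
\]
Each such piece is pushed down to $Y$ using the dualized (Remark~\ref{rmk:grass-dual}) relative form of Theorem~\ref{thm:bott}. The concatenated weight sequence
\[
(k + \lambda_1, \dots, k + \lambda_n,\; k - \nu_d, \dots, k - \nu_1)
\]
is weakly decreasing since $\lambda_n \ge 0$ and $\nu \subseteq (k^d)$, so the pushforward is $\bS_{((k^n)+\lambda,\; k-\nu_d, \dots, k-\nu_1)}(\cR^*)$ with no higher direct images. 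A second application of Borel--Weil on the isotropic Grassmannian, Theorem~\ref{thm:bott-C}, then gives global sections $\bS_{[(k^n)+\lambda,\;k-\nu_d,\dots,k-\nu_1]}(V)$ and vanishing higher cohomology (the length condition $\le n + d$ is automatic). Long exact sequences applied degree-by-degree to the finite Cauchy filtration then deliver both the vanishing of higher cohomology of $\sM^k_\nu(E, \cR^*)$ and the stated decomposition of $\rH^0$.

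The main obstacle is the combinatorial bookkeeping: verifying at each Borel--Weil step that the assembled weight sequence is weakly decreasing and satisfies the required length bound. Once those inequalities are secured, the two successive pushforwards propagate the filtration cleanly. In positive characteristic the Cauchy decomposition remains only a filtration rather than a splitting, but this affects only the structure of $\rH^0$ up to $\approx$, since the vanishing of higher cohomology is stable under extensions.
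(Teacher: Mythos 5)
Your proposal is correct and takes essentially the same route as the paper: the paper's proof simply invokes the relative form of \eqref{eqn:det-module} to get the filtration of $\sM^k_\nu(E,\cR^*)$ by pieces $\bS_{(k^n)+\lambda}(E)\otimes\bS_{((k^n)+\lambda,\,k-\nu_d,\dots,k-\nu_1)}(\cR^*)$ and then applies Theorem~\ref{thm:bott-C}, which is exactly what your argument does. The two-stage pushforward (Leray collapse via Proposition~\ref{prop:cVnu-vanish} plus the relative Borel--Weil step) that you spell out is just the verification hidden in the paper's phrase ``using \eqref{eqn:det-module} in a relative situation,'' and your remark about the filtration versus splitting in positive characteristic is consistent with the $\approx$ convention.
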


\begin{proof}
By \eqref{eqn:det-module}, we have 
\[
\sM^k_\nu(E,\cR^*) \approx \bigoplus_{\ell(\lambda) \le n} \bS_{(k+\lambda_1, \dots, k+\lambda_n)}(E) \otimes \bS_{(k + \lambda_1, \dots, k + \lambda_n, k - \nu_d, \dots, k- \nu_1)}(\cR^*).
\]
Now the result follows from Theorem~\ref{thm:bott-C}.
\end{proof}

We define 
\begin{equation} \label{eqn:cM-module}
\begin{split}
\cM^k_\nu &= \rH^0(\IGr(n+d,V); \sM^k_\nu(E, \cR^*))\\
&\approx \bigoplus_{\ell(\lambda) \le n} \bS_{(k^n) + \lambda}(E) \otimes \bS_{[((k^n)+\lambda, k-\nu_d, \dots, k-\nu_1)]}(V).
\end{split}
\end{equation}
So $\cM^k_\nu$ is an $A$-module. In fact, the scheme-theoretic image of $\Spec(\Sym(E \otimes \cR^*)) \to \Spec(A)$ is $\Spec(B)$ (this is clear set-theoretically, and then use that both $\Spec(\Sym(E \otimes \cR^*))$ and $\Spec(B)$ are reduced schemes), so $\cM^k_\nu$ is also a $B$-module. 

The following result is the symplectic version of the main result of this paper. A more detailed version of this theorem is contained in Theorem~\ref{thm:main-typeC}, which will be proven in \S\ref{sec:main-result-typeC}.

\begin{theorem} \label{thm:Mnu-linear-res}
Assume $\ch(\bk)=0$. The minimal free resolution of $\cM_\nu^k$ over $B$ is linear.
\end{theorem}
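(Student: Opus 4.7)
The plan is to use Koszul duality (Theorem~\ref{thm:dict}) to translate linearity into a statement about the $\rU(\fg) = B^!$-module structure on Tor, then identify that Tor-module with an irreducible lowest-weight representation of an ambient orthosymplectic Lie superalgebra $\osp$ arising from Howe duality. By Theorem~\ref{thm:dict}(e), the minimal resolution of $\cM^k_\nu$ over $B$ is linear if and only if
\[
T \;:=\; \bigoplus_{i \ge 0} \Tor^B_i(\cM^k_\nu,\bk)^*
\]
is cyclic as a $\rU(\fg)$-module on its lowest-degree piece, i.e., is a single linear strand.

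First I would make the Koszul dual explicit as in Proposition~\ref{prop:CI-koszul}: $\fg_1 = (E\otimes V)^*$ and $\fg_2 = \lw^2(E)^*$, with the bracket on $\fg_1$ induced by the symplectic form on $V$ and the wedge on $E$. The crucial structural observation (emphasized in the introduction) is that $\fg$ is the strictly positive part of a natural $\bZ$-grading on an orthosymplectic Lie superalgebra $\osp$ whose degree-zero part contains $\fsp(V) \oplus \fgl(E)$. Using the Howe duality developed in \S\ref{sec:howe}, I would then produce for each $\nu \subseteq (k^d)$ an infinite-dimensional irreducible lowest-weight $\osp$-module $L(\nu)$ whose $\fgl(E) \times \fsp(V)$-decomposition is term-by-term dual to the decomposition of $\cM^k_\nu$ in \eqref{eqn:cM-module}, with the lowest-weight piece being $\bS_{(k^n)}(E^*) \otimes \bS_{[(k^n, k-\nu_d, \ldots, k-\nu_1)]}(V)$.

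The main step is to identify $T$ with $L(\nu)$ as $\rU(\fg)$-modules. By Proposition~\ref{prop:CI-koszul}(b), $T$ is a finitely generated graded $\rU(\fg)$-module, and a computation using the geometric model of $\cM^k_\nu$ on $\IGr(n+d,V)$ together with Proposition~\ref{prop:par-hom} shows $T_0$ agrees with the lowest-weight piece of $L(\nu)$ as a $\fgl(E) \times \fsp(V)$-module. Building a $\fg$-equivariant map $L(\nu) \to T$ extending this identification on bottom pieces, and using the $\osp$-irreducibility of $L(\nu)$ together with a character match in the full $\fgl(E) \times \fsp(V)$-decomposition, one concludes the map is an isomorphism. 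Since $L(\nu)$ is irreducible as an $\osp$-module of lowest weight, it is cyclic over $\rU(\fg)$ on its lowest-weight piece; transported to $T$, this gives the single-linear-strand conclusion, and Theorem~\ref{thm:dict}(e) then yields linearity of the resolution.

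The hardest part will be the identification $L(\nu) \cong T$ as graded $\rU(\fg)$-modules. Matching $\fgl(E) \times \fsp(V)$-characters is the easy half, but promoting a character coincidence to an isomorphism of $\rU(\fg)$-modules demands comparing the Yoneda-product action on $\ext_B^\bullet(\cM^k_\nu, \bk)$ with the Howe-dual $\osp$-action on $L(\nu)$. I expect the bridge to come from the geometric realization of $\cM^k_\nu$ as a pushforward from $\IGr(n+d,V)$: the resolution produced by the geometric technique (Theorem~\ref{thm:geom-tech}) over the polynomial ring $A$ should let one track the $\fg$-action on the first linear strand explicitly enough to match it with the Howe-duality action in the lowest couple of graded pieces of $L(\nu)$, from which the rest follows by $\osp$-irreducibility.
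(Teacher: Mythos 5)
There is a genuine gap, and it sits exactly at the step you flag as the main one. Your plan hinges on an irreducible Howe-dual module $L(\nu)$ ``whose $\fgl(E)\times\fsp(V)$-decomposition is term-by-term dual to the decomposition of $\cM^k_\nu$ in \eqref{eqn:cM-module}.'' That is not what the Koszul dual module is: $T=\bigoplus_i \Tor^B_i(\cM^k_\nu,\bk)^*$ has graded pieces dual to the \emph{Betti spaces} of $\cM^k_\nu$ over $B$, not to the graded pieces of $\cM^k_\nu$ itself; it is the other Tor, $\Tor_i^{\rU(\fg)}(\cN^k_{\nu^\dagger},\bk)$, that recovers $(\cM^k_\nu)_i^*$ (this is Theorem~\ref{thm:main-typeC}(d)). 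Concretely, in the example of \S\ref{sec:typeC-examples} with $\dim E=4$, $\dim V=8$, the second Betti space of $\cM^1_\emptyset$ involves $\bS_{[1,1,1,1]}(V)$ and $\bS_{[1,1,0,0]}(V)$, while $(\cM^1_\emptyset)_2$ involves $\bS_{[2,2,1,1]}(V)$ and $\bS_{[3,1,1,1]}(V)$; so the character you assign to $L(\nu)$ is not the character of $T$, and the proposed ``character match'' cannot go through as stated. Moreover, even with the correct target (the isotypic component $\cN^k_{\nu^\dagger}$ of the oscillator module), the match is circular: knowing the $\fgl(E)\times\fsp(V)$-character of $T$ is the same as knowing all graded Betti numbers of $\cM^k_\nu$ over $B$, which is the content of the theorem. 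Irreducibility of $L(\nu)$ only gives injectivity of a nonzero map $L(\nu)\to T$; the needed statement is that $T$ is generated over $\rU(\fg)$ by $T_0$ (surjectivity), and nothing in the proposal produces that.

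The paper's proof runs in the opposite direction precisely to avoid this. One starts on the dual side with the Howe-duality irreducible $\cN^k_{\lambda}$ ($\lambda=\nu^\dagger$) and proves directly that it has a \emph{linear free resolution over} $\rU(\fg)$: the inputs are the Kostant-type homology formula for $\rH_\bullet(\ol{\fu};\cN^k_\lambda)$ (Theorem~\ref{thm:super-lwood}, from Cheng--Kwon--Wang), two Hochschild--Serre spectral sequences relating $\fg$, $\fg'$, $\ol{\fu}$ (Propositions~\ref{prop:typeC-linear-pres} and~\ref{prop:ss-chase}), and the combinatorial Lemma~\ref{lem:gamma-bound} bounding the degrees that can occur; this also yields the character bound of Corollary~\ref{cor:Nlambda-upper}. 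Only then does Koszul duality (Theorem~\ref{thm:dict}) enter: $\bL(\cN^k_\lambda)$ is an acyclic linear complex of free $B$-modules resolving a linearly presented module $M$ with prescribed $M_0$ and $M_1$, and $M$ is identified with $\cM^k_\nu$ using the multiplicity-freeness of $M_0\otimes B_1$ (Stembridge) together with the comparison of Corollary~\ref{cor:Nlambda-upper} against \eqref{eqn:cM-module}. Your proposal contains none of this machinery (no computation of $\rU(\fg)$- or $\ol{\fu}$-homology, no degree bound), and the geometric technique over $A$ that you invoke as a bridge controls the resolution over the polynomial ring, not the $\rU(\fg)$-action on the $B$-linear strand; so as written the argument does not close.
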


Let $\tilde{V}$ be a $\bZ/2$-graded space (from now on, superspace) with $\tilde{V}_0 = E \oplus E^*$ and $\tilde{V}_1 = V$. Define a nondegenerate symmetric bilinear form on $E \oplus E^*$ by $\langle (e, \phi), (e', \phi') \rangle = \phi'(e) + \phi(e')$. Then $\tilde{V}$ has a supersymmetric bilinear form by taking the direct sum of the orthogonal form on $E \oplus E^*$ and the symplectic form on $V$. Let $\osp(\tilde{V})$ be the orthosymplectic Lie superalgebra in $\fgl(\tilde{V})$ compatible with this form. See \cite[\S 1.1.3]{chengwang} for more details on supersymmetric forms and orthosymplectic Lie superalgebras. We have a $\bZ$-grading on $\osp(\tilde{V})$ supported on $[-2,2]$ (we use the symbol $\oplus$ to separate the different pieces of this grading):
\[
\lw^2(E) \oplus (E \otimes V) \oplus (\fgl(E) \times \fsp(V)) \oplus (E^* \otimes V^*) \oplus \lw^2(E^*).
\]
Let $\fg$ be the positive part of this grading, i.e.,
\[
\fg = (E^* \otimes V^*) \oplus \lw^2(E^*).
\]
Then $\lw^2(E^*)$ is central in $\fg$. For $e \otimes v, e' \otimes v' \in E^* \otimes V^*$, we have $[e \otimes v, e' \otimes v'] = \omega_V(v,v') e \wedge e'$ where $\omega_V$ is the symplectic form on $V^* \cong V$. So Proposition~\ref{prop:CI-koszul} implies the following:

\begin{proposition}  
The Koszul dual of $B$ is the universal enveloping algebra $\rU(\fg)$.
\end{proposition}

Let $F \subset V$ be a maximal isotropic subspace and write $V = F \oplus F^*$. Let $\ol{\fu} = \lw^2(E^*|F) \subset \osp(\tilde{V})$. It inherits a $\bZ$-grading: 
\[
\ol{\fu}_0 = \Sym^2(F),  \quad \ol{\fu}_1 = E^* \otimes F,  \quad \ol{\fu}_2 = \lw^2(E^*).
\]
Let $\fg' \subset \osp(\tilde{V})$ be the subalgebra generated by $\ol{\fu}$ and $\fg$. Then it also has a $\bZ$-grading:
\[
\fg'_0 = \Sym^2(F), \quad \fg'_1 = E^* \otimes V^*,  \quad \fg'_2 = \lw^2(E^*).
\]

\begin{lemma} \label{lem:lie-exact}
We have the following two exact sequences of Lie superalgebras
\begin{subeqns}
\begin{align}
0 \to \fg \to \fg' \to \Sym^2(F) \to 0, \label{eqn:ext-1}\\
0 \to \ol{\fu} \to \fg' \to E^* \otimes F^* \to 0 \label{eqn:ext-2}.
\end{align}
\end{subeqns}
\end{lemma}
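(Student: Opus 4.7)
My plan is to verify both exact sequences by direct bracket computations in $\osp(\tilde V)$, using the $\bZ$-grading in which $E$, $V$, $E^*$ lie in degrees $-1$, $0$, $+1$. Under this grading $\fg = \fg'_1 \oplus \fg'_2$ is the positive part of $\fg'$.

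For the first sequence \eqref{eqn:ext-1}: because $\osp(\tilde V)$ is concentrated in degrees $[-2,2]$ and $\fg$ is the positive part of $\fg'$, the containment $[\fg'_0, \fg'_i] \subset \fg'_i$ for $i=1,2$ is automatic from the grading and shows $\fg$ is an ideal. The quotient is $\fg'_0 = \Sym^2(F)$ as a vector space, and the induced bracket is trivial because $\Sym^2(F) \subset \fsp(V) \cong \Sym^2(V)$ sits as the ``upper-right block'' of matrices squaring to zero, a consequence of the isotropy of $F$.

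For the second sequence \eqref{eqn:ext-2}: using the symplectic identification $V \cong V^*$, write $V^* = F \oplus F^*$, so that $\fg'_1 = (E^* \otimes F) \oplus (E^* \otimes F^*)$ and $\ol{\fu}$ has vector-space complement $E^* \otimes F^*$ in $\fg'$. To check that $\ol{\fu}$ is an ideal, I verify $[\fg', \ol{\fu}] \subset \ol{\fu}$ degree by degree. The two non-trivial checks — $[\fg'_0, \ol{\fu}_1]$ and $[\fg'_1, \ol{\fu}_0]$ — both reduce to the assertion that $\Sym^2(F) \subset \fsp(V^*)$, viewed as endomorphisms of $V^* = F \oplus F^*$, kills $F$ and sends $F^*$ into $F$, so its image lies in $F$; this follows again from the upper-right-block description together with isotropy. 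The bracket $[\fg'_1, \ol{\fu}_1]$ lies in $\lw^2(E^*) = \ol{\fu}_2$ by the formula $[e \otimes v, e' \otimes v'] = \omega_V(v,v')\, e \wedge e'$ already recorded in the text (the $\fso(E \oplus E^*)$-part vanishes because the bilinear form on $E \oplus E^*$ restricts to zero on $E^* \otimes E^*$); all other brackets vanish by degree. Finally, the induced bracket on $\fg'/\ol{\fu} = E^* \otimes F^*$ is trivial because any bracket of two elements of $\fg'_1$ lands in $\lw^2(E^*) \subset \ol{\fu}$, so the quotient is an abelian Lie superalgebra.

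The main obstacle is keeping careful track of the symplectic identification $V \cong V^*$ together with the Lagrangian decompositions $V = F \oplus F^*$ and $V^* = F \oplus F^*$ on both sides, but once conventions are fixed the verifications are mechanical.
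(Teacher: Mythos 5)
Your proposal is correct and follows essentially the same route as the paper: the $\bZ$-grading makes $\fg$ an ideal of $\fg'$ for \eqref{eqn:ext-1}, and for \eqref{eqn:ext-2} the only non-automatic point is exactly your observation that $\Sym^2(F)\subset\fsp(V)$ kills $F$ and maps $F^*$ into $F$, i.e.\ $[\Sym^2(F),E^*\otimes F^*]=E^*\otimes F\subset\ol{\fu}$, which is the identity the paper's proof records. The additional verifications you spell out (the bracket $[\fg'_1,\ol{\fu}_1]\subset\lw^2(E^*)$ and the abelian quotients) are fine but already forced by the grading and the stated bracket formula.
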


\begin{proof}
For \eqref{eqn:ext-1}, it follows from the $\bZ$-grading that $\fg$ is an ideal in $\fg'$. The same reasoning almost works for \eqref{eqn:ext-2}, but we also have to note that $[\Sym^2(F), E^* \otimes F^*] = E^* \otimes F$.
\end{proof}

\subsection{Howe duality} \label{sec:howe}

In this section, we assume that $\ch(\bk)=0$.

Let $U$ be a $2k$-dimensional symplectic space. Then $U \otimes \tilde{V}$ is a superspace with even part $U \otimes (E \oplus E^*)$ and odd part $U \otimes V$ and has a super skew-symmetric bilinear form by taking the tensor product of the forms on $U$ and $\tilde{V}$. Let $\spo(U \otimes \tilde{V})$ be the associated orthosymplectic Lie superalgebra (we use $\spo$ instead of $\osp$ since our form is super skew-symmetric).

Let $E^*|F$ denote the superspace with even part $E^*$ and odd part $F$. Then $U \otimes (E^*|F)$ is a maximal isotropic subspace of $U \otimes \tilde{V}$ and hence we get an oscillator representation
\[
\sU = \Sym(U \otimes (E^*|F))
\]
of $\spo(U \otimes \tilde{V})$. Both $\fsp(U)$ and $\osp(\tilde{V})$ are subalgebras of $\spo(U \otimes \tilde{V})$ which commute with one another, so $\fsp(U) \times \osp(\tilde{V})$ acts on $\sU$. As a representation of $\fsp(U)$, $\sU$ is a direct sum of finite-dimensional representations, so we can use the group $\Sp(U)$ instead. 

For a partition $\lambda$ with $\ell(\lambda) \le k$ and $\lambda_{n+1} \le n+d$, let $\cN^k_\lambda$ denote the $\bS_{[\lambda]}(U)$-isotypic component of $\sU$ under the action of $\Sp(U)$. We need some of the following facts about the action of $\Sp(U) \times \osp(\tilde{V})$, which state that they form a Howe dual pair (we will not use unitarizability, and hence do not define it; we state it only for completeness):

\begin{theorem}
For a partition $\lambda$ with $\ell(\lambda) \le k$ and $\lambda_{n+1} \le n+d$, $\cN^k_\lambda$ is an irreducible, unitarizable, lowest-weight representation of $\osp(\tilde{V})$. We have $\cN^k_\lambda \cong \cN^k_{\lambda'}$ if and only if $\lambda = \lambda'$. As a representation of $\Sp(U) \times \osp(\tilde{V})$, we have a direct sum decomposition 
\[
\sU = \Sym(U \otimes (E^*|F)) = \bigoplus_{\substack{\ell(\lambda) \le k\\ \lambda_{n+1} \le n+d}} \bS_{[\lambda]}(U) \otimes \cN^k_\lambda.
\]
\end{theorem}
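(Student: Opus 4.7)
The plan is to obtain this as an instance of super Howe duality for the reductive dual pair $(\fsp(U), \osp(\tilde V))$ inside $\spo(U \otimes \tilde V)$, following the template of the classical symplectic--orthogonal duality.

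First I would verify the mutual centralizer property: that the two subalgebras $\fsp(U)$ and $\osp(\tilde V)$ generate each other's full centralizers inside $\spo(U \otimes \tilde V)$. This can be checked directly using the description of $\spo(U \otimes \tilde V)$ as a quotient of the super symmetric square of $U \otimes \tilde V$, combined with invariant theory for $\Sp(U)$ acting on $U$ and for $\osp(\tilde V)$ acting on the two halves of $\tilde V$.

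Next I would decompose $\sU$ under the enlarged (but smaller-centralizer) pair $\fgl(U) \times \fgl(E^*|F) \subset \spo(U \otimes \tilde V)$. The super Cauchy identity gives
\[
\sU = \Sym(U \otimes (E^* | F)) = \bigoplus_\mu \bS_\mu(U) \otimes \bS_\mu(E^*|F),
\]
where $\bS_\mu(E^*|F) \ne 0$ precisely when $\mu_{n+1} \le n+d$. To bridge this to the $\Sp(U) \times \osp(\tilde V)$ decomposition, I would exploit the seesaw with $(\Sp(U), \osp(\tilde V))$ on one axis and $(\GL(U), \fgl(E^*|F))$ on the other. The branching $\GL(U) \downarrow \Sp(U)$, governed by Littlewood's formula, reorganizes each $\bS_\mu(U)$ into pieces $\bS_{[\lambda]}(U)$ with $\ell(\lambda) \le k$; the seesaw then identifies the $\bS_{[\lambda]}(U)$-isotypic component of $\sU$ with a single $\osp(\tilde V)$-module $\cN^k_\lambda$, nonzero exactly when the hook condition $\lambda_{n+1} \le n+d$ holds.

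Finally I would establish irreducibility, pairwise non-isomorphism, and the lowest-weight property. Irreducibility of each $\cN^k_\lambda$ follows from the double commutant theorem once the dual pair property is in place. Non-isomorphism between distinct $\lambda$ is forced since $\Sp(U)$ already distinguishes them. For the lowest-weight claim, I would use the $\bZ$-grading on $\sU$ by polynomial degree together with the parabolic of $\osp(\tilde V)$ determined by the maximal isotropic $E^*|F$: the vacuum $1 \in \sU_0$ is a lowest-weight vector for the entire $\spo(U \otimes \tilde V)$-action, and its $\Sp(U)$-isotypic projections supply the unique (up to scalar) lowest-weight vector in each $\cN^k_\lambda$. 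The main obstacle is the dual pair verification; in the super setting the sign conventions make the centralizer computation more delicate than in the purely even case, and one also needs the super analogue of the first fundamental theorem of invariant theory to certify that nothing beyond $\fsp(U)$ and $\osp(\tilde V)$ appears. Once that is in hand, the rest is a formal consequence of the double commutant theorem combined with super Cauchy and seesaw branching.
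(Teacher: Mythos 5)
The paper does not prove this statement itself: it simply cites Cheng--Wang \cite[\S\S 5.3.1, 5.3.2]{chengwang}, and your outline is essentially the standard Howe-duality argument carried out there (semisimplicity of the $\Sp(U)$-action, a first-fundamental-theorem/double-commutant step, and an identification of the constituents consistent with the super Cauchy decomposition). So the overall strategy is the right one and matches the cited source, but two steps as you have written them would not go through.

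First, the commutant statement you propose to verify is not the one the argument needs. Knowing that $\fsp(U)$ and $\osp(\tilde V)$ are mutual centralizers \emph{inside the Lie superalgebra} $\spo(U\otimes\tilde V)$ does not by itself control the isotypic decomposition of $\sU$; what is needed is that the $\Sp(U)$-invariants in the Weyl--Clifford algebra of operators acting on $\sU$ are generated by the image of $\rU(\osp(\tilde V))$ (together with scalars). That is exactly where the super FFT you mention enters, so the ingredient is present in your sketch, but the ``mutual centralizer inside $\spo(U\otimes\tilde V)$'' formulation should be replaced by the operator-algebra statement, since only the latter, combined with semisimplicity of the $\Sp(U)$-action, makes each isotypic component irreducible over $\osp(\tilde V)$. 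Relatedly, the Littlewood branching rule $\GL(U)\downarrow\Sp(U)$ is valid only in the stable range $\ell(\mu)\le k$, whereas the Cauchy decomposition of $\sU$ contains $\bS_\mu(U)$ with $k<\ell(\mu)\le 2k$; pinning down the exact index set $\{\lambda:\ell(\lambda)\le k,\ \lambda_{n+1}\le n+d\}$ this way requires the modification rules (or, as in Cheng--Wang, an explicit determination of the joint highest weight vectors), not the stable Littlewood formula alone.

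Second, the lowest-weight argument via the vacuum fails as stated: the vector $1\in\sU_0$ spans a trivial $\Sp(U)$-type, so its projection onto the $\bS_{[\lambda]}(U)$-isotypic component is zero for every $\lambda\ne\emptyset$, and hence these projections cannot supply lowest-weight vectors for the $\cN^k_\lambda$. The correct route is either to exhibit explicit joint (highest weight for $\Sp(U)$, lowest weight for $\osp(\tilde V)$) vectors in higher polynomial degree, or to argue structurally: $\cN^k_\lambda$ is graded with finite-dimensional pieces and grading bounded below, its minimal-degree piece is annihilated by the degree-lowering part of $\osp(\tilde V)$ and is a $\fgl(E)\times\fsp(V)$-module, so an extreme weight vector there is a lowest-weight vector, and irreducibility then makes $\cN^k_\lambda$ a lowest-weight module. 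With these two repairs the outline becomes the argument of the cited reference.
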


\begin{proof} 
See \cite[\S\S 5.3.1, 5.3.2]{chengwang}. 
\end{proof}

\begin{theorem} \label{thm:super-lwood}
We have a $\fgl(E) \times \fgl(F)$-equivariant isomorphism
\[
\rH_i(\ol{\fu}; \cN^k_\lambda) = \Tor_i^{\rU(\ol{\fu})}(\cN^k_\lambda, \bk) \cong \bigoplus_{\substack{\alpha\\ \tau_{2k}(\alpha) = \lambda \\ \iota_{2k}(\alpha) = i}} \bS_\alpha(E^*|F) \otimes (\det E^*)^k \otimes (\det F^*)^k.
\]
\end{theorem}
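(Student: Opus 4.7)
The plan is to compute $\rH_*(\ol{\fu};\sU)$ for the full oscillator representation and extract the $\cN^k_\lambda$-isotypic piece by Howe duality. Fix the polarization $L = U\otimes(E^*|F)$ of $U\otimes\tilde{V}$. I would first verify that, under $\osp(\tilde{V})\hookrightarrow\spo(U\otimes\tilde{V})$, the subalgebra $\ol{\fu}$ lands inside the creation part $\spo(U\otimes\tilde{V})_{+2}\cong \Sym^2_{\rm super}(L)$ via tensoring with the invariant bivector $\omega_U^{-1}\in\lw^2(U)$. A short bracket computation---each of the three summands $\lw^2(E^*)$, $E^*\otimes F$, $\Sym^2(F)$ brackets trivially against the others, using that $F\subset V$ is isotropic---shows that $\ol{\fu}$ is abelian, so $\rU(\ol{\fu})\cong\Sym_{\rm super}(\ol{\fu})$ and $\ol{\fu}$ acts on $\sU=\Sym(L)$ by multiplication by the super-commuting elements $\omega_U^{-1}\otimes \ol{\fu}\subset\sU$. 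In particular $\rH_*(\ol{\fu};\sU)$ is just the Koszul homology of this super-sequence in $\sU$.

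Because $\Sp(U)$ commutes with the $\ol{\fu}$-action, Howe duality gives $\rH_i(\ol{\fu};\sU)=\bigoplus_\mu \bS_{[\mu]}(U)\otimes \rH_i(\ol{\fu};\cN^k_\mu)$, so the theorem is equivalent to the identity
\[
\rH_i(\ol{\fu};\sU)\;=\;\bigoplus_{\alpha:\,\iota_{2k}(\alpha)=i} \bS_{[\tau_{2k}(\alpha)]}(U)\otimes \bS_\alpha(E^*|F)\otimes (\det E^*)^k\otimes(\det F^*)^k
\]
of $\Sp(U)\times\fgl(E)\times\fgl(F)$-modules. On the right, the super Cauchy decomposition $\sU=\bigoplus_\alpha \bS_\alpha(U)\otimes \bS_\alpha(E^*|F)$ (nonzero iff $\alpha_{n+1}\le n+d$) is available; computing the Euler characteristic of the super Koszul complex and applying the symplectic modification rule $\bS_\alpha(U)\rightsquigarrow (-1)^{\iota_{2k}(\alpha)}\bS_{[\tau_{2k}(\alpha)]}(U)$ (to pass from $\GL(U)$- to $\Sp(U)$-characters) should give the signed Euler-characteristic version of the above display, with the twist $(\det E^*)^k\otimes(\det F^*)^k$ appearing as the $\rho$-shift for the parabolic decomposition $\osp(\tilde{V})=\ol{\fu}\oplus\fgl(E^*|F)\oplus\ol{\fu}^{*}$ with Levi $\fgl(E)\times\fgl(F)$.

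The main obstacle is promoting this Euler-characteristic identity to a degree-by-degree statement: for each $\alpha$ the summand indexed by $\alpha$ must contribute to homological degree exactly $\iota_{2k}(\alpha)$, a Bott-type concentration. My preferred route is to realize $\rH_*(\ol{\fu};\sU)$ geometrically as higher direct images of an equivariant vector bundle on the isotropic Grassmannian $\IGr(k,U)$ (or a suitable symplectic partial flag variety for $U$) and invoke Theorem~\ref{thm:bott-C}: the modification rule $(\tau_{2k},\iota_{2k})$ is precisely the type-$C$ Bott shuffle pinning down in which cohomological degree each weight survives. An alternative would be a super analog of Kostant's theorem for the parabolic $\fgl(E^*|F)\oplus\ol{\fu}^{*}\subset\osp(\tilde{V})$, reducing the computation to a Weyl-group orbit that encodes the same combinatorics; the price is that the super Kostant theorem is more delicate to invoke than the direct Borel--Weil--Bott input from the classical symplectic Grassmannian.
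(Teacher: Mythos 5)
Your reduction is set up correctly: $\ol{\fu}=\lw^2(E^*|F)$ is indeed abelian (it is the super exterior square of an isotropic subspace of $\tilde{V}$), under $\osp(\tilde{V})\subset\spo(U\otimes\tilde{V})$ it lands in $\Sym^2_{\rm super}(U\otimes(E^*|F))$ and acts on $\sU$ by multiplication, so $\rH_*(\ol{\fu};\sU)$ is Koszul homology; and since $\Sp(U)$ acts semisimply and commutes with $\ol{\fu}$, the computation splits over the isotypic pieces $\cN^k_\lambda$, with the Euler characteristic determined by the super Cauchy identity and the Koike--Terada specialization rule. But the whole content of the theorem is the step you defer, the concentration of the $\alpha$-summand in homological degree exactly $\iota_{2k}(\alpha)$, and the mechanism you propose for it does not hold up. Theorem~\ref{thm:bott-C} is only the dominant-weight statement (vanishing of higher cohomology of $\bS_\lambda(\cR^*)$ for $\lambda$ a partition); it contains no Bott algorithm for non-dominant weights, so it cannot produce the shuffle $(\tau_{2k},\iota_{2k})$. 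More seriously, you never construct the asserted realization of $\rH_*(\ol{\fu};\sU)$ as higher direct images of an equivariant bundle on $\IGr(k,U)$ or a symplectic flag variety of $U$, and it is far from clear that one exists: what is needed here is $\Tor$ over $\rU(\ol{\fu})=\Sym_{\rm super}(\lw^2(E^*|F))$ of the module $\sU$, and the known geometric arguments of this type in the purely even case (resolving the modules supported on rank $\le 2k$ skew forms, as in \cite[\S 3.5]{lwood}) use Grassmannians of $E^*$, not of $U$; for the super object $E^*|F$ there is no classical variety to play that role, which is exactly why the paper does not argue geometrically.

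Your fallback, a ``super analogue of Kostant's theorem'' for the parabolic with Levi $\fgl(E^*|F)$, is not an alternative to be developed: it is literally the result the paper cites, \cite[Theorem 5.7]{CKW}, whose invocation (or, alternatively, rerunning the combinatorial argument of \cite[Corollary 3.16]{lwood} with $E$ replaced by the object $E^*|F$ of a semisimple tensor category, plus the determinant twist coming from the ambient $\osp(\tilde V)$-action) constitutes the paper's entire proof. So as written your argument either has a genuine gap at the decisive step or collapses to the citation already made. To make the geometric route honest you would need (i) an explicit construction identifying the Koszul homology of $\sU$ with sheaf cohomology of concrete bundles on a flag variety of $U$, and (ii) a full Borel--Weil--Bott theorem for non-dominant weights there; neither is supplied by your sketch or by the results quoted in the paper, and note also that an Euler-characteristic identity alone cannot be promoted to the graded isomorphism, since cancelling pairs in adjacent homological degrees are invisible to it.
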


\begin{proof}
This follows from \cite[Theorem 5.7]{CKW}. Their notation does not match ours, so alternatively one can use \cite[\S 3]{lwood} by noting that \cite[Corollary 3.16]{lwood} is valid whenever $E$ is an object in a semisimple monoidal Abelian category. In particular, we can take $E$ to be $E^*|F$ in the category of polynomial representations of $\fgl(E^*|F)$. We note that the twist by $(\det E^*)^k \otimes (\det F^*)^k$ is not present in \cite{lwood} because it did not consider the action of $\ol{\fu}$ as being restricted from a larger Lie (super)algebra which forces the twist present here.
\end{proof}

\begin{proposition} \label{prop:typeC-linear-pres}
Let $\lambda$ be a partition with $\ell(\lambda) \le k$ and $\lambda_{n+1} \le n+d$. Set $\nu = (\lambda_1^\dagger, \dots, \lambda_{n+d}^\dagger)$ and $\mu = (\lambda_{n+d+1}^\dagger, \lambda_{n+d+2}^\dagger, \dots)$. Then
\begin{align*}
\rH_0(\fg; \cN^k_\lambda) &= (\det E^*)^k \otimes \bS_{\mu^\dagger}(E^*) \otimes \bS_{[((n+d) \times k) \setminus \nu]}(V) \\
\rH_1(\fg; \cN^k_\lambda) &= (\det E^*)^k \otimes \bS_{(1 + \nu_{n+d}, \mu_2, \mu_3, \dots)^\dagger}(E^*) \otimes \bS_{[(k-\mu_1 + 1, k - \nu_{n+d-1}, \dots, k - \nu_1)]}(V).
\end{align*}
In particular, as a $\rU(\fg)$-module, $\cN^k_\lambda$ is generated in a single degree and has relations of degree $\lambda_{n+d}^\dagger - \lambda_{n+d+1}^\dagger + 1$. 
\end{proposition}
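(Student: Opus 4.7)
The plan is to use the two short exact sequences of Lie superalgebras in Lemma~\ref{lem:lie-exact} to connect $\rH_*(\fg;\cN^k_\lambda)$ to the known $\rH_*(\ol{\fu};\cN^k_\lambda)$ of Theorem~\ref{thm:super-lwood}, via the intermediate algebra $\fg'$. First I would run the Hochschild--Serre spectral sequence \eqref{eqn:HS-SS} for \eqref{eqn:ext-2} to compute $\rH_*(\fg';\cN^k_\lambda)$. The key point is that $E^*\otimes F^*=\fg'/\ol{\fu}$ is the odd off-diagonal block of the Levi $\fgl(E^*|F)\subset\osp(\tilde V)$, and this Levi normalizes the nilradical $\ol{\fu}=\lw^2(E^*|F)$; therefore the induced action of $E^*\otimes F^*$ on $\rH_q(\ol{\fu};\cN^k_\lambda)\cong\bigoplus_\alpha\bS_\alpha(E^*|F)\otimes(\det E^*)^k\otimes(\det F^*)^k$ coincides with the restriction of the natural $\fgl(E^*|F)$-action. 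Proposition~\ref{prop:schur-hom}, applied with the substitution taking $(E,F)$ in its statement to $(E^*,F)$ here, then computes the $\rE^2$-page in the bidegrees of interest, with $\fgl(E)$-equivariance and representation-theoretic considerations ruling out the relevant higher-page differentials.

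Second, I would run Hochschild--Serre on \eqref{eqn:ext-1} to extract $\rH_*(\fg;\cN^k_\lambda)$ from $\rH_*(\fg';\cN^k_\lambda)$. Because $\cN^k_\lambda$ is $\bZ$-graded (via its embedding in $\sU=\Sym(U\otimes(E^*|F))$) with $\fg$ in positive degrees, every $\rH_i(\fg;\cN^k_\lambda)$ is a finite-dimensional $\fgl(E)\times\fsp(V)$-module. Proposition~\ref{prop:par-hom} then recovers its $\fsp(V)$-structure from the $\fgl(F)$-module of $\Sym^2(F)$-coinvariants. For $i=0$ we have $\rE^2_{0,0}=\rE^\infty_{0,0}=\rH_0(\fg';\cN^k_\lambda)=\rH_0(\Sym^2(F);\rH_0(\fg;\cN^k_\lambda))$; unpacking via Proposition~\ref{prop:schur-hom} applied to $\bS_\lambda(E^*|F)$ and the identity $\bS_\nu(F)\otimes(\det F^*)^k=\bS_{((n+d)\times k)\setminus\nu}(F^*)$, Proposition~\ref{prop:par-hom} then yields the stated formula for $\rH_0(\fg;\cN^k_\lambda)$.

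For $\rH_1$, the combinatorial constraints $\iota_{2k}(\alpha)=1$, $c(R_\alpha)=1$, and $\tau_{2k}(\alpha)=\lambda$ in Theorem~\ref{thm:super-lwood} pin down a unique $\alpha$, namely $\lambda$ with a single vertical column of $1$'s of length $2(k-\ell(\lambda)+1)$ appended below; Proposition~\ref{prop:schur-hom} then computes $\rH_1(\fg';\cN^k_\lambda)$. In the second Hochschild--Serre spectral sequence one must then isolate $\rE^\infty_{0,1}=\rH_0(\Sym^2(F);\rH_1(\fg;\cN^k_\lambda))$ from the additional contribution $\rE^\infty_{1,0}\subseteq\rH_1(\Sym^2(F);\rH_0(\fg;\cN^k_\lambda))$, computable via Kostant's theorem applied to the parabolic of $\fsp(V)$ with Levi $\fgl(F)$; subtracting this contribution and reconstructing via Proposition~\ref{prop:par-hom} delivers the claimed $\rH_1(\fg;\cN^k_\lambda)$. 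The generation in a single $\bZ$-degree and the degree $\lambda^\dagger_{n+d}-\lambda^\dagger_{n+d+1}+1$ of relations then follow from the $\bZ$-grading on $\fg$ (with $\fg_1$ in degree $1$ and $\fg_2$ in degree $2$): both $\rH_0$ and $\rH_1$ are concentrated in single $\bZ$-degrees, and the shift between them equals the border-strip length from Theorem~\ref{thm:super-lwood}. The main obstacle is the careful spectral-sequence bookkeeping and Kostant subtraction for $\rH_1$; $\fgl(E)$-equivariance and the fact that each homology group contains at most a single irreducible $\fgl(E)\times\fsp(V)$-component are the key levers for executing it.
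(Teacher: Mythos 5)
Your plan is essentially the paper's proof: the same two Hochschild--Serre spectral sequences through $\fg'$ for \eqref{eqn:ext-1} and \eqref{eqn:ext-2}, with input from Theorem~\ref{thm:super-lwood} (including the same unique $\alpha=(\lambda,1^{2k+2-2\ell(\lambda)})$ in homological degree one), Proposition~\ref{prop:schur-hom}, and reconstruction via Proposition~\ref{prop:par-hom}. The one divergence is in isolating $\rH_1(\fg;\cN^k_\lambda)$: you propose to compute $\rE^\infty_{1,0}$ by Kostant's theorem and subtract, whereas the paper avoids any such computation by noting that $\rH_0(\fg;\cN^k_\lambda)$ is $\fgl(E)$-isotypic of type $\bS_{(k^n)+\mu^\dagger}(E^*)$ while $\rH_1(\fg;\cN^k_\lambda)\subset\rH_0(\fg;\cN^k_\lambda)\otimes\rU(\fg)_{>0}$ contains no such component, so one simply discards that isotypic part on both sides; this observation is also exactly what makes the differential $\rd^2\colon\rE^2_{2,0}\to\rE^2_{0,1}$ vanish in the sequence for \eqref{eqn:ext-1}, a point your proposal leaves to unspecified ``representation-theoretic considerations'' (and note the intended lever is this isotypicity statement, not the a priori unknown irreducibility of the homology groups), and it likewise lets one ignore the isotypic term $\rE^2_{0,1}$ in the sequence for \eqref{eqn:ext-2} rather than having to control its differentials. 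With that equivariance argument made explicit, your Kostant-subtraction variant would also go through, but it does strictly more work than the paper's component-matching.
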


\begin{subeqns}
\begin{proof}
Consider $\cN^k_\lambda$ as a $\fg'$-module. We will analyze the Hochschild--Serre spectral sequence \eqref{eqn:HS-SS} with respect to the exact sequences \eqref{eqn:ext-1} and \eqref{eqn:ext-2}. Everything is equivariant with respect to $\fgl(E) \times \fgl(F)$ so we will keep track of this symmetry.

For $\rH_0$, the Hochschild--Serre spectral sequence gives us the following identity:
\begin{align*}
\rH_0(\Sym^2(F); \rH_0(\fg; \cN^k_\lambda)) \cong \rH_0(\fg'; \cN^k_\lambda) \cong \rH_0(E^* \otimes F^*; \rH_0(\ol{\fu}; \cN^k_\lambda)).
\end{align*}
From Theorem~\ref{thm:super-lwood}, 
\begin{align} \label{eqn:Nlambda-H0u}
\rH_0(\ol{\fu}; \cN^k_\lambda) = \bS_\lambda(E^*|F) \otimes (\det E^*)^k \otimes (\det F^*)^k.
\end{align}
Proposition~\ref{prop:schur-hom} gives
\[
\rH_0(E^* \otimes F^*; \bS_\lambda(E^*|F)) = \bS_{\nu}(F) \otimes \bS_{\mu^\dagger}(E^*).
\]
Hence
\[
\rH_0(\Sym^2(F); \rH_0(\fg; \cN^k_\lambda)) = \bS_{(k^n) + \mu^\dagger}(E^*) \otimes \bS_{((n+d) \times k) \setminus \nu}(F^*),
\]
so by Proposition~\ref{prop:par-hom}, we have 
\begin{align} \label{eqn:Nlambda-H0}
\rH_0(\fg; \cN^k_\lambda) = \bS_{(k^n) + \mu^\dagger}(E^*) \otimes \bS_{[((n+d) \times k) \setminus \nu]}(V).
\end{align}

Now we calculate $\rH_1$. First consider the Hochschild--Serre spectral sequence for \eqref{eqn:ext-1}. As a $\fgl(E) \times \fgl(F)$-representation, $\rH_1(\fg'; \cN^k_\lambda)$ is the direct sum of $\rE^2_{1,0}$ and the cokernel of $\rd^2 \colon \rE^2_{2,0} \to \rE^2_{0,1}$. We claim that $\rd^2 = 0$. First, $\rE^2_{2,0} = \rH_2(\Sym^2(F); \rH_0(\fg; \cN^k_\lambda))$ and $\rE^2_{0,1} = \rH_0(\Sym^2(F); \rH_1(\fg; \cN^k_\lambda))$. The action of the Lie algebra $\Sym^2(F)$ commutes with the action of $\fgl(E)$. From \eqref{eqn:Nlambda-H0}, $\rH_0(\fg; \cN^k_\lambda)$ is $\fgl(E)$-isotypic, i.e., is a direct sum of copies of the same representation $\bS_{(k^n)+\mu^\dagger}(E^*)$. Since $\rH_1(\fg; \cN^k_\lambda) \subset \rH_0(\fg; \cN^k_\lambda) \otimes \rU(\fg)_{>0}$, and $\rU(\fg)_{>0}$ does not contain any $\fgl(E)$-invariants, we conclude that the $\bS_{(k^n)+\mu^\dagger}(E^*)$-isotypic component of $\rH_1(\fg; \cN^k_\lambda)$ is $0$. Hence $\rd^2 = 0$. So we get
\begin{align*}
\rH_1(\fg'; \cN^k_\lambda) \cong \rH_0(\Sym^2(F); \rH_1(\fg; \cN^k_\lambda)) \oplus \rH_1(\Sym^2(F); \rH_0(\fg; \cN^k_\lambda)).
\end{align*}
So, to calculate $\rH_1(\fg; \cN^k_\lambda)$, we first remove the $\bS_{(k^n)+\mu^\dagger}(E^*)$-isotypic component from $\rH_1(\fg'; \cN^k_\lambda)$.

Now consider the Hochschild--Serre spectral sequence of \eqref{eqn:ext-2}. By Theorem~\ref{thm:super-lwood}, 
\[
\rH_1(\ol{\fu}; \cN^k_\lambda) = \bS_{(\lambda, 1^{2k+2-2\ell(\lambda)})}(E^*|F) \otimes (\det E^*)^k \otimes (\det F^*)^k.
\]
Setting $\nu' = (2k+2-\ell(\lambda), \nu_2, \dots, \nu_{n+d})$ (the first $n+d$ columns of $(\lambda, 1^{2k+2-2\ell(\lambda)})$), Proposition~\ref{prop:schur-hom} gives
\[
\rE^2_{0,1} = \rH_0(E^* \otimes F^*; \rH_1(\ol{\fu}; \cN^k_\lambda)) = \bS_{(k^n)+\mu^\dagger}(E^*) \otimes (\det F^*)^k \otimes \bS_{\nu'}(F),
\]
so we can ignore it for the purposes of calculating $\rH_1(\fg; \cN^k_\lambda)$, i.e., 
\begin{align*}
\rH_0(\Sym^2(F); \rH_1(\fg; \cN^k_\lambda)) &\cong \rE^2_{1,0} = \rH_1(E^* \otimes F^*; \rH_0(\ol{\fu}; \cN^k_\lambda))\\
& = (\det F^*)^k \otimes \bS_{(\nu_1, \dots, \nu_{n+d-1}, \mu_1 -1)}(F) \otimes (\det E^*)^k \otimes \bS_{(1 + \nu_{n+d}, \mu_2, \mu_3, \dots)^\dagger}(E^*).
\end{align*}
One more application of Proposition~\ref{prop:par-hom} gives us the desired formula for $\rH_1(\fg; \cN^k_\lambda)$.
\end{proof}
\end{subeqns}

\begin{proposition} \label{prop:ss-chase}
If $\lambda_1 \le d$ and $\ell(\lambda) \le k$, then $\cN^k_\lambda$ has a linear free resolution over~$\rU(\fg)$.
\end{proposition}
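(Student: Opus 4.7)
The plan is to show that $\rH_i(\fg; \cN^k_\lambda) = \Tor_i^{\rU(\fg)}(\cN^k_\lambda, \bk)$ is concentrated in internal degree $i + j_0$ for every $i$, where $j_0$ is the degree in which $\cN^k_\lambda$ is generated as a $\rU(\fg)$-module. Under the hypotheses $\lambda_1 \le d$ and $\ell(\lambda) \le k$, the partition $\mu$ in Proposition~\ref{prop:typeC-linear-pres} vanishes, so that proposition already covers the cases $i = 0$ and $i = 1$: the module is cyclically generated in a single $\fgl(E) \times \fsp(V)$-isotype and its relations sit one degree above. The task is therefore to propagate this linearity to all higher $\rH_i$.

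The natural tool is the double Hochschild--Serre chase begun in the proof of Proposition~\ref{prop:typeC-linear-pres}, now carried out in every homological degree rather than stopped at $\rH_1$. First I would run the Hochschild--Serre spectral sequence for \eqref{eqn:ext-2}, whose $\rE^2$-page $\rH_p(E^* \otimes F^*;\, \rH_q(\ol{\fu}; \cN^k_\lambda))$ is computable: the full list of groups $\rH_q(\ol{\fu}; \cN^k_\lambda)$ is supplied by Theorem~\ref{thm:super-lwood} in terms of the border-strip rule $\tau_{2k}, \iota_{2k}$, and Proposition~\ref{prop:schur-hom} then resolves the exterior homology of each Schur-superfunctor piece into its $\fgl(E) \times \fgl(F)$-isotypes. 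Second, I would run the Hochschild--Serre spectral sequence for \eqref{eqn:ext-1} in the reverse direction, extracting $\rH_*(\fg; \cN^k_\lambda)$ from $\rH_*(\fg'; \cN^k_\lambda)$ and stripping off the $\Sym^2(F)$-action via Proposition~\ref{prop:par-hom} to reassemble the answer as an $\fsp(V)$-representation. The assumptions $\lambda_1 \le d$ and $\ell(\lambda) \le k$ are exactly what is needed to restrict the border-strip enumeration and the subsequent Schur decomposition so that every contributing summand lands in the predicted linear strand.

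The main obstacle is the same one already encountered for $\rd^2$ in Proposition~\ref{prop:typeC-linear-pres}: a priori, higher differentials on either spectral sequence could mix internal degrees and destroy linearity. The decisive input is the $\fgl(E)$-isotypic argument. Every differential is $\fgl(E)$-equivariant, and $\rU(\fg)_{>0}$ carries no $\fgl(E)$-invariants, so moving one step in homological degree shifts the $\fgl(E)$-weight in a controlled way. Tracking these weights page by page using Theorem~\ref{thm:super-lwood} and Proposition~\ref{prop:schur-hom}, the potential source and target of any non-trivial higher differential always land in different $\fgl(E)$-isotypic components, so both spectral sequences degenerate and only the expected linear strand survives in the abutment. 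The technical heart of the argument is the bookkeeping: juggling the bigrading of each spectral sequence, the border-strip combinatorics of $\tau_{2k}$ and $\iota_{2k}$, and the $\fgl(E) \times \fgl(F)$-equivariance simultaneously, to read off the vanishing of every offending differential and of every non-linear summand at once.
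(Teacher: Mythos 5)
Your framework is the same as the paper's (the two Hochschild--Serre spectral sequences for \eqref{eqn:ext-1} and \eqref{eqn:ext-2}, with Theorem~\ref{thm:super-lwood} and Proposition~\ref{prop:schur-hom} as inputs, all tracked $\fgl(E)\times\fgl(F)$-equivariantly), but the two steps that actually make the argument close are asserted rather than proved. First, your mechanism for killing differentials does not work as stated. The isotypic trick from the proof of Proposition~\ref{prop:typeC-linear-pres} ($\rU(\fg)_{>0}$ has no $\fgl(E)$-invariants) shows $\rd^2=0$ there only because $\rH_0(\fg;\cN^k_\lambda)$ is a \emph{single} $\fgl(E)$-isotype; for higher homological degree, $\rH_i(\fg;\cN^k_\lambda)$ contains many types $\bS_{(k^n)+\gamma}(E^*)$ with $|\gamma|=i$ (cf.\ Corollary~\ref{cor:Nlambda-upper}), so ``source and target always lie in different isotypic components'' is not something you can read off page by page; and full degeneration of either spectral sequence is neither proved nor true in the form you use it (the abutment $\rH_i(\fg';\cN^k_\lambda)$ genuinely contains terms below the linear strand, coming from $\rE^\infty_{j,i-j}$ with $j>0$, so it is false that ``only the expected linear strand survives''). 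What does work --- and is the paper's route --- is an induction on $i$ asserting that $\rH_i(\fg;\cN^k_\lambda)$ is concentrated in the single internal degree $i+nk$: the inductive hypothesis is exactly what forces the incoming differentials $\rE^r_{r,i-r+1}\to\rE^r_{0,i}$ in the \eqref{eqn:ext-1} sequence to vanish for degree reasons, and only the term $\rE^2_{0,i}=\rH_0(\Sym^2(F);\rH_i(\fg;\cN^k_\lambda))$ needs to be controlled, not the whole page. Your proposal never sets up this induction, and without it the degree/isotype separation you invoke is unavailable.

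Second, and more seriously, the place where the hypotheses $\lambda_1\le d$ and $\ell(\lambda)\le k$ enter is a genuine combinatorial bound that you wave at but do not state or prove: one must show that every $\fgl(E)\times\fgl(F)$-constituent of $\bigoplus_j \rH_j(E^*\otimes F^*;\rH_{i-j}(\ol{\fu};\cN^k_\lambda))$ sits in internal degree at most $i+nk$, i.e.\ that for any $\alpha$ with $\tau_{2k}(\alpha)=\lambda$, $\iota_{2k}(\alpha)=m$, and any length-$m'$ permutation $w$ producing $\bS_\beta(F)\otimes\bS_{\gamma^\dagger}(E^*)$ via Proposition~\ref{prop:schur-hom}, one has $|\gamma|\le m+m'$. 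This is precisely Lemma~\ref{lem:gamma-bound} in the paper, whose proof requires the border-strip estimate $\iota_{2k}(\alpha)\ge\sum_j\max(\alpha_j-d-j+1,0)$ (using $\lambda_1\le d$) together with a careful analysis of the inversion count of $w$. Saying ``the assumptions are exactly what is needed to restrict the border-strip enumeration'' names the difficulty without resolving it; as it stands, the proposal has no proof of the upper bound, and hence no proof of linearity of the resolution.
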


\begin{proof}
Consider $\cN^k_\lambda$ as a $\fg'$-module. We will analyze the Hochschild--Serre spectral sequence \eqref{eqn:HS-SS} with respect to the exact sequences \eqref{eqn:ext-1} and \eqref{eqn:ext-2}. Everything is equivariant with respect to $\fgl(E) \times \fgl(F)$ so we will keep track of this symmetry. In particular, the center of $\fgl(E)$ puts a grading on all vector spaces that are involved according to which multiple of the trace character it acts by (we use the convention that $D$ times the trace means degree $-D$). We will prove by induction on $i$ that $\rH_i(\fg; \cN^k_\lambda)$ is concentrated in degree $i+nk$. The base case $i=0$ follows from Proposition~\ref{prop:typeC-linear-pres}. Note that $\rH_i(\fg; \cN^k_\lambda)$ is concentrated in degrees $\ge i+nk$ since the smallest degree appearing in $\rH_i$ is strictly bigger than the smallest degree appearing in $\rH_{i-1}$ (use that $\rH_i \subset \rH_{i-1} \otimes \rU(\fg)_{>0}$).

Since the action of $\Sym^2(F)$ commutes with the $\fgl(E)$-action, this is the same as proving that $\rH_0(\Sym^2(F); \rH_i(\fg; \cN^k_\lambda))$ is concentrated in degree $i+nk$. In the Hochschild--Serre spectral sequence for \eqref{eqn:ext-1}, this is the term $\rE^2_{0,i}$. We claim that it is the same as $\rE^\infty_{0,i}$. The relevant differentials are of the form $\rE^r_{r,i-r+1} \to \rE^r_{0,i}$. But $\rE^r_{r,i-r+1}$ is a subquotient of $\rE^2_{r,i-r+1} = \rH_r(\Sym^2(F); \rH_{i-r+1}(\fg; \cN^k_\lambda))$. By induction, the latter is concentrated in degree $i-r+1+nk$, and hence the differential must be $0$, and the claim is proven. Now, we have $\rH_i(\fg'; \cN^k_\lambda) = \bigoplus_{j=0}^i \rE^\infty_{j,i-j}$. For $j>0$, we know that $\rE^\infty_{j,i-j}$, being a subquotient of $\rH_j(\Sym^2(F); \rH_{i-j}(\fg; \cN^k_\lambda))$, is concentrated in degree $i-j+nk$ by induction. In conclusion, $\rH_0(\Sym^2(F); \rH_i(\fg; \cN^k_\lambda))$ is the degree $\ge i+nk$ part of $\rH_i(\fg'; \cN^k_\lambda)$.

Using the Hochschild--Serre spectral sequence for \eqref{eqn:ext-2}, $\rH_i(\fg'; \cN^k_\lambda)$ is a subquotient of $\bigoplus_{j=0}^i \rH_j(E^* \otimes F^*; \rH_{i-j}(\ol{\fu}; \cN^k_\lambda))$. We will show that this space is concentrated in degree $\le i+nk$, which implies the result. Pick a partition $\alpha$ such that $\bS_\alpha(E^*|F) \otimes (\det E^*)^k \otimes (\det F^*)^k \subseteq \rH_m(\ol{\fu}; \cN^k_\lambda)$. Then by Theorem~\ref{thm:super-lwood}, we have $\tau_{2k}(\alpha) = \lambda$ and $\iota_{2k}(\alpha) = m$. Since $\bS_\alpha(E^*|F) \ne 0$, we have $\alpha_{n+1} \le n+d$. Let $\rho = (-1,-2,-3,\dots)$. By Proposition~\ref{prop:schur-hom}, to calculate $\rH_{m'}(E^* \otimes F^*; \bS_\alpha(E^*|F))$, we apply the $\rho$-shifted action of a length $m'$ permutation $w$ to $\alpha^\dagger$ to get $(\beta_1, \dots, \beta_{n+d}, \gamma_1, \gamma_2, \dots)$ so that $\beta$ and $\gamma$ are weakly decreasing and $\gamma$ is nonnegative. This contributes the representation $\bS_\beta(F) \otimes \bS_{\gamma^\dagger}(E^*)$ (and so $\gamma_1 \le n$). So we just have to show that $|\gamma| \le m + m'$. We isolate and prove this statement in Lemma~\ref{lem:gamma-bound} along with a calculation of what happens when $|\gamma|=m+m'$.
\end{proof}

\begin{lemma} \label{lem:gamma-bound}
Fix $k > 0$. Let $\lambda$ be a partition with $\lambda_1 \le d$ and $\ell(\lambda) \le k$. Pick a partition $\alpha$ with $\alpha_{n+1} \le n+d$ such that $\tau_{2k}(\alpha) = \lambda$ and let $w$ be a permutation such that $w \bullet \alpha^\dagger = (\beta_1, \dots, \beta_{n+d}, \gamma_1, \gamma_2, \dots)$ with $\beta_1 \ge \cdots \ge \beta_{n+d}$ and $n \ge \gamma_1 \ge \gamma_2 \ge \cdots \ge 0$. 

Then $|\gamma| \le \iota_{2k}(\alpha) + \ell(w)$. If $|\gamma| = \iota_{2k}(\alpha) + \ell(w)$, then:
\begin{itemize}[\indent -]
\item $\iota_{2k}(\alpha) = 0$ and $\alpha^\dagger = \lambda^\dagger = (\beta_1, \dots, \beta_d)$, and 
\item $w \bullet \alpha^\dagger = (\lambda_1^\dagger, \dots, \lambda_d^\dagger, -\gamma^\dagger_n, \dots, -\gamma^\dagger_1, \gamma_1, \gamma_2, \dots)$ where $|\gamma|=\ell(w)$.
\end{itemize}
\end{lemma}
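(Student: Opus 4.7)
The first step is to reformulate $|\gamma|$. Let $A = w^{-1}(\{1,\dots,n+d\}) = \{a_1<\cdots<a_{n+d}\}$ with complement $A^c = \{b_1<b_2<\cdots\}$. Since $\sigma_j := \alpha^\dagger_j - j$ is strictly decreasing, the conditions that $\beta$ be weakly decreasing and $\gamma$ be a partition force $\beta_i = \sigma_{a_i}+i$, $\gamma_j = \sigma_{b_j}+n+d+j$, and $\ell(w) = \sum_i(a_i-i)$. Because $w$ permutes only finitely many indices, $\sum_i(w\bullet\alpha^\dagger)_i = |\alpha|$, and subtracting off $\sum_i\beta_i = \sum_{j\in A}\alpha^\dagger_j - \ell(w)$ gives the key identity
\[
|\gamma| = \sum_{j\in A^c}\alpha^\dagger_j + \ell(w).
\]
The lemma is thus equivalent to $\sum_{j\in A^c}\alpha^\dagger_j \le \iota_{2k}(\alpha)$.

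\textbf{Induction on $\iota_{2k}(\alpha)$.} The condition $0\le\gamma_j\le n$ translates to $\alpha^\dagger_{b_j}\le b_j-d-j$, forcing $\{1,\dots,d\}\subseteq A$ (any $b_j\le d$ would give $\alpha^\dagger_{b_j}\le -1$). In the base case $\iota_{2k}(\alpha)=0$, $\alpha=\lambda$ has $\lambda^\dagger$ supported in $\{1,\dots,d\}\subseteq A$, so $\sum_{j\in A^c}\alpha^\dagger_j=0$ and $|\gamma|=\ell(w)$ with equality. For the inductive step, let $R$ be the type-$\rC$ border strip at the bottom of $\alpha$ with $c=c(R)$ columns, $r$ rows, and column-heights $h_j\ge 1$, and set $\alpha' = \alpha \setminus R$. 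Since $(\alpha')^\dagger\le\alpha^\dagger$ pointwise, $A$ is still valid for $\alpha'$, and the inductive hypothesis gives $\sum_{j\in A^c}(\alpha')^\dagger_j\le\iota_{2k}(\alpha)-c$. Because $\alpha^\dagger_j-(\alpha')^\dagger_j = h_j$ for $j\le c$ and $0$ otherwise, the step reduces to the combinatorial bound
\[
\sum_{j\in A^c\cap\{1,\dots,c\}} h_j \le c.
\]

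\textbf{The hard step and the equality case.} This combinatorial bound is the main obstacle. Writing $\{i_1<\cdots<i_q\}=A^c\cap\{1,\dots,c\}$, the strip identity $h_j=\alpha^\dagger_j-\alpha^\dagger_{j+1}+1$ for $j<c$ and the weak decrease of $\alpha^\dagger$ yield a telescoping estimate $\sum_{s=1}^q h_{i_s}\le q+\alpha^\dagger_{i_1}-\alpha^\dagger_{i_q+1}$ when $c\in A$, and an analogous estimate using $h_c=\alpha^\dagger_c-\alpha^\dagger_1+r$ when $c\in A^c$. Combining these with validity $\alpha^\dagger_{i_1}\le i_1-d-1$, the fact $\alpha^\dagger_{j+1}\ge 1$ for $j+1\le c$ (every column of $R$ is nonempty), and, in the $c\in A^c$ case, the length identity $\alpha^\dagger_1=(c+r+2k+1)/2$ coming from $L=2\ell(\alpha)-2k-2$, a short two-case analysis produces the required bound; I expect the $c\in A^c$ case to be the more delicate one, since there the naive telescoping only gives $\sum\le r+q-1$ and one must bound $r$ in terms of $c$, $d$, $k$ via the strip-length identity. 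Once the bound is in hand, any slack in the inductive step forces strict inequality in the lemma, so equality implies the base case $\alpha=\lambda$. There, $\{1,\dots,d\}\subseteq A$ combined with $|A|=n+d$ forces $a_i=i$ for $i\le d$, so $\beta_i=\lambda^\dagger_i$ in these positions; writing $A\setminus\{1,\dots,d\}=\{d+c_1<\cdots<d+c_n\}$ and $A^c=\{d+e_1<d+e_2<\cdots\}$ with $\{c_k\},\{e_j\}$ partitioning $\bZ_{\ge 1}$, one computes $\beta_{d+i}=i-c_i$ and $\gamma_j=n+j-e_j$, whence $\gamma^\dagger_{n+1-i}=c_i-i$, verifying $\beta_{d+i}=-\gamma^\dagger_{n+1-i}$ as in the equality statement.
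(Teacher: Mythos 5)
Your reduction is sound, and your route is genuinely different from the paper's: your identity $|\gamma| = \sum_{j\in A^c}\alpha^\dagger_j + \ell(w)$ is the same bookkeeping as the paper's computation of $|\gamma| = |\alpha|-|\beta|$, but where the paper then concludes in one shot — using the minimal non-fixed index $i$, the constraint $\gamma_1\le n$, and the single global estimate $\iota_{2k}(\alpha)\ge\sum_j\max(\alpha_j-d-j+1,0)$ read off from the modification rule — you induct on the strip-peeling itself. The problem is that your ``hard step'' $\sum_{j\in A^c\cap\{1,\dots,c\}}h_j\le c$ is exactly where the content of the lemma sits, and you do not prove it: you list ingredients and say you ``expect'' a short two-case analysis to work. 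That is a genuine gap. The good news is that your ingredients do suffice, but the estimates must actually be run. If $c\in A$, then $i_s=b_s$ for every $s$, so validity gives $\alpha^\dagger_{i_1}\le i_1-d-1$; telescoping, $\alpha^\dagger_{i_q+1}\ge 1$ (column $i_q+1\le c$ meets the strip) and $i_1\le c-q$ give $\sum_s h_{i_s}\le q+(i_1-d-1)-1\le c-d-2$. If $c\in A^c$, then combining $h_c=\alpha^\dagger_c-\alpha^\dagger_1+r$ with the telescoped bound for $s<q$, the identity $r=2\ell(\alpha)-2k-1-c$, and $r\le\ell(\alpha)$ (hence $c\ge\ell(\alpha)-2k-1$) gives $\sum_s h_{i_s}\le\ell(\alpha)-2k-2-d\le c-d-1$. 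Note these bounds are strictly less than $c$: your equality analysis (``any slack in the inductive step forces strict inequality'') depends on recording exactly this margin, so without the completed hard step the deduction that equality forces $\iota_{2k}(\alpha)=0$ is also unsupported.

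A secondary, fixable imprecision: ``since $(\alpha')^\dagger\le\alpha^\dagger$ pointwise, $A$ is still valid for $\alpha'$'' is not literally true, because the lower bound $\gamma'_j\ge 0$ can fail after the strip is removed (removing boxes can push $\gamma'_j$ negative when $b_j>n+d+j$). Since your argument only ever uses the upper-bound conditions $\alpha^\dagger_{b_j}\le b_j-d-j$, which are inherited by $\alpha'$, the repair is to run the induction on the strengthened statement in which only those conditions are assumed — but this must be said, or the inductive hypothesis does not apply as stated. With these repairs your argument is complete and arguably more structural than the paper's, localizing the loss to each strip removal rather than invoking one global bound on $\iota_{2k}$; your base-case computation of the equality shape ($\beta_{d+i}=-\gamma^\dagger_{n+1-i}$, $|\gamma|=\ell(w)$) is correct and replaces the paper's citation for the action of $w$ on a sequence of zeroes.
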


\begin{subeqns}
\begin{proof}
We claim that
\begin{align} \label{eqn:iota-bound1}
\iota_{2k}(\alpha) \ge \sum_{j \ge 1} \max(\alpha_j - d - j + 1, 0).
\end{align}
The modification rule (\S\ref{sec:typeC-weyl}) expresses $\alpha \setminus \lambda$ as a disjoint union of border strips. Since $\lambda_1 \le d$, the part which is to the right of the $d$th column of $\alpha$  is expressed as a disjoint union of border strips, and the right hand side of \eqref{eqn:iota-bound1} is the least number of columns needed in all of the border strips to do this. This proves the claim.

Set $\nu = (\alpha_{n+d+1}^\dagger, \alpha_{n+d+2}^\dagger, \dots)$. If $w = 1$ is the identity, then $|\gamma| = |\nu| = \sum_{j \ge 1} \max(\alpha_j - n - d, 0)$. Since $\alpha_{n+1} \le n+d$, we conclude that this sum is $\le \iota_{2k}(\alpha)$.

Otherwise, suppose $w \ne 1$. To clarify the remainder of this argument, we use the convention that $w(a_1, a_2, \dots) = (a_{w^{-1}(1)}, a_{w^{-1}(2)}, \dots)$ and set $v = w^{-1}$. Since $\beta$ and $\gamma$ are weakly decreasing, we get $v(1) < v(2) < \cdots < v(n+d)$ and $v(n+d+1) < v(n+d+2) < \cdots$. So the number of inversions of $w$ is $\ell(w) = \ell(v) =  \sum_{j=1}^{n+d} (v(j) - j)$, and hence
\begin{equation} \label{eqn:gamma-len}
\begin{split}
|\gamma| = |\alpha| - |\beta| = \sum_{j \ge 1} (\alpha_j^\dagger - \beta_j) &= \sum_{j > n+d} \alpha_j^\dagger + \sum_{j = 1}^{n+d} (\alpha_j^\dagger - \alpha_{v(j)}^\dagger + v(j) - j)\\
&= \sum_{j > n+d} \alpha_j^\dagger + \sum_{j = 1}^{n+d} (\alpha_j^\dagger - \alpha_{v(j)}^\dagger) + \ell(w).
\end{split}
\end{equation}
Here we use the notation $|\beta|=\sum_{j=1}^{n+d} \beta_j$ even though some entries of $\beta$ may be negative.

Let $i$ be minimal with the property that $v(i) \ne i$. From above, we get $i \le n+d$ and $i = v(n+d+1)$. Then $\gamma_1 = \alpha_i^\dagger + n + d + 1 - i$. Since $\gamma_1 \le n$, we get $0 \ge \alpha_i^\dagger + d + 1 - i$, i.e., $\alpha_i^\dagger < i-d$, and hence $\alpha_{i-d} < i$. So 
\begin{align} \label{eqn:iota-bound2}
\sum_{j \ge i} \alpha_j^\dagger = \sum_{j=1}^{i-d-1} \max(\alpha_j - i + 1, 0) \le \sum_{j=1}^{i-d-1} \max(\alpha_j - d - j, 0) \le \iota_{2k}(\alpha)
\end{align}
where the last inequality is \eqref{eqn:iota-bound1}. By definition of $i$, and nonnegativity of $\alpha$, we have $\sum_{j \ge i} \alpha_j^\dagger \ge \sum_{j>n+d} \alpha_j^\dagger + \sum_{j=1}^{n+d} (\alpha_j^\dagger - \alpha_{v(j)}^\dagger)$, so combining this with \eqref{eqn:gamma-len}, we conclude that $|\gamma| \le \iota_{2k}(\alpha) + \ell(w)$.

Now assume that $|\gamma| = \iota_{2k}(\alpha) + \ell(w)$. In particular, all of the above inequalities are equalities. From our proof of \eqref{eqn:iota-bound1}, it can only be an equality if $d=0$ or $\iota_{2k}(\alpha)=0$ (if $d>0$ and $\iota_{2k}(\alpha)>0$, then the boxes in the first $d$ columns of $\alpha$ contribute to $\iota_{2k}(\alpha)$). Suppose further that $d=0$. By \eqref{eqn:iota-bound2}, we have $\max(\alpha_j - i + 1, 0) = \max(\alpha_j - j, 0)$ for $j=1,\dots,i-1$. If $i=1$, then the sum in \eqref{eqn:iota-bound2} is empty, which means $\iota_{2k}(\alpha)=0$. Otherwise $i>1$, so $\max(\alpha_1 - i + 1,0) = \max(\alpha_1 - 1,0)$, i.e., $\alpha_1 \le 1$. This implies that the sum in \eqref{eqn:iota-bound2} is empty, so again we get $\iota_{2k}(\alpha)=0$. In conclusion, we have $\iota_{2k}(\alpha) = 0$ if $|\gamma| = \iota_{2k}(\alpha) + \ell(w)$ and hence that $\alpha^\dagger = \lambda^\dagger = (\beta_1, \dots, \beta_d)$ (by assumption, $\lambda_1 \le d$).

So we are in the situation where $w$ acts on a sequence of zeroes. The result of this is of the form $(-\gamma_n^\dagger, \dots, -\gamma_1^\dagger, \gamma_1, \gamma_2, \dots)$ where $|\gamma| = \ell(w)$ (see, for example, \cite[\S 2.4]{getzler}), which proves the last part of the lemma.
\end{proof}
\end{subeqns}

\begin{corollary} \label{cor:Nlambda-upper}
Let $\lambda$ be a partition with $\lambda_1 \le d$ and $\ell(\lambda) \le k$. As a representation of $\GL(E) \times \Sp(V)$, $\rH_i(\fg; \cN_\lambda^k)$ is a subrepresentation of 
\[
\bigoplus_{\substack{|\gamma| = i\\ \ell(\gamma) \le n}} \bS_{(k^n) + \gamma}(E^*) \otimes \bS_{[((k^n) + \gamma, k - \lambda^\dagger_d, \dots, k - \lambda^\dagger_1)]}(V).
\]
\end{corollary}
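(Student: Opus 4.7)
The plan is to extract the corollary from the machinery already assembled in the proof of Proposition~\ref{prop:ss-chase}, combined with the case-of-equality part of Lemma~\ref{lem:gamma-bound}. The proof of that proposition shows that $\rH_0(\Sym^2(F); \rH_i(\fg; \cN_\lambda^k))$ is exactly the degree $i + nk$ part of $\rH_i(\fg'; \cN_\lambda^k)$, and that the latter is a subquotient of
\[
\bigoplus_{j=0}^i \rH_j(E^* \otimes F^*; \rH_{i-j}(\ol{\fu}; \cN_\lambda^k)).
\]
So the first step is to identify the degree $i+nk$ piece of this direct sum in an explicit $\fgl(E) \times \fgl(F)$-equivariant form, using Theorem~\ref{thm:super-lwood} to unpack $\rH_{i-j}(\ol{\fu}; \cN_\lambda^k)$ as a sum of $\bS_\alpha(E^*|F) \otimes (\det E^*)^k \otimes (\det F^*)^k$ over partitions with $\tau_{2k}(\alpha) = \lambda$ and $\iota_{2k}(\alpha) = i-j$, and then Proposition~\ref{prop:schur-hom} to compute $\rH_j(E^* \otimes F^*; \bS_\alpha(E^*|F))$ as a sum over length-$j$ permutations $w$ with $w \bullet \alpha^\dagger = (\beta,\gamma)$, contributing $\bS_\beta(F) \otimes \bS_{\gamma^\dagger}(E^*)$.

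Next I would track the $\fgl(E)$-grading: the piece $\bS_\beta(F) \otimes \bS_{\gamma^\dagger}(E^*) \otimes (\det E^*)^k \otimes (\det F^*)^k$ lives in degree $|\gamma| + nk$, so the degree $i+nk$ part corresponds exactly to $|\gamma| = i$. Now invoke Lemma~\ref{lem:gamma-bound}: the bound $|\gamma| \le \iota_{2k}(\alpha) + \ell(w) = (i-j) + j = i$ forces equality, which by the lemma means $\iota_{2k}(\alpha) = 0$, $\alpha^\dagger = \lambda^\dagger$ (so $j=i$ and only the $j=i$, $\alpha=\lambda$ summand survives), and
\[
w \bullet \lambda^\dagger = (\lambda_1^\dagger,\dots,\lambda_d^\dagger,-\gamma_n^\dagger,\dots,-\gamma_1^\dagger,\gamma_1,\gamma_2,\dots)
\]
with $|\gamma|=i$ and $\gamma_1 \le n$. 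So the degree $i+nk$ part is exactly $\bigoplus_{|\gamma|=i,\, \gamma_1 \le n} \bS_{\beta_\gamma}(F) \otimes \bS_{\gamma^\dagger}(E^*) \otimes (\det E^*)^k \otimes (\det F^*)^k$ with $\beta_\gamma = (\lambda_1^\dagger,\dots,\lambda_d^\dagger,-\gamma_n^\dagger,\dots,-\gamma_1^\dagger)$.

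The remaining step is cosmetic: absorb the determinantal twists and dualize $F$, using $\bS_{\gamma^\dagger}(E^*) \otimes (\det E^*)^k = \bS_{(k^n)+\gamma^\dagger}(E^*)$ and, passing to $F^*$, $\bS_{\beta_\gamma}(F) \otimes (\det F^*)^k = \bS_{(k+\gamma_1^\dagger,\dots,k+\gamma_n^\dagger,\,k-\lambda_d^\dagger,\dots,k-\lambda_1^\dagger)}(F^*)$. Relabelling $\gamma$ by $\gamma^\dagger$ (a bijection on the indexing set $\{|\gamma|=i,\ \ell(\gamma)\le n\}$) converts this to $\bS_{(k^n)+\gamma}(E^*) \otimes \bS_{((k^n)+\gamma,\,k-\lambda_d^\dagger,\dots,k-\lambda_1^\dagger)}(F^*)$, and by Proposition~\ref{prop:par-hom} this is precisely $\bS_{(k^n)+\gamma}(E^*) \otimes \rH_0(\Sym^2(F); \bS_{[((k^n)+\gamma,\,k-\lambda_d^\dagger,\dots,k-\lambda_1^\dagger)]}(V))$.

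To finish, recall that each graded piece of $\rH_i(\fg;\cN_\lambda^k)$ is finite-dimensional, so the $\fgl(E)\times\fsp(V)$-structure on it is determined by the $\fgl(E)\times\fgl(F)$-structure on $\rH_0(\Sym^2(F); \rH_i(\fg;\cN_\lambda^k))$ via the second part of Proposition~\ref{prop:par-hom}. Since we have shown this $\rH_0$ is a subrepresentation of the displayed direct sum (the subquotient bound from the Hochschild--Serre spectral sequence becomes a subrepresentation bound after taking $\rH_0(\Sym^2(F);-)$ because in characteristic zero we may replace $\approx$ by $\cong$), the corresponding bound lifts back to a $\GL(E)\times \Sp(V)$-subrepresentation statement, which is exactly the claim. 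The main subtlety to watch is the bookkeeping in step three between transposing $\gamma$, dualizing $F$, and the determinantal twists, since a sign or a transpose error here produces a superficially similar but incorrect list of $\Sp(V)$-weights.
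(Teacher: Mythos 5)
Your proposal is correct and follows essentially the same route as the paper: it reuses the degree bookkeeping and spectral-sequence setup from the proof of Proposition~\ref{prop:ss-chase}, invokes the equality case of Lemma~\ref{lem:gamma-bound} to pin down the degree $i+nk$ part of $\bigoplus_j \rH_j(E^*\otimes F^*;\rH_{i-j}(\ol{\fu};\cN^k_\lambda))$, and then lifts the resulting $\fgl(E)\times\fgl(F)$-bound to a $\GL(E)\times\Sp(V)$-statement via Proposition~\ref{prop:par-hom}. You merely spell out the twist/transposition bookkeeping that the paper leaves implicit, and your handling of it (and of the subquotient-to-subrepresentation upgrade by semisimplicity) is accurate.
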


\begin{proof}
In the proof of Proposition~\ref{prop:ss-chase}, we defined the degree of a representation in terms of the $\fgl(E)$ action and showed that $\rH_0(\Sym^2(F); \rH_i(\fg; \cN_\lambda^k))$ is the degree $i+nk$ part of $\rH_i(\fg'; \cN_\lambda^k)$, which is a subquotient (by semisimplicity, we can say subrepresentation) of the degree $i+nk$ part of $\bigoplus_{j=0}^i \rH_j(E^* \otimes F^*; \rH_{i-j}(\ol{\fu}; \cN_\lambda^k))$. Combining the calculation of this space in the proof of Proposition~\ref{prop:ss-chase} with Lemma~\ref{lem:gamma-bound} gives that the latter representation is 
\[
P := \bigoplus_{\substack{|\gamma| = i\\ \ell(\gamma) \le n}} \bS_{(k^n) + \gamma}(E^*) \otimes \bS_{((k^n) + \gamma,k - \lambda_d^\dagger, \dots, k - \lambda_1^\dagger)}(F^*).
\]
If $\rH_0(\Sym^2(F); P_0) = P$, then $\rH_i(\fg; \cN_\lambda^k) \subseteq P_0$. Using Proposition~\ref{prop:par-hom}, we get 
\[
P_0 = \bigoplus_{\substack{|\gamma|=i\\ \ell(\gamma) \le n}} \bS_{(k^n) + \gamma}(E^*) \otimes \bS_{[((k^n) + \gamma, k - \lambda_d^\dagger, \dots, k - \lambda_1^\dagger)]}(V). \qedhere
\]
\end{proof}

\subsection{Main result and consequences} \label{sec:main-result-typeC}

We assume that $\ch(\bk)=0$ in this section.

Recall the $\bZ$-grading on $\fosp(\tilde{V})$ from \S\ref{sec:typeC-prelim}. Let $\fp$ be the sum of the nonpositive pieces. So 
\[
\fp = \lw^2(E) \oplus (E \otimes V) \oplus (\fgl(E) \times \fsp(V)).
\]
Given a representation $W$ of $\fgl(E) \times \fsp(V)$, extend it to a $\fp$-module by letting the negative part act by $0$. The induced representation is ${\rm Ind}_\fp^{\fosp(\tilde{V})}(W) = \rU(\fosp(\tilde{V})) \otimes_{\rU(\fp)} W$, which is a representation of $\fosp(\tilde{V})$. When $W$ is irreducible, we call this a {\bf parabolic Verma module}. As a $\rU(\fg)$-module, it is isomorphic to $\rU(\fg) \otimes W$. From the definition, if $\cW$ is a representation of $\fosp(\tilde{V})$, and ${\rm Ind}_\fp^{\fosp(\tilde{V})}(W) \to \cW$ is $\rU(\fg)$-equivariant, then it is  $\fosp(\tilde{V})$-equivariant if and only if the negative part of $\fosp(\tilde{V})$ acts as $0$ on the image of $W$ in $\cW$. This implies:

\begin{lemma} \label{lem:parabolic-verma}
Let $\cW_0$ be a graded $\fosp(\tilde{V})$-submodule of ${\rm Ind}_\fp^{\fosp(\tilde{V})}(W)$ which is generated by $W_0$ as a $\rU(\fg)$-module, so we have a surjection $\rU(\fg) \otimes W_0 \to \cW_0 \subset {\rm Ind}_\fp^{\fosp(\tilde{V})}(W)$. If $W_0$ is concentrated in a single degree, then this map is equivariant for the natural action of $\fosp(\tilde{V})$.
\end{lemma}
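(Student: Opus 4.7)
The plan is to apply the criterion in the paragraph immediately preceding the lemma: any $\rU(\fg)$-equivariant map out of a parabolic Verma module into $\cW_0$ upgrades to an $\fosp(\tilde{V})$-equivariant map precisely when the negative part $\fp_{<0} = \lw^2(E) \oplus (E \otimes V)$ of $\fosp(\tilde{V})$ annihilates the image of the inducing module. To apply it here, I will first enlarge $W_0$ to its $(\fgl(E) \times \fsp(V))$-span inside the ambient degree-$d$ piece of $\cW_0$; this is harmless since it preserves both the single-degree hypothesis and the property of generating $\cW_0$ over $\rU(\fg)$. Once $W_0$ is closed under $\fgl(E) \times \fsp(V)$ it carries a $\fp$-module structure (with the negative part acting trivially, to be verified), and then the surjection $\rU(\fg) \otimes W_0 \twoheadrightarrow \cW_0$ can be identified with the canonical map ${\rm Ind}_\fp^{\fosp(\tilde{V})}(W_0) \to \cW_0$.

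The remaining verification is a one-line degree count in the $\bZ$-grading on $\fosp(\tilde{V})$. The algebra $\rU(\fg)$ is concentrated in non-negative degrees, since $\fg_1 = E^* \otimes V^*$ sits in degree $+1$ and $\fg_2 = \lw^2(E^*)$ in degree $+2$. So the hypothesis $\cW_0 = \rU(\fg) \cdot W_0$ together with $W_0$ being concentrated in a single degree $d$ forces $\cW_0$ to be supported in degrees $\ge d$. The generators of $\fp_{<0}$ carry weights $-1$ and $-2$, hence $\fp_{<0} \cdot W_0 \subseteq \cW_{0,d-1} \oplus \cW_{0,d-2} = 0$. This simultaneously confirms that $W_0$ is a $\fp$-submodule (so that the parabolic Verma identification above is legitimate) and verifies the annihilation hypothesis of the criterion, completing the proof.

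I anticipate no substantive obstacle here: the content is essentially that a $\rU(\fg)$-generating set concentrated in a single degree must sit at the bottom of the graded span, leaving the strictly-negative part of $\fosp(\tilde{V})$ no room to act nontrivially. In the sequel I expect $W_0$ to be taken as the bottom-degree piece of a module such as $\cN_\lambda^k$, matching the computations in Proposition~\ref{prop:typeC-linear-pres} and the upper bound in Corollary~\ref{cor:Nlambda-upper}, where the point is to recognize the module as a quotient of a parabolic Verma module.
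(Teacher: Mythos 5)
Your argument is correct and is essentially the paper's own: the lemma is presented there as an immediate consequence of the criterion in the preceding paragraph, justified by exactly the degree count you give ($\rU(\fg)$ is non-negatively graded, so $\cW_0$ lives in degrees $\ge d$ and the degree $-1,-2$ part of $\fosp(\tilde{V})$ must annihilate $W_0$). Note only that your enlargement step is vacuous: since $\rU(\fg)_0=\bk$, the hypothesis $\cW_0=\rU(\fg)\cdot W_0$ with $W_0$ in degree $d$ forces $W_0=(\cW_0)_d$, which is automatically $\fgl(E)\times\fsp(V)$-stable.
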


\begin{theorem} \label{thm:main-typeC}
Let $\lambda$ be a partition with $\ell(\lambda) \le k$ and $\lambda_{n+1} \le n+d$. 

\noindent Set $\nu = (\lambda_1^\dagger, \dots, \lambda_{n+d}^\dagger)$ and $\mu = (\lambda_{n+d+1}^\dagger, \lambda_{n+d+2}^\dagger, \dots)$. Then:
\begin{compactenum}[\rm (a)]
\item $\lambda^\dagger_{n+d} = \lambda^\dagger_{n+d+1}$ if and only if $\cN^k_\lambda$ has a linear presentation as a $\rU(\fg)$-module. 
\item If $\lambda^\dagger_{n+d} = \lambda^\dagger_{n+d+1}$, $\bL(\cN^k_\lambda)$ (notation of \S\ref{sec:koszul}) is the first linear strand of a $B$-module $M$ with a compatible $\GL(E) \times \Sp(V)$-action and which is uniquely determined by the fact that it is generated in degree $0$ with degree $1$ relations, and satisfies
\begin{align*}
M_0 &= (\det E)^k \otimes \bS_{\mu^\dagger}(E) \otimes \bS_{[((n+d) \times k) \setminus \nu]}(V),\\
M_1 &= (\det E)^k \otimes \bS_{(\mu_1 + 1, \mu_2, \mu_3, \dots)^\dagger}(E) \otimes \bS_{[(k+1-\nu_{n+d}, k-\nu_{n+d-1}, \dots, k-\nu_1)]}(V).
\end{align*}
\end{compactenum}
If furthermore $\lambda_1 \le d$, then $M \cong \cM^k_{\lambda^\dagger}$, and:
\begin{compactenum}[\rm (a)]
\setcounter{enumi}{2}
\item $\bL(\cN^k_{\lambda})$ is an acyclic linear complex and is a minimal free resolution of $\cM^k_{\lambda^\dagger}$ over $B$,
\item $\bR(\cM^k_{\lambda^\dagger})$ is an acyclic linear complex and is a minimal free resolution of $\cN^k_\lambda$ over $\rU(\fg)$. The action of $\fg$ on this complex extends to an action of $\osp(\tilde{V})$ and each $\bR(\cM^k_{\lambda^\dagger})_i$ is a direct sum of parabolic Verma modules.
\end{compactenum}
\end{theorem}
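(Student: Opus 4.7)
Part (a) is immediate from Proposition \ref{prop:typeC-linear-pres}, which shows $\cN^k_\lambda$ is generated in a single degree with relations of degree $\lambda^\dagger_{n+d} - \lambda^\dagger_{n+d+1} + 1$; a linear presentation corresponds exactly to this quantity equaling $1$. For (b), I apply Theorem \ref{thm:dict}(d) to the $\rU(\fg)$-module $\cN^k_\lambda$ (so $R = \rU(\fg)$, $R^! = B$): by (a), $\cN^k_\lambda$ is generated in one degree with linear relations, so $\bL(\cN^k_\lambda)$ is the first linear strand of the minimal free resolution of a $B$-module $M$, uniquely determined by the data $(M_0, M_1 \hookrightarrow B_1 \otimes M_0)$. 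The descriptions of $M_0$ and $M_1$ come from dualizing the formulas for $\rH_0(\fg; \cN^k_\lambda)$ and $\rH_1(\fg; \cN^k_\lambda)$ in Proposition \ref{prop:typeC-linear-pres} via Theorem \ref{thm:dict}(b), using $\bS_\lambda(E^*) \cong \bS_\lambda(E)^*$ and the self-duality $\bS_{[\lambda]}(V) \cong \bS_{[\lambda]}(V)^*$ for $\Sp(V)$; the hypothesis $\nu_{n+d} = \mu_1$ is exactly what forces the formulas to take the claimed forms.

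For (c), the added hypothesis $\lambda_1 \le d$ lets me invoke Proposition \ref{prop:ss-chase}: $\cN^k_\lambda$ has a linear free resolution over $\rU(\fg)$. Theorem \ref{thm:dict}(e) then promotes $\bL(\cN^k_\lambda)$ to the full minimal free resolution of $M$. To identify $M$ with $\cM^k_{\lambda^\dagger}$, the geometric construction \eqref{eqn:cM-module} combined with Proposition \ref{prop:cM-vanish} shows that $\cM^k_{\lambda^\dagger}$ is generated over $B$ by its lowest-degree component, which matches $M_0$ from (b). This gives a natural equivariant surjection $M \twoheadrightarrow \cM^k_{\lambda^\dagger}$; the degree-$1$ matching from (b) makes it an isomorphism in degrees $\le 1$. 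To promote this to all degrees I compare graded characters: the alternating sum from the linear resolution $\bL(\cN^k_\lambda)$ yields the character of $M$, and Corollary \ref{cor:Nlambda-upper}---whose bound becomes sharp once linearity of the resolution is known---matches this against the explicit formula \eqref{eqn:cM-module} for the character of $\cM^k_{\lambda^\dagger}$.

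For (d), the statement that $\bR(\cM^k_{\lambda^\dagger})$ is a minimal free $\rU(\fg)$-resolution of $\cN^k_\lambda$ is the mirror of (c) under the Koszul equivalence of Theorem \ref{thm:dict}(a). The parabolic-Verma enhancement is proved by induction on homological degree. Since $\cN^k_\lambda$ is a lowest-weight $\osp(\tilde{V})$-module (from the Howe-duality setup in \S\ref{sec:howe}), its lowest-degree generators $W_0 \subset \cN^k_\lambda$ form a $\fp$-subrepresentation on which the negative part of $\osp(\tilde{V})$ acts by zero, so $\bR(\cM^k_{\lambda^\dagger})_0 = \rU(\fg) \otimes W_0 \cong {\rm Ind}_\fp^{\osp(\tilde{V})}(W_0)$ and the augmentation to $\cN^k_\lambda$ is $\osp$-equivariant. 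In the inductive step, the kernel of $\bR_{i-1} \to \bR_{i-2}$ is an $\osp(\tilde{V})$-submodule whose lowest-degree component equals $W_i \subset \bR_i$; the negative part of $\osp(\tilde{V})$ annihilates $W_i$ for degree reasons, so $W_i$ is $\fp$-stable, and Lemma \ref{lem:parabolic-verma} then promotes $\bR_i$ to ${\rm Ind}_\fp^{\osp(\tilde{V})}(W_i)$ with $\osp$-equivariant differential to $\bR_{i-1}$. The main obstacle throughout is the identification $M = \cM^k_{\lambda^\dagger}$ in (c), where the abstract Koszul-duality output must be matched with the explicit geometric construction; once that is in hand, (d) is a clean induction combining the lowest-weight property of $\cN^k_\lambda$ with Lemma \ref{lem:parabolic-verma}.
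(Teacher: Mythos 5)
Your outline of (a), (d), and the skeleton of (c) matches the paper's argument (Proposition~\ref{prop:typeC-linear-pres}, Proposition~\ref{prop:ss-chase}, Theorem~\ref{thm:dict}, and induction with Lemma~\ref{lem:parabolic-verma}), but there is a genuine gap in (b)--(c): you never prove the uniqueness statement that the theorem actually asserts, and this missing ingredient is exactly what makes your ``natural equivariant surjection $M \twoheadrightarrow \cM^k_{\lambda^\dagger}$'' exist. In (b) you say $M$ is ``uniquely determined by the data $(M_0, M_1 \hookrightarrow B_1 \otimes M_0)$,'' which is the trivial statement that a module is determined by a presentation of it; the theorem claims the stronger fact that $M$ is determined by the \emph{isomorphism classes} of $M_0$ and $M_1$ alone. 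The paper gets this from the observation that $M_0 \otimes B_1$ is a multiplicity-free $\GL(E)\times\Sp(V)$-representation (Stembridge's theorem on tensoring an irreducible with the vector representation), so a subrepresentation of $M_0\otimes B_1$ is pinned down by its isomorphism class, and hence the kernel of $M_0\otimes B_1 \to M_1$ is determined by the class of $M_1$. Without this, knowing $\cM^k_{\lambda^\dagger}$ is generated in degree $0$ by $M_0$ only gives you a surjection $M_0\otimes B \twoheadrightarrow \cM^k_{\lambda^\dagger}$; to factor it through $M$ you must know that the degree-one relations of $M$ (a specific subspace of $M_0\otimes B_1$) die in $\cM^k_{\lambda^\dagger}$, i.e.\ that the two kernels inside $M_0\otimes B_1$ coincide. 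That is precisely the multiplicity-free argument you omitted, and nothing else in your write-up supplies it.

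Two smaller points. First, your claim that Corollary~\ref{cor:Nlambda-upper} ``becomes sharp once linearity of the resolution is known'' has the logic reversed: linearity identifies $M_i$ with $\rH_i(\fg;\cN^k_\lambda)^*$, the corollary bounds this above by the graded pieces of $\cM^k_{\lambda^\dagger}$ from \eqref{eqn:cM-module}, and sharpness is then a \emph{consequence} of combining this upper bound with the surjection $M\twoheadrightarrow\cM^k_{\lambda^\dagger}$ (so your character comparison is repaired once the surjection is legitimately in hand, but it is not an independent fact). Second, your justification that $\cM^k_{\lambda^\dagger}$ is generated in its lowest degree by citing \eqref{eqn:cM-module} and Proposition~\ref{prop:cM-vanish} is thin---those give the character and a cohomology vanishing, not generation---though the paper is comparably terse on this point.
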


\begin{proof}
(a) follows directly from Proposition~\ref{prop:typeC-linear-pres}. By Proposition~\ref{prop:typeC-linear-pres} and Theorem~\ref{thm:dict}, $\bL(\cN^k_\lambda)$ is the first linear strand of the cokernel of $(\cN^k_\lambda)_1^* \otimes B(-1) \to (\cN^k_\lambda)_0^* \otimes B$, and $(\cN^k_\lambda)_0^* = (\det E)^k \otimes \bS_{\mu^\dagger}(E) \otimes \bS_{[((n + d) \times k) \setminus \nu]}(V)$. Its tensor product with $B_1 = E \otimes V$ is multiplicity-free: for all classical groups, the tensor product of an irreducible representation with the vector representation is multiplicity-free \cite[Theorem 1.1ABCD]{stembridge} (there the character of the vector representation is denoted $\chi(\omega_n)$). So the cokernel $M$ is uniquely determined by the condition that it is linearly presented as a $B$-module and $M_i = \rH_i(\fg; \cN^k_\lambda)$ for $i=0,1$. This proves (b).

When $\lambda_1 \le d$, the module $\cM^k_{\lambda^\dagger}$ satisfies these conditions except that we do not yet know if it has a linear presentation. We have a surjection $M \to \cM^k_{\lambda^\dagger}$, which is an isomorphism by comparing Corollary~\ref{cor:Nlambda-upper} and \eqref{eqn:cM-module}. So (c) and (d) are an immediate consequence of Proposition~\ref{prop:ss-chase} and Theorem~\ref{thm:dict}. The second statement of (d) follows from Lemma~\ref{lem:parabolic-verma} by induction on $i$ since the free resolution of $\cN_\lambda^k$ over $\rU(\fg)$ is linear.
\end{proof}

\begin{conjecture} \label{conj:Ilambda}
Let $I_\lambda$ be the ideal generated by $\bS_\lambda(E) \otimes \bS_{[\lambda]}(V)$ in $B$. Then the resolution of $I_\lambda$ is a representation of $\osp(\tilde{V})$. Furthermore, each linear strand is irreducible.
\end{conjecture}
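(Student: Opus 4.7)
The plan is to reformulate the conjecture via Koszul duality and then try to mimic the geometric construction of \S\ref{sec:main-result-typeC} at a more general level. First, by Proposition~\ref{prop:CI-koszul}(b) and Theorem~\ref{thm:dict}, if $\bG_\bullet^{(j)}$ is the $j$th linear strand of the minimal free resolution of $I_\lambda$ over $B$, then $N_{\lambda,j} := \bR(\bG_\bullet^{(j)})$ is a finitely generated $\rU(\fg)$-module generated in a single degree, and the conjecture becomes the pair of statements that (i) each $N_{\lambda,j}$ extends to a $\rU(\osp(\tilde V))$-module, and (ii) it is irreducible as such. Passing to Koszul duals is what opens the door to Lemma~\ref{lem:parabolic-verma}, which is the only mechanism in the excerpt that promotes a $\rU(\fg)$-action to an $\osp(\tilde V)$-action.

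Second, I would construct a geometric model for $I_\lambda$ analogous to $\cM^k_\nu$. In the rectangular case $\lambda = ((k^n)+\gamma,\, k-\nu_d,\ldots,k-\nu_1)$ treated in Theorem~\ref{thm:main-typeC}, Howe duality provides the desingularization $\IGr(n+d,V)$. For a general partition $\lambda$ with $\ell(\lambda)\le n$, the natural candidate is an isotropic partial flag variety $X$ parametrizing chains of isotropic subspaces of $V$ whose dimensions read off the columns of $\lambda^\dagger$, equipped with tautological bundles $\cR_1 \subset \cR_2 \subset \cdots$; twist by Schur functors of the successive quotients $\cR_i/\cR_{i-1}$ together with appropriate determinants. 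Combining the relative forms of Theorem~\ref{thm:bott-C} and Theorem~\ref{thm:bott-flag} via Remark~\ref{rmk:bw-functorial} should identify the space of global sections of the resulting bundle $\cV$ with $I_\lambda$ and force higher cohomology of $\Sym(\eta)\otimes\cV$ to vanish (an analog of Proposition~\ref{prop:cM-vanish}). Theorem~\ref{thm:geom-tech} would then produce a minimal free resolution over $A = \Sym(E \otimes V)$, and Eisenbud's construction (as in the proof of Theorem~\ref{thm:bott-gen}) would descend this to a linear-strand-by-linear-strand resolution over $B$.

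Third, on the Koszul dual side, I would apply Lemma~\ref{lem:parabolic-verma} to upgrade each $N_{\lambda,j}$ to an $\osp(\tilde V)$-module once the generating piece is realized as the top of a parabolic Verma module. Irreducibility should then follow from a weight calculation: Proposition~\ref{prop:par-hom} shows that the generating piece is already irreducible under the Levi $\fgl(E) \times \fsp(V)$, and the character identity produced by the geometric step of the second paragraph, combined with the uniqueness (Theorem~\ref{thm:main-typeC}(b)) of a $B$-module linearly presented with a prescribed $\fgl(E)\times\fsp(V)$-equivariant generator, should rule out proper $\osp(\tilde V)$-submodules.

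The main obstacle is the irreducibility/extension step in the non-rectangular regime: outside the Howe-duality range of Theorem~\ref{thm:main-typeC} the modules $N_{\lambda,j}$ cannot be identified with a Howe-dual module $\cN^k_\mu$, so the clean source of both the $\osp(\tilde V)$-action and its irreducibility used in Theorem~\ref{thm:main-typeC} is unavailable --- this is precisely the phenomenon flagged after the statement of the conjecture. A plausible workaround is to use BGG-type (or Kazhdan--Lusztig-type) resolutions in the parabolic category $\cO^\fp$ for $\osp(\tilde V)$, together with translation functors, to reduce general $\lambda$ to the rectangular case; the delicate verification is then matching the translated modules with the specific linear strands produced by the geometric construction, i.e.\ showing that the geometry of step two and the category-$\cO^\fp$ combinatorics give the same direct sum decomposition of $\bR(I_\lambda)$ into irreducibles.
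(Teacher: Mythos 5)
The statement you are addressing is stated in the paper as Conjecture~\ref{conj:Ilambda}: the paper gives no proof, and indeed explicitly warns (right after the conjecture, and in the introduction) that for modules attached to lower-order minors the representations are not expected to come from a Howe dual pair, so the mechanism powering Theorem~\ref{thm:main-typeC} is unavailable. Your proposal is likewise not a proof but a program, and you yourself flag the decisive obstacle; so the honest assessment is that there is a genuine gap, and it sits exactly where the conjecture's content lies.

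Concretely: (1) Lemma~\ref{lem:parabolic-verma} does not ``promote'' a $\rU(\fg)$-action to an $\osp(\tilde V)$-action; it only shows that a $\rU(\fg)$-equivariant surjection $\rU(\fg)\otimes W_0\to\cW_0$ is automatically $\osp(\tilde V)$-equivariant \emph{when $\cW_0$ is already an $\osp(\tilde V)$-submodule of an induced module}. In Theorem~\ref{thm:main-typeC}(d) the seed for the induction is the $\osp(\tilde V)$-module $\cN^k_\lambda$ supplied by Howe duality; for a general linear strand $N_{\lambda,j}$ of the resolution of $I_\lambda$ no such ambient $\osp(\tilde V)$-structure is known, so invoking the lemma is circular. (2) Your geometric step, even if the desingularization by isotropic partial flag varieties and the cohomology vanishing were carried out, produces $A$- and $B$-module (hence only $\GL(E)\times\Sp(V)$-equivariant) information; it does not equip the Koszul-dual modules with a superalgebra action, and it says nothing about the higher strands $N_{\lambda,j}$ with $j>0$, which are not realized as an $\rH^0$ of anything in your construction. (3) Irreducibility does not follow from Levi-irreducibility of the lowest graded piece together with the uniqueness statement of Theorem~\ref{thm:main-typeC}(b): that uniqueness characterizes a linearly presented $B$-module, not the submodule lattice of an infinite-dimensional lowest-weight $\osp(\tilde V)$-module; in the proved cases irreducibility is imported wholesale from Howe duality. (4) The proposed reduction via BGG-type resolutions and translation functors in parabolic category $\cO$ for $\osp(\tilde V)$ is only a suggestion, and the matching of the translated simples with the specific strands of $I_\lambda$ is precisely the unsolved problem. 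So the proposal restates the conjecture in Koszul-dual form (which is fine and consistent with the paper's viewpoint) but does not close, or even substantially narrow, the gap the paper deliberately leaves open.
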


Now we derive a few consequences of Theorem~\ref{thm:main-typeC}.

First, we begin with an alternative construction of $\cM^k_\nu$. Consider $\IGr(n,V)$ with vector bundles $\eta = E \otimes \cR^*$, and $\cV^k_\nu = (\det E)^k \otimes (\det \cR^*)^k \otimes \bS_{[(d \times k) \setminus \nu]}(\cR^\perp / \cR)$. 

\begin{proposition} \label{prop:Mnu-alt}
$\cM^k_\nu = \rH^0(\IGr(n,V); \Sym(\eta) \otimes \cV^k_\nu)$.
\end{proposition}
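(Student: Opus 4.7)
Plan: The strategy is to realize both modules as the $\rH^0$ of a common coherent sheaf on the isotropic two-step flag variety $Y$ parameterizing pairs $(W \subset W')$ of isotropic subspaces of $V$ with $\dim W = n$ and $\dim W' = n+d$. Let $p \colon Y \to \IGr(n, V)$ and $q \colon Y \to \IGr(n+d, V)$ be the two forgetful projections, and let $\cR_n$, $\cR_{n+d}$ denote the pullbacks to $Y$ of the respective tautological subbundles. The common sheaf that will bridge the two descriptions is
\[
\cF = \Sym(E \otimes \cR_n^*) \otimes (\det E)^k \otimes (\det \cR_n^*)^k \otimes \bS_{(d\times k)\setminus \nu}((\cR_{n+d}/\cR_n)^*).
\]

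First I would push forward through $q$. The map $q$ is the relative Grassmannian of $n$-dimensional subspaces of $\cR_{n+d}$ (automatically isotropic since $\cR_{n+d}$ is), and under the duality $\Gr(n, \cR_{n+d}) \cong \Gr(d, \cR_{n+d}^*)$ it is exactly the relative version, over $\IGr(n+d,V)$, of the Grassmannian used in \S\ref{sec:EN-powers} with $F = \cR_{n+d}^*$. Under this identification the tautological rank-$d$ subbundle of \S\ref{sec:EN-powers} corresponds to $(\cR_{n+d}/\cR_n)^*$ and the rank-$n$ tautological quotient to $\cR_n^*$, so $\cF$ is precisely the sheaf $\Sym(\eta) \otimes \cV^k_{(d\times k)\setminus \nu}$ of \S\ref{sec:EN-powers} in this relative setup. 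The relative version of Proposition~\ref{prop:cVnu-vanish} then gives $q_* \cF = \sM^k_\nu(E, \cR_{n+d}^*)$ with all higher direct images vanishing, so by the Leray spectral sequence $\rH^0(Y; \cF) = \cM^k_\nu$.

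Next I would push forward through $p$. Since $\cR_n \subset \cR_{n+d}$ are both isotropic, $\cR_{n+d}/\cR_n$ is a maximal isotropic subbundle of the rank-$2d$ symplectic bundle $\cR_n^\perp/\cR_n$, so $p$ exhibits $Y$ as the relative isotropic Grassmannian $\IGr(d, \cR_n^\perp/\cR_n)$ with tautological subbundle $\cR_{n+d}/\cR_n$. The relative version of Theorem~\ref{thm:bott-C} gives
\[
\rR p_* \bS_{(d\times k)\setminus \nu}((\cR_{n+d}/\cR_n)^*) = \bS_{[(d\times k)\setminus \nu]}(\cR_n^\perp/\cR_n)
\]
concentrated in degree $0$. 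All remaining factors of $\cF$ are pulled back from $\IGr(n, V)$, so the projection formula combined with Leray yields $\rH^0(Y; \cF) = \rH^0(\IGr(n, V); \Sym(\eta) \otimes \cV^k_\nu)$, completing the identification of the two modules as vector spaces.

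Finally, the isomorphism is automatically $A = \Sym(E\otimes V)$-equivariant: on $Y$ both $A$-actions descend from the single surjection $E \otimes V \twoheadrightarrow E \otimes \cR_n^*$, which factors through $E \otimes \cR_{n+d}^* \twoheadrightarrow E \otimes \cR_n^*$ and matches the two constructions. I do not anticipate a serious obstacle here: the content of the proof is that the two relative Borel--Weil--Bott pushforwards on $Y$ (along $p$ and along $q$) can be performed in either order, and the only real care lies in the bookkeeping of tautological bundles on $Y$ under the two projections, most importantly the duality step $\Gr(n, \cR_{n+d}) \cong \Gr(d, \cR_{n+d}^*)$ needed to line up with the conventions of \S\ref{sec:EN-powers}.
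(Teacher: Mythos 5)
Your proof is correct and is essentially the paper's own argument: both realize the two modules as sections of the same sheaf on the two-step isotropic flag variety $\mathbf{IFl}(n,n+d,V)$ and push it down the two projections, using the relative Borel--Weil theorem (Theorem~\ref{thm:bott-C}) along one map and the relative form of the \S\ref{sec:EN-powers} construction (via the duality $\Gr(n,\cR_{n+d})\cong\Gr(d,\cR_{n+d}^*)$) along the other. The only cosmetic difference is that the paper writes the middle factor as $\bS_{(d\times k)\setminus\nu}(\cR_n^\perp/\cR_{n+d})$, which the symplectic form identifies with your $\bS_{(d\times k)\setminus\nu}((\cR_{n+d}/\cR_n)^*)$.
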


\begin{proof}
Let $X$ be the partial isotropic flag variety ${\bf IFl}(n,n+d,V)$ with tautological flag $\cR_n \subset \cR_{n+d} \subset V$. Let $\pi_1 \colon X \to \IGr(n,V)$ and $\pi_2 \colon X \to \IGr(n+d,V)$ be the projections. Let $\cR$ and $\cR'$ be the tautological subbundles on $\IGr(n,V)$ and $\IGr(n+d,V)$, respectively.

First, $\pi_1$ is the relative Grassmannian $\Gr(d,\cR^\perp / \cR)$. So 
\[
\bS_{[(d \times k) \setminus \nu]}(\cR^\perp / \cR) = {\pi_1}_* (\bS_{(d \times k) \setminus \nu}(\cR_n^\perp / \cR_{n+d}))
\]
using Theorem~\ref{thm:bott-C} and Remark~\ref{rmk:bw-functorial}. Set 
\[
\cW = \Sym(E \otimes \cR_n^*) \otimes (\det E)^k \otimes (\det \cR_n^*)^k \otimes \bS_{(d \times k) \setminus \nu}(\cR_n^\perp / \cR_{n+d}).
\]
Since $\cR_n = \pi_1^*(\cR)$, the projection formula gives $\Sym(\eta) \otimes \cV_\nu^k = {\pi_1}_*(\cW)$. Next, $\pi_2$ is the relative Grassmannian $\Gr(n,\cR')$. The symplectic form on $V$ induces an isomorphism $\cR_n^\perp / \cR_{n+d} \cong \cR_{n+d}^* / \cR_n^*$. So applying ${\pi_2}_*$ to $\cW$ is a relative version of the construction in \S\ref{sec:EN-powers}, which is how we defined $\sM^k_\nu(E, {\cR'}^*)$.

So pushing forward $\Sym(\eta) \otimes \cV_\nu^k$ to a point is the same as pushing forward $\sM^k_\nu(E, {\cR'}^*)$ to a point. Since the latter is $\cM^k_\nu$, we are done.
\end{proof}

Define the following vector bundle on $\IGr(n,V)$:
\[
\cE^\nu_\lambda = \bS_\lambda(\cR^\perp) \otimes (\det \cR^*)^k \otimes \bS_{[(d \times k) \setminus \nu]}(\cR^\perp / \cR).
\]

\begin{corollary} \label{cor:Mnu-res-general}
Let $\bF^\nu_\bullet$ be the minimal free resolution of $\cM^k_\nu$ over $A$. Then
\begin{align*} 
\bF^\nu_i = \bigoplus_{j \ge 0} \rH^j(\IGr(n,V); \bigoplus_{\substack{|\lambda| = i+j\\ \lambda \subseteq n \times (n+2d)}} \bS_{(k^n) + \lambda}(E) \otimes \cE^\nu_{\lambda^\dagger}) \otimes A(-i-j).
\end{align*}
\end{corollary}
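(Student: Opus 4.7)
The strategy is to apply the geometric technique (Theorem~\ref{thm:geom-tech}) to the presentation of $\cM^k_\nu$ supplied by Proposition~\ref{prop:Mnu-alt}. On $X = \IGr(n,V)$, the symplectic form identifies $\cR^\perp$ with $(V/\cR)^*$, producing the tautological exact sequence $0 \to \cR^\perp \to V \otimes \cO_X \to \cR^* \to 0$ (after identifying $V \cong V^*$). Tensoring with $E$ gives precisely the input needed for the geometric technique:
\[
0 \to \xi \to \eps \to \eta \to 0, \qquad \xi = E \otimes \cR^\perp, \ \eps = E \otimes V \otimes \cO_X, \ \eta = E \otimes \cR^*.
\]
Then $A = \Sym(\rH^0(X;\eps))$, and by Proposition~\ref{prop:Mnu-alt}, setting $\cV = \cV^k_\nu = (\det E)^k \otimes (\det \cR^*)^k \otimes \bS_{[(d \times k) \setminus \nu]}(\cR^\perp/\cR)$, we have $M^{(0)}(\cV) = \cM^k_\nu$.

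Next I would verify the vanishing $M^{(i)}(\cV) = 0$ for $i > 0$, which in view of Theorem~\ref{thm:geom-tech} is what makes the complex $\bF_\bullet$ a genuine resolution. This is implicit in the proof of Proposition~\ref{prop:Mnu-alt}: factor the pushforward through the partial isotropic flag variety $\mathbf{IFl}(n,n+d,V)$ with its two projections $\pi_1, \pi_2$. Proposition~\ref{prop:cM-vanish} gives vanishing of higher cohomology of $\sM^k_\nu(E,{\cR'}^*)$ on $\IGr(n+d,V)$, while Theorem~\ref{thm:bott-C} together with Remark~\ref{rmk:bw-functorial} gives vanishing of $\rR^{>0}{\pi_1}_*$ and $\rR^{>0}{\pi_2}_*$ of the relevant sheaves (as in the proof of Proposition~\ref{prop:Mnu-alt}). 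Two Leray spectral sequences then collapse to give the required vanishing on $\IGr(n,V)$.

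Finally I would unpack the terms of Theorem~\ref{thm:geom-tech}. By the Cauchy identity (Theorem~\ref{thm:cauchy-id}),
\[
\lw^{i+j}(E \otimes \cR^\perp) \approx \bigoplus_{\substack{|\lambda|=i+j\\ \lambda \subseteq n \times (n+2d)}} \bS_\lambda(E) \otimes \bS_{\lambda^\dagger}(\cR^\perp),
\]
where the range on $\lambda$ reflects $\dim E = n$ and $\rank \cR^\perp = n+2d$. Tensoring with $\cV^k_\nu$ and absorbing $(\det E)^k$ via $\bS_\lambda(E) \otimes (\det E)^k = \bS_{(k^n)+\lambda}(E)$ gives
\[
\lw^{i+j}(\xi) \otimes \cV^k_\nu \approx \bigoplus_{\substack{|\lambda|=i+j\\ \lambda \subseteq n \times (n+2d)}} \bS_{(k^n)+\lambda}(E) \otimes \cE^\nu_{\lambda^\dagger},
\]
and substituting into the formula of Theorem~\ref{thm:geom-tech} yields the claimed description of $\bF^\nu_i$.

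There is no real obstacle here: once Proposition~\ref{prop:Mnu-alt} is in hand, the argument is a bookkeeping exercise in the geometric technique, and the only nontrivial input (the vanishing of higher cohomology) has already been established along the way.
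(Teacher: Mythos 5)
Your argument is correct and follows the paper's own route: the proof given in the paper is exactly the observation that $\xi = E \otimes \cR^\perp$ in the setup of \S\ref{sec:geom}, combined with Proposition~\ref{prop:Mnu-alt}, Theorem~\ref{thm:geom-tech}, and Theorem~\ref{thm:cauchy-id}, and you have merely made explicit the higher-cohomology vanishing that the paper leaves implicit in the proof of Proposition~\ref{prop:Mnu-alt}. The only slip is a citation: the vanishing of $\rR^{>0}{\pi_2}_*$ is the relative form of Proposition~\ref{prop:cVnu-vanish} (i.e., the type A statement, Theorem~\ref{thm:bott}), whereas Theorem~\ref{thm:bott-C} is what handles $\pi_1$.
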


\begin{proof}
In the notation of \S\ref{sec:geom}, $\xi = E \otimes \cR^\perp$. Now use
Proposition~\ref{prop:Mnu-alt}, Theorem~\ref{thm:geom-tech}, and Theorem~\ref{thm:cauchy-id}.
\end{proof}

It is probably difficult to give a closed formula to simplify the above expression, but the next theorem sheds some light on the relevant combinatorial features. While this result looks similar to the Borel--Weil--Bott theorem, it is not a special case because the vector bundles $\cE^\nu_\lambda$ are not irreducible homogeneous bundles. To strengthen the analogy, we point the reader to \cite[\S 3.5]{lwood} where $\iota_{2k}$ and $\tau_{2k}$ are defined in terms of the Weyl group of type $\rB\rC_\infty$.

\begin{theorem} \label{thm:bott-gen}
Let $\nu \subseteq (k^d)$ and $\lambda \subseteq n \times (n+2d)$ be partitions and let $\cR$ be the tautological subbundle on $\IGr(n,V)$. 
\begin{compactenum}[\rm (a)]
\item If $k \ge \ell(\lambda)$, then $\rH^i(\IGr(n,V); \cE^\nu_{\lambda^\dagger}) = 0$ for $i>0$.
\item The cohomology of $\cE^\nu_{\lambda^\dagger}$ vanishes unless $\tau_{2k}(\lambda) = \mu$ is defined, in which case the cohomology is nonzero only in degree $\iota_{2k}(\lambda) = i$, and we have an $\Sp(V)$-equivariant isomorphism
\begin{align*}
\rH^i(\IGr(n,V); \cE^\nu_{\lambda^\dagger}) \cong \rH^0(\IGr(n,V); \cE^\nu_{\mu^\dagger}).
\end{align*}
\end{compactenum}
\end{theorem}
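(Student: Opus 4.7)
The plan is to compare two $A$-free resolutions of $\cM^k_\nu$: the geometric one from Corollary~\ref{cor:Mnu-res-general}, whose terms encode precisely the cohomology we wish to compute, and one produced from the linear $B$-free resolution $\bL(\cN^k_{\nu^\dagger})$ of Theorem~\ref{thm:main-typeC} via Eisenbud's construction \cite{eisenbud-ci}.

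The Koszul complex $K_\bullet$ on a basis $f_1,\dots,f_r$ of $\lw^2(E)$ is a minimal $A$-free resolution of $B$ with $K_q = \lw^q(\lw^2(E)) \otimes A(-2q)$. Starting from $\bL(\cN^k_{\nu^\dagger})$, whose differentials have entries in $B_1$ and can therefore be lifted to entries in $A_1$, Eisenbud's construction produces an $A$-free resolution of $\cM^k_\nu$ whose homological degree $i$ term is
\[
\bigoplus_{p+q=i} (\cN^k_{\nu^\dagger})_p^* \otimes \lw^q(\lw^2(E)) \otimes A(-p-2q).
\]
The differentials in this total complex are either linear (from the lifted $\bL$-part) or quadratic (from the Koszul part together with the correction terms $h_i$ arising from $\tilde d^2 = \sum f_i h_i$), so they all lie in the maximal ideal of $A$, and the resolution is minimal.

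Matching the internal degree shifts $A(-p-2q) = A(-i-j)$ against Corollary~\ref{cor:Mnu-res-general} forces $q = j$ and $p = i-j$, yielding the $\GL(E) \times \Sp(V)$-equivariant identity
\[
\rH^j\Big(\IGr(n,V);\ \bigoplus_{\substack{|\lambda| = i+j\\ \lambda \subseteq n \times (n+2d)}} \bS_{(k^n)+\lambda}(E) \otimes \cE^\nu_{\lambda^\dagger}\Big) \cong (\cN^k_{\nu^\dagger})_{i-j}^* \otimes \lw^j(\lw^2(E)).
\]
Taking $\bS_{(k^n)+\lambda}(E)$-isotypic components of each side reduces the computation of the $\Sp(V)$-representation $\rH^j(\IGr(n,V); \cE^\nu_{\lambda^\dagger})$ to a combinatorial analysis of how $\bS_{(k^n)+\lambda}(E)$ sits inside the right-hand side.

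For part (a), when $k \ge \ell(\lambda)$, the degree bookkeeping on the $\GL(E)$-graded structure of $\cN^k_{\nu^\dagger}$ — governed by Theorem~\ref{thm:super-lwood} and the modification rule $\tau_{2k}$ — shows that $\bS_{(k^n)+\lambda}(E)$ can contribute only when $j = 0$, yielding the vanishing of higher cohomology. For part (b), the analogous analysis via $\iota_{2k}, \tau_{2k}$ shows that $\bS_{(k^n)+\lambda}(E)$ contributes precisely in cohomological degree $j = \iota_{2k}(\lambda)$ whenever $\tau_{2k}(\lambda) = \mu$ is defined, and the accompanying $\Sp(V)$-representation agrees with the one appearing in $\rH^0(\IGr(n,V); \cE^\nu_{\mu^\dagger})$; if $\tau_{2k}(\lambda)$ is undefined, no contribution can appear and the cohomology vanishes. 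The main obstacle is the combinatorial bookkeeping identifying the $\GL(E)$-isotypic structure of each graded piece of the Howe dual module $\cN^k_{\nu^\dagger}$ with the output of the modification rule; this is essentially a refinement of Lemma~\ref{lem:gamma-bound}, carried out to produce an equality (rather than just the subrepresentation bound used in Corollary~\ref{cor:Nlambda-upper}) between the cohomology and the predicted shifted $\rH^0$.
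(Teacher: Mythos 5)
The decisive step in your argument fails. When you lift the differential $d$ of the linear $B$-resolution $\bL(\cN^k_{\nu^\dagger})$ to a map $\tilde d$ with linear entries over $A$, the relation $\tilde d^2=\sum_i f_i h_i$ forces the homotopies $h_i$ to have entries of degree $2-2=0$: they are matrices of scalars, not quadrics. Even granting that your lifted total complex can be completed to an $A$-free resolution of $\cM^k_\nu$, these scalar maps enter it as components $(\cN^k_{\nu^\dagger})_p^*\otimes\lw^q(\lw^2(E))\otimes A(-p-2q)\to(\cN^k_{\nu^\dagger})_{p-2}^*\otimes\lw^{q+1}(\lw^2(E))\otimes A(-p-2q)$, i.e.\ degree-preserving maps on generators, so the complex is minimal only if all of them vanish. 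They cannot all vanish: $\cM^k_\nu$ has finite projective dimension over the polynomial ring $A$, while your complex is nonzero in every homological degree because $\cN^k_{\nu^\dagger}$ is infinite-dimensional (equivalently, its support variety in Proposition~\ref{prop:support-typeC} is not $\{0\}$). For the same reason the identity you extract from the claimed minimality, namely $\rH^j(\IGr(n,V);\bigoplus_{|\lambda|=i+j}\bS_{(k^n)+\lambda}(E)\otimes\cE^\nu_{\lambda^\dagger})\cong(\cN^k_{\nu^\dagger})_{i-j}^*\otimes\lw^j(\lw^2(E))$, is false rather than merely unproved: its right-hand side is nonzero for arbitrarily large $i$, while the left-hand side vanishes once $i+j$ exceeds what the finite resolution of Corollary~\ref{cor:Mnu-res-general} allows.

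What survives of your idea is that the offending scalar components are exactly (duals of) Eisenbud's CI operators, i.e.\ the action of $S=\Sym(\lw^2(E^*))\subset\rU(\fg)$ on $\cN^k_{\nu^\dagger}$; cancelling them amounts to replacing your naive terms by Koszul homology over $S$. The correct identity is $\Tor^S_i(\cN^k_{\nu^\dagger},\bk)_j^*\cong\Tor^A_j(\cM^k_\nu,\bk)_{i+j}=\rH^i(\IGr(n,V);\bigoplus_{|\lambda|=i+j}\bS_{(k^n)+\lambda}(E)\otimes\cE^\nu_{\lambda^\dagger})$, and the paper establishes it by running Eisenbud's construction in the opposite direction: starting from the finite $A$-resolution $\bF_\bullet$ of Corollary~\ref{cor:Mnu-res-general}, one builds a $B$-resolution $\bG_\bullet$ with $\bG_i=\bigoplus_j\bF_{i-2j}(-2j)\otimes\rD^j(\lw^2(E))$, and the linearity of the minimal $B$-resolution (Theorem~\ref{thm:main-typeC}) shows that $\bR(\bG_\bullet)$ is a minimal complex of $S$-modules with homology $\cN^k_{\nu^\dagger}$ concentrated in degree $0$. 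Your final combinatorial step is also misdirected: what is needed is the $\GL(E)$-isotypic decomposition of $\Tor^S_\bullet(\cN^k_{\nu^\dagger},\bk)$, which is governed by $\tau_{2k}$ and $\iota_{2k}$ (Theorem~\ref{thm:super-lwood}, or \cite[Proposition 3.13]{lwood}), not the decomposition of the graded pieces of $\cN^k_{\nu^\dagger}$ themselves, which carries multiplicities and satisfies no such rule; with the corrected identity no refinement of Lemma~\ref{lem:gamma-bound} is required to obtain (a) and (b).
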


\begin{proof}
Let $\bF_\bullet$ be the minimal free resolution of $\cM^k_\nu$ over $A$. Using \cite[\S 7]{eisenbud-ci}, we can use it to build a free resolution $\bG_\bullet$ of $\cM^k_\nu$ over $B$. We just need a few properties of this construction. First, the terms are given by
\[
\bG_i = \bigoplus_{j=0}^{\lfloor i/2\rfloor} \bF_{i-2j}(-2j) \otimes_\bk \rD^j(\lw^2(E)).
\]
Second, if we set $S = \Sym(\lw^2(E^*))$, then $\bR(\bG_\bullet)$ is a minimal complex of $S$-modules, and applying $\bL$ to its homology gives the minimal free resolution of $\cM^k_\nu$ over $B$. By Theorem~\ref{thm:main-typeC}, the minimal free resolution of $\cM^k_\nu$ is linear over $B$, which means that $\bR(\bG_\bullet)$ only has homology in degree $0$, and $\rH_0(\bR(\bG_\bullet)) = \cN^k_{\nu^\dagger}$. Putting this together implies
\[
\Tor^S_i(\cN^k_{\nu^\dagger}, \bk)_{j}^* = \Tor^A_{j}(\cM^k_\nu, \bk)_{i+j}.
\]
Now consider the $\GL(E) \times \Sp(V)$-action on $\Tor^S_i(\cN^k_{\nu^\dagger}, \bk)_j^*$. If $\bS_{(k^n)+\lambda}(E^*)$ has a nonzero $\GL(E)$-isotypic component, then $\tau_{2k}(\lambda)$ is well-defined and $\iota_{2k}(\lambda) = i$ by Theorem~\ref{thm:super-lwood} (or \cite[Proposition 3.13]{lwood}). In particular, since the $\GL(E)$ and $\Sp(V)$ actions commute, we have an isomorphism of $\Sp(V)$-representations
\[
\hom_{\GL(E)}(\bS_{(k^n)+\lambda}(E^*), \Tor^S_{\iota_{2k}(\lambda)}(\cN^k_{\nu^\dagger}, \bk)_{|\lambda|}^*) \cong \hom_{\GL(E)}(\bS_{(k^n)+\tau_{2k}(\lambda)}(E^*), \Tor^S_0(\cN^k_{\nu^\dagger}, \bk)_{|\tau_{2k}(\lambda)|}^*).
\]
Combining Theorem~\ref{thm:geom-tech} and Corollary~\ref{cor:Mnu-res-general}, we get
\[
\Tor^A_i(\cM^k_\nu, \bk)_{i+j} = \rH^j(\IGr(n,V); \bigoplus_{\substack{|\lambda| = i+j\\ \lambda \subseteq n \times (n+2d)}} \bS_{(k^n) + \lambda}(E) \otimes \cE^\nu_{\lambda^\dagger}).
\]
Combining everything, we get an isomorphism of $\Sp(V)$-representations
\begin{align*}
\rH^{\iota_{2k}(\lambda)}(\IGr(n,V); \cE^\nu_{\lambda^\dagger}) \cong \rH^0(\IGr(n,V); \cE^\nu_{\tau_{2k}(\lambda)^\dagger}),
\end{align*}
and that $\rH^N(\IGr(n,V); \cE^\nu_{\lambda^\dagger}) = 0$ if $N \ne \iota_{2k}(\lambda)$.
\end{proof}

The method of the proof of Theorem~\ref{thm:bott-gen} is a clean, elegant approach, but it is in some ways unsatisfactory since it appeals to so many other techniques. We have another proof which is more direct, but has the disadvantage of only working for $\nu = (k^d)$ and is much longer. Since it may be of independent interest, we include this alternate proof in \cite{lwood-minors-appendix}. 

\begin{remark} \label{rmk:typeC-minors-explain}
As in \S\ref{sec:typeC-prelim}, we interpret $A$ as the coordinate ring of the space of linear maps $E \to V^*$ and hence it makes sense to talk about the generic map with respect to $A$, i.e., picking bases for $E$ and $V^*$, the matrix entries are the variables of $A$. Similar comments apply to $B$. The ideal of maximal minors in $B$ of this generic matrix is generated by $\lw^n(E) \otimes \bS_{[1^n]}(V)$. When $\nu = (k^d)$, \eqref{eqn:cM-module} becomes 
\[
\cM^k_{(k^d)} = \bigoplus_{\lambda,\ \lambda_n \ge k} \bS_\lambda(E) \otimes \bS_{[\lambda]}(V),
\]
which is the $k$th power of the maximal minors in $B$ (use Proposition~\ref{prop:typeC-basicfacts}(c)).
\end{remark}

\begin{corollary} \label{cor:C-lin-res}
For $\nu \subseteq (k^d)$, if $k \ge n-1$, then the minimal free resolution of $\cM^k_\nu$ over $A$ is linear.
\end{corollary}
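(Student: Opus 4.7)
The plan is to apply the formula of Corollary~\ref{cor:Mnu-res-general}:
\[
\bF^\nu_i = \bigoplus_{j \ge 0} \rH^j\Bigl(\IGr(n,V); \bigoplus_{\substack{|\lambda| = i+j\\ \lambda \subseteq n \times (n+2d)}} \bS_{(k^n) + \lambda}(E) \otimes \cE^\nu_{\lambda^\dagger}\Bigr) \otimes A(-i-j),
\]
and reduce linearity of $\bF^\nu_\bullet$ to the vanishing of $\rH^j(\IGr(n,V); \cE^\nu_{\lambda^\dagger})$ for every $j > 0$ and every partition $\lambda \subseteq n \times (n+2d)$. Indeed, only the $j=0$ terms contribute to $\bF^\nu_i$ with the correct internal degree $i$, so vanishing of all higher cohomologies forces the resolution to be linear.

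To verify this vanishing I would split on the size of $\ell(\lambda)$, using only the two parts of Theorem~\ref{thm:bott-gen}. Since $\lambda \subseteq n \times (n+2d)$ we have $\ell(\lambda) \le n$. If $\ell(\lambda) \le k$, then Theorem~\ref{thm:bott-gen}(a) directly yields $\rH^j(\IGr(n,V); \cE^\nu_{\lambda^\dagger}) = 0$ for all $j > 0$. Otherwise $\ell(\lambda) > k$, and combining this with $k \ge n-1$ and $\ell(\lambda) \le n$ forces $k = n-1$ and $\ell(\lambda) = n$.

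In this remaining case the modification rule of \S\ref{sec:typeC-weyl} calls for a border strip $R_\lambda$ of length $2\ell(\lambda) - 2k - 2 = 2n - 2(n-1) - 2 = 0$. Because the rule explicitly requires $R_\lambda$ to be non-empty, the procedure terminates with $\iota_{2k}(\lambda) = \infty$ and $\tau_{2k}(\lambda)$ undefined. Theorem~\ref{thm:bott-gen}(b) then gives that the entire cohomology of $\cE^\nu_{\lambda^\dagger}$ vanishes, in particular in positive degree. Combining the two cases completes the argument.

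The proof is short and essentially a bookkeeping exercise once Corollary~\ref{cor:Mnu-res-general} and Theorem~\ref{thm:bott-gen} are in hand, so there is no genuine obstacle; the only subtlety to watch is the convention that a border strip of length zero is not an admissible strip, which is what makes the threshold $k \ge n-1$ (rather than $k \ge n$) suffice.
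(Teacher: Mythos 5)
Your proof is correct and follows essentially the same route as the paper, which simply combines Corollary~\ref{cor:Mnu-res-general} with Theorem~\ref{thm:bott-gen}(a). The one genuine addition is your treatment of the borderline case $k=n-1$, $\ell(\lambda)=n$: the paper's one-line proof cites only part (a), which literally requires $k\ge\ell(\lambda)$, whereas you correctly note that here the border strip would have length $2\ell(\lambda)-2k-2=0$, so $\tau_{2k}(\lambda)$ is undefined and Theorem~\ref{thm:bott-gen}(b) kills all cohomology of $\cE^\nu_{\lambda^\dagger}$ --- exactly the point that makes the threshold $k\ge n-1$ (rather than $k\ge n$) suffice.
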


\begin{proof}
Combine Corollary~\ref{cor:Mnu-res-general} and Theorem~\ref{thm:bott-gen}(a).
\end{proof}

For every pair of finitely generated $B$-modules $M$ and $N$, there is an action of $\rU(\fg) \cong \ext^\bullet_B(\bk,\bk)$ on $\ext^\bullet_B(M,N)$ which we can restrict to $S = \Sym(\lw^2(E^*))$. The support of this module is the {\bf support variety} of the pair $(M,N)$, and is a subscheme of $\Spec(S) = \lw^2(E)$. The basic properties of support varieties are developed in \cite{support-ci}. We define the support variety of a single module $M$ to be the support variety of the pair $(M,\bk)$. Note that $\ext^i_B(M,\bk) = \Tor_i^B(M,\bk)^*$. 

\begin{proposition} \label{prop:support-typeC}
Choose a partition $\nu \subseteq (k^d)$. The support variety of $\cM^k_\nu$ is reduced and is the variety of skew-symmetric matrices in $\lw^2(E)$ of rank $\le 2k$.
\end{proposition}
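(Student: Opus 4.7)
My plan is to compute $\Ann_S(\cN^k_{\nu^\dagger})$ directly, using the Howe-duality realization $\cN^k_{\nu^\dagger} \subset \sU = \Sym(U \otimes (E^*|F))$. By Theorem~\ref{thm:main-typeC}(c--d), $\cN^k_{\nu^\dagger} \cong \ext^\bullet_B(\cM^k_\nu,\bk)$ as $\rU(\fg)$-modules, with the $\rU(\fg)$-action extending canonically to $\osp(\tilde V)$, and Proposition~\ref{prop:CI-koszul} identifies $S$ with the central subalgebra $\Sym(\lw^2 E^*) \subset \rU(\fg)$, so the Yoneda action of $S$ agrees with the restriction of the $\osp(\tilde V)$-action. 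Under the Howe embedding $\osp(\tilde V) \hookrightarrow \spo(U \otimes \tilde V)$ the piece $\lw^2 E^*$ lies in the raising part of $\spo(U \otimes \tilde V)$, and by $\Sp(U) \times \GL(E)$-equivariance it must be the unique copy $\omega_U \otimes \lw^2 E^* \subset \lw^2 U \otimes \lw^2 E^* \subset \Sym^2(U \otimes E^*)$. Raising operators act by multiplication on $\sU$, so the $S$-action on $\sU$ factors through an algebra map $\pi \colon S \to \Sym(U \otimes E^*)$, multiplying in the first tensor factor of $\sU = \Sym(U \otimes E^*) \otimes_\bk \lw^\bullet(U \otimes F)$.

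The first and second fundamental theorems of invariant theory for $\Sp(U)$ on $U \otimes E^*$ then give $\mathrm{im}(\pi) = \Sym(U \otimes E^*)^{\Sp(U)}$ and $\ker(\pi) = I$, where $I \subset S$ is the prime (hence radical) ideal defining the subvariety $Y \subset \lw^2 E$ of skew-symmetric forms of rank $\le 2k$. Since $\Sym(U \otimes E^*)$ is an integral domain containing $S/I$ and $\lw^\bullet(U \otimes F)$ is a finite-dimensional $\bk$-space, $\sU$ is torsion-free (in fact free of finite rank) over $S/I$. Consequently every nonzero $S$-submodule $N \subseteq \sU$ satisfies $\Ann_S(N) = I$: given $f \in S \setminus I$ and $0 \ne \xi \in N$, torsion-freeness forces $f \cdot \xi = \pi(f)\,\xi \ne 0$, while the reverse containment $\Ann_S(\sU) \subseteq \Ann_S(N)$ is automatic.

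From $\nu \subseteq (k^d)$ I read off $\ell(\nu^\dagger) = \nu_1 \le k$ and $(\nu^\dagger)_{n+1} \le \ell(\nu) \le d \le n+d$, so the $\bS_{[\nu^\dagger]}(U)$-isotypic component $\bS_{[\nu^\dagger]}(U) \otimes \cN^k_{\nu^\dagger}$ appears as a nonzero $S$-submodule of $\sU$, and its $S$-annihilator equals $\Ann_S(\cN^k_{\nu^\dagger})$ because $S$ commutes with $\Sp(U)$. The previous paragraph then yields $\Ann_S(\cN^k_{\nu^\dagger}) = I$, so the support variety of $\cM^k_\nu$ is precisely the reduced subscheme $Y$ of skew-symmetric matrices in $\lw^2 E$ of rank at most $2k$.

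The main subtle point is the identification in the first paragraph: matching the Yoneda-product action of cohomology operators on $\ext^\bullet_B(\cM^k_\nu,\bk)$ with the multiplication action on $\sU$ coming from Howe duality, and in particular pinning down the image of $\lw^2 E^*$ inside $\Sym^2(U \otimes E^*)$ as $\omega_U \otimes \lw^2 E^*$ (not merely up to an unknown $\Sp(U) \times \GL(E)$-equivariant scalar). Once this compatibility is in hand, the remainder reduces to classical invariant theory for $\Sp(U)$ combined with the elementary observation that a polynomial ring tensored with a finite-dimensional exterior algebra is a free module.
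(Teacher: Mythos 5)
Your proof is correct and follows the same skeleton as the paper's own argument: identify $\ext^\bullet_B(\cM^k_\nu,\bk)$ with $\cN^k_{\nu^\dagger}$ via Theorem~\ref{thm:main-typeC} and Proposition~\ref{prop:CI-koszul}, realize $\cN^k_{\nu^\dagger}$ as the $\bS_{[\nu^\dagger]}(U)$-isotypic piece of $\sU=\Sym(U\otimes E^*)\otimes\lw^\bullet(U\otimes F)$, and observe that $S=\Sym(\lw^2(E^*))$ acts by multiplication through the $\Sp(U)$-invariants of the first factor, which classical invariant theory identifies with the coordinate ring of rank $\le 2k$ skew-symmetric matrices. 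The packaging of the last step differs: the paper decomposes $\Sym(U\otimes E^*)$ under the dual pair $(\Sp(U),\fso(E\oplus E^*))$ into pieces $\bS_{[\mu]}(U)\otimes M^{\fso}_\mu$ as in \eqref{eqn:supp2} and asserts that the $S$-action factors through $M^{\fso}_\emptyset$, leaving the faithfulness statement (that the annihilator is exactly the determinantal ideal, not something larger) implicit, whereas you argue directly with the first and second fundamental theorems for $\Sp(U)$ plus torsion-freeness of $\sU$ over $S/I$, which cleanly pins down $\ann_S(\cN^k_{\nu^\dagger})=I$ and hence reducedness; this is a worthwhile sharpening of the written proof. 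Two small points: your parenthetical claim that $\sU$ is free of finite rank over $S/I$ is false ($\Sym(U\otimes E^*)$ is not a finite module over its $\Sp(U)$-invariant subring), but this is harmless since only torsion-freeness is used; and the normalization of the copy of $\lw^2(E^*)$ inside $\Sym^2(U\otimes E^*)$ matters only up to a nonzero scalar, which is guaranteed because the oscillator representation of $\spo(U\otimes\tilde{V})$ is faithful and $\GL(E)$-weight considerations rule out contraction terms, so your annihilator computation is unaffected.
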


\begin{subeqns}
\begin{proof}
We have $\bL(\cM^k_{\nu}) = \cN^k_{\nu^\dagger}$ by Theorem~\ref{thm:main-typeC}. With the notation in \S\ref{sec:howe}, we have
\begin{align} \label{eqn:supp1}
\sU = \Sym(U \otimes (E^*|F)) = \Sym(U \otimes E^*) \otimes \lw^\bullet(U \otimes F).
\end{align}
The first tensor factor has a multiplicity-free action of $\Sp(U) \times \fso(E \oplus E^*)$, and the second has a multiplicity-free action of $\Sp(U) \times \fsp(V)$. We have
\begin{equation} \label{eqn:supp2}
\begin{split} 
\Sym(U \otimes E^*) &= \bigoplus_{\ell(\mu) \le \min(n,k)} \bS_{[\mu]}(U) \otimes M^{\fso}_\mu,\\
\lw^\bullet(U \otimes F) &= \bigoplus_{\lambda \subseteq k \times (n+d)} \bS_{[\lambda]}(U) \otimes \bS_{[((n+d) \times k) \setminus \lambda^\dagger]}(V),
\end{split}
\end{equation}
where $(\det E)^k \otimes M^{\fso}_\mu$ is the module $M_\mu$ in \cite[\S 3.5]{lwood}. So as an $S$-module, $\cN^k_{\nu^\dagger}$ is a direct sum of modules of the form $M_\mu^\fso$. On each module, the action of $S$ factors through $M_\emptyset^\fso$, which is the coordinate ring of the variety of skew-symmetric matrices of rank $\le 2k$.
\end{proof}
\end{subeqns}

\begin{remark}
Using \cite[Theorem 3.9, Corollary 3.12]{avramov}, we get the following statement. Let $W \subset \lw^2(E)$ be a linear subspace of complementary dimension to the space of skew-symmetric matrices of rank $\le 2k$ so that their intersection is $0$. Then $\cM_\nu^k$ has finite projective dimension over $A/(W)$. It would be interesting to understand the minimal free resolution of $\cM^k_\nu$ over $A/(W)$.
\end{remark}

\subsection{Cokernels}

For background on multilinear algebra on chain complexes, see \cite[\S 2.4]{weyman}. Let $\phi \colon E \otimes B(-1) \to V \otimes B$ be the generic matrix. Let $\tilde{V} = (E \oplus E^*) \oplus V$ where $E \oplus E^*$ is given an orthogonal form as in \S\ref{sec:typeC-prelim}. The complex 
\[
\bV \colon 0 \to E \otimes B(-2) \xrightarrow{\phi} V \otimes B(-1) \xrightarrow{\phi^*} E^* \otimes B
\]
carries an action of the orthosymplectic Lie superalgebra $\osp(\tilde{V}) \otimes B$. Here the indexing is such that $\bV_0 = E^* \otimes B$, $\bV_1 = V \otimes B(-1)$, and $\bV_2 = E \otimes B(-2)$. Then we have
\begin{align*}
\Sym^2_B(\bV) \colon 0 \to &\Sym^2(E) \otimes B(-4) \to E \otimes V \otimes B(-3) \to\\ & (\lw^2(V) \oplus (E \otimes E^*)) \otimes B(-2) \to E^* \otimes V \otimes B(-1) \to \Sym^2(E^*) \otimes B.
\end{align*}
The space of $\fgl(E) \times \fsp(V)$ invariants in $E^* \otimes V \otimes B(-1)$ in degree $2$ is $1$-dimensional, and the space of invariants in $\lw^2(V) \oplus (E \otimes E^*)$ is $2$-dimensional, so there is an invariant in the latter space which represents a nonzero homology class. We write this as a map of complexes $B[-2] \to \Sym^2_B(\bV)$. Using the multiplication map for symmetric powers, we get an injective map of complexes $(\Sym^{k-2}_B (\bV))[-2] \to \Sym^k_B (\bV)$, and the cokernel is denoted $\Sym^k_0 (\bV)$. 

\begin{proposition} \label{prop:finite-complex}
If $2\dim(E) + 2d = \dim(V)$, then $\Sym^{d+1}_0 (\bV)$ is acyclic and resolves $\Sym^{d+1}_B(\coker \phi^*)$. In particular, $\Sym^{d+1}_B(\coker \phi^*)$ is a perfect module.
\end{proposition}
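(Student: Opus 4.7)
The plan is to combine a Lefschetz-type computation on a desingularization of $\Spec(B)$ with the linearity of $\Sym^{d+1}_0(\bV)$ as a complex of free $B$-modules, and then promote the resulting local acyclicity to global acyclicity.

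First I would observe that $\Sym^{d+1}_0(\bV)$ is a linear complex of free $B$-modules: since $\bV_0 = E^* \otimes B$, $\bV_1 = V \otimes B(-1)$, $\bV_2 = E \otimes B(-2)$ have generators in internal degrees $0, 1, 2$ respectively, each term of $\Sym^{d+1}_B(\bV)$ at homological position $m = b + 2c$ (with $a+b+c = d+1$) is $\Sym^a(E^*) \otimes \lw^b(V) \otimes \Sym^c(E) \otimes B(-m)$, generated in internal degree $m$. The subcomplex $(\Sym^{d-1}_B\bV)[-2]$ (with the shift interpreted as shifting both homological and internal degree by $2$) is also linear, and hence so is the cokernel $\Sym^{d+1}_0(\bV)$. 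By Koszul duality (Theorem~\ref{thm:dict}), $\Sym^{d+1}_0(\bV) = \bL(M)$ where $M = \bR(\Sym^{d+1}_0(\bV))$ is a graded $\rU(\fg)$-module, and $\Sym^{d+1}_0(\bV)$ is the minimal free resolution of its zeroth homology if and only if $M$ is generated in degree $0$ with a linear resolution over $\rU(\fg)$.

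Next I would pass to the resolution $\tilde Z = \Spec_{\IGr(n,V)}(\Sym(E \otimes \cR^*)) \to \Spec(B)$ from Proposition~\ref{prop:Mnu-alt}. On $\tilde Z$, the generic matrix factors as $\phi \colon E \xrightarrow{\psi} \cR \hookrightarrow V$ and dually $\phi^* \colon V \twoheadrightarrow V/\cR^\perp \cong \cR^* \xrightarrow{\psi^*} E^*$, where $\psi$ is the tautological map. On the open locus $U \subset \tilde Z$ where $\psi$ is an isomorphism (corresponding to the maximal-rank stratum of $\Spec(B)$), we have $\im(\phi) = \cR$ and $\ker(\phi^*) = \cR^\perp$, so $\bV|_U$ is quasi-isomorphic to the rank-$2d$ symplectic bundle $\cR^\perp/\cR$ placed in homological (and internal) degree $1$. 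Because $\cR^\perp/\cR$ sits in odd homological degree, the super-symmetric interpretation gives $\Sym^k_B(\bV)|_U$ quasi-isomorphic to $\lw^k(\cR^\perp/\cR)$ placed in homological degree $k$, and the canonical inclusion $(\Sym^{k-2}_B\bV)[-2] \hookrightarrow \Sym^k_B\bV$ corresponds to the Lefschetz operator $L_\omega \colon \lw^{k-2}(\cR^\perp/\cR) \to \lw^k(\cR^\perp/\cR)$ given by wedging with the symplectic form $\omega$. Hard Lefschetz for the rank-$2d$ symplectic bundle gives that $L_\omega \colon \lw^{d-1} \to \lw^{d+1}$ is an isomorphism, so $\Sym^{d+1}_0(\bV)|_U$ is acyclic.

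Identification of $H_0(\Sym^{d+1}_0(\bV))$ with $\Sym^{d+1}_B(\coker \phi^*)$ is by right-exactness of $\Sym^{d+1}$ applied to the presentation $V \otimes B(-1) \xrightarrow{\phi^*} E^* \otimes B \to \coker \phi^* \to 0$, yielding a surjection $\Sym^{d+1}(E^*) \otimes B \twoheadrightarrow \Sym^{d+1}_B(\coker \phi^*)$ matching the zeroth piece of $\Sym^{d+1}_0(\bV)$ together with its first differential. The main obstacle is extending acyclicity from $U$ to all of $\Spec(B)$: since the rank-deficient locus has codimension one, generic acyclicity does not directly imply global acyclicity. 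Two routes are available: (i) apply the Buchsbaum--Eisenbud acyclicity criterion, using the Cohen--Macaulayness of $\Spec(B)$ from Proposition~\ref{prop:typeC-basicfacts} together with depth estimates on the rank-deficient strata; or (ii) identify $M = \bR(\Sym^{d+1}_0(\bV))$ explicitly with an irreducible Howe-dual $\osp(\tilde V)$-representation and invoke Theorem~\ref{thm:main-typeC} to conclude directly that $M$ has a linear resolution over $\rU(\fg)$, forcing $\Sym^{d+1}_0(\bV)$ to resolve $\Sym^{d+1}_B(\coker \phi^*)$.
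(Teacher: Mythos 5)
Your local computation is exactly the heart of the paper's argument: on the full-rank locus the complex $\bV$ reduces to the rank-$2d$ symplectic space $W = E^\perp/E$ placed in odd degree, acyclicity of $\Sym^{d+1}_0(\bV)$ there comes down to the Lefschetz isomorphism $\omega \wedge(-)\colon \lw^{d-1}(W) \to \lw^{d+1}(W)$, and the identification of $\rH_0$ with $\Sym^{d+1}_B(\coker\phi^*)$ by right-exactness of symmetric powers is also as in the paper. The genuine gap is the globalization step, which you leave as a choice between two sketched routes. Route (ii) does not work as stated: Theorem~\ref{thm:main-typeC} concerns the infinite-dimensional Howe-dual modules $\cN^k_\lambda$, whose Koszul duals are the \emph{infinite} linear resolutions of the modules $\cM^k_\nu$; here $\Sym^{d+1}_0(\bV)$ has length $2d+2$, so $\bR(\Sym^{d+1}_0(\bV))$ is a finite-dimensional $\rU(\fg)$-module, and $\Sym^{d+1}_B(\coker\phi^*)$, being perfect, is not one of the $\cM^k_\nu$ --- no result in the paper identifies this Koszul dual with a Howe-dual irreducible for you.

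Route (i) is essentially the paper's proof, but the decisive input is missing from your sketch, and your stated reason for concern --- ``the rank-deficient locus has codimension one'' --- is incorrect for $\Spec(B)$: the degeneracy locus is a divisor only on the desingularization $\tilde{Z}$, whereas in $\Spec(B)$ the locus where $\phi$ drops rank has codimension $2d+2$ (see the remark following the proposition, or \cite[Theorem 2.2(1)]{littlewoodcomplexes}). That number equals the length of $\Sym^{d+1}_0(\bV)$, and this coincidence is the whole point: since $B$ is Cohen--Macaulay, the ideal of maximal minors has grade $2d+2$, so by the acyclicity lemma of Peskine--Szpiro \cite[Theorem 20.9]{eisenbud} it suffices to check exactness of $(\Sym^{d+1}_0(\bV))_\fp$ for primes $\fp$ with $\grade(\fp) < 2d+2$; any such prime misses some maximal minor, and after inverting it $\phi$ can be put in the split form $(I_n \;\; 0)$, which is precisely your open-locus Lefschetz computation. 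Supplying this grade count (which is also why only the power $d+1$ works, and which makes perfection of $\Sym^{d+1}_B(\coker\phi^*)$ immediate once the finite free complex is acyclic) is what is needed to turn your proposal into a proof; as written, the key step is asserted rather than carried out.
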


\begin{proof}
Set $\bF = \Sym^{d+1}_0 (\bV)$, which is a complex of length $2d+2$. By the Peskine--Szpiro acyclicity lemma \cite[Theorem 20.9]{eisenbud}, $\bF$ is acyclic if and only if the localization $\bF_\fp$ is acyclic for every prime ideal $\fp \subset B$ with $\grade(\fp) < 2d+2$ (recall that the grade of an ideal is the longest regular sequence contained in it). Pick such a prime $\fp$. The ideal generated by the maximal minors has grade $2d+2$: since $B$ is Cohen--Macaulay, the grade of any ideal $I$ is $\dim(B) - \dim(B/I)$; if $I$ is the ideal of maximal minors, then an easy calculation (or see \cite[Theorem 2.2(1)]{littlewoodcomplexes}) shows that this is $2d+2$. So there is at least one maximal minor that is not in $\fp$. After inverting this minor, there is a change of basis so that  $\phi$ becomes $\phi' = \begin{pmatrix} I_n & 0 \end{pmatrix}$ where $I_n$ is an $n \times n$ identity matrix and $0$ is the $n \times (n+2d)$ zero matrix.

We claim that $\bF_\fp$ is exact. Using the special form of $\phi'$, we can decompose $\bV \otimes_B B_\fp$ as the direct sum of a split exact sequence with $(W \otimes B_\fp)[-1]$, where $\dim(W) = 2d$ and $W$ inherits a symplectic form from $V$ (the image of $E$ in $V$ is isotropic since $\bV$ is a complex, and $W = E^\perp / E$). In particular, $\Sym^{d+1}_{B_\fp}(\bV)$ is a direct sum of a split exact complex with $(\lw^{d+1}(W) \otimes B_\fp)[-d-1]$. Similarly, $\Sym^{d-1}_{B_\fp}(\bV)[-2]$ is a direct sum of a split exact complex with $(\lw^{d-1}(W) \otimes B_\fp)[-d-1]$. The induced map on homology $\lw^{d-1}(W) \otimes B_\fp \to \lw^{d+1}(W) \otimes B_\fp$ is multiplication by the symplectic form on $W$, so it is an isomorphism. This implies that $\bF_\fp$ is exact. Finally, $\rH_0(\Sym^{d+1}_0(\bV)) = \rH_0(\Sym^{d+1}(\bV))$, and the latter module is $\Sym^{d+1}_B(\coker \phi^*)$ by right-exactness of symmetric powers (see, for example, \cite[Proposition A2.2(d)]{eisenbud}).
\end{proof}

\begin{remark}
It remains to construct finite length (linear) resolutions supported on lower-order minors. These should be orthosymplectic generalizations of Schur complexes which in turn should be special cases of orthogonal and symplectic analogues of Schubert complexes. See \cite{schubertcomplexes} for Schubert complexes and their connections with double Schubert polynomials, and see \cite[Exercises 6.34--6.36]{weyman} for an analogue of our situation when $B$ is replaced by the polynomial ring.

For $\dim(E) = n$ and $\dim(V) = 2n+2d$, we have that the rank $n-1$, $n-2$, $n-3$, $n-4$ loci have codimensions $2d+2$, $4d+7$, $6d+15$, and $8d+26$, respectively. When this codimension is odd, we cannot use an orthosymplectic Schur functor as we did above.
\end{remark}

\subsection{Examples} \label{sec:typeC-examples}

In general, the resolution of $\cM_\nu^k$, under the action of $\GL(E) \times \Sp(V)$, contains representations with high multiplicity (hence the main advantage of having an irreducible action of the orthosymplectic Lie superalgebra). Now we explain how the simplest examples can be calculated using a few tricks. This amounts to calculating $\cN_{\nu^\dagger}^k$ (after replacing $E$ with $E^*$), which is the $\bS_{[\nu^\dagger]}(U)$-isotypic component of $\sU$. To do this, use the decompositions \eqref{eqn:supp1} and \eqref{eqn:supp2}. When $\nu = \emptyset$, we get (using that representations of $\Sp(U)$ are self-dual)
\[
\bigoplus_{\mu \subseteq \min(n,k) \times (n+d)} \bS_{[((n+d) \times k) \setminus \mu^\dagger]}(V) \otimes M_\mu^{\fso}.
\]
If $k=1$, then $\Sp(U) = \SL(U)$ and $\bS_\lambda(U) = \bS_{[\lambda_1 - \lambda_2]}(U)$. Using Theorem~\ref{thm:cauchy-id}, we get
\[
M_\mu^{\fso} = \bigoplus_{e \ge 0} \bS_{(\mu_1 + e,e)}(E^*).
\]
Finally, we tensor with $(\det E^*)$ because of the way that Howe duality is setup.

Below, we abbreviate $\bS_\lambda(E) \otimes \bS_{[\mu]}(V)$ with the notation $(\lambda; \mu)$.

\begin{example}
Take $\dim(E) = 4$ and $\dim(V) = 8$ with $V$ symplectic. We consider the ideal $\cM_{\emptyset}^1$ generated by the $4 \times 4$ minors. The first few terms of its minimal free resolution over the coordinate ring of the Littlewood variety are 
\footnotesize \begin{align*}
\bF_0 &= (1,1,1,1;1,1,1,1)\\
\bF_1 &= (2,1,1,1;1,1,1,0)\\
\bF_2 &= (2,2,1,1;1,1,1,1) \oplus (3,1,1,1;1,1,0,0)\\
\bF_3 &= (3,2,1,1;1,1,1,0) \oplus (4,1,1,1;1,0,0,0)\\
\bF_4 &= (3,3,1,1;1,1,1,1) \oplus (4,2,1,1;1,1,0,0) \oplus (5,1,1,1;0,0,0,0)\\
\bF_5 &= (4,3,1,1;1,1,1,0) \oplus (5,2,1,1;1,0,0,0)\\
\bF_6 &= (4,4,1,1;1,1,1,1) \oplus (5,3,1,1;1,1,0,0) \oplus (6,2,1,1;0,0,0,0)\\
\bF_7 &= (5,4,1,1;1,1,1,0) \oplus (6,3,1,1;1,0,0,0)\\
\vdots
\end{align*}
\normalsize
The graded Betti table of $\cM_\emptyset^1$ over the polynomial ring (it is not an ideal in this setting) is
\footnotesize \begin{Verbatim}[samepage=true]
       0    1   2   3   4   5  6  7
total: 42 192 462 607 485 257 77 10
    4: 42 192 270 160  35   .  .  .
    5:  .   . 192 447 288  70  .  .
    6:  .   .   .   . 162 160 45  .
    7:  .   .   .   .   .  27 32 10
\end{Verbatim}
\normalsize
Here are the representations for the resolution of the Littlewood ideal (we removed a factor of $\lw^4(E) = \det E$):
\[
\tiny \begin{array}{l|l|l|l|l|l|l|l}
\lw^4_0 V & \lw^3_0 V \otimes E & \lw^2_0 V \otimes S^2 E & V \otimes S^3 E & S^4 E\\
\hline
& & \lw^3_0 V \otimes \lw^3 E & \begin{array}{l} \lw^2_0 V \otimes \bS_{2,1,1} E \\ \lw^4_0 V \otimes \lw^4 E \end{array} & V \otimes \bS_{3,1,1} E & \bS_{4,1,1} E \\
\hline
& & & & \lw_0^2 V \otimes \bS_{2,2,1,1} E & V \otimes \bS_{3,2,1,1} E & \bS_{4,2,1,1} E\\
\hline
& & & & & \lw^2_0 V \otimes \bS_{2,2,2,2} E & V \otimes \bS_{3,2,2,2} E & \bS_{4,2,2,2} E
\end{array}
\]
Compare this with the proof of Theorem~\ref{thm:bott-gen}.
\end{example}

We end with a slightly more complicated example not covered by the above derivation.

\begin{example}
Take $\dim(E) = 3$ and $\dim(V) = 8$ with $V$ symplectic. We consider the ideal $\cM_{(1)}^1$ generated by the $3 \times 3$ minors. The first few terms of its minimal free resolution over the coordinate ring of the Littlewood variety are
\footnotesize \begin{align*}
\bF_0 &= (1,1,1;1,1,1,0)\\
\bF_1 &= (2,1,1;1,1,1,1) \oplus (2,1,1;1,1,0,0)\\
\bF_2 &= (2,2,1;1,1,1,0) \oplus (3,1,1;1,1,1,0) \oplus (3,1,1;1,0,0,0)\\
\bF_3 &= (3,2,1;1,1,1,1) \oplus (3,2,1;1,1,0,0) \oplus (4,1,1;1,1,0,0) \oplus (4,1,1;0,0,0,0)\\
\bF_4 &= (3,3,1;1,1,1,0) \oplus (4,2,1;1,1,1,0) \oplus (4,2,1;1,0,0,0) \oplus (5,1,1;1,0,0,0)\\
\bF_5 &= (4,3,1;1,1,1,1) \oplus (4,3,1;1,1,0,0) \oplus (5,2,1;1,1,0,0) \oplus (5,2,1;0,0,0,0) \oplus (6,1,1;0,0,0,0)\\
&\vdots \qedhere
\end{align*}
\end{example}

\section{Orthogonal Littlewood varieties} \label{sec:orth-case}

Fix $d' \in \{0,1\}$. In this section, we use $\tau_{2k}$ and $\iota_{2k}$ to denote $\tau^\rD_{2k}$ and $\iota^\rD_{2k}$ (see \S\ref{sec:typeD-weyl}) if $d'=0$ and we use them to denote $\tau^\Delta_{2k}$ and $\iota^{\Delta}_{2k}$ (see \S\ref{sec:spin-weyl}) if $d'=1$. For simplicity of exposition, we assume $\ch(\bk)\ne 2$ so that we do not need to distinguish between quadratic forms and orthogonal forms, although with more care one could include this case in some of the results.

\subsection{Preliminaries} \label{sec:typeBD-prelim}

Let $E$ be a vector space of dimension $n$ and let $V$ be an orthogonal space of dimension $2n+2d + d'$ (we assume $d \ge 0$; some of the results below hold even if $d<0$, but we will not need them). Let $A = \Sym(E \otimes V)$. Using the orthogonal form, we have a $\GL(E) \times \bO(V)$-equivariant inclusion
\[
\Sym^2(E) \subset \Sym^2(E) \otimes \Sym^2(V) \subset \Sym^2(E \otimes V).
\]
Let $B = A / (\Sym^2(E))$ be the quotient of $A$ by the ideal generated by $\Sym^2(E)$.

\begin{example}
Let $n=2$ and $d=0$ and $d'=1$, and pick bases for $E$ and $V$. We think of the coordinates in $A$ as the elements in a $2 \times 5$ matrix:
\[
\begin{pmatrix}
x_{1,1} & x_{1,2} & x_{1,3} & x_{1,4} & x_{1,5} \\
x_{2,1} & x_{2,2} & x_{2,3} & x_{2,4} & x_{2,5} 
\end{pmatrix}
\]
If the orthogonal form on $V$ is given by $\omega_V(v, w) = v_1w_2 + v_2w_1 + v_3w_4 + v_4w_3 + v_5w_5$, then $\Sym^2(E)$ is spanned by the $3$ equations
\[
\begin{array}{r}
x_{1,1}x_{2,2} + x_{1,2}x_{2,1} + x_{1,3}x_{2,4} + x_{1,4}x_{2,3} + x_{1,5}x_{2,5},\\
2(x_{1,1}x_{1,2} + x_{1,3}x_{1,4}) + x_{1,5}^2,\\
2(x_{2,1}x_{2,2} + x_{2,3}x_{2,4}) + x_{2,5}^2.
\end{array} \qedhere
\]
\end{example}

We have $\Spec(A) = \hom(E,V^*)$, the space of linear maps $E \to V^*$. We can identify $\Spec(B) \subset \hom(E,V^*)$ with the set of maps $\phi$ such that the composition $E \xrightarrow{\phi} V^* \cong V \xrightarrow{\phi^*} E^*$ is $0$ (here $V \cong V^*$ is the isomorphism induced by the orthogonal form on $V$). Alternatively, $\Spec(B)$ is the subvariety of maps $E \to V^*$ such that the image of $E$ is an isotropic subspace, i.e., the restriction of the orthogonal form to it is the $0$ form.

\begin{proposition} \label{prop:typeBD-basicfacts}
\begin{compactenum}[\rm (a)]
\item If $2d+d'>0$, then $B$ is an integral domain, i.e., the ideal generated by $\Sym^2(E)$ is prime. If $2d+d'=0$, then $\Spec(B)$ has $2$ irreducible components.

\item As a representation of $\GL(E) \times \bO(V)$, we have
\[
B \approx \bigoplus_{\ell(\lambda) \le n} \bS_\lambda(E) \otimes \bS_{[\lambda]}(V).
\]

\item $\bS_\lambda(E) \otimes \bS_{[\lambda]}(V)$ is in the ideal generated by $\bS_\mu(E) \otimes \bS_{[\mu]}(V)$ if and only if $\lambda \supseteq \mu$.

\item Two closed points in $\Spec(B)$ are in the same $\GL(E) \times \bO(V)$ orbit if and only if they have the same rank as a matrix. Each orbit closure is a normal variety with rational singularities. In particular, they are Cohen--Macaulay varieties.

\item The ideal $(\Sym^2(E))$ is generated by a regular sequence. In particular, $B$ is a complete intersection and is a Koszul algebra.
\end{compactenum}
\end{proposition}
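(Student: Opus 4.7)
The approach is to mirror the proof of Proposition~\ref{prop:typeC-basicfacts} in the symplectic case, substituting $\Sym^2(E)$ for $\lw^2(E)$ and $\bO(V)$ for $\Sp(V)$. For (a), (b), (d), (e), I would cite \cite[Theorem 2.2]{littlewoodcomplexes}, which treats the classical types uniformly. The only structural difference from the symplectic story occurs in (a): when $2d+d'=0$, i.e.\ $\dim(V)=2n$, the maximal isotropic Grassmannian $\OGr(n,V)$ has two connected components (recall the discussion preceding Theorem~\ref{thm:bott-BD}), and so the closed subvariety $\Spec(B) \subset \hom(E,V^*)$ of linear maps whose image is an $n$-dimensional isotropic subspace also has two components (one per family of maximal isotropics). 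When $2d+d'>0$, there is a unique family of $n$-dimensional isotropic subspaces and $\Spec(B)$ is irreducible.

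For (c), I would deduce the statement directly from its polynomial-ring analogue \cite[Corollary 4.2]{dEP}: in $A=\Sym(E\otimes V)$, the isotypic component indexed by $\lambda$ (when decomposed under $\GL(E)\times\GL(V)$) lies in the ideal generated by the $\mu$-component if and only if $\lambda \supseteq \mu$. The quotient $A \twoheadrightarrow B$ is $\GL(E)\times\bO(V)$-equivariant, and under branching from $\GL(V)$ to $\bO(V)$ the component $\bS_\lambda(E)\otimes \bS_\lambda(V)$ of $A$ surjects onto $\bS_\lambda(E)\otimes \bS_{[\lambda]}(V) \subset B$ (modulo lower terms). So containment of ideals in $A$ passes to containment in $B$, giving the ``if'' direction. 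The converse is immediate from (b) together with the fact that multiplication by $B_{>0}$ strictly increases $|\lambda|$.

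For (e), the key computation is to verify that $\codim_A \Spec(B) = \dim \Sym^2(E) = \binom{n+1}{2}$, which is the number of generators of the ideal $(\Sym^2(E))$. One parametrizes $\Spec(B)$ via the projection to $\OGr(n,V)$ that sends $\phi$ to $\phi(E)^{\perp}$ (generically), showing $\dim \Spec(B) = \dim\OGr(n,V) + n^2$; a direct dimension count using $\dim\OGr(n,V) = n(n+2d+d') - \binom{n+1}{2}$ when $2d+d'>0$ (and each component of dimension $\binom{n}{2}$ when $2d+d'=0$) confirms in both cases that the codimension equals $\binom{n+1}{2}$. Since $A$ is Cohen--Macaulay, any ideal whose codimension matches its minimal number of generators is generated by a regular sequence, so $B$ is a complete intersection. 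The Koszul property then follows from \cite[Example 10.2.3]{avramov}, as in the symplectic proof.

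The main obstacle, if any, is the bookkeeping in (a) when $2d+d'=0$: one must be careful that the two components of $\Spec(B)$ are swapped by $\bO(V) \setminus \SO(V)$ but individually preserved by $\SO(V)$, matching the two components of $\OGr(n,V)$, and conclude that as $\GL(E)\times\bO(V)$-varieties $\Spec(B)$ is still defined by a regular sequence (being a union of two components of the correct dimension meeting only along a lower-dimensional locus). All other parts are essentially mechanical translations of the symplectic proof.
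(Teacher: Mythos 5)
Your proposal takes essentially the same route as the paper, which proves (a), (b), (d), (e) by citing the Littlewood-variety results of \cite{littlewoodcomplexes} and deduces (c) from the polynomial-ring statement \cite[Corollary 4.2]{dEP}, exactly as you do; your added dimension count and component analysis are consistent with what those references establish. The one correction is the citation: the orthogonal cases are \cite[Theorems 2.7, 2.10]{littlewoodcomplexes}, not Theorem 2.2, which is the symplectic statement used in Proposition~\ref{prop:typeC-basicfacts}.
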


\begin{proof}
For (a), (b), (d), and (e), see \cite[Theorems 2.7, 2.10]{littlewoodcomplexes}. (c) follows from the analogous statement for $\Sym(E \otimes V)$ \cite[Corollary 4.2]{dEP}. 
\end{proof}

Let $\OGr(n+d,V)$ be the isotropic Grassmannian of rank $n+d$ isotropic subspaces of $V$ with tautological subbundle $\cR$. Note that when $2d+d'=0$, $\OGr(n+d,V)$ has $2$ connected components, and otherwise it is connected. Let $\nu \subseteq (k^d)$ be a partition. Using \eqref{eqn:det-module} in a relative situation, we get a $\Sym(E \otimes \cR^*)$-module $\sM^k_\nu(E, \cR^*)$.

\begin{proposition} 
The higher sheaf cohomology groups of $\sM^k_\nu(E, \cR^*)$ vanish and
\[
\rH^0(\OGr(n+d,V); \sM^k_\nu(E, \cR^*)) \approx \bigoplus_{\ell(\lambda) \le n} \bS_{(k^n) + \lambda}(E) \otimes \bS_{[(k^{n})+\lambda, k-\nu_d, \dots, k-\nu_1]}(V).
\]
\end{proposition}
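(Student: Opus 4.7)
The proof will proceed in direct parallel to Proposition~\ref{prop:cM-vanish}, with Theorem~\ref{thm:bott-C} replaced by its orthogonal analogue Theorem~\ref{thm:bott-BD}. First, $\sM^k_\nu(E,\cR^*)$ is the relativization of the module in \eqref{eqn:det-module} over $\OGr(n+d,V)$, obtained by substituting the rank $n+d$ tautological subbundle $\cR^*$ in place of the vector space $F$. By Remark~\ref{rmk:bw-functorial}, the decomposition \eqref{eqn:det-module} relativizes to a $\GL(E)$-equivariant filtration
\[
\sM^k_\nu(E,\cR^*) \approx \bigoplus_{\ell(\lambda) \le n} \bS_{(k+\lambda_1,\dots,k+\lambda_n)}(E) \otimes \bS_{(k+\lambda_1,\dots,k+\lambda_n,\, k-\nu_d,\dots,k-\nu_1)}(\cR^*).
\]

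Next, I would check that each weight appearing is a partition of length at most $n+d = \rank(\cR)$: the concatenation is weakly decreasing because $k+\lambda_n \ge k \ge k - \nu_d$, and it is non-negative because $\nu \subseteq (k^d)$. Under these conditions Theorem~\ref{thm:bott-BD} applies termwise, yielding both the vanishing of higher cohomology and the identification
\[
\rH^0(\OGr(n+d,V); \bS_{(k+\lambda_1,\dots,k+\lambda_n,\, k-\nu_d,\dots,k-\nu_1)}(\cR^*)) = \bS_{[(k^n)+\lambda,\, k-\nu_d,\dots,k-\nu_1]}(V)
\]
as a representation of $\bO(V)$. Summing over $\lambda$ yields the formula in the statement.

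The only point that deserves attention is the boundary case $2d+d'=0$, where $\OGr(n,V)$ has two connected components; this is harmless because Theorem~\ref{thm:bott-BD} is already stated $\bO(V)$-equivariantly over the (possibly disconnected) $\OGr$, and $\bS_{[\mu]}(V)$ is the $\bO(V)$-representation that aggregates the $\bS_{[\mu]^\pm}(V)$ pieces one sees under $\SO(V)$. I do not expect any real obstacle: this is a mechanical transcription of the symplectic argument, with the only genuine inputs being the relative form of the Eagon--Northcott-type construction of \S\ref{sec:EN-powers} and the orthogonal Borel--Weil--Kempf vanishing.
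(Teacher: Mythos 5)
Your proposal is correct and follows exactly the paper's argument: the paper proves this by noting it is the same as Proposition~\ref{prop:cM-vanish}, i.e., relativize the filtration \eqref{eqn:det-module} over $\OGr(n+d,V)$ and apply Theorem~\ref{thm:bott-BD} termwise in place of Theorem~\ref{thm:bott-C}. Your additional checks (that the concatenated weights are partitions of length at most $n+d$, and that the two-component case $2d+d'=0$ is handled by the $\bO(V)$-equivariant statement of Theorem~\ref{thm:bott-BD}) are consistent with the paper's conventions.
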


\begin{proof}
Similar to Proposition~\ref{prop:cM-vanish}, but use Theorem~\ref{thm:bott-BD}.
\end{proof}

We define 
\begin{equation} \label{eqn:cM-module-BD}
\begin{split}
\cM^k_\nu &= \rH^0(\OGr(n+d,V); \sM^k_\nu(E, \cR^*))\\
&\approx \bigoplus_{\ell(\lambda) \le n} \bS_{(k^n) + \lambda}(E) \otimes \bS_{[((k^n)+\lambda, k-\nu_d, \dots, k-\nu_1)]}(V).
\end{split}
\end{equation}
So $\cM^k_\nu$ is an $A$-module. In fact, the scheme-theoretic image of $\Spec(\Sym(E \otimes \cR^*)) \to \Spec(A)$ is $\Spec(B)$, so $\cM^k_\nu$ is also a $B$-module. 

The following result is the orthogonal version of the main result of this paper. A more detailed version of this theorem is contained in Theorem~\ref{thm:main-typeBD}.

\begin{theorem} \label{thm:Mnu-linear-res-BD}
Assume $\ch(\bk)=0$. The minimal free resolution of $\cM_\nu^k$ over $B$ is linear.
\end{theorem}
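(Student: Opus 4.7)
The plan is to mirror the symplectic proof of Section~\ref{sec:symp-case} with the roles of symmetric and skew-symmetric forms interchanged at the super level. Since the quadratic relations defining $B$ are $\Sym^2(E)$, Proposition~\ref{prop:CI-koszul} identifies the Koszul dual as $B^! = \rU(\fg)$ where $\fg_1 = E^* \otimes V^*$ is odd, $\fg_2 = \Sym^2(E^*)$ is even, and the symmetric bracket $\fg_1 \otimes \fg_1 \to \fg_2$ is induced by the orthogonal form on $V$. I would embed $\fg$ as the strictly positive part of a $\bZ$-grading on the Lie superalgebra $\spo(\tilde{V})$, where now $\tilde{V}_0 = E \oplus E^*$ carries a \emph{symplectic} form (so that $\Sym^2(E^*) \subset \fsp(\tilde{V}_0)$ is the appropriate abelian nilpotent subalgebra) and $\tilde{V}_1 = V$ retains its orthogonal form.

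Next, to exhibit representations I would construct the Howe dual pair. Take $U$ to be an orthogonal space of dimension $2k+d'$, so that $U \otimes \tilde{V}$ carries a super skew-symmetric form (orthogonal tensored with symplectic is skew on the even part; orthogonal tensored with orthogonal is symmetric on the odd part). The associated Lie superalgebra then acts on the oscillator representation $\sU = \Sym(U \otimes (E^*|F))$, where $F \subset V$ is a maximal isotropic, and this action restricts to a Howe dual pair action of (the appropriate cover of) $\bO(U) \times \spo(\tilde{V})$; decomposing $\sU$ into $\bO(U)$-isotypic components defines the representations $\cN_\lambda^k$ of $\spo(\tilde{V})$. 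The analogue of Theorem~\ref{thm:super-lwood} derived from \cite{CKW}, combined with the type D or spin modification rules of \S\ref{sec:typeD-weyl}--\ref{sec:spin-weyl}, then computes the homology of the appropriate nilpotent radical $\ol{\fu} \subset \spo(\tilde{V})$ acting on $\cN_\lambda^k$ in terms of partitions $\alpha$ with $\tau_{2k}(\alpha) = \lambda$ and $\iota_{2k}(\alpha) = i$.

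The remainder of the proof follows the template of Propositions~\ref{prop:typeC-linear-pres} and \ref{prop:ss-chase}: one analyses the Hochschild--Serre spectral sequences \eqref{eqn:HS-SS} for two short exact sequences analogous to \eqref{eqn:ext-1} and \eqref{eqn:ext-2}, with $\Sym^2(F) \subset \fsp(V)$ replaced by the nilpotent subalgebra $\fn \subset \fso(V)$ from \S\ref{sec:liealg-hom} and with Proposition~\ref{prop:par-hom} applied in its orthogonal form. This first yields a linear presentation of $\cN_\lambda^k$ as a $\rU(\fg)$-module; an inductive degree-tracking argument using the grading from the center of $\fgl(E)$ then upgrades this to a linear free resolution. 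Matching $\rH_0(\fg; \cN_{\nu^\dagger}^k)$ against \eqref{eqn:cM-module-BD} and applying Theorem~\ref{thm:dict}(e) finally gives the claimed linear minimal free resolution of $\cM_\nu^k$ over $B$.

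The principal obstacle is the combinatorial analogue of Lemma~\ref{lem:gamma-bound} for the type D and spin border strip rules: in type D the strips have length $2\ell(\lambda) - 2k$ and contribute $c(R_\lambda) - 1$ rather than $c(R_\lambda)$, while in the spin case they have length $2\ell(\lambda) - 2k - 1$. A careful re-examination of the estimate $|\gamma| \le \iota_{2k}(\alpha) + \ell(w)$ together with its equality case is needed to accommodate these shifts, and a minor additional complication in the $d'=0$ case is the appearance of the split $\bS_{[\lambda]^\pm}(V)$ coming from the two components of $\OGr(n,V)$. Once these adjustments are made, Corollary~\ref{cor:Nlambda-upper} and the remaining bookkeeping transfer with only notational changes.
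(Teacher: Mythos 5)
Your outline coincides with the paper's argument when $d'=0$, but the Howe duality you set up breaks down in the case $d'=1$ (odd-dimensional $V$), which the theorem also covers. You take $U$ orthogonal of dimension $2k+d'$ and $\sU=\Sym(U\otimes(E^*|F))$. For $d'=1$ the subspace $U\otimes(E^*|F)$ is \emph{not} a maximal isotropic subspace of $U\otimes\tilde{V}$: the odd part $U\otimes V$ is an odd-dimensional orthogonal space and $U\otimes F$ falls short of a maximal isotropic by $k$ dimensions, so $\spo(U\otimes\tilde{V})$ does not act on this symmetric algebra as an oscillator module. Moreover, even after repairing the Fock space, taking $\dim U$ odd shifts the $\fgl(E)$- and $\fgl(F)$-weights by $(\dim U)/2=k+\tfrac{1}{2}$; the resulting isotypic components would carry the twist $(\det E^*)^{k+1/2}\otimes(\det F^*)^{k+1/2}$, i.e.\ they are the ``spinor'' modules corresponding to half-integer powers of the maximal minors (cf.\ the closing Remark of \S\ref{sec:main-result-typeBD}), and their degree-zero piece cannot match $(\cM^k_\nu)_0$ from \eqref{eqn:cM-module-BD}. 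The paper instead keeps $\dim U=2k$, writes $V=F\oplus F^*\oplus L$ with $\dim L=d'$, and uses the Fock space $\Sym(U\otimes(E^*|F))\otimes\lw^\bullet(U'\otimes L)$ with $U'\subset U$ maximal isotropic; the dual pair is $\Pin(U)\times\spo(\tilde{V})$ and the isotypic components are taken with respect to the spin-shifted labels $V_{\lambda+\delta}$. It is this choice that produces the integral twist $(\det E^*)^k\otimes(\det F^*)^k$ in Theorem~\ref{thm:super-lwood-typeBD}; the border-strip length $2\ell(\lambda)-2k-1$ you correctly anticipated comes from these spin labels for $\Pin(2k)$ (\S\ref{sec:spin-weyl}), not from enlarging $U$ to dimension $2k+1$.

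Two smaller points. The analogue of Lemma~\ref{lem:gamma-bound} that you flag as the principal obstacle is resolved in the paper by observing that \eqref{eqn:iota-bound1} must be weakened to $\iota_{2k}(\alpha)\ge\sum_{j\ge1}\max(\alpha_j-d-j,0)$ (since $c(R_\lambda)-1$ replaces $c(R_\lambda)$), and that only this weaker bound is used in \eqref{eqn:iota-bound2}; this is the step you would have to write out, and leaving it as ``a careful re-examination'' is a gap, though a modest one. Also, the $d'=0$ subtlety is not only the two components of $\OGr(n,V)$: one must pair the $\SO(V)$-constituents $\bS_{[\lambda]^\pm}(V)$ by defining $\cN^k_\lambda=\tilde{\cN}^k_\lambda\oplus\tilde{\cN}^k_{\lambda^\sigma}$ when $\ell(\lambda)<k$, so that all the homology computations carry an honest $\bO(V)$-action, as the paper does in \S\ref{sec:howe-BD}.
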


Let $\tilde{V}$ be a $\bZ/2$-graded space (from now on, superspace) with $\tilde{V}_0 = E \oplus E^*$ and $\tilde{V}_1 = V$. Define a nondegenerate symplectic form on $E \oplus E^*$ by $\langle (e, \phi), (e', \phi') \rangle = \phi'(e) - \phi(e')$. Then $\tilde{V}$ has a super skew-symmetric bilinear form by taking the direct sum of the symplectic form on $E \oplus E^*$ and the orthogonal form on $V$. Let $\spo(\tilde{V})$ be the orthosymplectic Lie superalgebra in $\fgl(\tilde{V})$ compatible with this form. We have a $\bZ$-grading on $\spo(\tilde{V})$ supported on $[-2,2]$ (we use the symbol $\oplus$ to separate the different pieces of this grading):
\[
\Sym^2(E) \oplus (E \otimes V) \oplus (\fgl(E) \times \fso(V)) \oplus (E^* \otimes V^*) \oplus \Sym^2(E^*).
\]
Let $\fg$ be the positive part of this grading, i.e.,
\[
\fg = (E^* \otimes V^*) \oplus \Sym^2(E^*).
\]
Then $\Sym^2(E^*)$ is central in $\fg$. For $e \otimes v, e' \otimes v' \in E^* \otimes V^*$, we have $[e \otimes v, e' \otimes v'] = \omega_V(v,v') e e'$ where $\omega_V$ is the orthogonal form on $V^* \cong V$. So Proposition~\ref{prop:CI-koszul} implies the following:

\begin{proposition}  
The Koszul dual of $B$ is the universal enveloping algebra $\rU(\fg)$.
\end{proposition}

Let $F \subset V$ be a maximal isotropic subspace and write $V = F \oplus F^* \oplus L$ ($\dim(L) = d'$). Let $\ol{\fu} = ((E^*|F) \otimes L) \oplus \Sym^2(E^*|F) \subset \spo(\tilde{V})$. It inherits a $\bZ$-grading: 
\[
\ol{\fu}_0 = (F \otimes L) \oplus \lw^2(F),  \quad \ol{\fu}_1 = E^* \otimes (L \oplus F),  \quad \ol{\fu}_2 = \Sym^2(E^*).
\]
Let $\fg' \subset \osp(\tilde{V})$ be the subalgebra generated by $\ol{\fu}$ and $\fg$. Then it also has a $\bZ$-grading:
\[
\fg'_0 = (F \otimes L) \oplus \lw^2(F), \quad \fg'_1 = E^* \otimes V^*,  \quad \fg'_2 = \Sym^2(E^*).
\]

\begin{lemma}
We have the following two exact sequences of Lie superalgebras
\begin{align*}
0 \to \fg \to \fg' \to (F \otimes L) \oplus \lw^2(F) \to 0, \qquad 
0 \to \ol{\fu} \to \fg' \to E^* \otimes F^* \to 0.
\end{align*}
\end{lemma}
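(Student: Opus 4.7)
The plan is to mirror the one-line argument of Lemma~\ref{lem:lie-exact} for the symplectic case, with a little extra bookkeeping due to the two summands in $\ol{\fu}_0$.

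For the first sequence, $\fg = \fg'_1 \oplus \fg'_2$ is the strictly positive part of the nonnegatively $\bZ$-graded Lie superalgebra $\fg'$, hence is automatically a graded ideal, and the quotient is $\fg'_0 = (F \otimes L) \oplus \lw^2(F)$ with its induced bracket. The second sequence is clearly exact as vector superspaces: $\ol{\fu}$ agrees with $\fg'$ in degrees $0$ and $2$, while the degree-$1$ quotient is $(E^* \otimes V^*)/(E^* \otimes (L \oplus F)) = E^* \otimes F^*$, using the identification $V^* \cong V = F \oplus F^* \oplus L$ coming from the orthogonal form.

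So the content of the lemma is that $\ol{\fu}$ is a subalgebra and an ideal of $\fg'$. The subalgebra property I would verify by direct computation inside $\fso(V) \cong \lw^2(V)$: $\lw^2(F)$ is abelian, $[\lw^2(F), F\otimes L] = 0$ since no pairings involve $F^*$, and $[F\otimes L, F \otimes L] \subseteq \lw^2(F)$ via the form on $L$, so $\ol{\fu}_0$ is closed; moreover $\ol{\fu}_0$ annihilates $L \oplus F \subset V^*$, so $[\ol{\fu}_0, \ol{\fu}_1] = 0$; all remaining brackets either lie in degree $\ge 3$ (and so vanish) or in degree $2$ (and so lie automatically in $\ol{\fu}_2 = \fg'_2 = \Sym^2(E^*)$).

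For the ideal property, since $\ol{\fu}$ already contains $\fg'_0$ and $\fg'_2$, the only potential escape comes from $[E^* \otimes F^*, \ol{\fu}_0]$, which a priori lies in $\fg'_1$. Computing the $\fso(V)$-action on $F^* \subset V^*$, one finds $(f_1 \wedge f_2) \cdot f^* = (f_1,f^*)f_2 - (f_2,f^*)f_1 \in F$ and $(f \wedge \ell) \cdot f^* = (f, f^*)\ell \in L$. Therefore $[E^* \otimes F^*, \lw^2(F)] \subseteq E^* \otimes F$ and $[E^* \otimes F^*, F\otimes L] \subseteq E^* \otimes L$, both of which lie inside $\ol{\fu}_1$. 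The only honest calculations needed are these last two; they are the direct analogue of the single identity $[\Sym^2(F), E^* \otimes F^*] = E^* \otimes F$ invoked in the symplectic case, and verifying them is the main (minor) obstacle.
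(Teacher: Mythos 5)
Your proposal is correct and follows essentially the same route as the paper: the first sequence comes for free from the $\bZ$-grading (the strictly positive part $\fg$ is automatically a graded ideal of $\fg'$), and the second reduces to the degree-one bracket computations $[\lw^2(F), E^*\otimes F^*]\subseteq E^*\otimes F$ and $[F\otimes L, E^*\otimes F^*]\subseteq E^*\otimes L$, which are exactly the orthogonal analogue of the identity $[\Sym^2(F),E^*\otimes F^*]=E^*\otimes F$ used in the symplectic case. One small slip in your subalgebra check: $\ol{\fu}_0$ does not annihilate $L\oplus F$, since $F\otimes L$ sends $L$ into $F$ (the form is nondegenerate on the line $L$), so $[\ol{\fu}_0,\ol{\fu}_1]$ is not zero; it does, however, land in $E^*\otimes F\subseteq\ol{\fu}_1$, so the closure property you need still holds and the argument is unaffected.
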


\begin{proof}
Similar to the proof of Lemma~\ref{lem:lie-exact}.
\end{proof}

\subsection{Howe duality} \label{sec:howe-BD}

In this section, we assume that $\ch(\bk)=0$.

Let $U$ be a $2k$-dimensional orthogonal space. Then $U \otimes \tilde{V}$ is a superspace with even part $U \otimes (E \oplus E^*)$ and odd part $U \otimes V$ and has a super skew-symmetric bilinear form by taking the tensor product of the forms on $U$ and $\tilde{V}$. Let $\spo(U \otimes \tilde{V})$ be the associated orthosymplectic Lie superalgebra. Pick a maximal isotropic subspace $U' \subset U$. Let $E^*|F$ denote the superspace with even part $E^*$ and odd part $F$ and let $0|L$ be the superspace with odd part $L$. Then $(U \otimes (E^*|F)) \oplus (U' \otimes (0|L))$ is a maximal isotropic subspace of $U \otimes \tilde{V}$ and hence we get an oscillator representation 
\[
\sU = \Sym(U \otimes (E^*|F)) \otimes \lw^\bullet(U' \otimes L)
\]
of $\spo(U \otimes \tilde{V})$. Both $\fso(U)$ and $\spo(\tilde{V})$ are subalgebras of $\spo(U \otimes \tilde{V})$ which commute with one another, so $\fso(U) \times \spo(\tilde{V})$ acts on $\sU$. As a representation of $\fso(U)$, $\sU$ is a direct sum of finite-dimensional representations, so we can use the pin group $\Pin(U)$ (this is a double cover of the orthogonal group $\bO(U)$) instead. If $L = 0$, i.e., $d'=0$, then the action of $\Pin(U)$ factors through an action of $\bO(U)$. In fact, there is more data by using the non-connected group $\Pin(U)$ as we now explain. Pick a partition $\lambda$. 
\begin{itemize}
\item If $d'=0$, $\lambda$ is {\bf admissible} if $\lambda_1^\dagger + \lambda_2^\dagger \le 2k$ and we consider representations $\bS_{[\lambda]}(U)$ of $\bO(U)$. For $\lambda_1^\dagger \le k$, this is discussed in \S\ref{sec:schur-functors}. Otherwise, set $\mu_1^\dagger = 2k - \lambda_1^\dagger$ and $\mu_i^\dagger = \lambda_i^\dagger$ for $i>1$. Then $\bS_{[\mu]}(V)$ and $\bS_{[\lambda]}(V)$ differ by a twist of the sign character and we write $\mu = \lambda^\sigma$. Further, define $\ol{\lambda} = \lambda$ if $\lambda_1^\dagger \le k$, otherwise define $\ol{\lambda} = \lambda^\sigma$.

\item If $d'=1$, $\lambda$ is {\bf admissible} if $\ell(\lambda) \le k$ and we consider highest-weight representations $V_{\lambda + \delta_{\pm}}$ where $\delta_+$ and $\delta_-$ are the half-spin weights of $\Spin(U)$ (see \cite[\S 3.1]{littlewoodcomplexes}). Define
\[
V_{\lambda + \delta} = V_{\lambda + \delta_+} \oplus V_{\lambda + \delta_-},
\]
which can be made into an irreducible representation of $\Pin(U)$. 
\end{itemize}
To unify the notation for both cases, we define 
\[
V_{\lambda + d' \delta} = \begin{cases} \bS_{[\lambda]}(U) & \text{if $d'=0$}\\
V_{\lambda + \delta} & \text{if $d'=1$}\end{cases}.
\]

Let $\lambda$ be an admissible partition with $\lambda_{n+1} \le n+d$. Let $\tilde{\cN}^k_\lambda$ be the $V_{\lambda + d'\delta}$-isotypic component of $\sU$ with respect to the action of $\Pin(U)$. We need some of the following facts about the action of $\Pin(U) \times \spo(\tilde{V})$ (we will not use unitarizability, and hence do not define it; we state it only for completeness):

\begin{theorem}
For an admissible partition $\lambda$ with $\lambda_{n+1} \le n+d$, $\tilde{\cN}^k_\lambda$ is  an irreducible, unitarizable, lowest-weight representation of $\spo(\tilde{V})$. We have $\tilde{\cN}^k_\lambda \cong \tilde{\cN}^k_{\lambda'}$ if and only if $\lambda = \lambda'$. As a representation of $\Pin(U) \times \spo(\tilde{V})$, we have a direct sum decomposition 
\[
\sU = \Sym(U \otimes (E^*|F)) \otimes \lw^\bullet(U' \otimes L) = \bigoplus_{\substack{\lambda \mathrm{\,admissible}\\ \lambda_{n+1} \le n+d}} V_{\lambda + d'\delta} \otimes \tilde{\cN}^k_\lambda.
\]
\end{theorem}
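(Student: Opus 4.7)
The plan is to invoke Howe duality for the dual pair $(\bO(U), \spo(\tilde{V}))$ inside the large orthosymplectic Lie superalgebra $\spo(U \otimes \tilde{V})$, in close parallel with the symplectic-orthosymplectic version cited from \cite{chengwang} in \S\ref{sec:howe} (with the roles of the orthogonal and symplectic forms swapped). The two subalgebras $\fso(U)$ and $\spo(\tilde{V})$ mutually centralize each other inside $\spo(U \otimes \tilde{V})$, and by construction $\sU$ is the oscillator (Fock) representation built from the polarization $(U \otimes (E^*|F)) \oplus (U' \otimes (0|L))$ of $U \otimes \tilde{V}$. The standard machinery of such dual pairs then produces a multiplicity-free joint decomposition of $\sU$ of the advertised form, with irreducible, unitarizable, lowest-weight $\spo(\tilde{V})$-modules on the right.

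First I would treat the even case $d' = 0$, where $V$ has even dimension and $L = 0$, so that $\sU = \Sym(U \otimes (E^*|F))$. Here the action of $\fso(U)$ already exponentiates to $\bO(U)$, in fact to $\Pin(U)$, on the Fock module. The admissibility condition $\lambda_1^\dagger + \lambda_2^\dagger \le 2k$ records exactly which irreducible $\bO(U)$-representations $\bS_{[\lambda]}(U)$ arise in tensor constructions out of $U$ and its dual, and the $\lambda^\sigma$ label distinguishes the two non-isomorphic $\bO(U)$-summands that can appear when $\lambda_1^\dagger > k$ (cf.\ \cite[\S 2]{littlewoodcomplexes}); the relevant decomposition $(\bO(U), \spo(\tilde{V}))$ is recorded in \cite[\S 5.3]{chengwang}.

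For the odd case $d' = 1$, the extra tensor factor $\lw^\bullet(U' \otimes L)$ is a $\Pin(U)$-module realizing (the sum of) the half-spin representations of $\Spin(U)$, since $U'$ is a maximal isotropic subspace of $U$. Tensoring $\Sym(U \otimes (E^*|F))$ against this spin factor shifts every $\Pin(U)$-weight by $\delta_+$ or $\delta_-$, which is why admissibility becomes $\ell(\lambda) \le k$ and why the $\Pin(U)$-irreducibles appearing in $\sU$ have the form $V_{\lambda + \delta}$. Once this spin twist is folded in, the rest of the argument proceeds exactly as in the $d'=0$ case.

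The hard part will be to verify that each $V_{\lambda + d'\delta}$-isotypic component $\tilde{\cN}^k_\lambda$ is \emph{irreducible} as an $\spo(\tilde{V})$-module, and that the $\lambda$'s with nonzero component are precisely the admissible ones. The standard strategy is to exhibit, inside each isotypic component, a joint highest-weight vector annihilated by the positive part of $\spo(U \otimes \tilde{V})$ in the triangular decomposition attached to our polarization. Writing such a vector explicitly in the Fock realization as a product of determinantal and Pfaffian-type expressions in the generators of $\Sym(U \otimes E^*)$, $\lw^\bullet(U \otimes F)$ and $\lw^\bullet(U' \otimes L)$, computing its $\spo(\tilde{V})$-lowest weight, and invoking the fact that an irreducible lowest-weight $\spo(\tilde{V})$-module is determined by its lowest weight, then yields both irreducibility and the inequivalence $\tilde{\cN}^k_\lambda \not\cong \tilde{\cN}^k_{\lambda'}$ for $\lambda \ne \lambda'$; unitarizability is automatic from the natural inner product on $\sU$. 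I expect that all of this can be extracted directly from \cite[Ch.\ 5]{chengwang}, so the only real work is fixing the dictionary between their conventions and ours.
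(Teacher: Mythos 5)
Your proposal follows essentially the same route as the paper: the paper's proof is simply a citation to the known $(\Pin(U),\spo(\tilde{V}))$ Howe duality on this Fock space, namely \cite[\S\S 5.3.4, 5.3.5]{chengwang} for $d'=0$ and \cite[\S A.1]{CKW} for $d'=1$, and your sketch is exactly the standard dual-pair argument underlying those references. The only adjustment is that for $d'=1$ the relevant statement is taken from the appendix of \cite{CKW} rather than from \cite[Ch.~5]{chengwang}.
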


\begin{proof}
For $d'=0$, see \cite[\S\S 5.3.4, 5.3.5]{chengwang} and for $d'=1$, see \cite[\S A.1]{CKW}.
\end{proof}

Note that $\fgl(E) \times \fso(V) \subset \spo(\tilde{V})$ acts on $\tilde{\cN}^k_\lambda$ which is a $\bZ$-graded representation with finite-dimensional pieces and the action can be integrated to the group $\GL(E) \times \SO(V)$. However, when $\dim(V)$ is even, we may not be able to extend this action to $\GL(E) \times \bO(V)$. We now make an additional definition
\[
\cN^k_\lambda = \begin{cases} \tilde{\cN}^k_\lambda \oplus \tilde{\cN}^k_{\lambda^\sigma} & \text{if $d'=0$ and $\ell(\lambda)<k$}\\ \tilde{\cN}^k_\lambda & \text{otherwise} \end{cases}.
\]
Then the action of $\GL(E) \times \SO(V)$ on $\cN^k_\lambda$ extends to $\GL(E) \times \bO(V)$.

\begin{theorem} \label{thm:super-lwood-typeBD}
We have a $\fgl(E) \times \fgl(F)$-equivariant isomorphism
\[
\rH_i(\ol{\fu}; \cN^k_\lambda) = \Tor_i^{\rU(\ol{\fu})}(\cN^k_\lambda, \bk) \cong \bigoplus_{\substack{\alpha\\ \tau_{2k}(\alpha) = \lambda \\ \iota_{2k}(\alpha) = i}} \bS_\alpha(E^*|F) \otimes (\det E^*)^k \otimes (\det F^*)^k.
\]
\end{theorem}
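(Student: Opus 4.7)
The plan is to follow the strategy used for the symplectic analog Theorem~\ref{thm:super-lwood}. There are two natural routes: invoke a super Howe duality homology calculation in the spirit of \cite[Theorem 5.7]{CKW} and the spin version from \cite[\S A]{CKW}, applied to the dual pair $(\Pin(U), \spo(\tilde{V}))$ after translating conventions; or give a categorical argument using the modification rules of \S\ref{sec:typeD-weyl} (when $d'=0$) and \S\ref{sec:spin-weyl} (when $d'=1$) inside a semisimple monoidal supercategory. I would present the categorical route, since it is the more self-contained of the two.

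For the categorical approach, the key point is that the border-strip combinatorics defining $\tau^\rD_{2k}, \iota^\rD_{2k}$ and $\tau^\Delta_{2k}, \iota^\Delta_{2k}$ come from Weyl-group manipulations which, exactly as observed for the symplectic case, remain valid when carried out in any semisimple monoidal Abelian category. First I would formulate the categorical analogues of the relevant results from \cite[\S 4]{lwood} and \cite{spin-cat}. Then I would apply them with the object playing the role of the ``base vector space'' taken to be the super vector space $E^*|F$ inside the category of polynomial representations of $\fgl(E^*|F)$. Under this substitution, ordinary Schur functors $\bS_\alpha$ are replaced by super Schur functors $\bS_\alpha(E^*|F)$, and $\ol{\fu} = \Sym^2(E^*|F) \oplus ((E^*|F) \otimes L)$ matches precisely the nilradical produced by the categorical recipe. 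Matching the Howe decomposition $\sU = \bigoplus V_{\lambda+d'\delta} \otimes \tilde{\cN}^k_\lambda$ on the group side with this categorical output will yield the claimed formula, with the twist by $(\det E^*)^k \otimes (\det F^*)^k$ arising, just as in the symplectic case, from restricting the $\ol{\fu}$-action from a larger Lie superalgebra with a nontrivial ground-state shift.

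The main obstacle will be the bookkeeping around $\Pin(U)$ versus $\bO(U)$ versus $\SO(U)$ and the corresponding passage between $\tilde{\cN}^k_\lambda$ and $\cN^k_\lambda$. When $d'=0$ and $\ell(\lambda)<k$ the definition combines $\tilde{\cN}^k_\lambda \oplus \tilde{\cN}^k_{\lambda^\sigma}$ into a single $\bO(V)$-module, and the type D admissibility condition $\lambda_1^\dagger + \lambda_2^\dagger \le 2k$ is exactly what pairs $\lambda$ with $\lambda^\sigma$ on the $\Pin(U)$ side, so the two summands of $\cN^k_\lambda$ should combine on the nose into a single family of $\alpha$-summands on the right-hand side. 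The self-$\sigma$ boundary case $\lambda_1^\dagger = k$, and the $d'=1$ spin case where $V_{\lambda+\delta}$ is intrinsically a $\Pin(U)$-representation and the spin rule carries a half-integer shift, require separate but routine verification; the remaining labor is sign- and convention-tracking.
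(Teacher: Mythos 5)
Your proposal matches the paper's own proof, which simply cites \cite[Theorem 5.7]{CKW} and, as the alternative route, the categorical versions of the modification-rule computations from \cite[\S 4]{lwood} (for $d'=0$) and \cite{spin-cat} (for $d'=1$) applied with the base object $E^*|F$ in polynomial representations of $\fgl(E^*|F)$, with the $(\det E^*)^k \otimes (\det F^*)^k$ twist explained exactly as you do. The $\Pin(U)$ versus $\bO(U)$ bookkeeping you flag is the right point to be careful about, but your plan is essentially the same argument the paper gives.
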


\begin{proof}
See \cite[Theorem 5.7]{CKW}. Their notation does not match ours, so alternatively one can use \cite[\S 4]{lwood} if $d'=0$ and \cite{spin-cat} if $d'=1$ (see proof of Theorem~\ref{thm:super-lwood} for comments). 
\end{proof}

\begin{proposition} \label{prop:typeBD-linear-pres}
Let $\lambda$ be a partition with $\ell(\lambda) \le k$ and $\lambda_{n+1} \le n+d$ and set $\nu = (\lambda_1^\dagger, \dots, \lambda_{n+d}^\dagger)$ and $\mu = (\lambda_{n+d+1}^\dagger, \lambda_{n+d+2}^\dagger, \dots)$. Then
\begin{align*}
\rH_0(\fg; \cN^k_\lambda) &= (\det E^*)^k \otimes \bS_{\mu^\dagger}(E^*) \otimes \bS_{[((n+d) \times k) \setminus \nu]}(V) \\
\rH_1(\fg; \cN^k_\lambda) &= (\det E^*)^k \otimes \bS_{(1 + \nu_{n+d}, \mu_2, \mu_3, \dots)^\dagger}(E^*) \otimes \bS_{[(k-\mu_1 + 1, k - \nu_{n+d-1}, \dots, k - \nu_1)]}(V).
\end{align*}
In particular, as a $\rU(\fg)$-module, $\cN^k_\lambda$ is generated in a single degree and has relations of degree $\lambda_{n+d}^\dagger - \lambda_{n+d+1}^\dagger + 1$. 
\end{proposition}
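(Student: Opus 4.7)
The plan is to mimic the argument in the symplectic case (Proposition~\ref{prop:typeC-linear-pres}) almost verbatim, replacing every ingredient by its orthogonal analogue. First I would view $\cN^k_\lambda$ as a $\fg'$-module and analyze the Hochschild--Serre spectral sequence for each of the two short exact sequences of Lie superalgebras
\[
0 \to \fg \to \fg' \to (F \otimes L) \oplus \lw^2(F) \to 0, \qquad 0 \to \ol{\fu} \to \fg' \to E^* \otimes F^* \to 0,
\]
keeping track of the $\fgl(E) \times \fgl(F)$-equivariance throughout.

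For the $\rH_0$ calculation, both spectral sequences degenerate in total degree $0$, so
\[
\rH_0((F\otimes L) \oplus \lw^2(F);\, \rH_0(\fg; \cN^k_\lambda)) \;\cong\; \rH_0(\fg'; \cN^k_\lambda) \;\cong\; \rH_0(E^*\otimes F^*;\, \rH_0(\ol{\fu}; \cN^k_\lambda)).
\]
Theorem~\ref{thm:super-lwood-typeBD} gives $\rH_0(\ol{\fu}; \cN^k_\lambda) = \bS_\lambda(E^*|F)\otimes (\det E^*)^k \otimes (\det F^*)^k$, and Proposition~\ref{prop:schur-hom} then computes $\rH_0(E^* \otimes F^*;\, \bS_\lambda(E^*|F)) = \bS_\nu(F) \otimes \bS_{\mu^\dagger}(E^*)$. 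Combined with the orthogonal version of Proposition~\ref{prop:par-hom}, which translates the $\fgl(F)$-socle into an $\fso(V)$-representation on the bigger orthogonal side, this yields the stated formula for $\rH_0(\fg; \cN^k_\lambda)$.

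For the $\rH_1$ calculation, the first spectral sequence decomposes $\rH_1(\fg'; \cN^k_\lambda)$, up to a differential $\rd^2 : \rE^2_{2,0}\to \rE^2_{0,1}$, as the sum of $\rE^2_{1,0}$ and $\rE^2_{0,1}$. The key step---and the step I expect to be the main obstacle---is showing $\rd^2=0$. The symplectic argument uses that $\rU(\fg)_{>0}$ has no $\fgl(E)$-invariants to deduce that the $\bS_{(k^n)+\mu^\dagger}(E^*)$-isotypic component of $\rH_1(\fg; \cN^k_\lambda)$ vanishes, forcing $\rd^2 = 0$. The same reasoning applies here because $\fg_{>0} = (E^*\otimes V^*)\oplus \Sym^2(E^*)$ still has no $\fgl(E)$-invariants, and $\rH_0(\fg;\cN^k_\lambda)$ is $\fgl(E)$-isotypic by the $\rH_0$ formula just established. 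The only subtlety compared to the symplectic case is that the quotient Lie algebra is now $(F\otimes L)\oplus \lw^2(F)$ rather than $\Sym^2(F)$, so I need to check that the isotypic argument (using that $\rH_1 \subset \rH_0 \otimes \rU(\fg)_{>0}$) is insensitive to this replacement; it is, since it only uses the $\fgl(E)$-invariant part, which sits inside $\fg$ regardless of the form.

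Granted $\rd^2=0$, the rest is straightforward bookkeeping. From the second spectral sequence I would compute $\rH_1(\ol{\fu};\cN^k_\lambda)$ using Theorem~\ref{thm:super-lwood-typeBD}: the contributing partitions $\alpha$ with $\tau_{2k}(\alpha)=\lambda$ and $\iota_{2k}(\alpha)=1$ are $\alpha = (\lambda, 1^s)$ for the appropriate shift $s$ determined by the orthogonal (or spin) modification rule. Then Proposition~\ref{prop:schur-hom} identifies $\rE^2_{0,1} = \rH_0(E^*\otimes F^*;\rH_1(\ol{\fu};\cN^k_\lambda))$ as a $\bS_{(k^n)+\mu^\dagger}(E^*)$-isotypic representation which I can discard, and identifies $\rE^2_{1,0} = \rH_1(E^*\otimes F^*;\rH_0(\ol{\fu};\cN^k_\lambda))$ as $(\det F^*)^k \otimes \bS_{(\nu_1,\ldots,\nu_{n+d-1},\mu_1-1)}(F) \otimes (\det E^*)^k \otimes \bS_{(1+\nu_{n+d},\mu_2,\mu_3,\ldots)^\dagger}(E^*)$. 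A final application of Proposition~\ref{prop:par-hom} converts the $\fgl(F)$-piece into the claimed $\fso(V)$-representation $\bS_{[(k-\mu_1+1, k-\nu_{n+d-1},\ldots,k-\nu_1)]}(V)$, producing the desired $\rH_1$ formula. The last assertion about the degree of generators and relations then follows directly from the $\rH_0$ and $\rH_1$ formulas via Theorem~\ref{thm:dict}, since $\lambda_{n+d}^\dagger - \lambda_{n+d+1}^\dagger + 1 = \nu_{n+d} - \mu_1 + 1$ matches the linear-strand grading shift read off from the two Schur factors.
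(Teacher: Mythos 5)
Your overall strategy (run the two Hochschild--Serre spectral sequences exactly as in Proposition~\ref{prop:typeC-linear-pres}, kill $\rd^2$ by the $\fgl(E)$-isotypic argument, then translate back with Proposition~\ref{prop:par-hom}) is the paper's strategy, and your observation that replacing $\Sym^2(F)$ by $(F\otimes L)\oplus\lw^2(F)$ is harmless is correct. But there is a genuine gap, and it is precisely the point the paper's proof singles out: the even orthogonal case $d'=0$ with $\ell(\lambda)<k$. There your key input is wrong: because the type D rule uses $c(R_\lambda)-1$, a border strip consisting of a single column contributes $0$ to $\iota^\rD_{2k}$, so the partition $\lambda^\sigma=(\lambda,1^{2k-2\ell(\lambda)})$ also satisfies $\tau^\rD_{2k}(\lambda^\sigma)=\lambda$ and $\iota^\rD_{2k}(\lambda^\sigma)=0$. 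Hence Theorem~\ref{thm:super-lwood-typeBD} gives $\rH_0(\ol{\fu};\cN^k_\lambda)=\bigl(\bS_\lambda(E^*|F)\oplus\bS_{\lambda^\sigma}(E^*|F)\bigr)\otimes(\det E^*)^k\otimes(\det F^*)^k$, not the single summand you use. This extra term is forced by consistency: when $\ell(\lambda)<k$ and $d'=0$ the partition $\beta=((n+d)\times k)\setminus\nu$ has full length $n+d$, so $\bS_{[\beta]}(V)$ splits over $\SO(V)$ as $\bS_{[\beta]^+}(V)\oplus\bS_{[\beta]^-}(V)$, and by Proposition~\ref{prop:par-hom} its $\fn$-homology $\rH_0(\fn;-)$ consists of \emph{two} $\fgl(F)$-irreducibles, with weights $\beta$ and $(\beta_1,\dots,\beta_{n+d-1},-\beta_{n+d})$; the second arises exactly from the $\lambda^\sigma$ summand after twisting by $(\det F^*)^k$. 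With only the $\bS_\lambda$ term your computation can at best identify $\bS_{[\beta]^+}(V)$, not the $\bO(V)$-module $\bS_{[\beta]}(V)$ appearing in the statement (the same issue recurs for $\rH_1$, where $\rH_1(E^*\otimes F^*;\bS_{\lambda^\sigma}(E^*|F))$ supplies the missing half of $\bS_{[(k-\mu_1+1,k-\nu_{n+d-1},\dots,k-\nu_1)]}(V)$). This is also why $\cN^k_\lambda$ was defined as $\tilde{\cN}^k_\lambda\oplus\tilde{\cN}^k_{\lambda^\sigma}$ in exactly these cases.

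The paper's remedy is to avoid chasing these signs: since $\fg$ is $\GL(E)\times\bO(V)$-stable and $\cN^k_\lambda$ carries an $\bO(V)$-action, every $\rH_i(\fg;\cN^k_\lambda)$ is an $\bO(V)$-module, so $\bS_{[\beta]^+}(V)$ and $\bS_{[\beta]^-}(V)$ always occur together and it suffices to detect one of the two $\fgl(F)$-weights. You should either invoke this, or track the $\lambda^\sigma$ contributions explicitly as above. A secondary slip: your description of the $\iota_{2k}=1$ partitions as $\alpha=(\lambda,1^s)$ is the type C answer; for the type D rule such an $\alpha$ has $\iota^\rD_{2k}(\alpha)=0$ (this is the $\lambda^\sigma$ phenomenon), so the degree-one terms look different there. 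For $d'=1$ (spin rule) and for $\ell(\lambda)=k$ none of these issues arise and your argument goes through as written.
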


\begin{proof}
Similar to the proof of Proposition~\ref{prop:typeC-linear-pres}. There is one point to highlight though: in Proposition~\ref{prop:par-hom}, we see that when $\dim(V)$ is even and $\lambda_{n+d}>0$, that $\rH_0(\fn; \bS_{[\lambda]}(V))$ is a sum of two irreducible representations of $\fgl(F)$. This does not happen in Proposition~\ref{prop:typeC-linear-pres} and is accounted for by the fact that $\cN^k_\lambda$ is defined to be a sum of two irreducible representations in these cases. Rather than chase signs, we note that the representations $\bS_{[\lambda]^\pm}(V)$ always come together in each homology group since there is an action of $\bO(V)$ on everything.
\end{proof}

\begin{proposition} \label{prop:ss-chase-BD}
If $\lambda_1 \le d$ and $\ell(\lambda) \le k$, then $\cN^k_\lambda$ has a linear free resolution over $\rU(\fg)$. As a representation of $\GL(E) \times \bO(V)$, $\rH_i(\fg; \cN_\lambda^k)$ is a subrepresentation of 
\[
\bigoplus_{\substack{|\gamma| = i\\ \ell(\gamma) \le n}} \bS_{(k^n) + \gamma}(E^*) \otimes \bS_{[((k^n) + \gamma, k - \lambda^\dagger_d, \dots, k - \lambda^\dagger_1)]}(V).
\]
\end{proposition}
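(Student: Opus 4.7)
The plan is to mimic the proofs of Proposition~\ref{prop:ss-chase} and Corollary~\ref{cor:Nlambda-upper}, replacing the symplectic data by the orthogonal data. Concretely, I would regard $\cN^k_\lambda$ as a $\fg'$-module and analyze the two Hochschild--Serre spectral sequences attached to the short exact sequences
\[
0 \to \fg \to \fg' \to (F \otimes L) \oplus \lw^2(F) \to 0, \qquad 0 \to \ol{\fu} \to \fg' \to E^* \otimes F^* \to 0.
\]
Everything is $\fgl(E) \times \fgl(F)$-equivariant, and the center of $\fgl(E)$ equips the relevant spaces with a grading (a representation on which the center acts by $D$ times the trace is declared to have degree $-D$). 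The base case of Proposition~\ref{prop:typeBD-linear-pres} shows $\rH_0(\fg; \cN^k_\lambda)$ is concentrated in degree $nk$. I then argue by induction on $i$ that $\rH_i(\fg; \cN^k_\lambda)$ is concentrated in degree $i + nk$, which is precisely linearity.

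The inductive step proceeds exactly as in Proposition~\ref{prop:ss-chase}. First I use the first spectral sequence: the $\rE^2$-terms $\rE^2_{j, i-j} = \rH_j((F\otimes L)\oplus \lw^2(F); \rH_{i-j}(\fg; \cN^k_\lambda))$ for $j > 0$ are concentrated in degree $i - j + nk$ by induction, so the degree $\ge i + nk$ part of $\rH_i(\fg'; \cN^k_\lambda)$ coincides with $\rH_0((F\otimes L)\oplus\lw^2(F); \rH_i(\fg; \cN^k_\lambda))$; by Proposition~\ref{prop:par-hom}, this $\rH_0$ determines the $\fso(V)$-module $\rH_i(\fg; \cN^k_\lambda)$, so it suffices to show this $\rH_0$ is concentrated in degree $i + nk$. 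Next I use the second spectral sequence to bound $\rH_i(\fg'; \cN^k_\lambda)$ from above by $\bigoplus_{j=0}^i \rH_j(E^*\otimes F^*; \rH_{i-j}(\ol{\fu}; \cN^k_\lambda))$, and apply Theorem~\ref{thm:super-lwood-typeBD} and Proposition~\ref{prop:schur-hom} to decompose each term: a weight $\bS_\beta(F)\otimes \bS_{\gamma^\dagger}(E^*)$ arises from some $\alpha$ with $\tau_{2k}(\alpha)=\lambda$, $\iota_{2k}(\alpha) = m$, and a permutation $w$ of length $m'$ with $w\bullet\alpha^\dagger = (\beta,\gamma)$, contributing in degree $|\gamma| + nk$ (with $m + m' = i$).

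Thus the whole argument reduces to an analogue of Lemma~\ref{lem:gamma-bound}: for $\lambda$ with $\lambda_1 \le d$ and $\ell(\lambda) \le k$, and any admissible $\alpha$ with $\tau_{2k}(\alpha) = \lambda$ and any permutation $w$ as above, one has $|\gamma| \le \iota_{2k}(\alpha) + \ell(w)$, with equality forcing $\iota_{2k}(\alpha) = 0$ and $w \bullet \alpha^\dagger = (\lambda_1^\dagger,\dots,\lambda_d^\dagger, -\gamma_n^\dagger,\dots,-\gamma_1^\dagger,\gamma_1,\gamma_2,\dots)$ with $|\gamma|=\ell(w)$. The proof is essentially identical to that of Lemma~\ref{lem:gamma-bound}: the only change is the length of the border strip and the $-1$ correction in the count for type D (\S\ref{sec:typeD-weyl}) and the spin rule (\S\ref{sec:spin-weyl}). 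The inequality
\[
\iota_{2k}(\alpha) \ge \sum_{j \ge 1} \max(\alpha_j - d - j + 1, 0)
\]
still holds (the columns contributed by each border strip only decrease by at most $1$ from the type C count, and the shorter strip length in types D and $\Delta$ compensates), and the bookkeeping argument in \eqref{eqn:gamma-len} and \eqref{eqn:iota-bound2} goes through verbatim. Putting everything together and combining with Proposition~\ref{prop:par-hom} (including the splitting into $\bS_{[\lambda]^\pm}(V)$ when $\dim(V)$ is even and $\ell(\lambda) = n$, handled as in Proposition~\ref{prop:typeBD-linear-pres} by invoking the $\bO(V)$-action) gives the stated subrepresentation bound.

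The main obstacle is verifying the combinatorial lemma in types D and $\Delta$; the rest is formal transfer. Since the only change in the modification rules is the length of the border strip and a $-1$ shift in the count, I expect a line-by-line check of Lemma~\ref{lem:gamma-bound} to go through, but care is required near the boundary cases $d' = 1$ (spin) and $d = 0$ with $\ell(\lambda) = n$, where the irreducible decomposition of the isotropic Grassmannian cohomology splits.
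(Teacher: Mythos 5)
Your overall strategy coincides with the paper's: run the two Hochschild--Serre spectral sequences as in Proposition~\ref{prop:ss-chase}, handle the $\bS_{[\lambda]^\pm}(V)$ splitting through the $\bO(V)$-action as in Proposition~\ref{prop:typeBD-linear-pres}, and reduce everything to a type D/spin analogue of Lemma~\ref{lem:gamma-bound}. The gap is precisely in that combinatorial step: the inequality you assert, $\iota_{2k}(\alpha) \ge \sum_{j \ge 1} \max(\alpha_j - d - j + 1, 0)$, is \emph{false} for $\iota^{\rD}_{2k}$. Take $d'=0$, $d=0$, $k=1$, $\alpha=(1,1)$: the border strip has length $2\ell(\alpha)-2k=2$, namely the vertical domino, so $c(R_\alpha)=1$, $\iota^{\rD}_2(\alpha)=c(R_\alpha)-1=0$ and $\tau^{\rD}_2(\alpha)=\emptyset$ (so $\lambda=\emptyset$ satisfies all the hypotheses), while your right-hand side is $\max(\alpha_1-0-1+1,0)=1$. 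Your justification is also off on both counts: the strips in types D and $\Delta$ are \emph{longer} than in type C (lengths $2\ell-2k$ and $2\ell-2k-1$ versus $2\ell-2k-2$), and no compensation occurs for the loss of one column per strip caused by using $c(R_\lambda)-1$ in \S\ref{sec:typeD-weyl}.

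The correct statement in type D, and the one the paper uses, is the weaker bound $\iota_{2k}(\alpha) \ge \sum_{j \ge 1} \max(\alpha_j - d - j, 0)$ (each removed strip meets column $1$, so it contains at most $c(R)-1$ boxes $(j,c)$ with $c-j \ge d+1$, and all such boxes of $\alpha$ lie outside $\lambda$ since $\lambda_1 \le d$). Crucially, this weaker bound is all that the chain \eqref{eqn:iota-bound2} actually invokes (the middle term there is $\max(\alpha_j-d-j,0)$, without the $+1$), and the $w=1$ case needs only it as well, since $\max(\alpha_j-n-d,0) \le \max(\alpha_j-d-j,0)$ for $j \le n$; the equality analysis also survives with the weaker bound. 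In the spin case $d'=1$, where $\iota^\Delta$ counts $c(R_\lambda)$ itself, your stronger inequality does hold by the type C argument. So your reduction is sound and the proof is repaired by replacing your inequality with the weaker one; but as written, the key combinatorial step of your argument rests on a false claim for $d'=0$, which you should correct.
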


\begin{proof}
The proof of the first part is similar to the proof of Proposition~\ref{prop:ss-chase}. See also the comments in the proof of Proposition~\ref{prop:typeBD-linear-pres}. The proof of the second part is similar to the proof of Corollary~\ref{cor:Nlambda-upper}. As an intermediate step, we need to prove an analogue of Lemma~\ref{lem:gamma-bound}. This is almost the same, but the inequality \eqref{eqn:iota-bound1} is replaced by $\iota_{2k}(\alpha) \ge \sum_{j \ge 1} \max(\alpha_j - d - j, 0)$ since the definition of $\iota_{2k}^\rD$ uses $c(R_\lambda)-1$ rather than $c(R_\lambda)$ (\S\ref{sec:typeD-weyl}). This weaker inequality is the one that is used in \eqref{eqn:iota-bound2}, so there is no problem.
\end{proof}

\subsection{Main result and consequences} \label{sec:main-result-typeBD}

We assume that $\ch(\bk)=0$ in this section. The definition of parabolic Verma modules is the same as in \S\ref{sec:main-result-typeC}.

\begin{theorem} \label{thm:main-typeBD}
Let $\lambda$ be a partition with $\ell(\lambda) \le k$ and $\lambda_{n+1} \le n+d$.

\noindent Set $\nu = (\lambda_1^\dagger, \dots, \lambda_{n+d}^\dagger)$ and $\mu = (\lambda_{n+d+1}^\dagger, \lambda_{n+d+2}^\dagger, \dots)$. Then:
\begin{compactenum}[\rm (a)]
\item $\lambda^\dagger_{n+d} = \lambda^\dagger_{n+d+1}$ if and only if $\cN^k_\lambda$ has a linear presentation as a $\rU(\fg)$-module. 
\item If $\lambda^\dagger_{n+d} = \lambda^\dagger_{n+d+1}$, $\bL(\cN^k_\lambda)$ (notation of \S\ref{sec:koszul}) is the first linear strand of a $B$-module $M$ with a compatible $\GL(E) \times \bO(V)$-action and which is uniquely determined by the fact that it is generated in degree $0$ with degree $1$ relations, and satisfies
\begin{align*}
M_0 &= (\det E)^k \otimes \bS_{\mu^\dagger}(E) \otimes \bS_{[((n+d) \times k) \setminus \nu]}(V),\\
M_1 &= (\det E)^k \otimes \bS_{(\mu_1 + 1, \mu_2, \mu_3, \dots)^\dagger}(E) \otimes \bS_{[(k+1-\nu_{n+d}, k-\nu_{n+d-1}, \dots, k-\nu_1)]}(V).
\end{align*}
\end{compactenum}
If furthermore $\lambda_1 \le d$, then $M \cong \cM^k_{\lambda^\dagger}$, and:
\begin{compactenum}[\rm (a)]
\setcounter{enumi}{2}
\item $\bL(\cN^k_{\lambda})$ is an acyclic linear complex and is a minimal free resolution of $\cM^k_{\lambda^\dagger}$ over $B$,
\item $\bR(\cM^k_{\lambda^\dagger})$ is an acyclic linear complex and is a minimal free resolution of $\cN^k_\lambda$ over $\rU(\fg)$. The action of $\fg$ on this complex extends to an action of $\spo(\tilde{V})$ and each $\bR(\cM^k_{\lambda^\dagger})_i$ is a direct sum of parabolic Verma modules.
\end{compactenum}
\end{theorem}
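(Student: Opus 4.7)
The plan is to mirror the proof of Theorem~\ref{thm:main-typeC} essentially verbatim, substituting the orthogonal analogues of each ingredient that we have already established in \S\ref{sec:typeBD-prelim}--\S\ref{sec:howe-BD}. First, part (a) is immediate from Proposition~\ref{prop:typeBD-linear-pres}, which computes $\rH_0(\fg;\cN^k_\lambda)$ and $\rH_1(\fg;\cN^k_\lambda)$ explicitly: the degree of the generator is $\lambda_{n+d}^\dagger$ while the degree of the relations is $\lambda^\dagger_{n+d+1}+1$, so a linear presentation amounts precisely to $\lambda_{n+d}^\dagger = \lambda^\dagger_{n+d+1}$.

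For (b), I would apply Theorem~\ref{thm:dict} to $\bL(\cN^k_\lambda)$: this complex is the first linear strand of the minimal free resolution over $B$ of the cokernel $M$ of the map $(\cN^k_\lambda)_1^* \otimes B(-1) \to (\cN^k_\lambda)_0^* \otimes B$. The formulas for $M_0$ and $M_1$ are read off from Proposition~\ref{prop:typeBD-linear-pres} by dualizing. To see that $M$ is \emph{uniquely} determined by $M_0$, $M_1$, and the requirement of being linearly presented in degree $0$, I would invoke the same multiplicity-freeness argument as in the symplectic case: by \cite[Theorem 1.1ABCD]{stembridge}, the tensor product of any irreducible representation of a classical group with the vector representation is multiplicity-free, so $M_0 \otimes B_1 = M_0 \otimes (E \otimes V)$ decomposes without multiplicities and the map $M_1 \to M_0 \otimes B_1$ is forced up to the choice of $M_1$ as a subrepresentation. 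This characterizes $M$.

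For (c) and (d), assume additionally $\lambda_1 \le d$. The candidate module $\cM^k_{\lambda^\dagger}$ from \eqref{eqn:cM-module-BD} sits in degree $0$ and carries the required $\GL(E) \times \bO(V)$-action; its degree-$0$ piece matches $M_0$. This gives a surjection $M \twoheadrightarrow \cM^k_{\lambda^\dagger}$. To promote this to an isomorphism, I would compare the upper bound on $\rH_0(\fg;\cN^k_\lambda)$ provided by Proposition~\ref{prop:ss-chase-BD} (which is the subrepresentation bound in the analogue of Corollary~\ref{cor:Nlambda-upper}) against the explicit decomposition \eqref{eqn:cM-module-BD}: the two representations coincide term-by-term. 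Once $M = \cM^k_{\lambda^\dagger}$, part (c) follows because Proposition~\ref{prop:ss-chase-BD} asserts that $\cN^k_\lambda$ has a \emph{linear} free resolution over $\rU(\fg)$, hence $\bL(\cN^k_\lambda)$ is acyclic and minimally resolves $\cM^k_{\lambda^\dagger}$ over $B$ by Theorem~\ref{thm:dict}(e), and dually $\bR(\cM^k_{\lambda^\dagger})$ minimally resolves $\cN^k_\lambda$ over $\rU(\fg)$. Finally, the claim that the $\fg$-action on $\bR(\cM^k_{\lambda^\dagger})_i$ extends to an $\spo(\tilde{V})$-action by parabolic Verma modules follows by induction on $i$ from Lemma~\ref{lem:parabolic-verma}: each syzygy module is generated in a single degree, so the lifting criterion applies at every step.

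The main obstacle, as in the symplectic setting, is the upper bound in Proposition~\ref{prop:ss-chase-BD}, which in turn rests on the combinatorial inequality $|\gamma| \le \iota_{2k}(\alpha) + \ell(w)$. The point needing care is that $\iota^\rD_{2k}$ and $\iota^\Delta_{2k}$ use border strips of length $2\ell(\lambda)-2k$ and $2\ell(\lambda)-2k-1$ respectively with the column-count convention $c(R_\lambda)-1$ rather than $c(R_\lambda)$; this weakens the bound \eqref{eqn:iota-bound1} to $\iota_{2k}(\alpha)\ge \sum_{j\ge 1}\max(\alpha_j-d-j,0)$, but (as noted in the proof of Proposition~\ref{prop:ss-chase-BD}) this weaker inequality is still sufficient to conclude. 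A secondary subtlety is that for even $\dim(V)$ with $\ell(\lambda)=n+d$ maximal, the representations $\bS_{[\lambda]^\pm}(V)$ split, and one must ensure the resolution is $\bO(V)$-equivariant rather than merely $\SO(V)$-equivariant; this is precisely why $\cN^k_\lambda$ was defined as the $\bO(V)$-symmetrization of $\tilde{\cN}^k_\lambda$ in \S\ref{sec:howe-BD}, and the signed representations always appear in pairs in each homological degree.
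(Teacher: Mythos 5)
Your proposal is correct and follows the paper's own route: the paper proves this theorem simply by the remark ``Similar to the proof of Theorem~\ref{thm:main-typeC}'', and you have carried out exactly that adaptation, substituting Proposition~\ref{prop:typeBD-linear-pres}, Proposition~\ref{prop:ss-chase-BD}, and \eqref{eqn:cM-module-BD} for their symplectic counterparts and reusing Theorem~\ref{thm:dict}, Stembridge's multiplicity-freeness, and Lemma~\ref{lem:parabolic-verma}. The two subtleties you flag (the weaker analogue of \eqref{eqn:iota-bound1} for $\iota^{\rD}_{2k}$, $\iota^{\Delta}_{2k}$, and the $\bO(V)$ versus $\SO(V)$ equivariance when $\dim(V)$ is even) are precisely the points the paper itself isolates in the proofs of Propositions~\ref{prop:typeBD-linear-pres} and~\ref{prop:ss-chase-BD}.
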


\begin{proof}
Similar to the proof of Theorem~\ref{thm:main-typeC}.
\end{proof}

\begin{conjecture} 
Let $I_\lambda$ be the ideal generated by $\bS_\lambda(E) \otimes \bS_{[\lambda]}(V)$ in $B$. Then the resolution of $I_\lambda$ is a representation of $\spo(\tilde{V})$. Furthermore, each linear strand is irreducible if $\dim(V)$ is odd and otherwise is a sum of $2$ irreducible representations which are interchanged by the outer automorphism of $\spo(\tilde{V})$ (this is related to the conjugation action of $\bO(V)$ on $\SO(V)$).
\end{conjecture}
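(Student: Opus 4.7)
The plan is to promote the $\rU(\fg)$-action on $\Tor^B_\bullet(I_\lambda,\bk)^*$ coming from Proposition~\ref{prop:CI-koszul}(b) to an $\spo(\tilde V)$-action, and then to decompose it strand-by-strand using an orthogonal analogue of the Howe-duality input already used in Theorem~\ref{thm:main-typeBD}. Concretely, if $\bF_\bullet \to I_\lambda$ is the minimal free resolution over $B$, then writing $\bF_\bullet = \bigoplus_j \bF^{(j)}_\bullet$ as a sum of its linear strands, Proposition~\ref{prop:CI-koszul}(b) gives that each $\bR(\bF^{(j)}_\bullet)$ is a finitely generated $\rU(\fg)$-module, and their direct sum is $\Tor^B_\bullet(I_\lambda,\bk)^*$ with the Yoneda action. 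Thus the conjecture reduces to the two claims: (i) this $\rU(\fg)$-module structure extends to an $\spo(\tilde V)$-module structure (which is automatic for the resolution once established on $\Tor$), and (ii) each summand $\bR(\bF^{(j)}_\bullet)$ is $\spo(\tilde V)$-irreducible (respectively a sum of two $\fso(V)$-outer-conjugate irreducibles in even dimension).

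For the first strand, I would identify $\bR(\bF^{(0)}_\bullet)$ with one of the Howe-duality modules $\cN^k_{\lambda^\dagger}$ (or $\tilde\cN^k_{\lambda^\dagger}$) of \S\ref{sec:howe-BD} for a suitable $k$ (the obvious candidate is $k=\lambda_1$, chosen so that $\lambda^\dagger$ lies in the admissible box), by applying Proposition~\ref{prop:typeBD-linear-pres} to compare degree-$0$ and degree-$1$ terms: both sides have the same $(\GL(E)\times\bO(V))$-decomposition at these two degrees, and, as in the proof of Theorem~\ref{thm:main-typeBD}(b), the tensor product with the vector representation of a classical group is multiplicity-free, so a $B$-module generated in one degree with linear relations is determined by these data. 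Having matched the strand with a known $\spo(\tilde V)$-module, the action extends by Lemma~\ref{lem:parabolic-verma}: the generators sit in a single degree, so a $\rU(\fg)$-equivariant surjection from a parabolic Verma module is automatically $\spo(\tilde V)$-equivariant.

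For the higher linear strands I would induct on $j$ using the spectral sequence
\[
\rE^2_{p,q} = \Tor^B_p\!\bigl(I_\lambda/\mathrm{(previous\ strands)},\bk\bigr)_{p+q} \Rightarrow \Tor^B_{p+q}(I_\lambda,\bk),
\]
combined with the Hochschild--Serre sequence \eqref{eqn:HS-SS} for $\fg\hookrightarrow\fg'\hookrightarrow\spo(\tilde V)$ as deployed in Proposition~\ref{prop:ss-chase-BD} and Corollary~\ref{cor:Nlambda-upper}. The degree-counting argument in those proofs bounds the $\fgl(E)$-weights that can contribute and, together with the irreducibility of the already-identified lower strands, forces each higher $\bR(\bF^{(j)}_\bullet)$ to be either a single parabolic-Verma image or a $\rU(\fg)$-quotient thereof; the irreducibility criterion of Proposition~\ref{prop:par-hom} then pins down the $\spo(\tilde V)$-module up to isomorphism. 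The two-irreducible phenomenon in even dimension is automatic: the only place in the paper where an $\fso(V)$-irreducible fails to be $\bO(V)$-irreducible is when $\dim(V)=2\ell$ and a partition of length exactly $\ell$ occurs, and exactly these partitions arise from the $\GL(E)\times\bO(V)$-types appearing in the strands of $I_\lambda$, so each strand picks up precisely a $\bS_{[\mu]^+}\!/\bS_{[\mu]^-}$ splitting that the outer automorphism of $\spo(\tilde V)$ swaps.

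The hard part is Step~3: for the modules $\cM^k_\nu$ treated in Theorem~\ref{thm:main-typeBD}, the linear resolution is a single strand and Howe duality directly furnishes the $\spo(\tilde V)$-module, but for a general $I_\lambda$ there is no a priori Howe-dual pair whose multiplicity spaces realize the higher strands, so one must either construct a tensor/Schur-type functor from $\spo(\tilde V)$-Mod to $B$-Mod that produces the full resolution (as hinted in the discussion following Conjecture~\ref{conj:Ilambda} and in the introduction) or control the Hochschild--Serre differentials finely enough to rule out, for each $j$, the possibility that $\bR(\bF^{(j)}_\bullet)$ is a proper $\spo(\tilde V)$-subquotient of a parabolic Verma rather than a genuine irreducible. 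This is the analogue of the open problem already acknowledged for the symplectic case in Conjecture~\ref{conj:Ilambda}, and I would expect any proof to proceed by first constructing the relevant tensor functor and then checking that it recovers $I_\lambda$ by comparing characters with \eqref{eqn:cM-module-BD} and Proposition~\ref{prop:typeBD-basicfacts}(b,c).
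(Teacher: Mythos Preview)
The statement you are attempting to prove is labeled a \emph{Conjecture} in the paper, and the paper offers no proof of it; there is therefore nothing to compare your attempt against. Your proposal is not a proof but a research outline, and you yourself acknowledge this in your final paragraph: the crucial step of identifying the higher linear strands with irreducible $\spo(\tilde V)$-modules requires either constructing a new tensor functor or controlling the Hochschild--Serre differentials in a way that neither you nor the paper accomplishes.

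Two specific gaps deserve emphasis. First, even your treatment of the \emph{first} linear strand is incomplete: your candidate identification with $\cN^k_{\lambda^\dagger}$ for $k=\lambda_1$ relies on matching $\rH_0$ and $\rH_1$ via Proposition~\ref{prop:typeBD-linear-pres}, but that proposition computes $\rH_0(\fg;\cN^k_\mu)$ as a representation of the very specific shape $(\det E^*)^k\otimes\bS_{{\mu'}^\dagger}(E^*)\otimes\bS_{[((n+d)\times k)\setminus\nu]}(V)$, and you have not checked that $\bS_\lambda(E)\otimes\bS_{[\lambda]}(V)$ can be put in this form for arbitrary $\lambda$. Moreover, the multiplicity-free argument you borrow from Theorem~\ref{thm:main-typeBD}(b) only pins down a $B$-module with linear presentation, whereas you need the first strand of $I_\lambda$ to coincide with $\bL(\cN^k_{\lambda^\dagger})$, and Theorem~\ref{thm:main-typeBD}(c) establishes that coincidence only under the additional hypothesis $\lambda_1\le d$.

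Second, your inductive scheme for the higher strands invokes a spectral sequence relating $\Tor^B_\bullet(I_\lambda,\bk)$ to $\Tor^B_\bullet$ of a quotient by ``previous strands,'' but no such spectral sequence is set up in the paper, and it is not clear what filtration on $I_\lambda$ would produce it or why its $\rE^2$-page would consist of modules you can already identify. The degree-counting bounds from Proposition~\ref{prop:ss-chase-BD} and Corollary~\ref{cor:Nlambda-upper} were proved only for the Howe-duality modules $\cN^k_\lambda$, not for arbitrary $\rU(\fg)$-modules arising as linear strands, so invoking them here is circular. In short, your outline correctly locates where the difficulty lies but does not overcome it; the statement remains open.
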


Now we derive a few consequences of Theorem~\ref{thm:main-typeBD}.

First, we begin with an alternative construction of $\cM^k_\nu$. Consider $\OGr(n,V)$ with vector bundles $\eta = E \otimes \cR^*$, and $\cV^k_\nu = (\det E)^k \otimes (\det \cR^*)^k \otimes \bS_{[(d \times k) \setminus \nu]}(\cR^\perp / \cR)$. 

\begin{proposition}  \label{prop:Mnu-alt-BD}
$\cM^k_\nu = \rH^0(\IGr(n,V); \Sym(\eta) \otimes \cV^k_\nu)$.
\end{proposition}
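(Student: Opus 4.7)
The plan is to follow the proof of Proposition~\ref{prop:Mnu-alt} essentially verbatim, substituting the orthogonal setting for the symplectic one throughout. Let $X = \mathbf{OFl}(n, n+d, V)$ denote the partial isotropic flag variety parametrizing pairs of isotropic subspaces $W_1 \subset W_2 \subset V$ with $\dim W_1 = n$ and $\dim W_2 = n+d$, equipped with tautological flag $\cR_n \subset \cR_{n+d} \subset V$. Let $\pi_1 \colon X \to \OGr(n,V)$ and $\pi_2 \colon X \to \OGr(n+d,V)$ be the two projections, and write $\cR$ and $\cR'$ for the tautological subbundles on $\OGr(n,V)$ and $\OGr(n+d,V)$ respectively. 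The strategy is to compute $\rH^0(X; \cW)$ for a suitable $\cW$ by pushing down along each projection.

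First I would push down along $\pi_1$, which realizes $X$ as the relative Grassmannian $\Gr(d, \cR^\perp/\cR)$ over $\OGr(n,V)$. Applying the orthogonal Borel--Weil--Bott theorem (Theorem~\ref{thm:bott-BD}) in the relative form provided by Remark~\ref{rmk:bw-functorial} to the partition $(d \times k) \setminus \nu$ yields
\[
\bS_{[(d \times k) \setminus \nu]}(\cR^\perp/\cR) = (\pi_1)_* \bigl( \bS_{(d \times k) \setminus \nu}(\cR_n^\perp/\cR_{n+d}) \bigr)
\]
with vanishing higher direct images. Setting
\[
\cW = \Sym(E \otimes \cR_n^*) \otimes (\det E)^k \otimes (\det \cR_n^*)^k \otimes \bS_{(d \times k) \setminus \nu}(\cR_n^\perp/\cR_{n+d}),
\]
and using $\cR_n = \pi_1^*\cR$, the projection formula gives $\Sym(\eta) \otimes \cV^k_\nu = (\pi_1)_* \cW$.

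Next I would push $\cW$ down via $\pi_2$, which realizes $X$ as the relative Grassmannian $\Gr(n, \cR')$ over $\OGr(n+d,V)$. The nondegenerate orthogonal form on $V$ restricts to a perfect pairing between $\cR_n^\perp/\cR_{n+d}$ and itself, and in particular gives an isomorphism $\cR_n^\perp/\cR_{n+d} \cong \cR_{n+d}^*/\cR_n^*$ of bundles on $X$. Under this identification, $\cW$ is exactly the sheaf used in the relative form of the construction of \S\ref{sec:EN-powers} (with $E$ and the ambient rank-$(n+d)$ bundle $\cR'$ playing the roles of $E$ and $F$), so $(\pi_2)_* \cW = \sM^k_\nu(E, (\cR')^*)$, again with no higher direct images.

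Since pushing $\cW$ to a point along $\pi_1$ computes $\rH^0(\OGr(n,V); \Sym(\eta) \otimes \cV^k_\nu)$ and along $\pi_2$ computes $\rH^0(\OGr(n+d,V); \sM^k_\nu(E, (\cR')^*)) = \cM^k_\nu$ (by \eqref{eqn:cM-module-BD}), and both compute $\rH^0(X; \cW)$, the two are equal. There is essentially no obstacle here; the only mild subtlety worth noting is that when $2d+d'=0$ the target $\OGr(n+d,V)$ is disconnected, but since everything above is $\GL(E) \times \bO(V)$-equivariant and both the statement and the proof respect this symmetry, no change is needed.
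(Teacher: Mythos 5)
Your overall strategy is exactly the paper's: the same two-projection argument through the partial isotropic flag variety as in Proposition~\ref{prop:Mnu-alt}, and for $d'=0$ your argument does go through essentially verbatim. However, there is a genuine error in the identification step when $\dim(V)$ is odd ($d'=1$), which is half the cases of this section. You claim that the orthogonal form gives an isomorphism $\cR_n^\perp/\cR_{n+d} \cong \cR_{n+d}^*/\cR_n^*$. This fails for $d'=1$: since $\dim(V)=2(n+d)+1$, the maximal isotropic subbundle $\cR_{n+d}$ satisfies $\cR_{n+d}^\perp \supsetneq \cR_{n+d}$ with $\cR_{n+d}^\perp/\cR_{n+d}$ a line bundle, so $\cR_n^\perp/\cR_{n+d}$ has rank $d+1$, not $d$. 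The form pairs $\cR_{n+d}/\cR_n$ perfectly with $\cR_n^\perp/\cR_{n+d}^\perp$, not with $\cR_n^\perp/\cR_{n+d}$; your phrase ``perfect pairing between $\cR_n^\perp/\cR_{n+d}$ and itself'' is also not what happens even for $d'=0$, where the pairing is against $\cR_{n+d}/\cR_n$ and the stated isomorphism holds only because there $\cR_{n+d}^\perp=\cR_{n+d}$.

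With $\cW$ literally as you wrote it, both pushforward claims break when $d'=1$. The relative form of Theorem~\ref{thm:bott-BD} computes $(\pi_1)_*$ of Schur functors of the dual tautological subbundle $(\cR_{n+d}/\cR_n)^*$ of the fibration $\OGr(d,\cR^\perp/\cR)$ (note this is the relative \emph{isotropic} Grassmannian, not $\Gr(d,\cR^\perp/\cR)$), whereas $\bS_{(d\times k)\setminus\nu}(\cR_n^\perp/\cR_{n+d})$ is the Schur functor of a rank-$(d+1)$ extension of $\cR_n^\perp/\cR_{n+d}^\perp$ by the line bundle $\cR_{n+d}^\perp/\cR_{n+d}$ and pushes forward to something larger than $\bS_{[(d\times k)\setminus\nu]}(\cR^\perp/\cR)$. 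Likewise $(\pi_2)_*\cW$ is not the relative module $\sM^k_\nu(E,(\cR')^*)$ of \S\ref{sec:EN-powers}, whose construction uses $\bS_{(d\times k)\setminus\nu}$ of the rank-$d$ tautological subbundle of $(\cR')^*$, namely $(\cR_{n+d}/\cR_n)^*$. The fix is small: define $\cW$ using $\bS_{(d\times k)\setminus\nu}((\cR_{n+d}/\cR_n)^*) \cong \bS_{(d\times k)\setminus\nu}(\cR_n^\perp/\cR_{n+d}^\perp)$; with that change both pushforwards are exactly as in the symplectic case and the proof is complete for both $d'=0$ and $d'=1$.
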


\begin{proof}
Similar to the proof of Proposition~\ref{prop:Mnu-alt}.
\end{proof}

Define the following vector bundle on $\OGr(n,V)$:
\[
\cE^\nu_\lambda = \bS_\lambda(\cR^\perp) \otimes (\det \cR^*)^k \otimes \bS_{[(d \times k) \setminus \nu]}(\cR^\perp / \cR).
\]

\begin{corollary} \label{cor:Mnu-res-general-BD}
Let $\bF^\nu_\bullet$ be the minimal free resolution of $\cM^k_\nu$ over $A$. We have
\begin{align*} 
\bF^\nu_i = \bigoplus_{j \ge 0} \rH^j(\OGr(n,V); \bigoplus_{\substack{|\lambda| = i+j\\ \lambda \subseteq n \times (n+2d+d')}} \bS_{(k^n) + \lambda}(E) \otimes \cE^\nu_{\lambda^\dagger}) \otimes A(-i-j).
\end{align*}
\end{corollary}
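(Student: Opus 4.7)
My plan is to mirror the proof of Corollary~\ref{cor:Mnu-res-general} in the symplectic case essentially line-for-line, since the orthogonal setting is genuinely parallel: Proposition~\ref{prop:Mnu-alt-BD} already provides the geometric model for $\cM^k_\nu$, and the remaining task is to apply the geometric technique (Theorem~\ref{thm:geom-tech}) and identify its terms via the Cauchy identity.

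First I would set up the geometric data on $\OGr(n,V)$. Since the tautological subbundle $\cR$ is isotropic, we have $\cR \subseteq \cR^\perp \subseteq V \otimes \cO$, and the orthogonal form on $V$ induces an isomorphism $V/\cR^\perp \cong \cR^*$, yielding the short exact sequence $0 \to \cR^\perp \to V \otimes \cO \to \cR^* \to 0$. Tensoring with $E$, in the notation of \S\ref{sec:geom} I take $\xi = E \otimes \cR^\perp$, $\eps = E \otimes V \otimes \cO$ (trivial, so $\Sym(\eps) = A$), and $\eta = E \otimes \cR^*$.

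Next I would combine Proposition~\ref{prop:Mnu-alt-BD}, which identifies $\cM^k_\nu$ with $\rH^0(\OGr(n,V); \Sym(\eta) \otimes \cV^k_\nu)$, together with the higher cohomology vanishing for $\Sym(\eta) \otimes \cV^k_\nu$ established in the unnamed vanishing proposition just before it, and invoke Theorem~\ref{thm:geom-tech} to conclude that the minimal free resolution has the form
\[
\bF^\nu_i = \bigoplus_{j \ge 0} \rH^j(\OGr(n,V); \lw^{i+j}(E \otimes \cR^\perp) \otimes \cV^k_\nu) \otimes A(-i-j).
\]

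Finally, Theorem~\ref{thm:cauchy-id} gives $\lw^{i+j}(E \otimes \cR^\perp) \approx \bigoplus_{|\lambda| = i+j} \bS_\lambda(E) \otimes \bS_{\lambda^\dagger}(\cR^\perp)$, where the sum is restricted by the Schur functor dimension constraints $\ell(\lambda) \le n$ (from $E$) and $\ell(\lambda^\dagger) = \lambda_1 \le \rank(\cR^\perp) = n+2d+d'$, giving $\lambda \subseteq n \times (n+2d+d')$. Absorbing the factor $(\det E)^k$ of $\cV^k_\nu$ into $\bS_\lambda(E)$ produces $\bS_{(k^n)+\lambda}(E)$, while the remaining vector-bundle factor $\bS_{\lambda^\dagger}(\cR^\perp) \otimes (\det \cR^*)^k \otimes \bS_{[(d \times k) \setminus \nu]}(\cR^\perp/\cR)$ is precisely $\cE^\nu_{\lambda^\dagger}$, yielding the stated formula. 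There is no real obstacle here; the only substantive difference from the symplectic case is that $\cR^\perp$ has rank $n+2d+d'$ rather than $n+2d$, which accounts automatically for the revised range of $\lambda$.
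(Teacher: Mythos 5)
Your proposal is correct and follows essentially the same route as the paper, which likewise takes $\xi = E \otimes \cR^\perp$ on $\OGr(n,V)$ and combines Proposition~\ref{prop:Mnu-alt-BD}, Theorem~\ref{thm:geom-tech}, and Theorem~\ref{thm:cauchy-id} exactly as in the symplectic Corollary~\ref{cor:Mnu-res-general}. The only slight imprecision is attributing the higher cohomology vanishing of $\Sym(\eta)\otimes\cV^k_\nu$ on $\OGr(n,V)$ to the unnamed proposition (which concerns $\sM^k_\nu(E,\cR^*)$ on $\OGr(n+d,V)$); that vanishing instead comes from the pushforward argument in the proof of Proposition~\ref{prop:Mnu-alt-BD}, but this does not affect the argument.
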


\begin{proof}
Similar to the proof of Corollary~\ref{cor:Mnu-res-general}.
\end{proof}

\begin{theorem} \label{thm:bott-gen-BD}
Let $\nu \subseteq (k^d)$ and $\lambda \subseteq n \times (n+2d+d')$ be partitions and let $\cR$ be the tautological subbundle on $\OGr(n,V)$. 
\begin{compactenum}[\rm (a)]
\item If $k \ge \ell(\lambda)$, then $\rH^i(\OGr(n,V); \cE^\nu_{\lambda^\dagger})=0$ for all $i>0$.
\item The cohomology of $\cE^\nu_{\lambda^\dagger}$ vanishes unless $\tau_{2k}(\lambda) = \mu$ is defined, in which case the cohomology is nonzero only in degree $\iota_{2k}(\lambda) = i$, and we have an $\bO(V)$-equivariant isomorphism
\begin{align*}
\rH^i(\OGr(n,V); \cE^\nu_{\lambda^\dagger}) \cong \rH^0(\OGr(n,V); \cE^\nu_{\mu^\dagger}).
\end{align*}
\end{compactenum}
\end{theorem}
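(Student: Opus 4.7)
My plan is to imitate the proof of Theorem~\ref{thm:bott-gen}, substituting the orthogonal versions of the required ingredients (Theorem~\ref{thm:main-typeBD}, Theorem~\ref{thm:super-lwood-typeBD}, Corollary~\ref{cor:Mnu-res-general-BD}) for their symplectic counterparts. Throughout, part (a) will follow from part (b), since $k \ge \ell(\lambda)$ forces $\tau_{2k}(\lambda) = \lambda$ and $\iota_{2k}(\lambda) = 0$, so (b) collapses to the vanishing of all higher cohomology.

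First, let $\bF_\bullet$ be the minimal free resolution of $\cM^k_\nu$ over $A$. Since $B = A/(\Sym^2(E))$ is a quadratic complete intersection, Eisenbud's construction \cite[\S 7]{eisenbud-ci} upgrades $\bF_\bullet$ to a (typically non-minimal) free resolution $\bG_\bullet$ of $\cM^k_\nu$ over $B$ whose terms have the form
\[
\bG_i = \bigoplus_{j=0}^{\lfloor i/2\rfloor} \bF_{i-2j}(-2j) \otimes_\bk \rD^j(\Sym^2(E)).
\]
Setting $S = \Sym(\Sym^2(E^*))$, the complex $\bR(\bG_\bullet)$ is a minimal complex of $S$-modules, and applying $\bL$ to its homology recovers the minimal $B$-resolution of $\cM^k_\nu$. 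By Theorem~\ref{thm:main-typeBD}, that minimal $B$-resolution is linear, so $\bR(\bG_\bullet)$ has homology concentrated in degree $0$ equal to $\cN^k_{\nu^\dagger}$. Tracking the bidegrees gives the identification
\[
\Tor^S_i(\cN^k_{\nu^\dagger}, \bk)^*_j = \Tor^A_j(\cM^k_\nu, \bk)_{i+j}.
\]

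Next I would exploit $\GL(E) \times \bO(V)$-equivariance. By Theorem~\ref{thm:super-lwood-typeBD} (alternatively \cite[Proposition 3.13]{lwood} when $d'=0$ and the analogue in \cite{spin-cat} when $d'=1$), if the representation $\bS_{(k^n)+\lambda}(E^*)$ has a nonzero $\GL(E)$-isotypic component in $\Tor^S_i(\cN^k_{\nu^\dagger},\bk)^*_j$, then $\tau_{2k}(\lambda)$ is defined and $\iota_{2k}(\lambda) = i$. Since the $\GL(E)$- and $\bO(V)$-actions commute, the isotypic component is an $\bO(V)$-representation, and we get an $\bO(V)$-equivariant isomorphism
\[
\hom_{\GL(E)}(\bS_{(k^n)+\lambda}(E^*), \Tor^S_{\iota_{2k}(\lambda)}(\cN^k_{\nu^\dagger},\bk)^*_{|\lambda|}) \cong \hom_{\GL(E)}(\bS_{(k^n)+\tau_{2k}(\lambda)}(E^*), \Tor^S_0(\cN^k_{\nu^\dagger},\bk)^*_{|\tau_{2k}(\lambda)|}).
\]
Combining this with Corollary~\ref{cor:Mnu-res-general-BD} and the geometric technique (Theorem~\ref{thm:geom-tech}), the Tor groups over $A$ translate into cohomology on $\OGr(n,V)$ of the bundles $\bS_{(k^n)+\lambda}(E) \otimes \cE^\nu_{\lambda^\dagger}$. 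Isolating the $\bS_{(k^n)+\lambda}(E)$-isotypic component produces the desired $\bO(V)$-equivariant isomorphism $\rH^{\iota_{2k}(\lambda)}(\OGr(n,V); \cE^\nu_{\lambda^\dagger}) \cong \rH^0(\OGr(n,V); \cE^\nu_{\tau_{2k}(\lambda)^\dagger})$, together with the vanishing of $\rH^N(\OGr(n,V); \cE^\nu_{\lambda^\dagger})$ for $N \ne \iota_{2k}(\lambda)$.

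The only point where I anticipate having to be careful is the $d'=1$ (odd orthogonal) case, where the modification rule is the spin rule $(\tau_{2k}^\Delta, \iota_{2k}^\Delta)$ rather than the type D rule, and for even-dimensional $V$ the module $\cN^k_{\nu^\dagger}$ is potentially a sum of two $\SO(V)$-irreducibles combined so as to carry an $\bO(V)$-action. Both subtleties have, however, been handled uniformly at the start of the section: the symbols $\tau_{2k}, \iota_{2k}$ are already defined to refer to the appropriate rule, Theorem~\ref{thm:super-lwood-typeBD} covers both parities of $d'$ simultaneously, and the definition of $\cN^k_\lambda$ as a direct sum when necessary guarantees $\bO(V)$-equivariance throughout. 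So no new ingredients beyond those already in place should be required.
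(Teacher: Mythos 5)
Your proposal is correct and follows essentially the same route as the paper, which proves this theorem simply by running the argument of Theorem~\ref{thm:bott-gen} with the orthogonal replacements (Theorem~\ref{thm:main-typeBD}, Theorem~\ref{thm:super-lwood-typeBD}, Corollary~\ref{cor:Mnu-res-general-BD}, $S=\Sym(\Sym^2(E^*))$) exactly as you describe. Your observation that (a) follows from (b) because $k \ge \ell(\lambda)$ forces $\tau_{2k}(\lambda)=\lambda$ and $\iota_{2k}(\lambda)=0$, and your handling of the $d'$ and $\bO(V)$-equivariance subtleties, match the paper's intent.
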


\begin{proof}
Similar to the proof of Theorem~\ref{thm:bott-gen}.
\end{proof}

\begin{remark}
As in \S\ref{sec:typeBD-prelim}, we interpret $A$ as the coordinate ring of the space of linear maps $E \to V^*$ and hence it makes sense to talk about the generic map with respect to $A$, i.e., picking bases for $E$ and $V^*$, the matrix entries are the variables of $A$. Similar comments apply to $B$. The ideal of maximal minors in $B$ of this generic matrix is generated by $\lw^n(E) \otimes \bS_{[1^n]}(V) = \lw^n(E) \otimes \lw^n(V)$. When $\nu = (k^d)$, \eqref{eqn:cM-module-BD} becomes 
\[
\cM^k_{(k^d)} = \bigoplus_{\lambda,\ \lambda_n \ge k} \bS_\lambda(E) \otimes \bS_{[\lambda]}(V),
\]
which is the $k$th power of the maximal minors in $B$ (use Proposition~\ref{prop:typeBD-basicfacts}(c)).
\end{remark}

\begin{corollary} \label{cor:BD-lin-res}
Pick $\nu \subseteq (k^d)$. If $k \ge n$, then the minimal free resolution of $\cM^k_\nu$ over $A$ is linear.
\end{corollary}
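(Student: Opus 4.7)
The plan is to mirror, verbatim, the proof of the symplectic analogue Corollary~\ref{cor:C-lin-res}: simply combine the explicit formula for the terms of the resolution given by Corollary~\ref{cor:Mnu-res-general-BD} with the Bott-vanishing statement of Theorem~\ref{thm:bott-gen-BD}(a).

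More precisely, the strategy is as follows. First, I would recall from Corollary~\ref{cor:Mnu-res-general-BD} that each term of the minimal free resolution $\bF^\nu_\bullet$ of $\cM^k_\nu$ over $A$ has the shape
\[
\bF^\nu_i = \bigoplus_{j \ge 0} \rH^j\bigl(\OGr(n,V); \bigoplus_{\substack{|\lambda|=i+j \\ \lambda \subseteq n \times (n+2d+d')}} \bS_{(k^n)+\lambda}(E) \otimes \cE^\nu_{\lambda^\dagger}\bigr) \otimes A(-i-j),
\]
so the resolution is linear precisely when all contributions with $j \ge 1$ vanish. Next, I would observe that every $\lambda$ in the sum satisfies $\ell(\lambda) \le n$ (it fits in an $n \times (n+2d+d')$ box). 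Under the hypothesis $k \ge n$ we therefore have $k \ge \ell(\lambda)$, and Theorem~\ref{thm:bott-gen-BD}(a) applies to kill $\rH^j(\OGr(n,V); \cE^\nu_{\lambda^\dagger})$ for every $j>0$. Only the $j=0$ summand survives, so $\bF^\nu_i$ is generated in degree $i$ and the resolution is linear.

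There is no real obstacle here; the content has been entirely absorbed into Theorem~\ref{thm:bott-gen-BD}(a), whose proof already combined the geometric technique, Eisenbud's CI construction, and the linearity of the resolution of $\cN^k_{\lambda^\dagger}$ over $\rU(\fg)$ (Proposition~\ref{prop:ss-chase-BD}). Hence the corollary is a one-line deduction, entirely parallel to the symplectic case.
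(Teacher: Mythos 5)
Your proposal is correct and is exactly the paper's argument: the proof of Corollary~\ref{cor:BD-lin-res} is just to combine Corollary~\ref{cor:Mnu-res-general-BD} with Theorem~\ref{thm:bott-gen-BD}(a), using that every $\lambda$ in the sum satisfies $\ell(\lambda) \le n \le k$ so all $j>0$ contributions vanish. Nothing further is needed.
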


\begin{proof}
Combine Corollary~\ref{cor:Mnu-res-general-BD} and Theorem~\ref{thm:bott-gen-BD}(a).
\end{proof}

\begin{proposition} \label{prop:support-typeBD}
Choose a partition $\nu \subseteq (k^d)$. The support variety of $\cM^k_\nu$ is reduced and is the variety of symmetric matrices in $\Sym^2(E)$ of rank $\le 2k$.
\end{proposition}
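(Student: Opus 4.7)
The plan is to mirror the proof of Proposition~\ref{prop:support-typeC} essentially word-for-word, replacing the $(\Sp(U), \fso(E \oplus E^*))$ Howe dual pair used there by the $(\bO(U), \fsp(E \oplus E^*))$ dual pair, since now $U$ is orthogonal and $E \oplus E^*$ carries a symplectic form on the even part of $\tilde{V}$. By Theorem~\ref{thm:main-typeBD} we have $\bL(\cM^k_\nu) = \cN^k_{\nu^\dagger}$, so it suffices to describe the structure of $\cN^k_{\nu^\dagger}$ as a module over $S = \Sym(\Sym^2(E^*))$, where the action comes from the central subalgebra $\Sym^2(E^*) = \fg_2 \subset \fg$.

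With the notation of \S\ref{sec:howe-BD}, first I would decompose the oscillator module as
\[
\sU = \Sym(U \otimes E^*) \otimes {\lw}^\bullet(U \otimes F) \otimes {\lw}^\bullet(U' \otimes L).
\]
The central subalgebra $\Sym^2(E^*) \subset \fsp(E \oplus E^*)$ lies entirely inside the even part of $\spo(\tilde{V})$, so it acts only on the first tensor factor (it kills the other two because they are odd in $\tilde V$). By classical Howe duality for the dual pair $(\bO(U), \fsp(E \oplus E^*))$ on $\Sym(U \otimes E^*)$, this factor has a multiplicity-free decomposition
\[
\Sym(U \otimes E^*) = \bigoplus_{\mu} V_{\mu + d'\delta}(U) \otimes M^{\fsp}_\mu,
\]
where $\mu$ ranges over the appropriate admissible partitions and each $M^{\fsp}_\mu$ is a module for the nilpotent radical $\Sym^2(E^*)$, well known to be (after a determinantal twist) the classical determinantal module $M_\mu$ of \cite[\S 4.4]{lwood}. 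Similarly the remaining factors $\lw^\bullet(U \otimes F)$ and $\lw^\bullet(U' \otimes L)$ carry multiplicity-free actions of $\Pin(U) \times \bO(V)$, on which $S$ acts trivially.

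Extracting the $V_{\nu^\dagger + d'\delta}$-isotypic component under $\Pin(U)$ will then express $\cN^k_{\nu^\dagger}$ as a direct sum of the $M^{\fsp}_\mu$ tensored with finite-dimensional $\bO(V)$-representations, and on each summand the $S$-action factors through $M^{\fsp}_\emptyset$. By the first fundamental theorem for $\bO(U)$, this last module is precisely the coordinate ring of the variety of symmetric $n \times n$ matrices of rank $\le 2k$ inside $\Sym^2(E)$, which is a reduced (in fact normal Cohen--Macaulay) domain on that variety. This will yield both the claimed support and the reducedness in one stroke. The only real obstacle, and the one bookkeeping step worth highlighting, is to verify that the $\Sym^2(E^*)$-action inherited from the $\bZ$-grading on $\spo(\tilde{V})$ coincides (up to the appropriate determinantal twist) with the nilpotent-radical action of $\fsp(E \oplus E^*)$ used in the standard Howe duality statement; this convention matching is analogous to the one implicit in the proof of Proposition~\ref{prop:support-typeC}.
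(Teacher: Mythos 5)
Your proposal is correct and is essentially the paper's own argument: the paper proves Proposition~\ref{prop:support-typeBD} simply by saying it is similar to Proposition~\ref{prop:support-typeC}, and your write-up is exactly that transposition — use $\bR(\cM^k_\nu) = \cN^k_{\nu^\dagger}$ from Theorem~\ref{thm:main-typeBD}, observe that $S = \Sym(\Sym^2(E^*))$ acts only through the factor $\Sym(U \otimes E^*)$, invoke Howe duality for $(\bO(U), \fsp(E \oplus E^*))$ there, and note that on each summand the $S$-action factors through $M^{\fsp}_\emptyset$, the coordinate ring of symmetric matrices of rank $\le 2k$. One small bookkeeping slip: the decomposition of $\Sym(U \otimes E^*)$ under $\bO(U)$ involves the ordinary representations $\bS_{[\mu]}(U)$, not $V_{\mu + d'\delta}$ — the half-spin shift when $d'=1$ comes only from the extra factor $\lw^\bullet(U' \otimes L)$, which carries no $S$-action, so this does not affect the conclusion.
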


\begin{proof}
Similar to the proof of Proposition~\ref{prop:support-typeC}. 
\end{proof}

\begin{remark}
When $d=0$, the line bundle $\det \cR^*$ on $\OGr(n,V)$ has a unique square root $\cL$, so it is possible to take powers $(\det \cR^*)^k$ where $k \in \frac{1}{2} \bZ$, and we can define $\cV^k = \cV^k_\emptyset$. The results of this section should mostly hold without change, but the full details of some of the results we cite are not written down when $k \notin \bZ$, so we omit explaining this case in detail. The modules we get this way are submodules of the ``Littlewood spinor module'' studied in \cite[\S 3]{littlewoodcomplexes}, and can be thought of as half-integer powers of the ideal of maximal minors.
\end{remark}

\section{Variety of complexes} \label{sec:gl-case}

In this section, we use $\tau_{2k}$ and $\iota_{2k}$ to mean $\tau^\rA_{2k}$ and $\iota^\rA_{2k}$ (see \S\ref{sec:typeA-weyl}). Given a vector space $V$ and partitions $\lambda, \mu$ with $\ell(\lambda) + \ell(\mu) \le \dim(V)$, let $\bS_{[\lambda; \mu]}(V)$ denote $\bS_{\nu}(V)$ where $\nu = (\lambda_1, \lambda_2, \dots, -\mu_2, -\mu_1)$ (if $\ell(\lambda) + \ell(\mu) < \dim(V)$, there are $0$'s in the $\dots$).

\subsection{Preliminaries} \label{sec:typeA-prelim}

Let $E$ and $F$ be vector spaces of dimensions $n$ and $m$ and let $V$ be a vector space of dimension $n+m+d$ (we assume $d \ge 0$; some of the results below hold even if $d<0$, but we will not need them). Let $A = \Sym((E \otimes V^*) \oplus (V \otimes F^*))$. Throughout, we  fix a decomposition $\dim(V) = a+b$ where $a \ge n$ and $b \ge m$. Using the trace invariant in $V^* \otimes V$, we have a $\GL(E) \times \GL(V) \times \GL(F)$-equivariant inclusion
\[
E \otimes F \subset (E \otimes V^*) \otimes (V \otimes F^*) \subset \Sym^2((E \otimes V^*) \oplus (V \otimes F^*)).
\]
Let $B = A / (E \otimes F^*)$ be the quotient of $A$ by the ideal generated by $E \otimes F^*$.

We have $\Spec(A) = \hom(E,V) \times \hom(V, F)$, and we can identify $\Spec(B) \subset \hom(E,V) \times \hom(V, F)$ with the set of pairs $(\phi, \psi)$ such that $\psi \phi = 0$.

\begin{proposition} \label{prop:typeA-basicfacts}
\begin{compactenum}[\rm (a)]
\item $B$ is an integral domain, i.e., the ideal generated by $E \otimes F^*$ is prime. 

\item As a representation of $\GL(E) \times \GL(V) \times \GL(F)$, we have
\[
B \approx \bigoplus_{\substack{\ell(\lambda) \le n\\ \ell(\lambda') \le m}} \bS_\lambda(E) \otimes \bS_{[\lambda; \lambda']}(V^*) \otimes \bS_{\lambda'}(F^*).
\]

\item $\bS_\lambda(E) \otimes \bS_{[\lambda; \lambda']}(V^*) \otimes \bS_{\lambda'}(F^*)$ is in the ideal generated by $\bS_\mu(E) \otimes \bS_{[\mu; \mu']}(V^*) \otimes \bS_{\mu'}(F^*)$ if and only if $\lambda \supseteq \mu$.

\item Two closed points $(\phi, \psi)$ and $(\phi', \psi')$ in $\Spec(B)$ are in the same $\GL(E) \times \GL(V) \times \GL(F)$ orbit if and only if $\rank(\phi) = \rank(\phi')$ and $\rank(\psi) = \rank(\psi')$. Each orbit closure is a normal variety with rational singularities. In particular, they are Cohen--Macaulay varieties.

\item The ideal $(E \otimes F^*)$ is generated by a regular sequence. In particular, $B$ is a complete intersection and is a Koszul algebra.
\end{compactenum}
\end{proposition}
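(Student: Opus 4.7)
The plan is to parallel the arguments used for Propositions~\ref{prop:typeC-basicfacts} and \ref{prop:typeBD-basicfacts}, which combine a Grassmannian desingularization with the geometric technique of \S\ref{sec:geom}. Take $a = n$ and $b = m+d$, and let $X = \Gr(n, V)$ with tautological sub- and quotient bundles $\cR$ and $\cQ$. On $X$, put $\eta = (E \otimes \cR^*) \oplus (\cQ \otimes F^*)$ and $Y = \Spec_X(\Sym \eta)$; closed points of $Y$ are triples $(W, \phi, \psi)$ with $W \in X$, $\phi \colon E \to W$, and $\psi \colon V/W \to F$, so the composition $\psi \circ \phi$ is identically zero. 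The induced map $\pi \colon Y \to \Spec(A)$ lands in the subscheme cut out by $(E \otimes F^*)$, and is birational onto its image because a generic $\phi$ is injective, forcing $W = \im \phi$. Since $Y$ is a vector bundle over the smooth projective variety $X$, it is itself smooth.

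The geometric technique, together with the Cauchy identity (Theorem~\ref{thm:cauchy-id}) and Borel--Weil (Theorem~\ref{thm:bott} with Remark~\ref{rmk:grass-dual}), shows that $\Sym(\eta)$ has vanishing higher cohomology on $X$ and that
\[
\rH^0(X; \Sym(\eta)) \approx \bigoplus_{\substack{\ell(\lambda) \le n \\ \ell(\lambda') \le m}} \bS_\lambda(E) \otimes \bS_{\lambda'}(F^*) \otimes \bS_{[\lambda; \lambda']}(V^*).
\]
The natural map $A \to \rH^0(Y; \cO_Y)$ kills $E \otimes F^*$ because $\cR \hookrightarrow V \twoheadrightarrow \cQ$ is zero, giving a $\GL(E) \times \GL(V) \times \GL(F)$-equivariant surjection $B \twoheadrightarrow \rH^0(Y; \cO_Y)$. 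A dimension count yields $\dim Y = \dim A - nm$, matching both the codimension of $V(E \otimes F^*)$ and the minimal number of generators of this ideal; Krull's height theorem then forces $(E \otimes F^*)$ to be generated by a regular sequence, establishing (e), and Koszulness follows from \cite[Example 10.2.3]{avramov}. At this point $B$ is Cohen--Macaulay of the same dimension as the irreducible variety $\pi(Y)$, and the surjection to $\rH^0(Y; \cO_Y)$ combined with reducedness (see below) upgrades it to an isomorphism, proving (a) and (b).

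For (d), the orbit analysis under $\GL(E) \times \GL(V) \times \GL(F)$ is the standard rank stratification: $(\rank \phi, \rank \psi)$ is a complete invariant by a normal-form argument, and each orbit closure admits an analogous Grassmannian desingularization whose regularity transfers via Boutot's theorem to yield normality, rational singularities, and Cohen--Macaulayness. Part (c) follows by reduction to the analogous statement for the polynomial ring $A$ \cite[Corollary 4.2]{dEP}: the ideal $(E \otimes F^*) \subset A$ is $\GL(E) \times \GL(V) \times \GL(F)$-stable, and $B$ and $A$ share the same isotypic components in the $(\lambda, \lambda')$-pieces relevant to the statement, so ideal containment in $B$ is controlled by ideal containment in $A$. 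The main obstacle I anticipate is reducedness of $B$: the complete-intersection count only gives a set-theoretic statement, and to promote it to the scheme-theoretic isomorphism $B \cong \rH^0(Y; \cO_Y)$ one must exhibit a smooth closed point of $\Spec(B)$ (any pair with $\rank \phi = n$ and $\rank \psi = m$) so that generic reducedness together with Cohen--Macaulayness yields reducedness throughout.
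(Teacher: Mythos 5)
Your overall strategy is the same as the paper's: there the proposition is proved by rerunning the argument of Proposition~\ref{prop:typeC-basicfacts} (i.e.\ of \cite[Theorem 2.2]{littlewoodcomplexes}) with the collapsing bundle $\eta = (E \otimes \cR_n^*) \oplus ((V/\cR_{n+d}) \otimes F^*)$ on the two-step flag variety $\Fl(n,n+d,V)$, and (c) is quoted from \cite[Corollary 4.2]{dEP}. Your substitute collapsing over $\Gr(n,V)$ with $(E \otimes \cR^*) \oplus (\cQ \otimes F^*)$ is a legitimate variant: it is proper and birational onto $\Spec(B)$ (do record that its image is all of $V(E \otimes F^*)$, which uses $d \ge 0$ so that $\dim \ker \psi \ge n$), the Cauchy plus Borel--Weil computation of $\rH^0(X;\Sym(\eta))$ gives exactly the right-hand side of (b) with vanishing higher cohomology, and the dimension count together with Krull's height theorem and Cohen--Macaulayness of $A$ gives (e). The paper's flag-variety choice is simply the one reused for the modules $\cM^{k,\ell}_{\nu,\nu'}$, so this substitution costs nothing for the proposition itself.

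The genuine gap is the step ``surjection $B \twoheadrightarrow \rH^0(Y;\cO_Y)$, upgraded to an isomorphism by reducedness.'' Surjectivity of $A \to \rH^0(Y;\cO_Y)$ is not automatic: in Theorem~\ref{thm:geom-tech} the degree-zero term is $\bF_0 = \bigoplus_{j \ge 0} \rH^j(X;\lw^j(\xi)) \otimes A(-j)$ with $\xi = (E \otimes \cQ^*) \oplus (\cR \otimes F^*)$, so $A \to M^{(0)}$ is onto only after one checks $\rH^j(X;\lw^j(\xi)) = 0$ for $j > 0$; and reducedness in fact gives the comparison in the \emph{other} direction -- once $B$ is reduced (Cohen--Macaulay from (e) plus your smooth point, via Serre's criterion) it is the coordinate ring of the irreducible image, and restriction of functions yields an injection $B \hookrightarrow \rH^0(Y;\cO_Y)$, not a surjection. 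To finish (a), (b) and the normality and rational singularities in (d), either verify the Bott vanishing $\rH^j(X;\lw^j(\xi)) = 0$ for $j>0$, or prove $\Spec(B)$ is normal directly: it is Cohen--Macaulay, the Jacobian of the $nm$ quadrics has full rank wherever $\phi$ is injective or $\psi$ is surjective, and a stratum dimension count shows the locus where both ranks drop has codimension $2d+3 \ge 2$, so Serre's criterion applies; then Zariski's main theorem for the proper birational map $Y \to \Spec(B)$ gives $B \cong \rH^0(Y;\cO_Y)$, and rational singularities follow from the vanishing of higher cohomology of $\Sym(\eta)$ via the collapsing (Weyman's Theorem 5.1.3). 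Boutot's theorem, which concerns quotients by reductive groups, is not the relevant tool here nor for the orbit closures in (d); for those use the analogous collapsings over the obvious flag varieties, or cite \cite{dS} as the paper does.
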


\begin{proof}
The proofs of (a), (b), (d), and (e) are similar to the proofs of the corresponding statements in Proposition~\ref{prop:typeC-basicfacts}. The desingularization in \cite{littlewoodcomplexes} should be replaced with the bundle $\eta$ on $X$ mentioned directly below. The reader is also encouraged to see \cite{dS} for the general situation of complexes with more than $2$ maps.
(c) follows from the analogous statement for $A$ \cite[Corollary 4.2]{dEP}. 
\end{proof}

\begin{remark}
Part (e) of Proposition~\ref{prop:typeA-basicfacts} also holds if $\dim(V) = \dim(E) + \dim(F) - 1$. One difference is that it has $2$ irreducible components whose general points are pairs $(\phi, \psi)$ where either $\phi$ or $\psi$ has full rank and the other has corank $1$.
\end{remark}

Let $X = \Fl(n, n+d ,V)$ be the partial flag variety of subspaces $R_n \subset R_{n+d} \subset V$ where the subscript indicates the dimension. It has a tautological flag of vector bundles $\cR_n \subset \cR_{n+d} \subset V \times X$. 

Set $\eta = (E \otimes \cR_n^*) \oplus ((V/\cR_{n+d}) \otimes F^*)$. Pick $k, \ell \ge 0$ and partitions $\nu \subseteq (k^a)$, $\nu' \subseteq (\ell^b)$. Set 
\[
\ol{\nu} = (k-\nu_a, \dots, k-\nu_1, \nu'_1 - \ell, \dots, \nu'_b - \ell).
\]
Define
\[
\cV^{k,\ell}_{\nu, \nu'} = (\det E)^k \otimes (\det \cR_n^*)^k \otimes \bS_{\ol{\nu}}((\cR_{n+d}/\cR_n)^*) \otimes (\det V/\cR_{n+d})^\ell \otimes (\det F^*)^\ell.
\]

\begin{proposition} 
The higher sheaf cohomology groups of $\Sym(\eta) \otimes \cV^{k,\ell}_{\nu, \nu'}$ vanish and
\[
\rH^0(X; \Sym(\eta) \otimes \cV^{k,\ell}_{\nu, \nu'}) \approx \bigoplus_{\substack{\ell(\lambda) \le n\\ \ell(\mu) \le m}} \bS_{(k^n) + \lambda}(E) \otimes \bS_{((k^n)+\lambda, \ol{\nu}, -\ell - \mu_m, \dots, - \ell - \mu_1)}(V^*) \otimes \bS_{(\ell^m) + \mu}(F^*).
\]
\end{proposition}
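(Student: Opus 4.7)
The plan is to mirror the proof of Proposition~\ref{prop:cM-vanish}, replacing the isotropic Grassmannian and Theorem~\ref{thm:bott-C} by the partial flag variety $X = \Fl(n, n+d, V)$ and the flag-variety version of Borel--Weil--Bott (Theorem~\ref{thm:bott-flag}). Since $\eta = (E \otimes \cR_n^*) \oplus ((V/\cR_{n+d}) \otimes F^*)$ is a direct sum, I would first use the Cauchy identity (Theorem~\ref{thm:cauchy-id}) separately on each factor to get
\[
\Sym(\eta) \approx \bigoplus_{\substack{\ell(\lambda) \le n \\ \ell(\mu) \le m}} \bS_\lambda(E) \otimes \bS_\lambda(\cR_n^*) \otimes \bS_\mu(V/\cR_{n+d}) \otimes \bS_\mu(F^*),
\]
with the $E$- and $F^*$-factors constant along $X$.

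Next, I would tensor with $\cV^{k,\ell}_{\nu,\nu'}$ and put each bundle on $X$ into the form required by Theorem~\ref{thm:bott-flag}. Using $\bS_\lambda(\cR_n^*) \otimes (\det \cR_n^*)^k = \bS_{(k^n) + \lambda}(\cR_n^*)$, and for $\dim U = m$ the identity $\bS_\mu(U) = \bS_{(-\mu_m, \dots, -\mu_1)}(U^*)$ from \S\ref{sec:schur-functors}, the bundle-on-$X$ piece becomes
\[
\cB_{\lambda,\mu} = \bS_{(k^n) + \lambda}(\cR_n^*) \otimes \bS_{\ol{\nu}}((\cR_{n+d}/\cR_n)^*) \otimes \bS_{(-\ell - \mu_m, \dots, -\ell - \mu_1)}((V/\cR_{n+d})^*).
\]
The concatenated weight that Theorem~\ref{thm:bott-flag} wants to see is then
\[
\beta = \bigl( k+\lambda_1,\dots,k+\lambda_n,\; \ol{\nu},\; -\ell - \mu_m,\dots, -\ell - \mu_1\bigr).
\]

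The core of the argument is verifying that $\beta$ is weakly decreasing. The blocks $(k^n)+\lambda$ and $-\ell-\mu$ are each weakly decreasing since $\lambda$ and $\mu$ are partitions; the block $\ol{\nu}$ is weakly decreasing because $k-\nu_1 \ge 0 \ge \nu'_1 - \ell$ (using $\nu \subseteq (k^a)$ and $\nu' \subseteq (\ell^b)$); the junction $k + \lambda_n \ge k - \nu_a$ amounts to $\lambda_n + \nu_a \ge 0$; and the junction $\nu'_b - \ell \ge -\ell - \mu_m$ amounts to $\nu'_b + \mu_m \ge 0$. All of these are immediate, so Theorem~\ref{thm:bott-flag} gives $\rH^{>0}(X;\cB_{\lambda,\mu}) = 0$ and $\rH^0(X;\cB_{\lambda,\mu}) = \bS_\beta(V^*)$. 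Summing over $\lambda, \mu$ produces both the higher cohomology vanishing and the claimed $\rH^0$ formula.

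The main nuisance (not really a deep obstacle) is the bookkeeping in the dualization step: one has to be careful translating between $\bS_\mu$ of a vector bundle and $\bS$ of its dual, and between the tautological subbundle $\cR_n$ and its dual, so that the three weight blocks line up in the correct order along the flag $\cR_n \subset \cR_{n+d} \subset V$. Once that is done, the weakly-decreasing check is entirely routine and the application of Borel--Weil--Bott is immediate.
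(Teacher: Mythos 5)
Your argument is correct and is essentially the paper's own (the paper's proof simply says to mirror Proposition~\ref{prop:cM-vanish} using Theorem~\ref{thm:bott-flag}): apply the Cauchy identity to each summand of $\eta$, absorb the determinant twists into the three blocks along the flag $\cR_n \subset \cR_{n+d} \subset V$, and verify the concatenated weight is weakly decreasing before invoking Borel--Weil for the two-step flag variety. The only caveat is a notational slip inherited from the paper rather than an error of yours: for $\bS_{\ol{\nu}}((\cR_{n+d}/\cR_n)^*)$ to make sense and for Theorem~\ref{thm:bott-flag} to apply, $\ol{\nu}$ must have length $d$, so one should read $\nu \subseteq (k^{a-n})$ and $\nu' \subseteq (\ell^{b-m})$ (as in Corollary~\ref{cor:A-lin-res}); with that reading your junction inequalities are unchanged and the proof goes through verbatim.
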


\begin{proof}
Similar to Proposition~\ref{prop:cM-vanish}, but use Theorem~\ref{thm:bott-flag}.
\end{proof}

We define 
\begin{equation} \label{eqn:cM-module-A}
\begin{split}
\cM^{k, \ell}_{\nu, \nu'} &= \rH^0(\Fl(n,n+d,V); \Sym(\eta) \otimes \cV^{k,\ell}_{\nu, \nu'})\\
&\approx \bigoplus_{\substack{\ell(\lambda) \le n\\ \ell(\mu) \le m}} \bS_{(k^n) + \lambda}(E) \otimes \bS_{((k^n)+\lambda, \ol{\nu}, -\ell - \mu_m, \dots, - \ell - \mu_1)}(V^*) \otimes \bS_{(\ell^m) + \mu}(F^*).
\end{split}
\end{equation}
So $\cM^{k,\ell}_{\nu, \nu'}$ is an $A$-module. In fact, the scheme-theoretic image of $\Spec(\Sym(\eta)) \to \Spec(A)$ is $\Spec(B)$, so $\cM^{k,\ell}_{\nu, \nu'}$ is also a $B$-module. 

The following result is the variety of complexes version of the main result of this paper. A more detailed version of this theorem is contained in Theorem~\ref{thm:main-typeA}.

\begin{theorem} \label{thm:Mnu-linear-res-A}
Assume $\ch(\bk)=0$. The minimal free resolution of $\cM^{k,\ell}_{\nu, \nu'}$ over $B$~\mbox{is linear.}
\end{theorem}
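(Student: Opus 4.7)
The plan is to mirror the proof of Theorem~\ref{thm:Mnu-linear-res} in \S\ref{sec:main-result-typeC} (and its orthogonal counterpart in \S\ref{sec:main-result-typeBD}) with the orthosymplectic setup replaced by a general linear one. First I would identify the Koszul dual. Setting $\tilde{V} = E^* | V^* | F^*$ with $V^*$ odd and $E^*, F^*$ even, the algebra $\fgl(\tilde{V})$ carries a $\bZ$-grading putting $E^*, V^*, F^*$ at degrees $2, 1, 0$. Its positive part is the Lie superalgebra $\fg = \fg_1 \oplus \fg_2$ with $\fg_1 = (E^* \otimes V) \oplus (V^* \otimes F)$ and $\fg_2 = E^* \otimes F$; the bracket on $\fg_1$ is the matrix commutator and lands in $\fg_2$ via the trace pairing on $V \otimes V^*$. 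By Proposition~\ref{prop:CI-koszul}, $B^! = \rU(\fg)$.

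Next I would set up a Howe duality. Introduce auxiliary spaces $U^+$ and $U^-$ of dimensions $k$ and $\ell$; then $\fgl(U^+) \times \fgl(U^-)$ commutes with $\fgl(\tilde{V})$ on an appropriate oscillator module $\sU$, and mimicking \S\ref{sec:howe} one obtains a multiplicity-free decomposition $\sU = \bigoplus_{\lambda, \lambda'} \bS_\lambda(U^+) \otimes \bS_{\lambda'}(U^-)^* \otimes \cN^{k,\ell}_{\lambda, \lambda'}$, where the $\cN^{k,\ell}_{\lambda, \lambda'}$ are irreducible lowest-weight $\fgl(\tilde{V})$-modules. Using the decomposition $V = A \oplus B$ with $\dim A = a$, $\dim B = b$, I would build an enlarged subalgebra $\fg' \supset \fg$ together with a more tractable nilpotent $\ol{\fu}$, giving two short exact sequences of Lie superalgebras analogous to those in Lemma~\ref{lem:lie-exact}. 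The Lie superalgebra homology $\rH_\bullet(\ol{\fu}; \cN^{k,\ell}_{\lambda, \lambda'})$ should then be computed via the type~A modification rule of \S\ref{sec:typeA-weyl} (see \cite[\S 5]{lwood}), yielding a type~A analogue of Theorem~\ref{thm:super-lwood} stratified by pairs $(\alpha, \alpha')$ with $\tau_{2k}^\rA(\alpha, \alpha') = (\lambda, \lambda')$.

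The heart of the argument is the Hochschild--Serre analysis. Running the spectral sequence \eqref{eqn:HS-SS} for the two exact sequences defining $\fg'$ and using Proposition~\ref{prop:par-hom-typeA} in place of Proposition~\ref{prop:par-hom}, I would first obtain the type~A analogue of Proposition~\ref{prop:typeC-linear-pres}, computing $\rH_0(\fg; \cN^{k,\ell}_{\lambda,\lambda'})$ and $\rH_1(\fg; \cN^{k,\ell}_{\lambda,\lambda'})$ and exhibiting a linear presentation when the Howe-dual partition pair satisfies the appropriate column-equality condition. Then I would imitate the chase in Proposition~\ref{prop:ss-chase}, inducting on the $\fgl(E)$-degree and applying a type~A version of Lemma~\ref{lem:gamma-bound} to show that all higher $\rH_i(\fg; \cN^{k,\ell}_{\lambda, \lambda'})$ live in the correct (linear) degree. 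This is where the hypothesis $\nu \subseteq (k^a), \nu' \subseteq (\ell^b)$ — which translates into the Howe-dual partitions fitting into a fixed rectangle — will be used to make the modification rule trivially extract a linear strand.

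To finish, I would apply Theorem~\ref{thm:dict}: the chain complex $\bL(\cN^{k,\ell}_{\lambda,\lambda'})$ is an acyclic linear complex of free $B$-modules whose zeroth homology $M$ is the unique $B$-module with matching $M_0$ and $M_1$; comparing $M_0$ against \eqref{eqn:cM-module-A} (after using the $\fgl \times \fgl$-multiplicity-freeness of tensoring with $B_1$, which follows from \cite[Theorem 1.1ABCD]{stembridge}) identifies $M$ with $\cM^{k,\ell}_{\nu, \nu'}$. The main obstacle will be the combinatorial heart of the argument, namely formulating and proving the type~A analogue of Lemma~\ref{lem:gamma-bound}: the modification rule $\iota_k^\rA$ uses border strips of length $\ell(\lambda) + \ell(\lambda') - k - 1$ together with the correction $c(R) + c(R') - 1$, so the chain of inequalities \eqref{eqn:iota-bound1}--\eqref{eqn:iota-bound2} must be reworked simultaneously on both sides of the pair $(\lambda, \lambda')$, and the analogue of the permutation manipulation at the end of that proof has to be carried out in the setting of Proposition~\ref{prop:schur-hom} where $\fgl$-homology is governed by finite shuffles rather than by a $(a+b)$-indexed cut of a single partition.
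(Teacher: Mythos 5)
Your overall route is the paper's own: identify $B^!=\rU(\fg)$ with $\fg$ the positive part of a $\bZ$-grading on $\fgl(\tilde{V})$, realize the Koszul duals of the modules $\cM^{k,\ell}_{\nu,\nu'}$ as isotypic components in an oscillator module, compute $\ol{\fu}$-homology by the type~A modification rule, run Hochschild--Serre against Proposition~\ref{prop:par-hom-typeA} to get first the linear presentation and then the degree chase for all higher $\rH_i(\fg;-)$, and finish with Theorem~\ref{thm:dict} together with Stembridge's multiplicity-freeness to identify the cokernel with $\cM^{k,\ell}_{\nu,\nu'}$. This is exactly how \S\ref{sec:gl-case} proceeds (Theorem~\ref{thm:super-lwood-typeA}, Propositions~\ref{prop:typeA-linear-pres} and \ref{prop:ss-chase-A}, Theorem~\ref{thm:main-typeA}), and you correctly flag that the combinatorial heart is a type~A analogue of Lemma~\ref{lem:gamma-bound}, which the paper indeed leaves implicit ("similar to the proof of Proposition~\ref{prop:ss-chase}").

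The one step that would fail as written is your Howe-duality setup. The dual pair is not $\GL(U^+)\times\GL(U^-)$ with $\dim U^+=k$, $\dim U^-=\ell$: the paper (following \cite{CLZ}) uses a single $(k+\ell)$-dimensional space $U$, with $\GL(U)\times\fgl(\tilde{V})$ acting on $\sU=\Sym(((E|V^a)^*\otimes U^*)\oplus(U\otimes(F|V^b)))$, and $\cN^{k,\ell}_{\lambda,\lambda'}$ is the isotypic component of the \emph{rational} (mixed tensor) representation $\bS_{[\lambda;\lambda']}(U)$. With two unrelated spaces $U^+,U^-$ the degree-two part $E^*\otimes F\subset\fg$ has no way to act on the naive oscillator module, since its action is the second-order operator contracting $U$ against $U^*$; and if instead you restrict the correct $\sU$ to $\GL(U^+)\times\GL(U^-)\subset\GL(U)$, the isotypic components are no longer irreducible $\fgl(\tilde{V})$-modules (they acquire branching multiplicities), so the Kostant-type homology formula of \cite{CKW} underlying Theorem~\ref{thm:super-lwood-typeA} --- and with it your entire spectral-sequence chase --- would not apply to them. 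Relatedly, the modification rule enters at parameter $k+\ell$ (i.e.\ $\tau^{\rA}_{k+\ell}$, $\iota^{\rA}_{k+\ell}$), not $2k$ as you wrote. Once the dual pair is corrected, the remainder of your outline coincides with the paper's proof.
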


Let $\tilde{V}$ be a $\bZ/2$-graded space (from now on, superspace) with $\tilde{V}_0 = E \oplus F$ and $\tilde{V}_1 = V$. We have a $\bZ$-grading on $\fgl(\tilde{V})$ supported on $[-2,2]$ (we use the symbol $\oplus$ to separate the different pieces of this grading):
\[
(E \otimes F^*) \oplus (E \otimes V^* + V \otimes F^*) \oplus (\fgl(E) \times \fgl(V) \times \fgl(F)) \oplus (E^* \otimes V + V^* \otimes F) \oplus (E^* \otimes F).
\]
Let $\fg$ be the positive part of this grading, i.e.,
\[
\fg = (E^* \otimes V + V^* \otimes F) \oplus (E^* \otimes F).
\]
Then $E^* \otimes F$ is central in $\fg$. For $(e \otimes v, \phi \otimes f), (e' \otimes v', \phi' \otimes f') \in E^* \otimes V + V^* \otimes F$, we have 
\[
[(e \otimes v, \phi \otimes f), (e' \otimes v', \phi' \otimes f')] = \phi'(v) e \otimes f' + \phi(v') e' \otimes f.
\]
So Proposition~\ref{prop:CI-koszul} implies the following:

\begin{proposition}  
The Koszul dual of $B$ is the universal enveloping algebra $\rU(\fg)$.
\end{proposition}

Choose a decomposition $V = V^a \oplus V^b$ where $\dim(V^a)=a$ and $\dim(V^b)=b$ (recall that $a \ge n$ and $b \ge m$). Let $\ol{\fu} = (E|V^a)^* \otimes (F|V^b) \subset \fgl(\tilde{V})$. It inherits a $\bZ$-grading: 
\[
\ol{\fu}_0 = {V^a}^* \otimes V^b,  \quad \ol{\fu}_1 = (E^* \otimes V^b) \oplus ({V^a}^* \otimes F),  \quad \ol{\fu}_2 = E^* \otimes F.
\]
Let $\fg' \subset \fgl(\tilde{V})$ be the subalgebra generated by $\ol{\fu}$ and $\fg$. Then it also has a $\bZ$-grading:
\[
\fg'_0 = {V^a}^* \otimes V^b, \quad \fg'_1 = (E^* \otimes V) \oplus (V^* \otimes F), \quad \fg'_2 = E^* \otimes F.
\]

\begin{lemma}
We have the following two exact sequences of Lie superalgebras
\begin{align*}
0 \to \fg \to \fg' \to {V^a}^* \otimes V^b \to 0, \qquad 
0 \to \ol{\fu} \to \fg' \to (E^* \otimes V^a) \oplus ({V^b}^* \otimes F) \to 0.
\end{align*}
\end{lemma}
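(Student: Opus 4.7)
The plan is to mirror the proof of Lemma~\ref{lem:lie-exact}, reading both exact sequences off the $\bZ$-grading on $\fg'$. For the first sequence, the subspace $\fg = \fg'_1 \oplus \fg'_2$ is the strictly positive graded part, so any super-bracket of an element of $\fg$ with an element of $\fg'$ has $\bZ$-degree $\ge 1$ and hence lies in $\fg$; this makes $\fg$ an ideal of $\fg'$, and the quotient $\fg'/\fg = \fg'_0 = {V^a}^* \otimes V^b$ is abelian, because any two endomorphisms of $V$ factoring through the projection $V \twoheadrightarrow V^a$ followed by the inclusion $V^b \hookrightarrow V$ compose to zero (the middle map $V^b \hookrightarrow V \twoheadrightarrow V^a$ vanishes).

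For the second sequence I would check graded-piece by graded-piece that $\ol{\fu}$ is an ideal and that the complement in each degree assembles into the claimed quotient. In degrees $0$ and $2$ we have $\ol{\fu}_i = \fg'_i$, so only the degree-$1$ piece contributes to the quotient. There, the complement of $\ol{\fu}_1 = (E^* \otimes V^b) \oplus ({V^a}^* \otimes F)$ inside $\fg'_1 = (E^* \otimes V) \oplus (V^* \otimes F)$ is precisely $(E^* \otimes V^a) \oplus ({V^b}^* \otimes F)$. To show $[\fg', \ol{\fu}] \subseteq \ol{\fu}$, the $\bZ$-grading reduces everything to brackets against $\fg'_0$, and these reduce to short matrix computations: for $\alpha \otimes v \in {V^a}^* \otimes V^b$ and $\phi \otimes w \in E^* \otimes V^a$, the supercommutator evaluates to $\alpha(w)\, \phi \otimes v \in E^* \otimes V^b \subseteq \ol{\fu}_1$, and the three analogous pairings drop into $E^* \otimes V^b$ or ${V^a}^* \otimes F$, often vanishing outright when a linear functional is paired with a vector in the wrong summand.

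Finally, since $[\fg'_1, \fg'_1] \subseteq \fg'_2 = \ol{\fu}_2$, the bracket descends to the degree-$1$ quotient as zero, so $(E^* \otimes V^a) \oplus ({V^b}^* \otimes F)$ is abelian and the displayed sequence is genuinely an extension of Lie superalgebras. The only delicate issue is sign bookkeeping in the supercommutator, but all of the relevant mixed brackets involve the even factor $\fg'_0$, so no $\bZ/2$-signs intervene; this is the sole step where anything beyond the $\bZ$-grading is used, and it is entirely routine.
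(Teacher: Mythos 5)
Your proposal is correct and follows the same route as the paper, which proves this lemma exactly as it proves Lemma~\ref{lem:lie-exact}: the first sequence comes for free from the $\bZ$-grading, and the second requires only the extra observation that bracketing $\fg'_0 = {V^a}^* \otimes V^b$ against the degree-one complement $(E^* \otimes V^a) \oplus ({V^b}^* \otimes F)$ lands in $\ol{\fu}_1 = (E^* \otimes V^b) \oplus ({V^a}^* \otimes F)$, which is precisely the computation you carry out (the type A analogue of the paper's remark that $[\Sym^2(F), E^* \otimes F^*] = E^* \otimes F$). Your explicit evaluation $[\alpha \otimes v, \phi \otimes w] = \alpha(w)\,\phi \otimes v$ and the observation that all mixed brackets involve the even piece $\fg'_0$ (so no sign issues arise) are accurate.
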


\begin{proof}
Similar to the proof of Lemma~\ref{lem:lie-exact}.
\end{proof}

\subsection{Howe duality} \label{sec:howe-A}

In this section, we assume that $\ch(\bk)=0$.

Let $U$ be a $(k+\ell)$-dimensional vector space. Then $\GL(U) \times \fgl(\tilde{V})$ acts on the space
\[
\sU = \Sym( (( E|V^a)^* \otimes U^*) \oplus ( U \otimes (F|V^b)) )
\]
extending the obvious action of $\GL(U) \times \fgl(E|V^a) \times \fgl(F|V^b)$ \cite[\S 3.1]{CLZ}.

Let $\lambda, \lambda'$ be partitions with $\ell(\lambda) + \ell(\lambda') \le k+\ell$ and such that $\lambda_{m+1} \le b$ and $\lambda'_{n+1} \le a$. Let $\cN^{k,\ell}_{\lambda, \lambda'}$ be the $\bS_{[\lambda;\lambda']}(U)$-isotypic component of $\sU$ with respect to the action of $\GL(U)$. We need some of the following facts about the action of $\GL(U) \times \fgl(\tilde{V})$ (we will not use unitarizability, and hence do not define it; we state it only for completeness):

\begin{theorem}
Let $\lambda, \lambda'$ be partitions with $\ell(\lambda) + \ell(\lambda') \le k+\ell$ such that $\lambda_{m+1} \le b$ and $\lambda'_{n+1} \le a$. Then $\cN^{k,\ell}_{\lambda, \lambda'}$ is an irreducible, unitarizable, lowest-weight representation of $\fgl(\tilde{V})$. We have $\cN^{k,\ell}_{\lambda, \lambda'} \cong \cN^{k,\ell}_{\mu, \mu'}$ if and only if $\lambda = \mu$ and $\lambda' = \mu'$. As a representation of $\GL(U) \times \fgl(\tilde{V})$, we have a direct sum decomposition 
\[
\sU = \Sym( (( E|V^a)^* \otimes U^*) \oplus ( U \otimes (F|V^b)) ) = \bigoplus_{\substack{\ell(\lambda) + \ell(\lambda') \le k+\ell\\ \lambda_{m+1} \le b\\ \lambda'_{n+1} \le a}} \bS_{[\lambda;\lambda']}(U) \otimes \cN^{k,\ell}_{\lambda, \lambda'}.
\]
\end{theorem}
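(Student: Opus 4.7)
The approach is to recognize this as the $(\GL(U), \fgl(\tilde V))$ Howe duality on the oscillator-type representation $\sU$, analogous to the orthosymplectic Howe dualities invoked in \S\ref{sec:howe} and \S\ref{sec:howe-BD}. The general-linear case is essentially \cite[\S 3.1]{CLZ}; it remains to match the combinatorics under our grading.

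First, I would split $\sU$ as $\Sym((E|V^a)^* \otimes U^*) \otimes \Sym(U \otimes (F|V^b))$ and apply the super Cauchy identity (the super version of Theorem~\ref{thm:cauchy-id}) to each factor to get a $\GL(U) \times \fgl(E|V^a) \times \fgl(F|V^b)$-equivariant decomposition
\[
\sU \approx \bigoplus_{\mu, \mu'} \bS_\mu(U^*) \otimes \bS_{\mu'}(U) \otimes \bS_\mu((E|V^a)^*) \otimes \bS_{\mu'}(F|V^b),
\]
summed over partitions with $\mu_{n+1} \le a$ and $\mu'_{m+1} \le b$. Next, decomposing each $\GL(U)$-factor $\bS_\mu(U^*) \otimes \bS_{\mu'}(U)$ into rational irreducibles $\bS_{[\lambda;\lambda']}(U)$ via the standard tensor product rule (multiplicity $\sum_\sigma c^\mu_{\lambda', \sigma} c^{\mu'}_{\lambda, \sigma}$) and reindexing produces the claimed multiplicity-free decomposition of $\sU$ into $\bS_{[\lambda;\lambda']}(U)$-isotypic summands. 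The range constraints $\lambda_{m+1} \le b$ and $\lambda'_{n+1} \le a$ fall out from $\lambda \subseteq \mu'$ and $\lambda' \subseteq \mu$, and the bound $\ell(\lambda)+\ell(\lambda') \le k+\ell$ comes from $\bS_{[\lambda;\lambda']}(U)\ne 0$ on $\dim U = k+\ell$.

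The main obstacle is to upgrade this character-level identity into a statement about irreducible $\fgl(\tilde V)$-modules. I would do so by exhibiting an explicit lowest-weight vector in each isotypic component: a product of a determinantal lowest-weight vector for $\bS_\lambda((E|V^a)^*)$ in the first Sym factor and one for $\bS_{\lambda'}(F|V^b)$ in the second, of multiplicity-one weight under the Cartan of $\fgl(\tilde V)$. Cyclicity of this vector under the lowering operators of $\fgl(\tilde V)$ (the negative-degree part of the grading together with the Levi lowerings), combined with the fact that the full formal character of the isotypic component matches the irreducible lowest-weight module of the predicted weight, forces irreducibility and the lowest-weight identification. Distinct $(\lambda, \lambda')$ yield distinct lowest weights, which gives the non-isomorphism of the $\cN^{k,\ell}_{\lambda,\lambda'}$. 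Unitarizability, which we do not use, follows from the natural positive-definite Hermitian form on the oscillator representation.
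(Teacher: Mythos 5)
The paper's own ``proof'' of this statement is a one-line citation to \cite[Theorem 3.3]{CLZ}, so what you are really attempting is a self-contained proof of that Howe duality. The first half of your argument is fine: splitting $\sU$ into the two symmetric algebra factors, applying the super Cauchy identity, and decomposing $\bS_\mu(U^*)\otimes\bS_{\mu'}(U)$ into rational irreducibles $\bS_{[\lambda;\lambda']}(U)$ does give the correct character-level decomposition, and the constraints $\lambda_{m+1}\le b$, $\lambda'_{n+1}\le a$, $\ell(\lambda)+\ell(\lambda')\le k+\ell$ come out exactly as you say. But this only identifies the $\bS_{[\lambda;\lambda']}(U)$-isotypic component as a module over the Levi subalgebra $\fgl(E|V^a)\times\fgl(F|V^b)$; the actual content of the theorem is that this isotypic component is irreducible over all of $\fgl(\tilde{V})$, and that is where your argument has a genuine gap.

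Concretely, the step ``cyclicity of this vector under the lowering operators of $\fgl(\tilde V)$'' is asserted, not proved, and it is precisely the non-formal input. In all proofs of such dualities (including \cite{CLZ}), cyclicity/irreducibility of the isotypic components comes from a double commutant argument: one shows that the commutant of $\GL(U)$ inside the algebra of polynomial differential operators on $\sU$ is generated by the image of $\rU(\fgl(\tilde V))$ (a super first-fundamental-theorem statement), and then general dual-pair theory gives irreducibility. Your proposed substitute --- matching the formal character of the isotypic component with that of the irreducible lowest-weight module of the predicted weight --- is circular: these lowest weights are typically atypical, their irreducible characters are not known a priori, and computing them is the main theorem of \cite{CLZ}, which is itself deduced \emph{from} this Howe duality. (A workable alternative would be unitarity plus cyclicity plus one-dimensionality of the extreme weight space, which does force irreducibility without character formulas, but that still leaves cyclicity unproved.) So either supply the double commutant/FFT step, or do as the paper does and cite \cite[Theorem 3.3]{CLZ} directly.
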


\begin{proof}
See \cite[Theorem 3.3]{CLZ}.
\end{proof}

\begin{theorem} \label{thm:super-lwood-typeA}
We have a $\fgl(E) \times \fgl(V^a) \times \fgl(V^b) \times \fgl(F)$-equivariant isomorphism
\[
\rH_i(\ol{\fu}; \cN^{k,\ell}_{\lambda, \lambda'}) = \Tor_i^{\rU(\ol{\fu})}(\cN^{k,\ell}_{\lambda, \lambda'}, \bk) \cong \bigoplus_{\substack{(\alpha, \alpha')\\ \tau_{k+\ell}(\alpha, \alpha') = (\lambda, \lambda') \\ \iota_{k+\ell}(\alpha, \alpha') = i}} \bS_{\alpha'}((E|V^a)^*) \otimes \bS_{\alpha}(F|V^b) \otimes \cL
\]
where $\cL = (\det E^*)^k \otimes (\det V^a)^k \otimes (\det {V^b}^*)^\ell \otimes (\det F)^\ell$.
\end{theorem}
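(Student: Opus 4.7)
The plan is to follow the template established by the proof of Theorem~\ref{thm:super-lwood} in the symplectic setting, which itself rests on two complementary references. My first approach would be to invoke \cite[Theorem 5.7]{CKW} directly: that reference computes Tor groups of irreducible Howe duality modules against their canonical abelian nilpotent subalgebras in precisely the generality we need. After matching conventions between their notation and ours, the stated formula — including the twist by $\cL$ — should drop out.

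The more conceptual route, which I would pursue in parallel as a sanity check, is the categorical framework of \cite[\S 5]{lwood}. The type $\rA$ modification rules $\tau^\rA_{k+\ell}$ and $\iota^\rA_{k+\ell}$ are defined there in a way that applies to any pair of objects in a semisimple monoidal Abelian category, in exact analogy with how \cite[Corollary 3.16]{lwood} was used in Theorem~\ref{thm:super-lwood}. I would instantiate this with the pair $((E|V^a)^*, F|V^b)$ inside the category of polynomial representations of $\fgl(E|V^a) \times \fgl(F|V^b)$; the Tor groups then read off directly from border-strip removal applied to pairs of partitions. The presence of two partitions $\alpha, \alpha'$ in the formula, as opposed to the single partition in Theorem~\ref{thm:super-lwood}, reflects the bilinear tensor structure $\ol{\fu} = (E|V^a)^* \otimes (F|V^b)$ in place of the symmetric square $\ol{\fu} = \lw^2(E^*|F)$.

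The main obstacle will be carefully tracking the twist by $\cL = (\det E^*)^k \otimes (\det V^a)^k \otimes (\det {V^b}^*)^\ell \otimes (\det F)^\ell$. In \cite{lwood}, $\ol{\fu}$ is treated as a stand-alone abelian Lie superalgebra and no such twist appears. In our setting $\ol{\fu}$ sits inside the ambient $\fgl(\tilde V)$ as the off-diagonal block with respect to the splitting $\tilde V = (E|V^a) \oplus (F|V^b)$, and the Howe duality pairing with $\GL(U)$ on $\sU = \Sym(((E|V^a)^* \otimes U^*) \oplus (U \otimes (F|V^b)))$ pins down a specific normalization of Cartan weights on $\cN^{k,\ell}_{\lambda,\lambda'}$. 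The discrepancy between this normalization and the naive one used in \cite[\S 5]{lwood} is exactly the character of $\cL$. Verifying this amounts to computing the lowest weight of $\cN^{k,\ell}_{\lambda,\lambda'}$ from the oscillator realization, comparing it against the weight of the vacuum vector predicted by the categorical framework, and observing that the difference has the required factorization.
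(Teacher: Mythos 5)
Your proposal follows the paper's own proof: cite \cite[Theorem 5.7]{CKW} after matching conventions, or alternatively apply the categorical framework of \cite[\S 5]{lwood} to the pair $((E|V^a)^*, F|V^b)$, with the twist by $\cL$ arising because $\ol{\fu}$ is viewed inside the ambient Lie superalgebra rather than as a stand-alone algebra as in \cite{lwood}. The only point worth adding is the paper's remark that $\cL$ is only determined up to a power of the supertrace character $\det \tilde{V}$ (indeed \cite{CKW} uses $\cL \otimes (\det\tilde{V})^{-\ell}$), which is precisely the convention-matching issue you flag.
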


\begin{proof}
See \cite[Theorem 5.7]{CKW}. Their notation does not match ours, so alternatively one can use \cite[\S 5]{lwood} (see proof of Theorem~\ref{thm:super-lwood} for comments). The choice of $\cL$ is not unique: one can twist the action of $\fgl(\tilde{V})$ on $\cN^{k,\ell}_{\lambda, \lambda'}$ by the supertrace character $\det \tilde{V} = (\det E) \otimes (\det V^a)^* \otimes (\det V^b)^* \otimes (\det F)$ which does not affect any of its essential properties; \cite{CKW} uses $\cL \otimes (\det \tilde{V})^{-\ell}$ instead of $\cL$.
\end{proof}

\begin{proposition} \label{prop:typeA-linear-pres}
Let $\lambda, \lambda'$ be partitions such that $\ell(\lambda) + \ell(\lambda') \le k+\ell$, $\lambda_{m+1} \le b$, and $\lambda'_{n+1} \le a$. Set $\nu = (\lambda_1^\dagger, \dots, \lambda_b^\dagger)$ and $\mu = (\lambda_{b+1}^\dagger, \lambda_{b+2}^\dagger, \dots)$ and similarly, set $\nu' = ({\lambda'}_1^\dagger, \dots, {\lambda'}_a^\dagger)$ and $\mu = ({\lambda'}_{a+1}^\dagger, {\lambda'}_{a+2}^\dagger, \dots)$. Then
\begin{align*}
\rH_0(\fg; \cN^{k,\ell}_{\lambda, \lambda'}) &= (\det E^*)^k \otimes \bS_{{\mu'}^\dagger}(E^*) \otimes (\det F)^\ell \otimes \bS_{\mu^\dagger}(F) \otimes \bS_{[(k^a) \setminus \nu'; (\ell^b) \setminus \nu]}(V) \\
\rH_1(\fg; \cN^{k,\ell}_{\lambda, \lambda'}) &= (\det E^*)^k \otimes \bS_{(\nu'_a + 1, \mu'_2, \mu'_3, \dots)^\dagger}(E^*) \otimes (\det F)^\ell \otimes \bS_{\mu^\dagger}(F) \otimes \bS_{[(k-\mu'_1 +1, k-\nu'_{a-1}, \dots, k-\nu'_1); (\ell^b) \setminus \nu]}(V) \\
\oplus & (\det E^*)^k \otimes \bS_{{\mu'}^\dagger}(E^*) \otimes (\det F)^\ell \otimes \bS_{(\nu_b, \mu_2, \mu_3, \dots)^\dagger}(F) \otimes \bS_{[(k^a) \setminus \nu'; \ell - \mu_1 + 1, \ell-\nu_{b-1}, \dots, \ell-\nu_1]}(V).
\end{align*}
In particular, as a $\rU(\fg)$-module, $\cN^{k,\ell}_{\lambda, \lambda'}$ is generated in a single degree and has relations of degree $\lambda_b^\dagger - \lambda_{b+1}^\dagger + 1$ and relations of degree ${\lambda'}_a^\dagger - {\lambda'}_{a+1}^\dagger + 1$. 
\end{proposition}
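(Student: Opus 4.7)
My plan is to imitate the proof of Proposition~\ref{prop:typeC-linear-pres}, using the two Hochschild--Serre spectral sequences \eqref{eqn:HS-SS} attached to the short exact sequences of Lie superalgebras
\begin{align*}
0 \to \fg \to \fg' \to {V^a}^* \otimes V^b \to 0, \qquad
0 \to \ol{\fu} \to \fg' \to (E^* \otimes V^a) \oplus ({V^b}^* \otimes F) \to 0
\end{align*}
from \S\ref{sec:typeA-prelim}. I will track $\fgl(E) \times \fgl(V^a) \times \fgl(V^b) \times \fgl(F)$-equivariance throughout.

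First I would compute $\rH_0(\fg; \cN^{k,\ell}_{\lambda,\lambda'})$ via the chain
\[
\rH_0({V^a}^* \otimes V^b; \rH_0(\fg; \cN^{k,\ell}_{\lambda,\lambda'})) = \rH_0(\fg'; \cN^{k,\ell}_{\lambda,\lambda'}) = \rH_0((E^* \otimes V^a) \oplus ({V^b}^* \otimes F); \rH_0(\ol{\fu}; \cN^{k,\ell}_{\lambda,\lambda'})).
\]
Theorem~\ref{thm:super-lwood-typeA} identifies the innermost homology as a single tensor of Schur functors, and since the two summands of the right-hand quotient act on independent tensor factors, I would apply Proposition~\ref{prop:schur-hom} separately to each to get the middle term explicitly. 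Finally Proposition~\ref{prop:par-hom-typeA} reassembles the $V^a$ and $V^b$ pieces into the $\fgl(V)$-module $\bS_{[(k^a) \setminus \nu';\, (\ell^b) \setminus \nu]}(V)$.

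For $\rH_1$ the key observation, exactly as in Type C, is that $\rU(\fg)_{>0}$ contains no $\fgl(E) \times \fgl(F)$-invariants: every generator of $\fg$ lies in one of $E^* \otimes V$, $V^* \otimes F$, $E^* \otimes F$, each of which has nontrivial central character for $\fgl(E)$ or for $\fgl(F)$. Since $\rH_0(\fg;-)$ is $\bS_{(k^n)+{\mu'}^\dagger}(E^*) \otimes \bS_{(\ell^m)+\mu^\dagger}(F)$-isotypic and $\rH_1(\fg;-) \subset \rH_0(\fg;-) \otimes \rU(\fg)_{>0}$, it will follow that $\rH_1(\fg;-)$ has no component of this isotypy. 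This forces $\rd^2 \colon \rE^2_{2,0} \to \rE^2_{0,1}$ in the first spectral sequence to vanish, giving a splitting
\[
\rH_1(\fg'; \cN^{k,\ell}_{\lambda,\lambda'}) \cong \rH_0({V^a}^* \otimes V^b; \rH_1(\fg; \cN^{k,\ell}_{\lambda,\lambda'})) \oplus \rH_1({V^a}^* \otimes V^b; \rH_0(\fg; \cN^{k,\ell}_{\lambda,\lambda'}))
\]
in which the second summand is precisely the discarded isotypy.

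I would then read off $\rH_1(\fg';-)$ from the second spectral sequence. By Theorem~\ref{thm:super-lwood-typeA} applied with $\iota = 1$, the module $\rH_1(\ol{\fu}; \cN^{k,\ell}_{\lambda,\lambda'})$ is a single Schur product indexed by the pair $(\lambda \cup (1^L), \lambda' \cup (1^L))$ with $L = k+\ell+1-\ell(\lambda)-\ell(\lambda')$; Proposition~\ref{prop:schur-hom} should then show that $\rE^2_{0,1}$ is entirely $\bS_{(k^n)+{\mu'}^\dagger}(E^*) \otimes \bS_{(\ell^m)+\mu^\dagger}(F)$-isotypic, so contributes only to the discarded part. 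The complementary term $\rE^2_{1,0}$ splits via K\"unneth into two summands according to whether $\rH_1$ is taken in the $E^* \otimes V^a$ factor or in the ${V^b}^* \otimes F$ factor of the quotient; applying Proposition~\ref{prop:schur-hom} and then Proposition~\ref{prop:par-hom-typeA} to each will produce the two summands in the stated formula. The assertion about generators and relations is a formal consequence of the $\rH_0$ and $\rH_1$ computations. The main obstacle will not be any single step but the notational bookkeeping of tracking four distinct $\fgl$-actions through two nested spectral sequences.
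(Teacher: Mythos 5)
Your proposal is correct and is essentially the paper's own argument: the paper's proof of this proposition is simply ``similar to the proof of Proposition~\ref{prop:typeC-linear-pres},'' i.e.\ the same two Hochschild--Serre spectral sequences combined with Theorem~\ref{thm:super-lwood-typeA}, Proposition~\ref{prop:schur-hom}, and Proposition~\ref{prop:par-hom-typeA}. Your adaptation is carried out correctly, in particular the necessary switch to tracking $\fgl(E)\times\fgl(F)$-isotypic components jointly (which is what keeps both summands of $\rH_1$ and is legitimate since ${V^a}^*\otimes V^b$ commutes with both factors), and your identification of $\rH_1(\ol{\fu};\cN^{k,\ell}_{\lambda,\lambda'})$ via the pair $((\lambda,1^L),(\lambda',1^L))$ matches the type~A modification rule.
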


\begin{proof}
Similar to the proof of Proposition~\ref{prop:typeC-linear-pres}. 
\end{proof}

\begin{proposition} \label{prop:ss-chase-A}
Let $\lambda, \lambda'$ be partitions such that $\ell(\lambda) + \ell(\lambda') \le k+\ell$ 
and $\lambda'_1 \le a-n$ and $\lambda_1 \le b-m$. Then $\cN^{k,\ell}_{\lambda, \lambda'}$ has a linear free resolution over $\rU(\fg)$. As a representation of $\GL(E) \times \GL(V) \times \GL(F)$, $\rH_i(\fg; \cN_{\lambda, \lambda'}^{k,\ell})$ is a subrepresentation of 
\[
\bigoplus_{\substack{|\gamma| + |\gamma'| = i\\ \ell(\gamma) \le m\\ \ell(\gamma') \le n}} \bS_{(k^n) + \gamma'}(E^*) \otimes \bS_{[((k^n) + \gamma', k - {\lambda'}^\dagger_{a-n}, \dots, k-{\lambda'}^\dagger_1); ((\ell^m) + \gamma, \ell - \lambda^\dagger_{b-m}, \dots, \ell - \lambda^\dagger_1)]}(V) \otimes \bS_{(\ell^m) + \gamma}(F).
\]
\end{proposition}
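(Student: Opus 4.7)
The strategy mirrors Proposition~\ref{prop:ss-chase}. Regard $\cN^{k,\ell}_{\lambda,\lambda'}$ as a $\fg'$-module and analyze the Hochschild--Serre spectral sequence \eqref{eqn:HS-SS} for each of the two exact sequences of Lie superalgebras introduced at the end of \S\ref{sec:typeA-prelim}. Everything is equivariant for $\fgl(E) \times \fgl(V^a) \times \fgl(V^b) \times \fgl(F)$, and the centers of $\fgl(E)$ and $\fgl(F)$ endow each homology group with a bigrading (by multiples of the two trace characters). I plan to prove by induction on $i$ that $\rH_i(\fg; \cN^{k,\ell}_{\lambda,\lambda'})$ is concentrated in a single bidegree whose components sum to $i$ plus a constant depending on $(\lambda,\lambda')$. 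The base case $i=0$ is Proposition~\ref{prop:typeA-linear-pres}, and once the inductive claim is established Theorem~\ref{thm:dict} yields linearity of the resolution.

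For the inductive step, $\fg'_0 = {V^a}^* \otimes V^b$ is trivial under both trace characters, so the argument of Proposition~\ref{prop:ss-chase} applies verbatim in this bigraded setting: the relevant differentials in the spectral sequence for $\fg \hookrightarrow \fg' \twoheadrightarrow \fg'_0$ vanish by the inductive hypothesis, identifying $\rH_0(\fg'_0; \rH_i(\fg; \cN^{k,\ell}_{\lambda,\lambda'}))$ with the maximal-bidegree piece of $\rH_i(\fg'; \cN^{k,\ell}_{\lambda,\lambda'})$. The latter is bounded above, via the second spectral sequence, by $\bigoplus_q \rH_q((E^* \otimes V^a) \oplus ({V^b}^* \otimes F);\ \rH_{i-q}(\ol{\fu}; \cN^{k,\ell}_{\lambda,\lambda'}))$. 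Theorem~\ref{thm:super-lwood-typeA} expresses $\rH_{i-q}(\ol{\fu};-)$ as a sum over pairs $(\alpha,\alpha')$ with $\tau^\rA_{k+\ell}(\alpha,\alpha') = (\lambda,\lambda')$ and $\iota^\rA_{k+\ell}(\alpha,\alpha') = i-q$, and two applications of Proposition~\ref{prop:schur-hom}---one for $\bS_{\alpha'}((E|V^a)^*)$ as a module over $E^* \otimes V^a$, one for $\bS_\alpha(F|V^b)$ as a module over ${V^b}^* \otimes F$---describe each $\rE^2$-contribution via a pair of permutations $(w,w')$ with $\ell(w) + \ell(w') = q$, producing partitions $\gamma',\gamma$ living on $E^*$ and $F$ respectively.

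The crucial and most delicate step is a type A analogue of Lemma~\ref{lem:gamma-bound}, namely the inequality
\[
|\gamma| + |\gamma'| \;\le\; \iota^\rA_{k+\ell}(\alpha,\alpha') + \ell(w) + \ell(w'),
\]
with equality forcing $\iota^\rA_{k+\ell}(\alpha,\alpha')=0$ (so $(\alpha,\alpha') = (\lambda,\lambda')$) and pinning down $w \bullet \alpha'^\dagger$ and $w' \bullet \alpha^\dagger$ precisely. This is where the hypotheses $\lambda'_1 \le a-n$ and $\lambda_1 \le b-m$ enter: they ensure, via the type A modification rule of \S\ref{sec:typeA-weyl}, a lower bound on $\iota^\rA_{k+\ell}(\alpha,\alpha')$ in terms of the boxes of $\alpha$ and $\alpha'$ lying past the $(b-m)$th and $(a-n)$th columns respectively. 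The proof will proceed by combining such a lower bound on $\iota^\rA$ with the identity \eqref{eqn:gamma-len} and the argument \eqref{eqn:iota-bound2} applied separately to $\alpha$ and $\alpha'$; the main obstacle is the $-1$ per modification step in the type A rule ($c(R_\lambda) + c(R_{\lambda'}) - 1$), which couples the recursion on $\alpha$ to the recursion on $\alpha'$ and must be tracked carefully through the joint induction. Once this combinatorial lemma is in place, the inductive step concludes as in Proposition~\ref{prop:ss-chase}, and the explicit upper bound on $\rH_i(\fg; \cN^{k,\ell}_{\lambda,\lambda'})$ follows by combining the equality case with Proposition~\ref{prop:par-hom-typeA}, paralleling the deduction of Corollary~\ref{cor:Nlambda-upper} from Lemma~\ref{lem:gamma-bound}.
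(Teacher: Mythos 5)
Your proposal follows exactly the route the paper intends: the paper's own proof of this proposition is literally the statement that it is ``similar to the proof of Proposition~\ref{prop:ss-chase}'' (and of Corollary~\ref{cor:Nlambda-upper}), and your roadmap --- the two Hochschild--Serre spectral sequences, Theorem~\ref{thm:super-lwood-typeA} for $\rH_\bullet(\ol{\fu};-)$, two applications of Proposition~\ref{prop:schur-hom} (with a K\"unneth factorization), a type~A analogue of Lemma~\ref{lem:gamma-bound}, and Proposition~\ref{prop:par-hom-typeA} at the end --- is precisely that argument transplanted to $\fgl(\tilde V)$.

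One concrete correction: your inductive statement, that $\rH_i(\fg;\cN^{k,\ell}_{\lambda,\lambda'})$ is concentrated in a \emph{single bidegree}, is false already for $i=1$. By Proposition~\ref{prop:typeA-linear-pres} the relations come in two families, one raising the $\fgl(E)$-degree and one raising the $\fgl(F)$-degree, so $\rH_1$ sits in two distinct bidegrees $(d_E+1,d_F)$ and $(d_E,d_F+1)$. What the induction should (and need only) establish is concentration in a single \emph{total} degree $d_E+d_F=i+\text{const}$; with that replacement the degree argument of Proposition~\ref{prop:ss-chase} goes through verbatim, since $\fg'_0={V^a}^*\otimes V^b$ commutes with both centers and hence preserves the total degree, while $\rU(\fg)_{>0}$ strictly raises it. Beyond this, the type~A analogue of Lemma~\ref{lem:gamma-bound} is only outlined in your write-up; you have correctly identified it as the delicate point (the $c(R_\lambda)+c(R_{\lambda'})-1$ in the modification rule forces the weaker per-partition bound $\iota \ge \sum_j \max(\alpha_j-(b-m)-j,0)+\sum_j\max(\alpha'_j-(a-n)-j,0)$-type estimate, as in the orthogonal case in Proposition~\ref{prop:ss-chase-BD}), but carrying it out is the one substantive ingredient not actually verified --- which is also the level of detail at which the paper itself leaves it.
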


\begin{proof}
The proof of the first part is similar to the proof of Proposition~\ref{prop:ss-chase}. See also the comments in the proof of Proposition~\ref{prop:typeA-linear-pres}. The proof of the second part is similar to the proof of Corollary~\ref{cor:Nlambda-upper}. 
\end{proof}

\subsection{Main result and consequences} \label{sec:main-result-typeA}

We assume that $\ch(\bk)=0$ in this section. The definition of parabolic Verma modules is the same as in \S\ref{sec:main-result-typeC}.

\begin{theorem} \label{thm:main-typeA}
Let $\lambda, \lambda'$ be partitions with $\ell(\lambda) + \ell(\lambda') \le k+\ell$ and $\lambda_{m+1} \le b$ and $\lambda'_{n+1} \le a$. Set $\nu = (\lambda_1^\dagger, \dots, \lambda_b^\dagger)$, $\mu = (\lambda_{b+1}^\dagger, \lambda_{b+2}^\dagger, \dots)$, $\nu' = ({\lambda'}_1^\dagger, \dots, {\lambda'}_a^\dagger)$, and $\mu' = ({\lambda'}_{a+1}^\dagger, {\lambda'}_{a+2}^\dagger, \dots)$. Then:
\begin{compactenum}[\rm (a)]
\item $\lambda^\dagger_b = \lambda^\dagger_{b+1}$ and ${\lambda'}^\dagger_a = {\lambda'}^\dagger_{a+1}$ if and only if $\cN^{k,\ell}_{\lambda, \lambda'}$ has a linear presentation as a $\rU(\fg)$-module. 
\item If $\lambda^\dagger_b = \lambda^\dagger_{b+1}$ and ${\lambda'}^\dagger_a = {\lambda'}^\dagger_{a+1}$, then $\bL(\cN^{k,\ell}_{\lambda, \lambda'})$ (notation of \S\ref{sec:koszul}) is the first linear strand of a $B$-module $M$ with a compatible $\GL(E) \times \GL(V) \times \GL(F)$-action and which is uniquely determined by the fact that it is generated in degree $0$ with degree $1$ relations, and satisfies
\begin{align*}
M_0 &= (\det E)^k \otimes \bS_{{\mu'}^\dagger}(E) \otimes (\det F^*)^\ell \otimes \bS_{\mu^\dagger}(F^*) \otimes \bS_{[(k^a) \setminus \nu'; (\ell^b) \setminus \nu]}(V^*), \\
M_1 &= ((\det E)^k \otimes \bS_{(\nu'_a + 1, \mu'_2, \mu'_3, \dots)^\dagger}(E) \otimes (\det F^*)^\ell \otimes \bS_{\mu^\dagger}(F^*) \otimes \bS_{[(k-\mu'_1 +1, k-\nu'_{a-1}, \dots, k-\nu'_1); (\ell^b) \setminus \nu]}(V^*)) \\
\oplus & ((\det E)^k \otimes \bS_{{\mu'}^\dagger}(E) \otimes (\det F^*)^\ell \otimes \bS_{(\nu_b+1, \mu_2, \mu_3, \dots)^\dagger}(F^*) \otimes \bS_{[(k^a) \setminus \nu'; \ell - \mu_1 + 1, \ell-\nu_{b-1}, \dots, \ell-\nu_1]}(V^*)).
\end{align*}
\end{compactenum}
If furthermore $\lambda'_1 \le a-n$ and $\lambda_1 \le b-m$, then $M \cong \cM^{k,\ell}_{{\lambda'}^\dagger, \lambda^\dagger}$, and:
\begin{compactenum}[\rm (a)]
\setcounter{enumi}{2}
\item $\bL(\cN^{k,\ell}_{\lambda, \lambda'})$ is an acyclic linear complex and is a minimal free resolution of $\cM^{k,\ell}_{{\lambda'}^\dagger, \lambda^\dagger}$ over $B$,
\item $\bR(\cM^{k,\ell}_{{\lambda'}^\dagger, \lambda^\dagger})$ is an acyclic linear complex and is a minimal free resolution of $\cN^{k,\ell}_{\lambda, \lambda'}$ over $\rU(\fg)$. The action of $\fg$ on this complex extends to an action of $\fgl(\tilde{V})$ and each $\bR(\cM^{k,\ell}_{{\lambda'}^\dagger, \lambda^\dagger})_i$ is a direct sum of parabolic Verma modules.
\end{compactenum}
\end{theorem}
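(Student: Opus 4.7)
The plan is to mirror the proof of Theorem~\ref{thm:main-typeC} step by step, substituting the Type A ingredients (Proposition~\ref{prop:typeA-linear-pres}, Proposition~\ref{prop:ss-chase-A}) for their orthosymplectic counterparts, and taking care of the one structural novelty: in Type A there are two distinct families of degree-$1$ relations (one on the $E$-side, one on the $F$-side), which is why $M_1$ splits as a direct sum of two irreducibles. Part (a) is then an immediate reading of Proposition~\ref{prop:typeA-linear-pres}: $\cN^{k,\ell}_{\lambda, \lambda'}$ is generated in one degree by $\rH_0(\fg;\cN^{k,\ell}_{\lambda,\lambda'})$ and the stated condition $\lambda^\dagger_b=\lambda^\dagger_{b+1}$ and ${\lambda'}^\dagger_a={\lambda'}^\dagger_{a+1}$ exactly kills the two border-strip contributions that would otherwise produce quadratic relations.

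For (b), I would apply Theorem~\ref{thm:dict} to the quotient of $(\cN^{k,\ell}_{\lambda,\lambda'})_0^* \otimes B$ by the image of $(\cN^{k,\ell}_{\lambda,\lambda'})_1^* \otimes B(-1)$, and observe that the tensor product $B_1 \otimes M_0 = (E \otimes V^* \oplus V \otimes F^*) \otimes M_0$ is multiplicity-free by \cite[Theorem 1.1ABCD]{stembridge} applied to the three factors of $\GL(E) \times \GL(V) \times \GL(F)$ separately. Combined with the explicit descriptions of $\rH_0$ and $\rH_1$ from Proposition~\ref{prop:typeA-linear-pres}, this determines $M$ uniquely as a $B$-module with a compatible group action. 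The only care needed here is that the two irreducible summands in $M_1$ match the two terms computed in Proposition~\ref{prop:typeA-linear-pres}; this is a direct comparison of highest weights.

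For (c), assume the stronger constraints $\lambda'_1 \le a-n$ and $\lambda_1 \le b-m$. Then the explicit decomposition \eqref{eqn:cM-module-A} shows that $\cM^{k,\ell}_{{\lambda'}^\dagger,\lambda^\dagger}$ has the right $M_0$, and by equivariance has a natural surjection $M \twoheadrightarrow \cM^{k,\ell}_{{\lambda'}^\dagger,\lambda^\dagger}$. The key step, as in Type C, is then to bound $\rH_i(\fg; \cN^{k,\ell}_{\lambda,\lambda'})$ from above by the $i$th graded piece of \eqref{eqn:cM-module-A}; this is exactly the content of the second half of Proposition~\ref{prop:ss-chase-A}. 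Since the upper bound matches \eqref{eqn:cM-module-A} term by term, the surjection must be an isomorphism, and linearity follows from the first half of Proposition~\ref{prop:ss-chase-A} combined with Theorem~\ref{thm:dict}(e).

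Finally, (d) follows formally: $\bR$ applied to $\bL(\cN^{k,\ell}_{\lambda,\lambda'})=\cM^{k,\ell}_{{\lambda'}^\dagger,\lambda^\dagger}$ recovers $\cN^{k,\ell}_{\lambda,\lambda'}$ as a Koszul dual, giving a minimal linear free resolution over $\rU(\fg)$. The promotion of the $\fg$-action to an $\fgl(\tilde V)$-action on each term is the verbatim analogue of the Lemma~\ref{lem:parabolic-verma} argument used in Theorem~\ref{thm:main-typeC}: each homological degree of $\bR(\cM^{k,\ell}_{{\lambda'}^\dagger,\lambda^\dagger})$ is concentrated in a single internal degree (by linearity), so the natural map from the induced parabolic Verma module is automatically $\fgl(\tilde V)$-equivariant, and an induction on $i$ identifies each term with a direct sum of parabolic Vermas. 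The main obstacle I anticipate is purely bookkeeping: tracking the two symmetric families of border-strip contributions on the $E$- and $F$-sides so that the two summands of $M_1$ and the two linear-presentation conditions in~(a) correspond correctly; but this is dictated by Proposition~\ref{prop:typeA-linear-pres} and requires no new ideas beyond those already deployed in the symplectic case.
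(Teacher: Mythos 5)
Your proposal matches the paper's own proof, which is simply "Similar to the proof of Theorem~\ref{thm:main-typeC}": part (a) from Proposition~\ref{prop:typeA-linear-pres}, part (b) from Theorem~\ref{thm:dict} plus the multiplicity-free tensoring argument, part (c) by comparing the upper bound in Proposition~\ref{prop:ss-chase-A} with \eqref{eqn:cM-module-A}, and part (d) via Koszul duality and the Lemma~\ref{lem:parabolic-verma} induction. Your handling of the one genuine difference (the two summands of $M_1$ coming from the $E$- and $F$-side relations) is exactly the bookkeeping the paper leaves implicit, so the argument is correct and essentially identical in approach.
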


\begin{proof}
Similar to the proof of Theorem~\ref{thm:main-typeC}.
\end{proof}

\begin{conjecture} 
Let $I_{\lambda, \lambda'}$ be the ideal generated by $\bS_\lambda(E) \otimes \bS_{[\lambda; \lambda']}(V^*) \otimes \bS_{\lambda'}(F^*)$ in $B$. Then the resolution of $I_{\lambda, \lambda'}$ is a representation of $\fgl(\tilde{V})$ and each linear strand is irreducible.
\end{conjecture}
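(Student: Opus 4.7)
The plan is to work on the Koszul-dual side. Let $\bG_\bullet$ be the minimal free resolution of $I_{\lambda, \lambda'}$ over $B$. The Yoneda product of $\rU(\fg) \cong \ext^\bullet_B(\bk,\bk)$ on $\Tor^B_\bullet(I_{\lambda,\lambda'}, \bk)^*$ preserves each linear strand (it shifts internal and homological degree by the same amount), so the associated $\rU(\fg)$-module decomposes as $\bigoplus_{s \ge 0} N^{(s)}$, where $N^{(s)}$ corresponds to the $s$-th linear strand via the equivalence between $\bL$ and $\bR$ (Theorem~\ref{thm:dict} and Proposition~\ref{prop:CI-koszul}(b)). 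The two halves of the conjecture then become: (i) each $N^{(s)}$ carries an action of $\fgl(\tilde{V})$ extending the $\rU(\fg)$-action; and (ii) each $N^{(s)}$ is irreducible as a $\fgl(\tilde{V})$-module.

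For (i), I would first pin down the starting generator of each $N^{(s)}$ as a $\fgl(E) \times \fgl(V) \times \fgl(F)$-representation using the Hochschild--Serre spectral sequences for $\fg \subset \fg'$ and $\ol{\fu} \subset \fg'$ (as in the proofs of Propositions~\ref{prop:typeA-linear-pres} and~\ref{prop:ss-chase-A}) together with Theorem~\ref{thm:super-lwood-typeA}. Combined with a geometric model for $I_{\lambda, \lambda'}$ obtained by pushing forward $\Sym(\eta) \otimes \bS_\lambda(\cR_n^*) \otimes \bS_\mu((\cR_{n+d}/\cR_n)^*) \otimes \bS_{\lambda'}(V/\cR_{n+d})$ from $\Fl(n, n+d, V)$ (choosing the middle shift $\mu$ via Theorem~\ref{thm:bott-flag} so that the generator reproduces $\bS_\lambda(E) \otimes \bS_{[\lambda;\lambda']}(V^*) \otimes \bS_{\lambda'}(F^*)$), this should identify each $N^{(s)}$ as a $\rU(\fg)$-quotient of a parabolic Verma module ${\rm Ind}_\fp^{\fgl(\tilde{V})}(W^{(s)})$ whose highest-weight piece $W^{(s)}$ sits in a single $\bZ$-degree. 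The type-A analogue of Lemma~\ref{lem:parabolic-verma} (whose proof is entirely parallel) then upgrades the $\rU(\fg)$-surjection to a $\fgl(\tilde{V})$-equivariant one, settling (i) and exhibiting $N^{(s)}$ as a quotient of ${\rm Ind}_\fp^{\fgl(\tilde{V})}(W^{(s)})$.

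The main obstacle is (ii), the irreducibility of $N^{(s)}$ as a $\fgl(\tilde{V})$-module. In the rectangular case treated by Theorem~\ref{thm:main-typeA}, there is only one strand and $N^{(0)}$ is the Howe-dual module $\cN^{k,\ell}_{\lambda^\dagger, {\lambda'}^\dagger}$, which is irreducible by construction. For general $(\lambda, \lambda')$ and for the higher strands, the generating irreducible $W^{(s)}$ typically does not come from a Howe dual pair, so Howe duality alone cannot supply irreducibility. My plan is to show that $N^{(s)}$ is the simple quotient $L(W^{(s)})$ of the parabolic Verma module at its top, by matching its graded $\fgl(E) \times \fgl(V) \times \fgl(F)$-character (as determined by the spectral sequence computation above) against the character of $L(W^{(s)})$ in the parabolic category $\cO$ for $\fgl(\tilde{V})$, the latter being accessible via the super Kazhdan--Lusztig machinery of Cheng--Lam--Wang. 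Ruling out extra composition factors in this character comparison in full generality is precisely the hard step, and is what keeps the statement a conjecture.
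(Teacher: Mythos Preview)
The statement is labeled a \emph{Conjecture} in the paper and the paper offers no proof of it; there is nothing on the paper's side to compare against. You are aware of this, since your proposal ends by naming the irreducibility step as ``what keeps the statement a conjecture.'' So what you have written is a strategy sketch, not a proof, and that is appropriate.

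A few comments on the sketch itself. Your reduction to the Koszul-dual side and the decomposition into strands $N^{(s)}$ is exactly the framework the paper uses for the special modules $\cM^{k,\ell}_{\nu,\nu'}$, so that part is sound. However, there is a gap in part~(i) beyond the one you flag in~(ii). The argument via Lemma~\ref{lem:parabolic-verma} in Theorem~\ref{thm:main-typeC}(d) proceeds by induction on homological degree and uses that the resolution over $\rU(\fg)$ is \emph{linear}; this is what guarantees that the kernel at each stage is again generated in a single degree, so the lemma can be reapplied. For a general $I_{\lambda,\lambda'}$ with several linear strands, you would need to know that the generator of each strand $N^{(s)}$ is concentrated in a single $\bZ$-degree (equivalently, is a single $\fgl(E)\times\fgl(V)\times\fgl(F)$-irreducible), and that is not automatic. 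Your spectral-sequence plan might establish this, but it is a genuine step, not a formality.

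Second, the geometric model you propose for $I_{\lambda,\lambda'}$ is not obviously correct. Pushing forward $\Sym(\eta)\otimes\bS_\lambda(\cR_n^*)\otimes\bS_\mu((\cR_{n+d}/\cR_n)^*)\otimes\bS_{\lambda'}(V/\cR_{n+d})$ produces a module generated by $\bS_\lambda(E)\otimes\bS_{(\lambda,\mu,-\lambda')}(V^*)\otimes\bS_{\lambda'}(F^*)$, but the resulting module is the $\cM$-type module of \eqref{eqn:cM-module-A}, not the ideal $I_{\lambda,\lambda'}$; compare Proposition~\ref{prop:typeA-basicfacts}(c), which describes $I_{\lambda,\lambda'}$ as those summands with $\lambda\subseteq\alpha$ and $\lambda'\subseteq\alpha'$, a different condition. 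So either a different geometric construction is needed, or you must work directly with the combinatorial description of $I_{\lambda,\lambda'}$.
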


We now derive a few consequences of Theorem~\ref{thm:main-typeA}. Recall that $\ol{\nu} = (k-\nu_a, \dots, k-\nu_1, \nu'_1 - \ell, \dots, \nu'_b - \ell)$. Define 
\[
\cE^{\nu, \nu'}_{\lambda, \lambda'} = (\det \cR^*_n)^k \otimes \bS_{\ol{\nu}}((\cR_{n+d} / \cR)^*) \otimes (\det V/\cR_{n+d})^\ell \otimes \bS_{\lambda^\dagger}((V/\cR_n)^*) \otimes \bS_{{\lambda'}^\dagger}(\cR_{n+d}).
\]

\begin{proposition} \label{prop:Mnu-res-general-A}
Let $\bF^{\nu,\nu'}_\bullet$ be the minimal free resolution of $\cM^{k,\ell}_{\nu,\nu'}$ over $A$. We have
\begin{align*} 
\bF^{\nu,\nu'}_i = \bigoplus_{j \ge 0} \rH^j(\Fl(n,n+d,V); \bigoplus_{\substack{|\lambda| + |\lambda'| = i+j\\ \lambda \subseteq n \times (m+d)\\ \lambda' \subseteq m \times (n+d)}} \bS_{(k^n) + \lambda}(E) \otimes \cE^{\nu, \nu'}_{\lambda^\dagger, {\lambda'}^\dagger} \otimes \bS_{(\ell^m) + \lambda'}(F^*)) \otimes A(-i-j).
\end{align*}
\end{proposition}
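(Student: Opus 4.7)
The plan is to apply the geometric technique of Theorem~\ref{thm:geom-tech} in the setup preceding \eqref{eqn:cM-module-A}. By the preceding proposition, the higher cohomology of $\Sym(\eta) \otimes \cV^{k,\ell}_{\nu,\nu'}$ vanishes, so $M^{(j)}(\cV^{k,\ell}_{\nu,\nu'}) = 0$ for $j > 0$ and Theorem~\ref{thm:geom-tech} produces the minimal free resolution $\bF^{\nu,\nu'}_\bullet$ with terms
\[
\bF^{\nu,\nu'}_i = \bigoplus_{j \ge 0} \rH^j(X; \lw^{i+j}(\xi) \otimes \cV^{k,\ell}_{\nu,\nu'}) \otimes A(-i-j),
\]
so everything reduces to identifying $\xi$ and decomposing $\lw^{\bullet}(\xi) \otimes \cV^{k,\ell}_{\nu,\nu'}$ into irreducible homogeneous bundles.

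First I would compute $\xi$ from the exact sequence $0 \to \xi \to \eps \to \eta \to 0$, where $\eps = (E \otimes V^*) \oplus (V \otimes F^*) \otimes \cO_X$ and $\eta = (E \otimes \cR_n^*) \oplus ((V/\cR_{n+d}) \otimes F^*)$. Using the tautological short exact sequences $0 \to (V/\cR_n)^* \to V^* \to \cR_n^* \to 0$ and $0 \to \cR_{n+d} \to V \to V/\cR_{n+d} \to 0$, we get
\[
\xi = (E \otimes (V/\cR_n)^*) \oplus (\cR_{n+d} \otimes F^*).
\]

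Next I would expand the exterior power on the two summands separately and apply the Cauchy identity (Theorem~\ref{thm:cauchy-id}). This yields
\[
\lw^{i+j}(\xi) \approx \bigoplus_{\substack{|\lambda| + |\lambda'| = i+j \\ \lambda \subseteq n \times (m+d) \\ \lambda' \subseteq m \times (n+d)}} \bS_\lambda(E) \otimes \bS_{\lambda^\dagger}((V/\cR_n)^*) \otimes \bS_{{\lambda'}^\dagger}(\cR_{n+d}) \otimes \bS_{\lambda'}(F^*),
\]
where the bounds $\lambda \subseteq n \times (m+d)$ and $\lambda' \subseteq m \times (n+d)$ come from nonvanishing of the Schur functors applied to the relevant bundles (which have ranks $n$, $m+d$, $n+d$, $m$).

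Finally, tensoring with $\cV^{k,\ell}_{\nu,\nu'} = (\det E)^k \otimes (\det \cR_n^*)^k \otimes \bS_{\ol{\nu}}((\cR_{n+d}/\cR_n)^*) \otimes (\det V/\cR_{n+d})^\ell \otimes (\det F^*)^\ell$, I would absorb the determinant twists into the adjacent Schur functors via the identities $\bS_\lambda(E) \otimes (\det E)^k = \bS_{(k^n)+\lambda}(E)$ (valid since $\ell(\lambda) \le n$) and $\bS_{\lambda'}(F^*) \otimes (\det F^*)^\ell = \bS_{(\ell^m)+\lambda'}(F^*)$. The remaining factors $(\det \cR_n^*)^k \otimes \bS_{\ol{\nu}}((\cR_{n+d}/\cR_n)^*) \otimes (\det V/\cR_{n+d})^\ell \otimes \bS_{\lambda^\dagger}((V/\cR_n)^*) \otimes \bS_{{\lambda'}^\dagger}(\cR_{n+d})$ are exactly the definition of $\cE^{\nu,\nu'}_{\lambda^\dagger, {\lambda'}^\dagger}$. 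Substituting this expansion into Theorem~\ref{thm:geom-tech} gives the stated formula. No serious obstacle arises; the only care needed is the combinatorial bookkeeping to match the transposition conventions in the statement (for which the argument is entirely parallel to that of Corollary~\ref{cor:Mnu-res-general}).
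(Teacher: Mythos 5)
Your proposal is correct and follows essentially the same route as the paper: identify $\xi = (E \otimes (V/\cR_n)^*) \oplus (\cR_{n+d} \otimes F^*)$, invoke Theorem~\ref{thm:geom-tech} together with the vanishing of higher cohomology of $\Sym(\eta) \otimes \cV^{k,\ell}_{\nu,\nu'}$, and decompose $\lw^{i+j}(\xi)$ via the Cauchy identity, absorbing the determinant twists to match the definition of $\cE^{\nu,\nu'}_{\lambda^\dagger,{\lambda'}^\dagger}$. The bookkeeping of the rank bounds $\lambda \subseteq n \times (m+d)$, $\lambda' \subseteq m \times (n+d)$ is also exactly as intended.
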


\begin{proof}
Similar to the proof of Corollary~\ref{cor:Mnu-res-general}. In the notation of \S\ref{sec:geom}, we have $\eps = ((E \otimes V^*) \oplus (V \otimes F^*)) \otimes \cO_{\Fl(n,n+d,V)}$ and $\xi = (E \otimes (V/\cR_n)^*) \oplus (\cR_{n+d} \otimes F^*)$.
\end{proof}

\begin{theorem} \label{thm:bott-gen-A}
Let $\nu \subseteq (k^a)$ and $\nu' \subseteq (\ell^b)$ be partitions. Also pick partitions $\lambda \subseteq n \times (m+d)$ and $\lambda' \subseteq m \times (n+d)$. Let $\cR_n \subset \cR_{n+d}$ be the tautological flag on $\Fl(n,n+d,V)$. 
\begin{compactenum}[\rm (a)]
\item If $k+\ell \ge \ell(\lambda) + \ell(\lambda') - 1$, then $\rH^i(\Fl(n,n+d,V); \cE^{\nu, \nu'}_{\lambda^\dagger, {\lambda'}^\dagger})=0$ for $i>0$.
\item The cohomology of $\cE^{\nu, \nu'}_{\lambda^\dagger, {\lambda'}^\dagger}$ vanishes unless $\tau_{k+\ell}(\lambda, \lambda') = (\mu, \mu')$ is defined, in which case the cohomology is nonzero only in degree $\iota_{k+\ell}(\lambda, \lambda') = i$, and we have a $\GL(V)$-equivariant isomorphism
\begin{align*}
\rH^i(\Fl(n,n+d,V); \cE^{\nu, \nu'}_{\lambda^\dagger, {\lambda'}^\dagger}) \cong \rH^0(\Fl(n,n+d,V); \cE^{\nu, \nu'}_{\mu^\dagger, {\mu'}^\dagger}).
\end{align*}
\end{compactenum}
\end{theorem}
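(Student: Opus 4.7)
The plan is to follow the strategy used in the proof of Theorem \ref{thm:bott-gen} (and its orthogonal analogue Theorem \ref{thm:bott-gen-BD}), adapted to the variety-of-complexes setting. First, I would let $\bF_\bullet$ denote the minimal free resolution of $\cM^{k,\ell}_{\nu,\nu'}$ over $A$, whose terms are given by Proposition \ref{prop:Mnu-res-general-A}. Eisenbud's construction \cite[\S 7]{eisenbud-ci} then produces a (generally non-minimal) free $B$-resolution $\bG_\bullet$ of $\cM^{k,\ell}_{\nu,\nu'}$ whose terms have the shape $\bigoplus_j \bF_{i-2j}(-2j) \otimes_\bk \rD^j(E \otimes F^*)$. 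Setting $S = \Sym(E \otimes F^*)$, I would use that $\bR(\bG_\bullet)$ is a minimal complex of $S$-modules and that applying $\bL$ to its homology recovers the minimal free $B$-resolution of $\cM^{k,\ell}_{\nu,\nu'}$.

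By Theorem \ref{thm:main-typeA}, that minimal resolution over $B$ is linear, so $\bR(\bG_\bullet)$ has homology concentrated in cohomological degree $0$, equal to the Koszul-dual representation $\cN^{k,\ell}_{{\nu'}^\dagger,\nu^\dagger}$ (the swap tracks the duality convention in Theorem \ref{thm:main-typeA}(c)). Exactly as in the symplectic argument, this yields the $\GL(E) \times \GL(V) \times \GL(F)$-equivariant identification
\[
\Tor^S_i(\cN^{k,\ell}_{{\nu'}^\dagger,\nu^\dagger}, \bk)_j^* \;\cong\; \Tor^A_j(\cM^{k,\ell}_{\nu,\nu'}, \bk)_{i+j}.
\]

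I would then extract the $\bS_{(k^n)+\lambda}(E^*) \otimes \bS_{(\ell^m)+\lambda'}(F)$-isotypic component under the commuting $\GL(E) \times \GL(F)$-action. Theorem \ref{thm:super-lwood-typeA}, applied in the same way Theorem \ref{thm:super-lwood} is applied in the symplectic case (and using the type A modification-rule calculation of \cite[\S 5]{lwood}), forces this isotypic piece to vanish unless $\tau^\rA_{k+\ell}(\lambda,\lambda')$ is defined and $\iota^\rA_{k+\ell}(\lambda,\lambda')=i$, in which case it is canonically isomorphic as a $\GL(V)$-representation to the corresponding isotypic piece of $\Tor^S_0$ indexed by $\tau^\rA_{k+\ell}(\lambda,\lambda')$. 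Translating this back through Theorem \ref{thm:geom-tech} and Proposition \ref{prop:Mnu-res-general-A} yields statement (b). Part (a) is then immediate: the bound $k+\ell \ge \ell(\lambda)+\ell(\lambda')-1$ forces the border strip defining $\tau^\rA_{k+\ell}$ to have length $\le 0$, so either $\tau^\rA_{k+\ell}(\lambda,\lambda')$ is undefined (cohomology vanishes in every degree) or no modification is needed ($\iota=0$), and higher cohomology vanishes in both cases.

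The main obstacle I expect to be purely bookkeeping. The determinantal twist $\cL$ appearing in Theorem \ref{thm:super-lwood-typeA}, the transposition $({\nu'}^\dagger,\nu^\dagger) \leftrightarrow (\nu,\nu')$ introduced by Theorem \ref{thm:main-typeA}, and the asymmetric roles played by $\lambda$ and $\lambda'$ inside the type A modification rule must all be aligned simultaneously so that the $\tau^\rA_{k+\ell}(\lambda,\lambda')$-indexed summands match on both sides of the comparison. Each of these verifications is routine individually, but gluing them together so that the final isomorphism is literally the one stated in (b) is where the delicate work lies.
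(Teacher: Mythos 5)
Your proposal is correct and is essentially the paper's intended argument: the paper omits a written proof of this theorem precisely because it runs exactly as the proof of Theorem~\ref{thm:bott-gen} (Eisenbud's construction over the complete intersection, linearity of the $B$-resolution from Theorem~\ref{thm:main-typeA}, the resulting comparison $\Tor^S_i(\cN,\bk)_j^* \cong \Tor^A_j(\cM,\bk)_{i+j}$, extraction of the $\GL(E)\times\GL(F)$-isotypic components via Theorem~\ref{thm:super-lwood-typeA}, and translation back through Theorem~\ref{thm:geom-tech} and Proposition~\ref{prop:Mnu-res-general-A}), which is exactly what you do, including the correct handling of part (a) in the boundary case where the border strips are empty. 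The only quibble is a duality bookkeeping slip ($S$ should be the symmetric algebra on the dual of the space of defining quadrics), a point you already flag as routine and on which the paper itself is loose.
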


\begin{remark}
As in \S\ref{sec:typeA-prelim}, we interpret $A$ as the coordinate ring of the space of pairs of linear maps $E \to V$ and $V \to F$ and hence it makes sense to talk about the generic map with respect to $A$, i.e., picking bases for $E, V, F$, the matrix entries are the variables of $A$. Similar comments apply to $B$. When $\nu = (k^a)$ and $\nu' = (\ell^b)$, \eqref{eqn:cM-module-A} becomes 
\[
\cM^{k,\ell}_{(k^a), (\ell^b)} = \bigoplus_{\substack{\lambda, \lambda'\\ \lambda_n \ge k\\ \lambda'_m \ge \ell}} \bS_\lambda(E) \otimes \bS_{[\lambda; \lambda']}(V^*) \otimes \bS_{\lambda'}(F^*),
\]
which is the product of the $k$th power of the maximal minors of $E \to V$ in $B$ with the $\ell$th power of the maximal minors of $V \to F$ (use Proposition~\ref{prop:typeA-basicfacts}(c)).
\end{remark}

\begin{corollary} \label{cor:A-lin-res}
Pick $\nu \subseteq (k^{a-n})$ and $\nu' \subseteq (\ell^{b-m})$. If $k +\ell \ge n+m-1$, then the minimal free resolution of $\cM^{k,\ell}_{\nu, \nu'}$ is linear over $A$. 
\end{corollary}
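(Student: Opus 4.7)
The plan is to mimic the strategy used for Corollaries~\ref{cor:C-lin-res} and \ref{cor:BD-lin-res}: combine the geometric expression for the resolution given in Proposition~\ref{prop:Mnu-res-general-A} with the cohomology vanishing statement in Theorem~\ref{thm:bott-gen-A}(a).

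More concretely, by Proposition~\ref{prop:Mnu-res-general-A}, the $i$th term of the minimal free resolution of $\cM^{k,\ell}_{\nu,\nu'}$ over $A$ decomposes as
\[
\bF^{\nu,\nu'}_i = \bigoplus_{j \ge 0}\bigoplus_{\substack{|\lambda|+|\lambda'|=i+j\\ \lambda \subseteq n \times (m+d)\\ \lambda' \subseteq m \times (n+d)}} \rH^j(\Fl(n,n+d,V);\, \bS_{(k^n)+\lambda}(E) \otimes \cE^{\nu,\nu'}_{\lambda^\dagger, {\lambda'}^\dagger} \otimes \bS_{(\ell^m)+\lambda'}(F^*)) \otimes A(-i-j),
\]
so linearity is equivalent to the vanishing of all higher cohomology groups appearing on the right-hand side. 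For every index $(\lambda,\lambda')$ appearing in the sum we have $\ell(\lambda) \le n$ and $\ell(\lambda') \le m$, hence $\ell(\lambda)+\ell(\lambda')-1 \le n+m-1$. Under the hypothesis $k+\ell \ge n+m-1$, this gives $k+\ell \ge \ell(\lambda)+\ell(\lambda')-1$, which is exactly the hypothesis of Theorem~\ref{thm:bott-gen-A}(a) applied to $\cE^{\nu,\nu'}_{\lambda^\dagger,{\lambda'}^\dagger}$.

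Therefore Theorem~\ref{thm:bott-gen-A}(a) forces the $j>0$ summands to vanish, leaving only $j=0$, and $\bF^{\nu,\nu'}_i$ is concentrated in internal degree $i$. There is no real obstacle here; the only small bookkeeping point is to verify that the length bounds $\ell(\lambda) \le n$, $\ell(\lambda') \le m$ coming from the containments $\lambda \subseteq n \times (m+d)$ and $\lambda' \subseteq m \times (n+d)$ are tight enough to feed into the hypothesis of Theorem~\ref{thm:bott-gen-A}(a), which we just checked.
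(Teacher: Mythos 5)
Your proof is correct and is exactly the paper's argument: the paper proves this corollary by combining Proposition~\ref{prop:Mnu-res-general-A} with Theorem~\ref{thm:bott-gen-A}(a), using precisely the bound $\ell(\lambda)+\ell(\lambda')-1 \le n+m-1 \le k+\ell$ coming from $\lambda \subseteq n \times (m+d)$ and $\lambda' \subseteq m \times (n+d)$.
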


\begin{proof}
Combine Proposition~\ref{prop:Mnu-res-general-A} and Theorem~\ref{thm:bott-gen-A}(a).
\end{proof}

\begin{proposition} \label{prop:support-typeA}
Choose partitions $\nu \subseteq (k^{a-n})$ and $\nu' \subseteq (\ell^{b-m})$. The support variety of $\cM^{k,\ell}_{\nu, \nu'}$ is reduced and is the variety of matrices in $\hom(E,F)$ of rank $\le k+\ell$.
\end{proposition}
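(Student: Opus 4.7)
The plan is to mimic the proof of Proposition~\ref{prop:support-typeC} (and its orthogonal counterpart Proposition~\ref{prop:support-typeBD}), replacing the relevant Howe dual pair with the type A pair $(\GL(U), \fgl(\tilde{V}))$ acting on $\sU$. By Theorem~\ref{thm:main-typeA}, $\bL(\cM^{k,\ell}_{\nu,\nu'}) = \cN^{k,\ell}_{{\nu'}^\dagger, \nu^\dagger}$, so it suffices to determine the annihilator in $S = \Sym(E^* \otimes F)$ of $\cN^{k,\ell}_{{\nu'}^\dagger, \nu^\dagger}$.

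For the upper bound on the support, I would decompose
\[
\sU = \Sym(E^* \otimes U^*) \otimes \Sym(U \otimes F) \otimes \lw^\bullet((V^a)^* \otimes U^*) \otimes \lw^\bullet(U \otimes V^b).
\]
The action of $\fg_2 = E^* \otimes F$ on $\sU$ is quadratic in the generators and $\GL(U)$-equivariant, and a direct Howe-duality calculation shows it acts by multiplication via the composition pairing $e^* \otimes f \mapsto \sum_u (e^* \otimes u^*)(u \otimes f)$, with $u$ running over a basis of $U$, landing inside $\Sym(E^* \otimes U^*) \otimes \Sym(U \otimes F)$. By the first and second fundamental theorems of invariant theory for $\GL(U)$ acting on $\hom(F, U^*) \times \hom(U^*, E)$, the resulting algebra map $S \to \Sym(E^* \otimes U^*) \otimes \Sym(U \otimes F)$ is the dual of matrix composition; its kernel is precisely the ideal $I_{k+\ell+1}$ of $(k+\ell+1) \times (k+\ell+1)$ minors, and its image $S/I_{k+\ell+1}$ is the coordinate ring of the reduced, irreducible variety of matrices of rank at most $k+\ell$. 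Hence $\ann_S(\cN^{k,\ell}_{{\nu'}^\dagger, \nu^\dagger}) \supseteq I_{k+\ell+1}$.

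For the lower bound, I would exhibit inside $\cN^{k,\ell}_{{\nu'}^\dagger, \nu^\dagger}$ a direct summand of the form $S/I_{k+\ell+1} \otimes_\bk W$ with $W \ne 0$. Write $\sU = P \otimes Q$, where $P = \Sym(E^* \otimes U^*) \otimes \Sym(U \otimes F)$ and $Q = \lw^\bullet((V^a)^* \otimes U^*) \otimes \lw^\bullet(U \otimes V^b)$. The $\GL(U)$-trivial isotypic component of $P$ is the invariant ring $P^{\GL(U)} = S/I_{k+\ell+1}$, so any contribution of $\bS_{[{\nu'}^\dagger;\nu^\dagger]}(U)$ to the $\GL(U)$-isotypic decomposition of $Q$ pairs with $P^{\GL(U)}$ to yield the desired summand. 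By Cauchy's identity applied to both exterior factors, $Q$ contains the summand $\bS_\nu((V^a)^*) \otimes \bS_{\nu^\dagger}(U^*) \otimes \bS_{{\nu'}^\dagger}(U) \otimes \bS_{\nu'}(V^b)$, and the stable decomposition $\bS_{\nu^\dagger}(U^*) \otimes \bS_{{\nu'}^\dagger}(U) \supseteq \bS_{[{\nu'}^\dagger;\nu^\dagger]}(U)$ (multiplicity one, using $\ell(\nu^\dagger) + \ell({\nu'}^\dagger) \le k+\ell = \dim U$) produces the required component with multiplicity space $W = \bS_\nu((V^a)^*) \otimes \bS_{\nu'}(V^b)$. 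The hypotheses $\nu \subseteq (k^{a-n})$ and $\nu' \subseteq (\ell^{b-m})$ imply $\ell(\nu) \le a-n \le a$ and $\ell(\nu') \le b-m \le b$, so $W$ is nonzero. Combining the two inclusions yields $\ann_S(\cN^{k,\ell}_{{\nu'}^\dagger, \nu^\dagger}) = I_{k+\ell+1}$, so the support variety is $V(I_{k+\ell+1})$, and it is reduced since $I_{k+\ell+1}$ is prime.

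I expect the main obstacle to be justifying the composition-pairing formula for the $\fg_2$-action on $\sU$ --- a concrete but nontrivial Howe-duality calculation involving how an even quadratic element $e^* \otimes f \in \fgl(\tilde{V})$ acts on the oscillator representation built from the generating superspace $((E|V^a)^* \otimes U^*) \oplus (U \otimes (F|V^b))$. Once that formula is in place, the remaining invariant-theoretic identification of $P^{\GL(U)}$ with $S/I_{k+\ell+1}$ and the multiplicity bookkeeping inside $Q$ are routine.
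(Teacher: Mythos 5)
Your proposal is correct and is essentially the paper's own argument: the paper proves Proposition~\ref{prop:support-typeA} by saying it is ``similar to the proof of Proposition~\ref{prop:support-typeC},'' and your adaptation --- reducing via Theorem~\ref{thm:main-typeA} to the $S$-module $\cN^{k,\ell}_{{\nu'}^\dagger,\nu^\dagger}$, splitting $\sU$ into the even polynomial factor $P$ and odd exterior factor $Q$, and identifying the image of $S$ in $P$ with $S/I_{k+\ell+1}$ by the first and second fundamental theorems for $\GL(U)$ --- is exactly the type~A analogue of that proof (where $M^{\fso}_\emptyset$ is replaced by the determinantal ring). Your explicit lower-bound submodule $(S/I_{k+\ell+1})\otimes W$ and the multiplication formula for the $\fg_2$-action are details the paper leaves implicit, and they check out.
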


\begin{proof}
Similar to the proof of Proposition~\ref{prop:support-typeC}. 
\end{proof}

\end{document}